\title[Characters and Sylow subgroup abelianization]{Characters and Sylow $3$-subgroup abelianization}
\author[E. Giannelli]{Eugenio Giannelli}
\address[E. Giannelli]{Dipartimento di Matematica e Informatica ‘Ulisse Dini’, 50134 Firenze, Italy}
\email{eugenio.giannelli@unifi.it}
\author[N. Rizo]{Noelia Rizo}
\address[N. Rizo]{Departament de Matem\`atiques, Universitat de Val\`encia,
 46100 Burjassot, Val\`encia, Spain}
 \email{Noelia.Rizo@uv.es}
\author[A. A. Schaeffer Fry]{A. A. Schaeffer Fry} 
\address[A. A. Schaeffer Fry]{Dept. Mathematics - University of Denver, Denver, CO 80210, USA}
\email{mandi.schaefferfry@du.edu}
\author[C. Vallejo]{Carolina Vallejo}
\address[C. Vallejo]{Dipartimento di Matematica e Informatica ‘Ulisse Dini’, 50134 Firenze, Italy}
\email{carolina.vallejorodriguez@unifi.it}
\date{\today}
\thanks{The first and fourth-named authors are funded by the Italian national project PRIN 2022- 2022PSTWLB - Group Theory and Applications 
 and, as members of the GNSAGA, both are grateful for the support of the {\em Istituto Nazionale di
Alta Matematica}. The second-named author is supported by Ministerio de Ciencia e
Innovaci\'on 
(Grant PID2022-137612NBI00 funded
by MCIN/AEI/10.13039/501100011033 and “ERDF A way of making Europe”) and a
CDEIGENT grant CIDEIG/2022/29 funded by Generalitat Valenciana.  The third-named author  gratefully acknowledges support from the National Science Foundation, Award No. DMS-2100912,  and her former institution, Metropolitan State University of Denver, which holds the award and allows her to serve as PI. The fourth-named author also acknowledges support from the Rita Levi-Montalcini 2019 programme. This material is based upon work supported by the National Science Foundation under Grant No.
DMS-1928930, while the authors were in residence at the Mathematical Sciences Research Institute in
Berkeley, California, during the summer of 2023.}
\keywords{Height zero character, principal block, Sylow subgroup commutator}
\subjclass[2010]{20C15, 20C20}
\begin{document}

\newtheorem{thm}{Theorem}[section]

\newtheorem{lem}[thm]{Lemma}
\newtheorem*{thmA}{Theorem A}
\newtheorem*{thmB}{Theorem B}

\newtheorem{prop}[thm]{Proposition}
\newtheorem{conj}[thm]{Conjecture}
\newtheorem{cor}[thm]{Corollary}
\newtheorem{que}[thm]{Question}
\newtheorem{rem}[thm]{Remark}
\newtheorem{defi}[thm]{Definition}

\newcommand{\6}{^}
\newcommand{\Maxn}{\operatorname{Max_{\textbf{N}}}}
\newcommand{\Syl}{\operatorname{Syl}}
\newcommand{\Lin}{\operatorname{Lin}}
\newcommand{\U}{\mathbf{U}}
\newcommand{\R}{\mathbf{R}}
\newcommand{\dl}{\operatorname{dl}}
\newcommand{\Con}{\operatorname{Con}}
\newcommand{\cl}{\operatorname{cl}}
\newcommand{\Stab}{\operatorname{Stab}}
\newcommand{\Aut}{\operatorname{Aut}}
\newcommand{\Ker}{\operatorname{Ker}}
\newcommand{\InnDiag}{\operatorname{InnDiag}}
\newcommand{\fl}{\operatorname{fl}}
\newcommand{\Irr}{\operatorname{Irr}}
\newcommand{\FF}{\mathbb{F}}
\newcommand{\EE}{\mathbb{E}}
\newcommand{\normal}{\trianglelefteq}
\newcommand{\sn}{\normal\normal}
\newcommand{\Bl}{\mathrm{Bl}}
\newcommand{\NN}{\mathbb{N}}
\newcommand{\N}{\mathbf{N}}
\newcommand{\bfC}{\mathbf{C}}
\newcommand{\bfO}{\mathbf{O}}
\newcommand{\bfF}{\mathbf{F}}
\def\GGG{{\mathcal G}}
\def\HHH{{\mathcal H}}
\def\HH{{\mathcal H}}
\def\irra#1#2{{\rm Irr}_{#1}(#2)}

\newcommand{\bG}{{\mathbf{G}}}
\newcommand{\bS}{{\mathbf{S}}}
\newcommand{\bH}{{\mathbf{H}}}
\newcommand{\bT}{{\mathbf{T}}}
\newcommand{\bL}{{\mathbf{L}}}
\newcommand{\type}{\operatorname}
\newcommand{\POmega}{{\mathrm {P\Omega}}}

\newcommand{\GO}{{\mathrm {GO}}}
\newcommand{\SO}{{\mathrm {SO}}}
\newcommand{\PCO}{{\mathrm {PCO}}}

\newcommand{\wt}{\widetilde}

\renewcommand{\labelenumi}{\upshape (\roman{enumi})}

\newcommand{\PSL}{\operatorname{PSL}}
\newcommand{\PSU}{\operatorname{PSU}}
\newcommand{\alt}{\operatorname{Alt}}

\providecommand{\V}{\mathrm{V}}
\providecommand{\E}{\mathrm{E}}
\providecommand{\ir}{\mathrm{Irm_{rv}}}
\providecommand{\Irrr}{\mathrm{Irm_{rv}}}
\providecommand{\re}{\mathrm{Re}}

\numberwithin{equation}{section}
\def\irrp#1{{\rm Irr}_{p'}(#1)}

\def\ibrrp#1{{\rm IBr}_{\Bbb R, p'}(#1)}
\def\C{{\mathbb C}}
\def\Q{{\mathbb Q}}
\def\irr#1{{\rm Irr}(#1)}
\def\irrp#1{{\rm Irr}_{p^\prime}(#1)}
\def\irrq#1{{\rm Irr}_{q^\prime}(#1)}
\def \c#1{{\cal #1}}
\def \aut#1{{\rm Aut}(#1)}
\def\cent#1#2{{\bf C}_{#1}(#2)}
\def\norm#1#2{{\bf N}_{#1}(#2)}
\def\zent#1{{\bf Z}(#1)}
\def\syl#1#2{{\rm Syl}_#1(#2)}
\def\normal{\triangleleft\,}
\def\oh#1#2{{\bf O}_{#1}(#2)}
\def\Oh#1#2{{\bf O}^{#1}(#2)}
\def\det#1{{\rm det}(#1)}
\def\gal#1{{\rm Gal}(#1)}
\def\ker#1{{\rm ker}(#1)}
\def\normalm#1#2{{\bf N}_{#1}(#2)}
\def\alt#1{{\rm Alt}(#1)}
\def\iitem#1{\goodbreak\par\noindent{\bf #1}}
   \def \mod#1{\, {\rm mod} \, #1 \, }
\def\sbs{\subseteq}

\def\gc{{\bf GC}}
\def\ngc{{non-{\bf GC}}}
\def\ngcs{{non-{\bf GC}$^*$}}
\newcommand{\notd}{{\!\not{|}}}

\newcommand{\Z}{\mathbf{Z}}
\newcommand{\Out}{{\mathrm {Out}}}
\newcommand{\Mult}{{\mathrm {Mult}}}
\newcommand{\Inn}{{\mathrm {Inn}}}
\newcommand{\IBR}{{\mathrm {IBr}}}
\newcommand{\IBRL}{{\mathrm {IBr}}_{\ell}}
\newcommand{\IBRP}{{\mathrm {IBr}}_{p}}
\newcommand{\cd}{\mathrm{cd}}
\newcommand{\ord}{{\mathrm {ord}}}
\def\id{\mathop{\mathrm{ id}}\nolimits}
\renewcommand{\Im}{{\mathrm {Im}}}
\newcommand{\Ind}{{\mathrm {Ind}}}
\newcommand{\diag}{{\mathrm {diag}}}
\newcommand{\soc}{{\mathrm {soc}}}
\newcommand{\End}{{\mathrm {End}}}
\newcommand{\sol}{{\mathrm {sol}}}
\newcommand{\Hom}{{\mathrm {Hom}}}
\newcommand{\Mor}{{\mathrm {Mor}}}
\newcommand{\Mat}{{\mathrm {Mat}}}
\def\rank{\mathop{\mathrm{ rank}}\nolimits}
\newcommand{\Tr}{{\mathrm {Tr}}}
\newcommand{\tr}{{\mathrm {tr}}}
\newcommand{\Gal}{{\rm Gal}}
\newcommand{\Spec}{{\mathrm {Spec}}}
\newcommand{\ad}{{\mathrm {ad}}}
\newcommand{\Sym}{{\mathrm {Sym}}}
\newcommand{\Char}{{\mathrm {Char}}}
\newcommand{\pr}{{\mathrm {pr}}}
\newcommand{\rad}{{\mathrm {rad}}}
\newcommand{\abel}{{\mathrm {abel}}}
\newcommand{\PGL}{{\mathrm {PGL}}}
\newcommand{\PCSp}{{\mathrm {PCSp}}}
\newcommand{\PGU}{{\mathrm {PGU}}}
\newcommand{\codim}{{\mathrm {codim}}}
\newcommand{\ind}{{\mathrm {ind}}}
\newcommand{\Res}{{\mathrm {Res}}}
\newcommand{\Lie}{{\mathrm {Lie}}}
\newcommand{\Ext}{{\mathrm {Ext}}}
\newcommand{\Alt}{{\mathrm {Alt}}}
\newcommand{\AAA}{{\sf A}}
\newcommand{\SSS}{{\sf S}}
\newcommand{\DDD}{{\sf D}}
\newcommand{\QQQ}{{\sf Q}}
\newcommand{\CCC}{{\sf C}}
\newcommand{\SL}{{\mathrm {SL}}}
\newcommand{\Sp}{{\mathrm {Sp}}}
\newcommand{\PSp}{{\mathrm {PSp}}}
\newcommand{\SU}{{\mathrm {SU}}}
\newcommand{\GL}{{\mathrm {GL}}}
\newcommand{\GU}{{\mathrm {GU}}}
\newcommand{\Spin}{{\mathrm {Spin}}}
\newcommand{\CC}{{\mathbb C}}
\newcommand{\CB}{{\mathbf C}}
\newcommand{\RR}{{\mathbb R}}
\newcommand{\QQ}{{\mathbb Q}}
\newcommand{\ZZ}{{\mathbb Z}}
\newcommand{\bfN}{{\mathbf N}}
\newcommand{\bfZ}{{\mathbf Z}}
\newcommand{\PP}{{\mathbb P}}
\newcommand{\cG}{{\mathcal G}}
\newcommand{\OO}{\mathcal O}
\newcommand{\cH}{{\mathcal H}}
\newcommand{\cQ}{{\mathcal Q}}
\newcommand{\GA}{{\mathfrak G}}
\newcommand{\cT}{{\mathcal T}}
\newcommand{\cL}{{\mathcal L}}
\newcommand{\IBr}{\mathrm{IBr}}
\newcommand{\cS}{{\mathcal S}}
\newcommand{\cR}{{\mathcal R}}
\newcommand{\GCD}{\GC^{*}}
\newcommand{\TCD}{\TC^{*}}
\newcommand{\FD}{F^{*}}
\newcommand{\GD}{G^{*}}
\newcommand{\HD}{H^{*}}
\newcommand{\GCF}{\GC^{F}}
\newcommand{\TCF}{\TC^{F}}
\newcommand{\PCF}{\PC^{F}}
\newcommand{\GCDF}{(\GC^{*})^{F^{*}}}
\newcommand{\RGTT}{R^{\GC}_{\TC}(\theta)}
\newcommand{\RGTA}{R^{\GC}_{\TC}(1)}
\newcommand{\Om}{\Omega}
\newcommand{\eps}{\epsilon}
\newcommand{\varep}{\varepsilon}
\newcommand{\al}{\alpha}
\newcommand{\chis}{\chi_{s}}
\newcommand{\sigmad}{\sigma^{*}}
\newcommand{\PA}{\boldsymbol{\alpha}}
\newcommand{\gam}{\gamma}
\newcommand{\lam}{\lambda}
\newcommand{\la}{\langle}
\newcommand{\genf}{F^*}
\newcommand{\ra}{\rangle}
\newcommand{\hs}{\hat{s}}
\newcommand{\htt}{\hat{t}}
\newcommand{\tG}{\hat G}
\newcommand{\St}{\mathsf {St}}
\newcommand{\bfs}{\boldsymbol{s}}
\newcommand{\bfl}{\boldsymbol{\lambda}}
\newcommand{\tn}{\hspace{0.5mm}^{t}\hspace*{-0.2mm}}
\newcommand{\ta}{\hspace{0.5mm}^{2}\hspace*{-0.2mm}}
\newcommand{\tb}{\hspace{0.5mm}^{3}\hspace*{-0.2mm}}
\def\skipa{\vspace{-1.5mm} & \vspace{-1.5mm} & \vspace{-1.5mm}\\}
\newcommand{\tw}[1]{{}^#1\!}
\newcommand{\Irrg}[1]{\Irr_{p',\sigma}(#1)}
\renewcommand{\mod}{\bmod \,}

\maketitle
\begin{abstract}
We characterize when a finite group $G$ possesses a Sylow $3$-subgroup $P$ with abelianization  of order $9$ in terms of the number of height zero characters lying in the principal 
$3$-block of $G$, settling a conjecture put forward by Navarro, Sambale, and Tiep in 2018.  Along the way, we show that a recent result by Laradji on the number of character of height zero in a block that lie above a given character of some normal subgroup holds, without any hypothesis on the group for blocks of maximal defect.
\end{abstract}

\section{Introduction}

\noindent A key role in the theory of finite groups is played  by dihedral, semidihedral and generalized quaternion $2$-groups. These can be characterized, for example, as the non-abelian $2$-groups $P$ whose commutator subgroup $P'$ has index $4$ in $P$, by work of O. Taussky \cite{Tau37}. Since the beginning of the $20$th Century, the representation theory of finite groups having a Sylow $2$-subgroup $P$ with $[P:P']=4$ (and more generally of blocks having a defect group $P$ with $[P:P']=4$) has been largely studied. Partly relying on this extensive study, G. Navarro, B. Sambale, and P. H. Tiep showed in \cite[Thm.~A]{NST18} that a finite group $G$ possesses a Sylow $2$-subgroup $P$ whose commutator subgroup has index $4$ if, and only if, the principal $2$-block of $G$ contains exactly $4$ irreducible characters of odd degree. 

Contrasting this situation, the representation theory of groups having a Sylow $p$-subgroup $P$ with $[P:P']=p^2$ for an odd prime $p$ is far from being understood.
 It is perhaps not so surprising, given the fact that the structure of these $p$-groups for a general prime can be exceedingly complicated.  For instance, write $f_p(n)$ to denote the number of isomorphism classes of groups $P$ of order $p^n$ with $[P:P']=p^2$. For $p=2$, as mentioned above, we have that $f_2(n)=3$ if $n\geq 4$, but already for $p=3$ the function $f_3(n)$ tends to infinity with $n$ (see for instance \cite{Neb89}). 
 In fact, proving that a finite group has a Sylow $3$-subgroup $P$ with $[P:P']=9$ if, and only if, its 
principal $3$-block contains exactly $6$ or $9$ irreducible characters of degree coprime to $3$ has remained a challenge since this statement was put forward in \cite[Conj.~B]{NST18}. The main result of this article is to prove the above mentioned conjecture.

\begin{thmA} Let $G$ be a finite group and $P\in \syl 3 G$. 
Then $[P:P']=9$ if, and only if, there are exactly $6$ or $9$ irreducible characters of degree coprime to $3$ lying in the principal $3$-block of $G$.
\end{thmA}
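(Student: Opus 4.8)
The plan is to reduce Theorem~A to a statement about the principal block of a group that is close to simple, and then use the Classification of Finite Simple Groups together with the ``B-Alperin–McKay''-type machinery. First I would set up the easy direction and the quantitative framework: by the McKay–Navarro philosophy, $|\Irr_{3'}(B_0(G))|$ should be read off from the Sylow normalizer, and a counting argument on $P/P'$ together with the fact that $B_0(P/P')=\Irr(P/P')$ consists entirely of linear characters shows that $[P:P']=9$ forces a principal-block count of exactly $9$ \emph{when $P$ is normal}, and then one has to see how this count can drop to $6$ (and only to $6$) under fusion. So the combinatorial heart is: among the nine linear characters of a group $Q$ with $[Q:Q']=9$, how can an automorphism group act, and which orbit structures are compatible with $Q$ being realized as a Sylow $3$-subgroup of a group whose principal block we control? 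The orbit sizes on the $8$ nontrivial linear characters must be compatible with $3'$-automorphisms, giving possible counts $1+8=9$, $1+4+4$, $1+2+2+\cdots$, etc., and one must show only the totals $9$ and $6$ survive once the Alperin weight / Dade-type constraints from the principal block are imposed.

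Second, I would prove the converse (the substantive direction): if $[P:P']\neq 9$ then the principal $3$-block count is neither $6$ nor $9$. The natural route is a minimal-counterexample argument. Let $G$ be a counterexample of minimal order. Using Laradji's theorem in the strengthened form quoted in the abstract — the number of height-zero characters in a block lying over a fixed character of a normal subgroup behaves well \emph{for blocks of maximal defect with no hypothesis on the group} — one reduces modulo $\bfO_{3'}(G)$ and modulo central $3$-subgroups, and then uses the theory of the principal block under quotients by normal subgroups to push the problem either into a proper section (contradiction with minimality) or into the situation where $G$ has a unique minimal normal subgroup. Standard arguments (Clifford theory for the principal block, the fact that $B_0(G)$ lies over $B_0(N)$ for $N\normal G$) then force $G$ to be, up to the relevant reductions, of the shape $S \leq G \leq \Aut(S)$ with $S$ a non-abelian simple group with $3\mid|S|$, or an extension thereof by a $3$-group acting faithfully.

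Third — and this is where the main difficulty lies — one must verify the statement for these almost-simple and related configurations, i.e. establish that for $S$ simple with a Sylow $3$-subgroup $P_S$ one has $|\Irr_{3'}(B_0(X))|\in\{6,9\}$ if and only if $[P_X:P_X']=9$, for all $X$ with $S\le X\le\Aut(S)$, and control the $3$-local behavior when one further extends by outer $3$-elements. This forces a case analysis over the CFSL: alternating groups (where Sylow $3$-subgroups are iterated wreath products and $[P:P']=9$ pins down $n\in\{3,4,5,6,7,8\}$ up to the relevant reductions), sporadic groups (finite check, presumably via \textsf{GAP}'s character table library), and groups of Lie type — split into the defining-characteristic case (where $P_S$ is a Sylow $3$-subgroup of a Lie type group in characteristic $3$, and $[P:P']=9$ is extremely restrictive, essentially forcing rank $1$ or $2$ over small fields) and the non-defining case (where one uses the theory of $d$-Harish-Chandra series, generic Sylow theory, and known descriptions of principal-block $3'$-characters). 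The expected obstacle, exactly as in \cite{NST18}, is the non-defining characteristic Lie type analysis: counting $\Irr_{3'}(B_0)$ there requires detailed information about unipotent characters, $e$-cuspidal pairs, and the action of diagonal and field automorphisms on the relevant Deligne–Lusztig parametrization, and one must also pin down $[P:P']$ for the Sylow $3$-subgroups via Weyl-group and torus combinatorics. The uniqueness of the value set $\{6,9\}$ — in particular excluding counts like $7$ or $8$ that are a priori allowed by the pure orbit-counting in step one — is precisely what makes the simple-group input indispensable.
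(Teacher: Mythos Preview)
Your outline has the right overall shape (reduce, then handle almost simple groups via CFSG), but there are two genuine gaps.

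\textbf{The reduction does not terminate at almost simple groups.} You assert that minimality forces $G$ to be ``of the shape $S\le G\le\Aut(S)$ \ldots\ or an extension thereof by a $3$-group acting faithfully''. In the paper this is \emph{not} what happens: when the unique minimal normal subgroup is $N=S_1\times\cdots\times S_t$ with $t\ge 2$, one cannot pass to a smaller group, and the argument must be carried out inside $G$ directly. The paper does this by proving, for every non-abelian simple $S$ with $3\mid|S|$, an auxiliary package of statements (Theorem~3.1): (a) existence of enough $\Syl_3(\Aut(S))$-invariant height-zero characters in $B_0(S)$, with a controlled extension to $B_0$ of every intermediate group; (b) at least three distinct $3'$-degrees in $B_0(S)$; (c) at least four $\Aut(S)$-orbits in $\Irr_{3'}(B_0(S))$ when $9\mid|S|$. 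These are then fed into Theorem~B (the relative Landrock/Laradji result: $3\mid k_0(B_0(G)\mid\theta)$ for each $P$-invariant $\theta$) to manufacture at least $12$ height-zero characters in $B_0(G)$ and contradict $k_0\le 9$. Your proposal contains none of this; without it the induction does not close.

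\textbf{The ``easy'' direction is not an orbit count.} You propose to read $|\Irr_{3'}(B_0(G))|$ off the normaliser via McKay and analyse $\Aut$-orbits on the eight nontrivial linear characters of $P/P'$. Even granting McKay for $p=3$, this only tells you $k_0(B_0)\equiv 0\pmod 3$ via Landrock and that $k_0(B_0)>3$ via \cite{NST18}; it does not by itself give $k_0(B_0)\le 9$. In the paper the implication $[P:P']=9\Rightarrow k_0(B_0)\in\{6,9\}$ is obtained by the same inductive scheme: the abelian-$P$ case uses Dade/Sambale, the normal-$P$ case uses Fong, an $O_3(G)$-reduction uses An--Eaton's verification of Alperin--McKay for blocks with $P\cap P^g=O_3(G)$, and the residual non-almost-simple case invokes \cite{NS23}, Theorems~3.2--3.3, which already need CFSG. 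So this direction is not substantially easier than the converse, and your orbit-structure argument would not suffice. (Incidentally, for $\Alt_n$ the condition $[P:P']=9$ gives $6\le n\le 11$, not $3\le n\le 8$.)
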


For a group $G$, we write $k_0(B_0(G))$ to denote the number of irreducible characters of degree coprime to $3$ lying in the principal $3$-block $B_0(G)$ of $G$ (as these are exactly those of $3$-height zero in $B_0(G)$). Theorem A above, together with \cite[Thm.~A]{NST18}, imply that the conditions $[P:P']\leq p^2$ and $k_0(B_0(G))\leq p^2$ are equivalent for any finite group $G$ and $p\in \{ 2, 3\}$. This statement does not hold for arbitrary primes as the group $H=\verb|SmallGroup(1000,163)|$ satisfies $k(H)=k_0(B_0(H))=25$ with respect to the prime $p=5$ and its Sylow $5$-subgroups are abelian of order $125$. We care to mention that, for a general prime $p$, it has been proven very recently in \cite{NS23} that the local condition $[P:P']=p^2$ can be detected in the character table of the group $G$. However, it is still not known whether there exists a simple
algorithm to do that for primes $p\geq 5$.

We also mention that Theorem A is an essential step toward proving the $p=3$ analogue of the main result of \cite{NRSV21}, which would give a way to determine from the character table whether a Sylow $3$-subgroup is $2$-generated.

\smallskip

Our proof of Theorem A eventually reduces to showing that the statement of Theorem A holds for almost simple groups $A$ with $[A: {\rm Soc}(A)]_3\leq 3$, where $n_3$ is the largest power of $3$ dividing an integer $n$ and ${\rm Soc}(A)$ is the socle of $A$.  On the way to proving Theorem A for these almost simple groups, we classify when $[P:P']=9$ for those groups and prove some additional statements that could be of independent interest.  
In order to perform the reduction, we have also proven results of independent interest regarding the behavior of characters in principal blocks with respect to normal subgroups and the character theory of simple non-abelian groups with respect to the action of group automorphisms on them. The former appear in Section \ref{Sec:principalblocks} and the latter in Section \ref{Sec:almostsimple} of this work. 
Among the results briefly mentioned above, we would like to highlight the following generalization of the main theorem of \cite{Lar23}.

\begin{thmB} Let  $p$ be a prime. Let $G$ be a finite group, $P\in \syl p G$ and $B$ a $p$-block of $G$ with defect group $P$.
Let $N$ be a normal subgroup of $G$ such that $PN/N>1$. If $\theta \in {\rm Irr}(N)$, then  the number $k_0(B|\theta)$ of irreducible characters of degree coprime to $p$ in $B$ lying above $\theta$ is either 0 or $k_0(B|\theta)\geq 2$. If we further assume that $p\leq 3$,
then $k_0(B|\theta)$ is divisible by $p$.
\end{thmB}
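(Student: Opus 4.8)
The plan is to argue by induction on $[G:N]$, using the Clifford theory of blocks as the main tool. Write $\Irr_0(B|\psi)$ for the set of characters of $p'$-degree in $B$ lying over $\psi$, so that $k_0(B|\psi)=|\Irr_0(B|\psi)|$; we may assume $\theta$ lies in a block $b$ of $N$ covered by $B$, since otherwise $k_0(B|\theta)=0$. Two normalizations come first. By Fong--Reynolds, the inertia group $T=I_G(\theta)$ carries a block $B'$ with defect group $P$, so $[G:T]$ is prime to $p$; since induction $\Irr(B'|\theta)\to\Irr(B|\theta)$ multiplies degrees by $[G:T]$ it preserves $p'$-degrees, and $PN/N>1$ persists in $T$, with $[T:N]<[G:N]$ unless $\theta$ is already $G$-invariant. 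So we may assume $\theta$ is $G$-invariant, and then, using the block-wise version of the theory of character triples (as in the reduction of \cite{Lar23}), we may replace $(G,N,\theta,B)$ by an isomorphic configuration --- denoted the same way and not increasing $[G:N]$ --- in which $N$ is a cyclic central subgroup of $G$, now called $Z$, and $\theta=\lambda$ is a faithful linear character of $Z$, with $B$ still of maximal defect (defect group $P\in\syl p G$, $P\not\le Z$) and $k_0(B|\lambda)$ equal to the original count. Thus it suffices to prove: if $Z\le\bfZ(G)$ is cyclic, $\lambda\in\Irr(Z)$ is faithful, and $B$ is a $p$-block of $G$ with defect group $P\in\syl p G$ and $P\not\le Z$, then $k_0(B|\lambda)\ne1$ and $p\mid k_0(B|\lambda)$ whenever $p\le3$.

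For the inductive step, choose a chief factor $M/Z$ of $G/Z$ and let $Z_p$ be the Sylow $p$-subgroup of $Z$. If $M/Z$ is a $p'$-group, then $P\cap M=Z_p$ is central in $M$, so the covered block $b_M$ of $M$ has central defect group and is nilpotent; since central $p'$-subgroups are block-invariant and the characters of a nilpotent block with central defect group correspond by restriction to those of the defect group, there is (invoking Fong--Reynolds for $b_M$ when $b_M$ is not $G$-invariant) a character $\theta_M\in\Irr(b_M)$ over $\lambda$ with $\Irr_0(B|\lambda)=\Irr_0(B|\theta_M)$, to which the inductive hypothesis applies, its relevant index being $<[G:Z]$ and the hypotheses being preserved. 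Hence we may assume $\oh{p'}{G/Z}=1$, so every chief factor of $G/Z$ has order divisible by $p$. If some chief factor $M/Z$ satisfies $P\not\le M$, then $P\cap M\in\syl p M$, $b_M$ has maximal defect in $M$, and Clifford theory yields a disjoint decomposition
\[
\Irr_0(B|\lambda)=\bigsqcup_{[\psi]}\Irr_0(B_\psi|\psi),
\]
where $[\psi]$ runs over $G$-orbit representatives of $\psi\in\Irr_0(b_M|\lambda)$ with $p\nmid[G:G_\psi]$ and $B_\psi$ is the Fong--Reynolds correspondent of $B$ in $G_\psi$; each $B_\psi$ has defect group $P$ and $PM/M>1$ in $G_\psi$, so by induction each summand is $0$ or $\ge2$, and divisible by $p$ when $p\le3$; summing, $k_0(B|\lambda)$ inherits both properties.

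It remains to handle the configurations with $\oh{p'}{G/Z}=1$ in which $P\le M$ for every chief factor $M/Z$. Since distinct minimal normal subgroups of $G/Z$ intersect trivially while all of these contain the nontrivial image of $P$, there is a unique minimal normal subgroup $M/Z=\soc(G/Z)$, with $P\le M$ and $G/M$ a $p'$-group. Either (i) $M/Z$ is an elementary abelian $p$-group, forcing $M=PZ$ and $P\trianglelefteq G$, so that $B$ is a block with normal defect group; or (ii) $M/Z\cong S^k$ with $S$ nonabelian simple (necessarily $p\mid|S|$, since $\oh{p'}{G/Z}=1$), $P\le M$, and $G/M$ a $p'$-group. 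In case (ii), iterating the reductions above --- along the $k$ simple factors and through further character-triple steps --- brings us to the situation in which $M$ is quasisimple with $\bfZ(M)=Z$, $\lambda$ is faithful on $\bfZ(M)$, $B$ is a maximal-defect block of $G$ over $\lambda$, and $G/M$ is a $p'$-group; here $PM/M=1$, so the inductive hypothesis is \emph{not} available, and one must invoke the classification of finite simple groups to prove directly that for any such quasisimple $M$ (with $p\mid|M/\bfZ(M)|$) and $p'$-overgroup $G$, the count $k_0(B|\lambda)$ is $\ne1$ and, for $p\le3$, divisible by $p$. This case-by-case quasisimple analysis --- carried out with known data on character degrees and block partitions and dovetailing with the almost-simple results of Section~\ref{Sec:almostsimple} --- is the main obstacle. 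Case (i), the block with normal defect group, is the secondary difficulty: Clifford theory relative to $P$ expresses $k_0(B|\lambda)$ as a sum, indexed by $G$-orbits of linear characters of $P$ extending $\lambda|_{Z_p}$, of numbers of irreducible Brauer characters of Fong--Reynolds correspondents, from which the non-triviality and the divisibility must be extracted using the structure theory of such blocks. Note that the inductive machinery never uses $p\le3$ for the assertion $k_0(B|\theta)\ne1$; that hypothesis enters only for the divisibility, namely in the summation step and in the two base cases.
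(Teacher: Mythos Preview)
Your approach has genuine gaps, and it is also very far from the paper's argument.

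First, the block-wise character-triple reduction you invoke ``as in the reduction of \cite{Lar23}'' is precisely where Laradji uses $p$-solvability: Fong's theory lets one replace $(G,N,\theta)$ by an isomorphic triple with $N$ central \emph{while keeping track of the block $B$ and its defect group}. Without $p$-solvability there is no such Fong reduction, and ordinary character-triple isomorphisms do not carry block information. So the passage from a $G$-invariant $\theta$ to a central cyclic $Z$ with $B$ still of maximal defect is not justified. Second, in your terminal case (ii) you do not prove anything: you declare that a CFSG-based case analysis of quasisimple $M$ with $p'$-overgroup $G$ ``is the main obstacle'' and leave it there. That is not a proof, and nothing in Section~\ref{Sec:almostsimple} supplies it --- those results concern $p=3$, specific almost-simple groups, and principal blocks, not arbitrary maximal-defect blocks over a faithful central character.

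The paper's proof is short, direct, and classification-free. After the same Fong--Reynolds step to make $\theta$ $G$-invariant, one considers the virtual character $\psi=\sum_{\chi\in\Irr(G|\theta)}\chi(1)\chi$; Kn\"orr's theorem \cite[Thm.~5.21]{Navarro18} gives $\psi(x)=0$ off $N$ and $\psi(x)=|G/N|\,\theta(1)\theta(x)$ on $N$, so $\psi^\ast\equiv 0$ (here is where $PN/N>1$ is used). A decomposition-number computation, together with the $\mathbb F$-linear independence of $\{\varphi^\ast\}$, shows that the block component $\psi_C$ also satisfies $(\psi_C)^\ast=0$ on $p$-regular elements for every block $C$; evaluating at $1$ gives $p\mid\sum_{\chi\in\Irr(C|\theta)}\chi(1)^2$, hence $p\mid\sum_{\chi\in\Irr_0(C|\theta)}\chi(1)^2$. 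This forces $k_0(B|\theta)\ne 1$ in general, and for $p\le 3$ (where every $p'$-square is $\equiv 1\bmod p$) it gives $p\mid k_0(B|\theta)$. No induction, no character triples, no CFSG.
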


The structure of the paper is as follows. In Section 2, we state some results (both old and new) on principal blocks  (and more generally, blocks of maximal defect). In Section 3, we state our results on almost simple groups, which will be proven in Sections 4 and 5. We address the alternating groups in Section 4 and the groups of Lie type in Section 5. Finally, we complete the proof of Theorem A in Section 6.

\section{General statements on principal blocks}\label{Sec:principalblocks}

\subsection{Known results on principal blocks}
For a finite group $G$ and a prime $p$, we denote by $B_0(G)$ the principal $p$-block of $G$. 
Recall that 
$${1}_G \in \irr{B_0(G)}=\{ \chi \in \irr G \ | \ \sum_{{x \in G}\atop{(p, o(x))=1}}\chi(x)\neq 0 \}\, .$$
In particular, if $G=H_1\times\dots\times H_t$, then 
$$\Irr(B_0(G))=\{\theta_1\times\dots\times\theta_t\mid\theta_i\in\Irr(B_0(H_i))\}.$$
Given a normal subgroup $N$ of $G$, we can view the irreducible characters of $G/N$ as irreducible characters of $G$ containing $N$ in their kernel, and under this identification
$$\irr {B_0(G/N)}\sbs \irr{B_0(G)},$$
with equality in the case where the order of $N$ is coprime to $p$ by \cite[Thm.~9.9(c)]{Navarro98}.

For any $\theta\in\irr{B_0(N)}$, there is some $\chi\in\irr{B_0(G})$ lying over $\theta$  (that is, with $[\chi, \theta^G]\neq0$), and for any  $\chi\in\irr{B_0(G)}$, every constituent of $\chi_N$ lies in $B_0(N)$ by \cite[Thm.~9.2, Cor.~9.3, and Thm.~9.4]{Navarro98}. If moreover $B_0(G)$ is the only block covering $B_0(N)$, then for any  $\theta\in\irr{B_0(N)}$ every constituent of $\theta^G$ lies in $B_0(G)$. This happens, for instance, in the following case.
\medskip

\begin{lem}\label{lemma:onlyblockabove} Let $G$ be a finite group and let $p$ be a prime. Suppose that $M$ is a normal subgroup of $G$ containing $\cent G Q$, where $Q \in \syl p M$. Then the principal $p$-block $B_0(G)$ of $G$ is the only block covering the principal $p$-block $B_0(M)$ of $M$.
\end{lem}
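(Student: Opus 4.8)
The goal is to show that, under the hypothesis $\mathbf{C}_G(Q) \le M \trianglelefteq G$ with $Q \in \mathrm{Syl}_p(M)$, the principal block $B_0(G)$ is the unique block of $G$ covering $B_0(M)$. The natural tool here is Brauer's correspondence together with the characterization of block covering in terms of defect groups and Brauer correspondents. I would first recall that a block $B$ of $G$ covers a block $b$ of $M$ precisely when $b$ is a direct summand of the restriction of a primitive idempotent of $B$, equivalently (since $M \trianglelefteq G$) when $B$ and $b$ lie in the same $G$-orbit relationship; more usefully, I would use the characterization via central characters and the fact that the blocks of $G$ covering a fixed $G$-stable block $b$ of $M$ are in bijection with the blocks of a certain twisted group algebra of $G/M$.

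**Key steps.**

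First, observe that $B_0(M)$ is $G$-stable: conjugation by $G$ permutes the blocks of $M$ and fixes the principal one, since $1_M$ is conjugation-invariant and the principal block is the one containing $1_M$. Next, I would bring in the Sylow subgroup $Q$ of $M$: since $B_0(M)$ has defect group $Q$ (the principal block of $M$ has full defect $Q \in \mathrm{Syl}_p(M)$), I can apply the theory of covering blocks relative to a defect group. The crucial input is a theorem (essentially due to Knörr, or in the form found in Navarro's book, Chapter 9) stating that the number of blocks of $G$ covering a $G$-stable block $b$ of $M$ with defect group $D$ is controlled by $\mathbf{N}_G(D, b)/M\mathbf{C}_G(D)$ — more precisely, if $b$ is $G$-invariant and $D$ is a defect group of $b$, there is a unique block of $G$ covering $b$ whenever $G = M\mathbf{C}_G(D) \cdot \text{(something)}$; in the cleanest case, if $\mathbf{C}_G(D) \le M$ then... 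Here is where the hypothesis $\mathbf{C}_G(Q) \le M$ enters decisively. I would take $D = Q$ as a defect group of $b = B_0(M)$, and the hypothesis gives $\mathbf{C}_G(Q) \le M$, so $M\mathbf{C}_G(Q) = M$. Then by the theory of the "Fong–Reynolds" / covering-block reduction, or directly by Brauer's first main theorem applied in $\mathbf{N}_G(Q)$, the blocks of $G$ covering $b$ correspond to blocks of $\mathbf{N}_G(Q)$ covering the Brauer correspondent $b'$ of $b$ in $\mathbf{N}_M(Q)$; but $\mathbf{N}_M(Q)\mathbf{C}_G(Q) = \mathbf{N}_M(Q)$ since $\mathbf{C}_G(Q) \le M$ and $\mathbf{C}_G(Q) \le \mathbf{N}_G(Q)$, forcing $\mathbf{C}_G(Q) \le \mathbf{N}_M(Q)$. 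Finally, I'd identify the Brauer correspondent of $B_0(M)$ in $\mathbf{N}_M(Q)$ as the principal block $B_0(\mathbf{N}_M(Q))$ (Brauer's third main theorem), and the unique block of $\mathbf{N}_G(Q)$ covering it as $B_0(\mathbf{N}_G(Q))$ — because $\mathbf{N}_G(Q)/\mathbf{N}_M(Q)$ acts with the relevant centralizer inside $\mathbf{N}_M(Q)$, so there is a single covering block, and it must be the principal one since $1$ covers $1$. Tracing back through Brauer correspondence, the unique block of $G$ covering $B_0(M)$ is $B_0(G)$.

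**Alternative, more elementary route.**

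If the defect-group bookkeeping above feels heavy, there is a slicker argument: a block $B$ of $G$ covering $B_0(M)$ satisfies $\mathbf{O}_p(M) \cdot \ker(B_0(M)|_{\text{restricted to }p\text{-part}})\dots$ — more concretely, any block covering $B_0(M)$ has a defect group $D$ with $D \cap M \in \mathrm{Syl}_p(M)$, so $D \cap M$ is $G$-conjugate to $Q$, say $D \cap M = Q$ after adjusting. Then $\mathbf{C}_G(D) \le \mathbf{C}_G(D \cap M) = \mathbf{C}_G(Q) \le M$, so $\mathbf{C}_G(D) \le M \cap D\mathbf{C}_G(D)$-type containment forces $D\le M$, whence $D = Q \in \mathrm{Syl}_p(G)$ as well (using $PN/N>1$-style considerations are not needed here, but $Q \in \mathrm{Syl}_p(G)$ follows since $|G:M|$ is... actually not necessarily coprime to $p$). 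Hmm — this needs care. The honest statement is: $\mathbf{C}_G(D) \le M$ and $D$ is a defect group, so by a standard lemma any block of $G$ with a defect group $D$ satisfying $\mathbf{C}_G(D) \le M$ and $D \le M$ has a unique Brauer correspondent behavior, and combined with $B$ covering the principal block, Brauer's extended first main theorem pins down $B = B_0(G)$.

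**Main obstacle.**

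The technical heart, and the step I expect to require the most care, is the reduction "blocks of $G$ covering the $G$-invariant block $b$ of $M$ $\leftrightarrow$ blocks of $\mathbf{N}_G(Q)$ covering $b'$" and verifying that the relevant quotient $\mathbf{N}_G(Q)\mathbf{C}_G(Q)/\dots$ collapses so that the covering block is unique. One must be careful that $Q$ is genuinely a defect group of $b = B_0(M)$ (true, as $b$ is principal) and that the Brauer correspondent exists in the right subgroup; the hypothesis "$M \supseteq \mathbf{C}_G(Q)$" is exactly what makes $M\mathbf{C}_G(Q)=M$, killing the obstruction to uniqueness. Once that is in place, Brauer's first and third main theorems finish the identification with principal blocks on both sides, and the argument closes.
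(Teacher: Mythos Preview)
Your ``alternative route'' is actually the beginning of the paper's argument, and you get the crucial containment: if $B$ covers $B_0(M)$ then $B$ has a defect group $D$ with $Q\leq D$ (this is \cite[Thm.~9.26]{Navarro98}), whence $\cent G D\leq \cent G Q\leq M$. But from there you go astray: the claim that $\cent G D\leq M$ forces $D\leq M$ is false (take $D\in\Syl_p(G)$ strictly containing $Q$; nothing prevents this), and $Q$ need not lie in $\Syl_p(G)$. The paper's proof does not try to pin down $D$ at all. Instead, $\cent G D\leq M$ is exactly the definition of $B$ being \emph{regular} with respect to $M$ (\cite[Lem.~9.20]{Navarro98}), and for a regular block covering $b$ one has $b^G$ defined and equal to $B$ (\cite[Thm.~9.19]{Navarro98}); then $B_0(M)^G=B_0(G)$ by Brauer's Third Main Theorem. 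That is the entire argument---four lines.

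Your primary approach via Harris--Kn\"orr (passing to $\mathbf N_G(Q)$) is not wrong in spirit, but it is a detour: after reducing to $\mathbf N_G(Q)$ you still owe a proof that there is a \emph{unique} block of $\mathbf N_G(Q)$ covering $B_0(\mathbf N_M(Q))$, and your sentence ``because $\mathbf N_G(Q)/\mathbf N_M(Q)$ acts with the relevant centralizer inside $\mathbf N_M(Q)$'' is the same hypothesis one level down, not a proof. Closing that step requires precisely the regularity/block-induction argument above (now applied in $\mathbf N_G(Q)$), so nothing is gained by the reduction. The missing ingredient throughout is the pair \cite[Lem.~9.20, Thm.~9.19]{Navarro98}; once you have those, the lemma is immediate and the Brauer-correspondence machinery is unnecessary.
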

\begin{proof}
Let $B$ be a $p$-block of $G$ covering $B_0(M)$.  By \cite[Thm.~9.26]{Navarro98}, $Q$ is contained in some defect group $P$ of $B$. Since $\cent G P \leq \cent G Q \leq M$, by \cite[Lem.~9.20]{Navarro98} $B$ is regular with respect to $M$. 
By \cite[Thm.~9.19]{Navarro98}, $B_0(M)^G$ is defined and hence
$B=B_0(M)^G=B_0(G)$
by Brauer's Third Main Theorem \cite[Thm.~6.7]{Navarro98}.
\end{proof}

Next is a well-known result of   J. Alperin  and E. C. Dade. We will use both the notation ${\rm Irr}_{p'}(B_0(G))$ and ${\rm Irr}_0(B_0(G))$ to denote the set of irreducible characters of height zero ($p'$-degree) in the principal $p$-block of $G$. Moreover we let $k(G)=|\mathrm{Irr}(G)|$, $k(B_0(G))=|{\rm Irr}(B_0(G))|$ and $k_0(B_0(G))=|{\rm Irr}_0(B_0(G))|$.

\begin{thm}[Alperin-Dade]\label{alperin-dade}
Suppose that $N$ is a normal subgroup of $G$, with $G/N$ a $p'$-group.
Let $P \in \syl p G$ and assume that $G=N\cent GP$. Then restriction of characters defines
a natural bijection between the irreducible characters of the principal blocks
of $G$ and $N$. In particular, $k_0(B_0(G))=k_0(B_0(N))$.
\end{thm}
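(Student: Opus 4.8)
The statement is: if $N \trianglelefteq G$ with $G/N$ a $p'$-group, $P \in \syl p G$, and $G = N\mathbf{C}_G(P)$, then restriction gives a natural bijection $\mathrm{Irr}(B_0(G)) \to \mathrm{Irr}(B_0(N))$, and in particular $k_0(B_0(G)) = k_0(B_0(N))$.

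First I would observe that $P \le N$: indeed $P \in \syl p N$ since $[G:N]$ is coprime to $p$, so $N$ contains a defect group of $B_0(G)$ and $B_0(N)$ has full defect. Since $G = N\mathbf{C}_G(P)$, the Frattini-type hypothesis lets us invoke Brauer's First and Third Main Theorems: $B_0(N)$ is the unique block of $N$ covered by $B_0(G)$, and more to the point, because $\mathbf{C}_G(P) \le \mathbf{N}_G(P)$ acts, one wants to set up the correspondence $\chi \mapsto \chi_N$. The key input is that the hypothesis $G = N\mathbf{C}_G(P)$ forces every $G$-orbit on $\mathrm{Irr}(N)$ meeting $B_0(N)$ to be a singleton on the relevant characters, so that restriction and induction behave transparently; more precisely, one shows $\theta^G$ is irreducible (up to the $p'$-index obstruction) for $\theta \in \mathrm{Irr}(B_0(N))$, or dually that $\chi_N$ is irreducible for $\chi \in \mathrm{Irr}(B_0(G))$. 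The cleanest route: since $[G:N]$ is a $p'$-number and these blocks have maximal defect, a Clifford-theoretic count combined with the fact that $B_0(G)$ is the unique block over $B_0(N)$ (Lemma~\ref{lemma:onlyblockabove} applied with $M = N$, using $\mathbf{C}_G(P) \le N\mathbf{C}_G(P) = G$ — one needs $\mathbf{C}_G(Q) \le N$ for $Q \in \syl p N$, which follows as $Q = P$ and $\mathbf{C}_G(P) \le \mathbf{C}_G(P)$... this requires care) pins down that the decomposition is multiplicity-free with trivial ramification.

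The technically delicate step — and the one I expect to be the main obstacle — is verifying that restriction $\chi \mapsto \chi_N$ actually lands in $\mathrm{Irr}(N)$ (not a proper sum) and is injective, i.e. controlling Clifford theory along $N \trianglelefteq G$ in the presence of a $p'$-quotient. The standard argument uses that $G = N \mathbf{C}_G(P)$ implies, for $\theta \in \mathrm{Irr}(B_0(N))$, that the stabilizer $G_\theta$ satisfies $G = N G_\theta$ only when... — actually the right mechanism is: a character $\theta \in \mathrm{Irr}(B_0(N))$ is $\mathbf{C}_G(P)$-invariant because $P$ fixes $\theta$ (as $P \le N$) and an averaging/coprime argument over the $p'$-group shows $G_\theta$-invariance is automatic, forcing $\theta$ to be $G$-invariant; then since $G/N$ is a $p'$-group and (by the unique-block property) the projective representation obstruction lies in $H^2(G/N, \mathbb{C}^\times)_p = 1$, the character $\theta$ extends to $G$, and the extensions lying in $B_0(G)$ are parametrized by $\mathrm{Irr}(G/N)$ with exactly one in the principal block. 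Matching this up so that the map is genuinely a bijection and respects height zero (automatic here since defects agree and $[G:N]$ is prime to $p$) is the crux. In fact, for the purposes needed in this paper one could shortcut: cite this as the classical Alperin–Dade result (Dade, \emph{Blocks with cyclic defect groups} circa 1966, or Alperin's formulation), since it appears to be used only as a black box; I would present the short proof above if a self-contained argument is wanted, and otherwise simply attribute it.
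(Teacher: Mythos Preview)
The paper does not prove this theorem: its proof is a two-line citation to Alperin \cite{Alperin76} for the case $G/N$ solvable and Dade \cite{Dad77} for the general case, together with the remark that the height-zero equality is immediate from the bijection. Your closing suggestion --- to simply attribute the result --- is therefore exactly what the paper does.

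Your attempted self-contained sketch, however, has a real gap at the decisive step. You want to show that every $\theta \in \Irr(B_0(N))$ is $G$-invariant. The observation that $P$ fixes $\theta$ because $P \le N$ acts by inner automorphisms is fine, but the passage from $P$-invariance to $\cent G P$-invariance via ``an averaging/coprime argument over the $p'$-group'' is not an argument: this is precisely the nontrivial content of the Alperin--Dade theorem, and it does not follow from elementary Clifford theory. (Indeed, $\cent G P$-invariance of characters in $B_0(N)$ is essentially equivalent to the bijection you are trying to establish.) Your attempt to invoke Lemma~\ref{lemma:onlyblockabove} also fails, as you yourself notice: that lemma requires $\cent G P \le N$, but the hypothesis $G = N\cent G P$ makes this trivial unless $G = N$. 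Finally, even granting $G$-invariance and extendibility, the claim that exactly one extension lies in $B_0(G)$ is not automatic and also requires the Alperin--Dade machinery. In short, the sketch correctly identifies the architecture of a proof but hand-waves through the one step that carries all the difficulty; citing the original papers, as you suggest at the end and as the paper does, is the right move.
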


\begin{proof}
The case where $G/N$ is solvable was proved in \cite{Alperin76} and the general case
in \cite{Dad77}. The last part of the statement follows from the first.
\end{proof}

The following is contained in \cite[Lem.~4.3]{Murai94}.

\begin{lem}[Murai]\label{lem:Murai} Let $p$ be a prime. Let $N$ be a normal subgroup of $G$ and let $\theta$ be a character of height zero in a $p$-block $b$ of maximal defect of $N$. Let $P\in{\rm Syl}_{p}(G)$. Suppose that $\theta$ extends to $PN$. If $B$ is a $p$-block of $G$ of maximal defect covering $b$, then there exists $\chi\in{\rm Irr}(B)$ of height zero lying over $\theta$.
\end{lem}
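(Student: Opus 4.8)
The plan is to strip off $N$ by Clifford-theoretic reductions until $\theta$ is $G$-invariant, and then to use the character triple attached to $(G,N,\theta)$ to turn the problem into a statement about an ordinary block of maximal defect, where the desired character is automatic. Since $\theta$ extends to $PN$ it is $PN$-invariant, so $P\le I_G(\theta)$ and hence $[G:I_G(\theta)]$ is prime to $p$; combining the Fong--Reynolds correspondence for $b$ with the Clifford correspondence in $I_G(\theta)$ for $\theta$ (and the attendant correspondence of blocks) --- each height-preserving and matching blocks of maximal defect with one another, precisely because the index in question is prime to $p$ --- reduces us to the case that $\theta$ is $G$-invariant. One must check that the hypotheses ($p'$-degree of $\theta$, maximal defect of $b$, extendibility of $\theta$ to a subgroup of the form (Sylow $p$-subgroup)$\cdot N$) and the conclusion all persist; this is routine but needs care in tracking defect groups. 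It is also useful to note the elementary facts that $\overline P:=PN/N\cong P/(P\cap N)$ is a Sylow $p$-subgroup of $\overline G:=G/N$, that $[G:PN]$ is prime to $p$, and that every $p$-regular element of $PN$ lies in $N$; the last of these shows (via Gallagher's theorem) that all irreducible characters of $PN$ over $\theta$ --- among them a $p'$-degree extension $\hat\theta$ of $\theta$, which has height zero because $b$ has maximal defect --- lie in a single block $\hat b$ of $PN$ of maximal defect, and that $B$ covers $\hat b$. One cannot, however, simply induce $\hat b$ up to $G$: $\hat b^G$ need not be defined, since $\cent G P$ need not lie in $PN$, which is why a character-triple detour is needed.

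Assuming now that $\theta$ is $G$-invariant, pass to the character triple $(G,N,\theta)$ with its associated $2$-cocycle $\alpha$ on $\overline G$. Then $\Irr(B\mid\theta)$ is in a degree-compatible bijection with the irreducible $\alpha$-projective characters of $\overline G$ lying in a block $\overline B$ of the corresponding twisted group algebra, and, as $\theta(1)$ is prime to $p$, this bijection carries the height-zero characters of $B$ over $\theta$ onto the $\alpha$-projective characters of $p'$-degree in $\overline B$. The hypothesis that $\theta$ extends to $PN$ says exactly that $\alpha$ restricts to a coboundary on $\overline P$; together with $B$ having defect group $P$ and covering $b$, this forces $\overline B$ to have defect group $\overline P$, i.e.\ maximal defect in $\overline G$. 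One is thus reduced to showing that a maximal-defect block of a twisted group algebra whose cocycle is trivial on some defect group contains an irreducible projective character of $p'$-degree; untwisting along $\overline P$ --- a Fong-type reduction inside the twisted setting, or standard structure theory for blocks of twisted group algebras --- turns this into the classical fact that an ordinary block of maximal defect contains an irreducible character of height zero. Transporting that character back through the bijections above yields the required $\chi\in\Irr(B)$ of height zero lying over $\theta$.

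The main obstacle is the content of the second paragraph: organizing the Clifford theory of blocks so that defect groups and heights are tracked correctly, and then settling the twisted-algebra statement. An alternative organization is an induction on $[G:N]$ along a $G$-chief series --- a $p$-chief factor $L/N$ lies inside $PN$, so $\hat\theta|_L$ transports all hypotheses to $(G,L,\hat\theta|_L,B)$, while a $p'$-chief factor is handled by a Clifford-correspondence argument of the same flavour (using that then $PN\cap L=N$ and $N$, $L$ share Sylow $p$-subgroups) --- but this route must contend with the same cohomological subtleties that the character-triple formulation makes explicit.
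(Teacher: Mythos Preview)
The paper does not give its own proof of this lemma at all: it simply records the statement and cites \cite[Lem.~4.3]{Murai94}. So there is nothing in the paper to compare against beyond the attribution to Murai.

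Your outline is in the spirit of Murai's original argument: reduce to $\theta$ being $G$-invariant via Clifford/Fong--Reynolds (using that $P\le I_G(\theta)$ so the index is prime to $p$), then pass to the associated twisted group algebra of $G/N$, observe that the extendibility of $\theta$ to $PN$ trivialises the cocycle on $\overline P$, and conclude that the relevant block $\overline B$ over $\overline G$ has maximal defect, whence it contains a $p'$-degree projective character that lifts back to the desired $\chi$. A couple of places deserve more care than you give them. First, your justification that all of $\Irr(PN\mid\theta)$ lies in a single block is better phrased via \cite[Cor.~9.6]{Navarro98} (that $PN/N$ is a $p$-group) than via the remark on $p$-regular elements alone, since the characters $\hat\theta\beta$ do not literally agree on $p$-regular elements when $\beta(1)>1$. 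Second, the assertion that $\overline B$ has maximal defect and the ``untwisting along $\overline P$'' step are precisely the substantive content of Murai's lemma; you gesture at ``a Fong-type reduction inside the twisted setting'' without saying what it is. This is exactly the point where Murai does real work (comparing defect groups across the character-triple isomorphism and invoking the structure of blocks of twisted group algebras), and your sketch does not supply that argument. The alternative chief-series induction you mention runs into the same issue at the $p'$-factor step. So the strategy is correct and matches the literature, but the heart of the proof is asserted rather than proved.
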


For a normal subgroup $N$ of $G$ and given $\theta \in {\rm Irr}(B_0(N))$, we will sometimes write ${\rm Irr}_0(B_0(G)|\theta)=\{ \chi \in {\rm Irr}_0(B_0(G)) \ | \ [\chi_N, \theta]\neq 0\}$ and $k_0(B_0(G)|\theta)$ to denote the size of the above set. Lemma \ref{lem:Murai} above guarantees that $k_0(B_0(G)|\theta)\neq 0$ if $\theta$ has height zero and extends to $PN$, where $P\in \syl p G$.

\subsection{New results on principal blocks} 
Here we collect some new properties of principal blocks that are key to our reduction process and that we believe to be of independent interest.

\smallskip

If $\chi \in {\rm Irr}(B_0(G))$ and a normal subgroup $N$ is contained in the kernel of $\chi$, then in general it is not true that $\chi$, seen as a character of $G/N$, lies in $B_0(G/N)$.
The following result was actually proven within the proof of \cite[Thm.~3.5]{NRSV21}. We include it here for the reader's convenience. We recall that $\Phi(P)$ denotes the Frattini subgroup of a $p$-group $P$.

\begin{thm}\label{thm:NRSV}
Let $N$ be a normal subgroup of a finite group $G$. Suppose that $N\sbs \Phi(P)$, where $P\in \syl p G$. Write $B_0=B_0(G)$ to denote the principal $p$-block of $G$. If $\chi$ is a height zero irreducible character in $B_0$ and $N\subseteq {\rm Ker}(\chi)$, then $\chi$ seen as a character of $G/N$ belongs to the principal $p$-block of $G/N$.
\end{thm}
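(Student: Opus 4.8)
The plan is to pass to the quotient $\bar G = G/N$ and compare the principal block there with the image of $B_0(G)$. Write $P \in \syl p G$ and $\bar P = PN/N \in \syl p{\bar G}$; since $N \subseteq \Phi(P)$ we have $N \trianglelefteq G$ is a $p$-group, so $\bar P \cong P/N$. The character $\chi$, having $N$ in its kernel, is naturally an irreducible character $\bar\chi \in \Irr(\bar G)$; by the standard fact $\Irr(B_0(\bar G)) \subseteq \Irr(B_0(G))$ recalled above (under the inflation identification), it suffices to show $\bar\chi$ lies in $B_0(\bar G)$, equivalently that $\bar\chi$ is not separated from $1_{\bar G}$ by the block idempotents. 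I would use the sum-characterization of the principal block: $\bar\chi \in \Irr(B_0(\bar G))$ if and only if $\sum_{\bar x \in \bar G, (p,o(\bar x))=1} \bar\chi(\bar x) \neq 0$. So the crux is to relate the $p$-regular classes of $\bar G$ and of $G$, using the key feature $N \subseteq \Phi(P)$.

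The key structural input is this: because $N$ is a $p$-group, $p$-regular elements of $G$ map bijectively onto $p$-regular elements of $\bar G$ on the level of classes, in the sense that every $p$-regular $\bar x \in \bar G$ has a $p$-regular preimage $x \in G$, unique up to conjugacy once we fix it to be $p$-regular (a $p$-regular lift of a $p$-element of $\bar G$ times something in $N$... more precisely: given $\bar x$ $p$-regular, any preimage $y$ has $p$-part $y_p \in N$ since $N \ni y_p$ follows from $y_p$ mapping to $\bar y_p = 1$; then $x := y_{p'}$ is $p$-regular and maps to $\bar x$, and is determined up to $G$-conjugacy). Then for such $x$ we have $\bar\chi(\bar x) = \chi(x)$ (since $\chi$ has $N$ in its kernel and $x$ is a chosen $p$-regular preimage), but I must be careful: $\chi(x)$ depends only on the class of $x$ in $G$, and distinct $p$-regular classes of $\bar G$ could a priori have preimages in the same $G$-class or different ones — since $N$ is normal, the fibers of the class map from $p$-regular $G$-classes to $p$-regular $\bar G$-classes must be analyzed. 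Here is where $N \subseteq \Phi(P)$ (rather than just $N$ a normal $p$-subgroup) should be used: I expect it guarantees that two $p$-regular elements of $G$ with conjugate images in $\bar G$ are already conjugate in $G$, or at least that the relevant character-sum contributions match up. Alternatively, and probably more cleanly, I would invoke Brauer's First and Third Main Theorems together with a Fong–Reynolds / block-domination argument: $B_0(G)$ dominates a unique block $\bar B$ of $\bar G$, namely $B_0(\bar G)$ (since $N$ is a normal $p$-subgroup, domination sends $B_0(G)$ to $B_0(\bar G)$), and the height-zero characters of $B_0(G)$ with $N$ in their kernel are exactly inflations of characters of $\bar B = B_0(\bar G)$ — this is essentially the content one wants, and it is here that $N \subseteq \Phi(P)$ ensures the defect group of $\bar B$ is $\bar P = P/N$, i.e. that $\bar B$ again has maximal defect, so that height zero is preserved.

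Concretely, the steps I would carry out: (1) reduce to showing $\bar\chi \in B_0(\bar G)$ using the known inclusion $\Irr(B_0(\bar G)) \subseteq \Irr(B_0(G))$; (2) recall that $B_0(G)$ dominates $B_0(\bar G)$ since $N$ is a normal $p$-subgroup, and that a character of $B_0(G)$ with $N$ in its kernel, viewed in $\bar G$, lies in some block dominated by $B_0(G)$ — but domination from a fixed block need not be to a unique block in general, so (3) the real work is to rule out the other blocks of $\bar G$ lying under $B_0(G)$, which I would do by a defect/height count: $\chi$ has height zero in $B_0(G)$, whose defect group is $P$; I claim the block $\bar B$ of $\bar G$ containing $\bar\chi$ has defect group $\bar P = P/N$ — this uses $N \subseteq \Phi(P)$ crucially, because it forces $N$ to be inside every defect group's Frattini subgroup appropriately, pushing $\bar B$ to maximal defect; then $\bar\chi$ has height zero in $\bar B$, and a maximal-defect block of $\bar G$ under $B_0(G)$ whose trivial character... finally (4) pin down $\bar B = B_0(\bar G)$ by Brauer's Third Main Theorem, since the principal block dominates only the principal block and $\bar B$ having full defect matches.

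The main obstacle I anticipate is precisely step (3)/(the defect computation): showing that the block $\bar B \ni \bar\chi$ in $G/N$ has defect group $P/N$, for which the hypothesis $N \subseteq \Phi(P)$ — equivalently $N$ being "small inside $P$ in a Frattini sense" — must be leveraged, presumably via the fact that a defect group of $\bar B$ is of the form $D/N$ for some defect group $D$ of a suitable block of $G$ lying over it, combined with $N \subseteq \Phi(P) \subseteq P$ forcing $D = P$. Everything else is bookkeeping with the standard block-theoretic machinery (domination of blocks, Brauer's Main Theorems) as developed in Navarro's book, which the paper is already freely citing.
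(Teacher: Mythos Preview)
Your block-domination approach has a genuine gap. The assertion that ``$B_0(G)$ dominates a unique block $\bar B$ of $\bar G$, namely $B_0(\bar G)$ (since $N$ is a normal $p$-subgroup)'' is false in general: for $G=S_3$, $p=3$, $N=A_3$, the principal $3$-block of $S_3$ contains both the trivial and the sign character, whose images in $G/N\cong C_2$ lie in two different blocks. Your proposed repair in step~(3)---computing that $\bar B$ has defect group $P/N$---does not use $N\subseteq\Phi(P)$ at all: since $\bar\chi(1)=\chi(1)$ is prime to $p$, the block $\bar B$ automatically has maximal defect. So you have correctly sensed that $N\subseteq\Phi(P)$ must enter somewhere, but you have placed it in a step that is trivial without it, and step~(4) is left as an assertion (Brauer's Third Main Theorem concerns block induction $b^G$, not domination, and does not give what you need here).

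The paper's argument is quite different from your preferred route and is closer in spirit to the sum-characterization idea you sketched first and then abandoned. Rather than work with the naive sum $\sum_{\bar x\ p\text{-regular}}\bar\chi(\bar x)$, the paper uses the central-character criterion \cite[Prob.~4.5]{Navarro98}: a $p'$-degree character lies in the principal block of $G/N$ if and only if a certain congruence on $\omega_{\bar\chi}(\hat K)$ holds for every $p$-regular class $K$ of $G/N$ whose elements centralize $P/N$. For such a class one chooses a $p$-regular representative $x\in\norm GP$; the key point---and the only place $N\subseteq\Phi(P)$ is used---is that $x$ then centralizes $P/N$, hence $P/\Phi(P)$, and therefore centralizes $P$ itself by the classical fact that a $p'$-order automorphism of a $p$-group trivial on the Frattini quotient is trivial \cite[Cor.~3.29]{Isa08}. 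This gives $\cent{G/N}{Nx}=\cent Gx/N$, so the class sizes and character values in $G$ and $G/N$ match exactly, and the required congruence for $\bar\chi$ follows immediately from the corresponding congruence for $\chi\in B_0(G)$. Your guess that one needs ``two $p$-regular elements of $G$ with conjugate images in $\bar G$ are already conjugate in $G$'' is stronger than what is actually used; only the centralizer comparison for $p$-regular elements centralizing $P/N$ is needed.
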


\begin{proof}

 Write $\bar\chi$ to denote the character $\chi$ seen as a character of $G/N$. Since $\bar\chi$
 has degree not divisible by $p$, it follows that $\bar\chi$ lies in a block of $G/N$ of maximal defect $P/N$.
 By  \cite[Prob.~4.5]{Navarro98}, we only need to prove that if $Nx \in G/N$, with $x \in G$, is $p$-regular
 with $P/N\subseteq \cent{G/N}{Nx}$, then
 $$\left( {[G/N : \cent{G/N}{Nx}]  \bar\chi(Nx) \over  \bar \chi(1)}\right) \equiv [G/N : \cent{G/N}{Nx}]$$ modulo any maximal ideal $\mathcal P$ of the ring $\bf R$ of algebraic integers in $\mathbb C$ containing $p$.
 First notice that $Nx \in \norm{G/N}{P/N}= \norm GP/N$. Also, we may assume that $x$ is
 a $p$-regular element, using that $Nx=(Nx)_{p'}=Nx_{p'}$.  Notice that
 $$\cent{G/N}{Nx}=\cent Gx N/N \, ,$$ using that $(|N|, o(x))=1$ (and \cite[Cor.~3.28]{Isa08}). 
We have shown that $x$ is a $p$-regular element of
 $\norm GP$ centralizing $P/N$. Therefore $x$ centralizes $P/\Phi(P)$,
 and by 
 \cite[Cor.~3.29]{Isa08} 
 we have that $x$ centralizes $P$. 
 In particular, $N \subseteq \cent Gx$. Then $[G/N : \cent{G/N}{Nx}]=[G:\cent Gx]$.
 Since $\chi$ is in the principal block of $G$, we have that 
 $$\left( {[G/N : \cent{G/N}{Nx}]  \bar\chi(Nx) \over  \bar \chi(1)}\right)=
 \left( {[G : \cent{G}{x}]  \chi(x) \over   \chi(1)}\right)\equiv [G: \cent Gx]\, ,$$
 modulo any maximal ideal $\mathcal P$ of the ring of algebraic integers $\bf R$ in $\mathbb C$ containing $p$, as desired.
\end{proof}

If $N \triangleleft G$ and $\theta$ is a character in the principal block of $N$ that extends to $G$, in general it is not true that the principal block of $G$ must contain some extension of $\theta$. 
The following guarantees the existence of an extension in the principal block. The main idea of the proof is taken from the proof of \cite[Thm.~6.26]{Isaacs76}. We denote by ${\rm Lin}(G)$ the subset of ${\rm Irr}(G)$ of characters of $G$ of degree 1, often called linear characters.

\begin{lem}\label{lem:extensionprincipalblock} Let $N$ be a normal $p$-subgroup of $G$. Suppose that $\lambda \in {\rm Lin}(N)$ is $G$-invariant and extends to $P$, where $P\in \syl p G$.
Then $\lambda$ extends to some character in the principal block of $G$.
\end{lem}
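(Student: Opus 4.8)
The plan is to reduce to the case of a faithful $\lambda$ --- where $N$ becomes a central cyclic $p$-group --- then produce some extension of $\lambda$ to $G$ by a transfer argument and correct it by a linear character of $G/N$ so that it lands in $B_0(G)$, using the central-character description of the principal block. First I would reduce to $\ker\lambda=1$: if $L:=\ker\lambda\ne1$, then $L\trianglelefteq G$ (as $\lambda$ is $G$-invariant) and $L\le N\le P$, so any extension of $\lambda$ to $P$ is trivial on $L$ and descends to an extension of the faithful character $\bar\lambda\in\Lin(N/L)$ to the Sylow $p$-subgroup $P/L$ of $G/L$; the conclusion for $G/L$, inflated via $\Irr(B_0(G/L))\subseteq\Irr(B_0(G))$, then gives the conclusion for $G$. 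So assume $\ker\lambda=1$; then $N$ embeds in $\C^\times$, hence is cyclic of $p$-power order, and $G$-invariance forces $[N,G]\le\ker\lambda=1$, i.e.\ $N\le\Z(G)$, while ``$\lambda$ extends to $P$'' now reads $N\cap P'=1$.

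Next I would show $\lambda$ extends to $G$. Since $N$ is central, the transfer homomorphism $G\to P/P'$ carries $n\in N$ to $\overline{n^{[G:P]}}$, which is injective on $N$ because $[G:P]$ is prime to $p$ and $N\cap P'=1$; as the transfer kills $G'$, this yields $N\cap G'=1$, so $\lambda$ extends to some $\hat\lambda\in\Lin(G)$, all extensions of $\lambda$ being $\hat\lambda\nu$ with $\nu\in\Lin(G/N)$. I would then record the relevant block criterion: comparing central characters modulo a maximal ideal $\mathcal P$ of $\R$ containing $p$ (as in the proof of Theorem~\ref{thm:NRSV}), a linear $\chi\in\Irr(G)$ lies in $B_0(G)$ precisely when $\chi(y)=1$ for every $p$-regular $y$ that centralizes a Sylow $p$-subgroup of $G$; since $\cent GP$ has normal $p$-complement $K:=\oh{p'}{\cent GP}$ (its Sylow $p$-subgroup $\Z(P)$ being central in it) and all Sylow $p$-subgroups are conjugate, this is equivalent to $K\le\Ker(\chi)$.

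It then remains to choose $\nu\in\Lin(G/N)$ so that $K\le\Ker(\hat\lambda\nu)$, i.e.\ $\nu|_N=1$ and $\nu|_K=\overline{\hat\lambda|_K}$. Here $NK=N\times K$, since $N$ is central of $p$-power order and $K$ is a $p'$-group, and a short argument using $N\cap G'=1$ and the fact that the images of $N$ and $K$ in $G/G'$ have coprime orders gives $NK\cap G'=K\cap G'$. Because $\hat\lambda$ is trivial on $G'\supseteq K\cap G'$, the linear character of $NK$ that is trivial on $N$ and equals $\overline{\hat\lambda|_K}$ on $K$ is trivial on $NK\cap G'$, hence, $\C^\times$ being divisible, extends to a linear character $\nu$ of $G$; this $\nu$ works, and $\hat\lambda\nu$ is an extension of $\lambda$ lying in $B_0(G)$.

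The step I expect to be the main obstacle is the block-theoretic one: establishing the equivalence ``$\chi\in B_0(G)\iff K\le\Ker(\chi)$'' for linear $\chi$ (this needs the central-character computation, the injectivity of reduction modulo $\mathcal P$ on roots of unity of order prime to $p$, and the normal $p$-complement structure of $\cent GP$), together with the $G/G'$-bookkeeping that produces $\nu$; by contrast, the reduction to $\ker\lambda=1$ and the transfer computation are short.
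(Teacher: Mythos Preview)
Your argument is correct, but the paper's proof is considerably shorter and avoids both the reduction to $\ker\lambda=1$ and the block-theoretic characterisation that you flag as the main obstacle. The paper simply quotes \cite[Thm.~6.26]{Isaacs76} to obtain \emph{some} linear extension $\delta\in\Lin(G)$ of $\lambda$ (the hypothesis ``$\lambda$ extends to $P$'' is all that is needed here, since for primes $q\neq p$ the extension to a Sylow $q$-subgroup over $N$ is automatic by coprimality), and then replaces $\delta$ by a suitable \emph{power}: choosing $b\in\NN$ with $|G|_{p'}\mid b$ and $b\equiv 1\pmod{o(\lambda)}$, one has $\delta^b|_N=\lambda^b=\lambda$ while $\delta^b(g)=1$ for every $p$-regular $g$ (since $o(g)\mid |G|_{p'}\mid b$); the latter gives $\sum_{g\in G^0}\delta^b(g)=|G^0|\neq 0$, so $\delta^b\in B_0(G)$ by the very definition of the principal block. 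This sidesteps entirely the transfer computation, the structure of $\cent GP$, and the construction of the correcting $\nu$.

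What your approach buys is an explicit structural description of which linear characters lie in $B_0(G)$ (namely those trivial on $\oh{p'}{\cent GP}$), which is of independent interest; the paper's power trick is opaque about \emph{why} the resulting character lies in $B_0$ beyond the direct computation, but gets there in three lines.
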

\begin{proof} Write $|G|=p^a m$ with $(m, p)=1$. Let $\delta \in {\rm Lin}(G)$ be an extension of $\lambda$ (such an extension exists by \cite[Theorem 6.26]{Isaacs76}). 
Choose a positive integer $b$ such that $m$ divides $b$ and $b\equiv 1 \mod o(\lambda)$. 
Notice then that  $\delta^b \in {\rm Lin}(G)$.  Write $G^0=\{ g \in G \ | \ (o(g), p)=1\}$ to denote the set of $p$-regular elements of $G$. 
Since every $g \in G^0$ satisfies $o(g)$ divides $m$, we have that $\delta(g)$ is an $m$th root of unity. In particular $\delta^b(g)=1$ for every $g \in G^0$. 
Since then $$\displaystyle \sum_{x \in G^0}\delta^b(x)=|G^0|\neq 0\, ,$$
we have that $\delta^b$  lies in the principal $p$-block of $G$ by \cite[Thm.~3.19]{Navarro98}.
\end{proof}

We need to prove a relative version of classic results by Landrock \cite{Landrock81} and Cliff--Plesken--Weiss \cite{CPW87} (stated below for the reader's convenience) for principal blocks. In fact, we can do more than that.  Below, $k_0(B)$ denotes the number of height zero characters in a block $B$.

\begin{thm}\label{Landrock} Let $B$ a $p$-block of nontrivial defect of a finite group $G$. Then 
$$k_0(B)\geq 2\, .$$ 
If moreover $p\leq 3$, then 
$p$ divides $k_0(B)$.
\end{thm}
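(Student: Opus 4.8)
The plan is to deduce Theorem~\ref{Landrock} from the two classical results it generalizes, by a standard reduction to the Brauer correspondent in the local subgroup $\norm GD$, where $D$ is a defect group of $B$.

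First I would recall the classical statements. Landrock's theorem \cite{Landrock81} asserts that $k_0(B)\neq 1$ for any block $B$ of positive defect, so $k_0(B)\geq 2$ (the inequality $k_0(B)\geq 1$ being trivial since every block contains a height zero character). For the divisibility part, I would invoke the Cliff--Plesken--Weiss result \cite{CPW87}: if $B$ has positive defect and $p\in\{2,3\}$, then $p\mid k_0(B)$. Thus in a sense the theorem as stated for $k_0(B)$ (as opposed to $k_0(B\mid\theta)$, which is Theorem~B) is essentially a restatement of these two facts, and the ``in fact, we can do more than that'' remark in the text signals that the genuinely new content is the \emph{relative} version appearing as Theorem~B. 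So the proof of Theorem~\ref{Landrock} itself should be short: cite \cite{Landrock81} for the first inequality and \cite{CPW87} for the divisibility when $p\leq 3$.

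If instead the intent is to give a self-contained argument reducing to the principal-block or $\norm GD$ setting, I would proceed as follows. Let $D$ be a defect group of $B$, so $D>1$ by hypothesis, and let $b=B^{\norm GD}$ be the Brauer correspondent of $B$ in $N=\norm GD$, which has $D$ as defect group. By the Brauer first main theorem and the theory of blocks with a normal defect group, one reduces the count of height zero characters in $B$ to the corresponding count in $b$: more precisely, one uses that $k_0(B)$ can be computed via the local structure, and in the case of a normal defect group $D\trianglelefteq N$, the height zero characters of $b$ lying over a $D$-invariant linear character of $D$ are governed by an extended group $D\cdot E$ with $E=E(b)$ the inertial quotient, a $p'$-group acting on $D/\Phi(D)$. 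One then counts orbits of $E$ on $\Irr(D/\Phi(D))$, or rather on the relevant characters, and observes that the trivial character is always fixed, forcing at least one more orbit by a non-degeneracy argument (this is exactly where Landrock's argument, or the transfer/cohomological argument, enters), giving $k_0(B)\geq 2$. For the divisibility when $p\leq 3$, one uses that an action of a $p'$-group on an elementary abelian $p$-group has the number of fixed points, hence the number of orbits, congruent to the order of the group modulo relevant congruences; the small-prime hypothesis is what makes the orbit-counting force $p\mid k_0$, following \cite{CPW87}.

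The main obstacle, in the self-contained approach, is handling the passage from $B$ to $b$ faithfully at the level of \emph{height zero} characters rather than all characters: one must ensure heights are preserved under the relevant correspondences (this is where one invokes that $k_0(B)$ is a local invariant, e.g.\ via results of Murai or via the structure theory for blocks with normal defect group and the Kn\"orr/Robinson circle of ideas), and that the ``defect zero twisted block over $D$'' picture correctly accounts for the contribution of $\Phi(D)$. Given that both \cite{Landrock81} and \cite{CPW87} already establish exactly the two assertions of Theorem~\ref{Landrock} for arbitrary blocks of positive defect, the cleanest route — and, I expect, the one the authors take — is simply to cite those two papers and move on to the new relative statement in the next result.
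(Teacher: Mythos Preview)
Your overall approach is exactly what the paper does: simply cite the two classical results and move on to the genuinely new relative version (Theorem~B). However, you have the attributions reversed. In the paper, the inequality $k_0(B)\geq 2$ is credited to Cliff--Plesken--Weiss \cite{CPW87} (with \cite[Prob.~3.11]{Navarro98} as a secondary reference), while the divisibility $p\mid k_0(B)$ for $p\leq 3$ is credited to Landrock \cite[Cors.~1.3 and 1.6]{Landrock81} (with \cite[Lem.~2.2]{NST18} noted for the principal-block case). This makes sense historically: Landrock's paper is specifically about $2$-blocks and contains the congruence results, whereas the general lower bound $k_0(B)\geq 2$ comes from the order-theoretic work of Cliff--Plesken--Weiss.

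Your alternative self-contained sketch via reduction to $\norm GD$ is not needed here and, as you yourself note, the passage from $B$ to its Brauer correspondent does not obviously preserve $k_0$ without further input; the paper does not attempt any such reduction for this theorem.
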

\begin{proof}
The first part appears in \cite{CPW87} (see also \cite[Prob.~3.11]{Navarro98}), and the second is \cite[Cors.~1.3 and 1.6]{Landrock81} (see also \cite[Lem.~2.2]{NST18} for a proof for principal blocks).
\end{proof}

Below $k_0(B|\theta)=|{\rm Irr}_0(B)\cap {\rm Irr}(G|\theta)|$ for a block $B$ and a character $\theta$ of some normal subgroup of $G$. We next prove Theorem B from the Introduction. We re-state it here in a way in which $k_0(B|\theta)\neq 0$ (by using Lemma \ref{lem:Murai}). 
  
\begin{thm}\label{thm:thetatheorem}
    Let $G$ be a finite group, $p$ be a prime and $P\in \syl p G$. Let $N$ be a normal subgroup of $G$. Suppose that  $PN/N>1$. Let $B$ be a $p$-block of $G$ with defect group $P$ covering a $p$-block $b$ of $N$ with defect group $N\cap P$.
    Let $\theta\in{\rm Irr}_0(b)$. Suppose that 
    $\theta$ extends to $PN$. Then $k_0(B|\theta)\geq 2$. If moreover $p\leq 3$, then 
     $p$ divides $k_0(B|\theta)$.
\end{thm}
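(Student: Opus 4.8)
The plan is to reduce Theorem~\ref{thm:thetatheorem} to the absolute statements in Theorem~\ref{Landrock} via a Clifford-theoretic analysis over $N$, combined with the Murai-type extension machinery of Lemma~\ref{lem:Murai}. Set $\bar G = G/N$ and $\bar P = PN/N \in \syl p(\bar G)$, which is nontrivial by hypothesis. Since $\theta$ extends to $PN$, it is in particular $P$-invariant, and as $B$ covers $b$ we may pass to the stabilizer $T = G_\theta$; by the Fong--Reynolds correspondence there is a block $B'$ of $T$ covering $b$, with defect group $P$ (we may arrange $P \leq T$ since $P$ stabilizes $\theta$), such that induction gives a height-preserving bijection between $\irr_0(B'|\theta)$ and $\irr_0(B|\theta)$. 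So it suffices to prove the statement when $\theta$ is $G$-invariant.

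**The $G$-invariant case.** Now assume $\theta$ is $G$-invariant and extends to $PN$; say $\hat\theta \in \irr(PN)$ is an extension. The key point is to produce an extension of $\theta$ to all of $G$ lying in the block $B$, so that Gallagher's theorem applies. First I would argue that $\theta$ extends to $G$: the obstruction lies in $H^2(\bar G, \CC^\times)$, and by a standard transfer/restriction argument (cf.\ the proof of \cite[Thm.~6.26]{Isaacs76}) the fact that $\theta$ extends to $PN$ kills the $p$-part of the obstruction cocycle, while a suitable power handles the $p'$-part; combining gives an honest extension $\chi_0 \in \irr(G|\theta)$. Among all extensions of $\theta$ to $G$, the characters $\{\beta \chi_0 : \beta \in \irr(\bar G)\}$ run over $\irr(G|\theta)$ by Gallagher, and $\beta\chi_0$ has $p'$-degree precisely when $\beta \in \irr_{p'}(\bar G)$, i.e.\ $\beta(1)$ is coprime to $p$. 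The remaining task is to control which of these lie in $B$: I claim $\chi_0$ may be chosen so that $\beta \chi_0 \in B$ iff $\beta \in \irr(B_0(\bar G))$ — equivalently, that the map $\beta \mapsto \beta\chi_0$ sends $\irr_0(B_0(\bar G))$ bijectively onto $\irr_0(B|\theta)$. To see this, first replace $\chi_0$ by $\chi_0$ times a linear character of $G/N$ of suitable $p'$-order so that the central character of $B$ "factors through" the central character of $B_0(\bar G)$ on $p$-regular classes; concretely, one checks via Brauer's characterization \cite[Thm.~3.19]{Navarro98} (applied along the lines of Lemma~\ref{lem:extensionprincipalblock} and Theorem~\ref{thm:NRSV}) that the block of $\beta\chi_0$ is determined by the block of $\beta$, using that $\theta(1)$ is coprime to $p$ and that $N \cap P$ is a defect group of $b$. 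Granting this, $k_0(B|\theta) = |\irr_0(B_0(\bar G))| = k_0(B_0(\bar G))$, and since $\bar P = PN/N > 1$ the principal block $B_0(\bar G)$ has nontrivial defect, so Theorem~\ref{Landrock} gives $k_0(B_0(\bar G)) \geq 2$, with $p \mid k_0(B_0(\bar G))$ when $p \leq 3$. This yields exactly the two assertions.

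**Main obstacle.** The delicate step is the block-matching claim: showing that the extension $\chi_0$ can be normalized so that tensoring by $\irr(\bar G)$ identifies $\irr_0(B|\theta)$ with $\irr_0(B_0(\bar G))$, rather than with height-zero characters in some other block of $\bar G$. The subtlety is that $B$ need not "restrict" to the principal block of $N$ — $b$ is only assumed to be \emph{some} block of $N$ with defect group $N \cap P$ — so I would instead work with $b$ directly: the central character $\omega_B$ of $B$ composed with the natural surjection determines a block of $\bar G$ of maximal defect $\bar P$, and I must show this block contains the trivial character, i.e.\ is $B_0(\bar G)$. Here the hypotheses that $P$ is a defect group of $B$ and $N\cap P$ a defect group of $b$, together with $\theta$ extending to $PN$, should force the relevant idempotent to live over $B_0(\bar G)$ after the $p'$-linear twist; the argument parallels Lemma~\ref{lem:extensionprincipalblock} but relative to $b$. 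If this block identification is more subtle than expected, the fallback is to bypass it entirely by invoking Lemma~\ref{lem:Murai} (to get $k_0(B|\theta) \geq 1$, hence the fiber is nonempty) together with a counting argument modulo $p$ on the $p'$-degree extensions $\{\beta\chi_0\}$ that land in $B$, using that the number of $\beta \in \irr_{p'}(\bar G)$ in a fixed $p$-block of maximal defect of $\bar G$ is governed by Theorem~\ref{Landrock} applied blockwise.
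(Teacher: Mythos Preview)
Your reduction to the $G$-invariant case via Fong--Reynolds is fine and matches the paper. The gap is the next step: you assert that $\theta$ extends to $G$, and this is simply false under the given hypotheses. Extending to $PN$ does kill the $p$-primary part of the obstruction class in $H^2(G/N,\CC^\times)$, but nothing in the statement controls the $p'$-part, and your appeal to ``a suitable power'' (as in \cite[Thm.~6.26]{Isaacs76}) does not apply: that argument adjusts the determinantal order, not an arbitrary $p'$-torsion obstruction class. A concrete counterexample: take $p=3$, $G=Q_8\times C_3$, $N=\zent{Q_8}\times 1\cong C_2$, and $\theta$ the nontrivial character of $N$. Then $PN\cong C_2\times C_3$ is abelian so $\theta$ extends to $PN$, all the defect-group hypotheses are met with $P=1\times C_3$, yet $\theta$ does not extend to $G$ (the unique character of $Q_8$ lying over it has degree $2$). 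Your entire Gallagher mechanism, the block-matching claim, and the fallback all presuppose the existence of $\chi_0$, so the argument does not get off the ground.

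The paper avoids extension to $G$ altogether and takes a completely different route. After the same Fong--Reynolds reduction, it forms the virtual character $\psi=\sum_{\chi\in\Irr(G|\theta)}\chi(1)\chi$ and uses Kn\"orr's theorem to see that $\psi$ is supported on $N$ with values $|G/N|\,\theta(1)\theta(x)$; since $p\mid |G/N|$, the reduction $\psi^\ast$ vanishes. A decomposition-matrix argument then shows that the \emph{block components} $(\psi_C)^\ast$ vanish on $p$-regular elements for every block $C$, whence $p$ divides $\sum_{\chi\in\Irr(C|\theta)}\chi(1)^2$. Applying this with $C=B$ and using Lemma~\ref{lem:Murai} for positivity gives $k_0(B|\theta)\geq 2$, and for $p\leq 3$ the congruence $m^2\equiv 1\pmod p$ for $p'$-integers $m$ yields $p\mid k_0(B|\theta)$. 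Notice this never invokes Theorem~\ref{Landrock} for $G/N$; it is a direct mod-$p$ count inside $B$ itself.
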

\begin{proof}
First notice that, by induction on $[G:N]$, and using the Clifford correspondence together with the Fong--Reynolds correspondence \cite[Thm.~9.14]{Navarro98}, we may assume that $\theta$ is $G$-invariant.

Define $$\psi=\sum_{\chi \in {\rm Irr}(G|\theta)}\chi(1)\chi\, .$$
Notice that $\psi(x)=0$ if $x \notin N$ and $\psi(x)=|G/N|\theta(1)\theta(x)$ if $x \in N$ by a theorem of Kn\"orr (see \cite[Thm.~5.21]{Navarro18}). In particular, $\psi^*\equiv 0$, where $^*\colon {\bf R} \to \mathbb F$ is the ring homomorphism defined in \cite[Ch.~2]{Navarro98}, naturally extended to characters of $G$ (recall that $\bf R$ is the ring of algebraic integers in $\C$ and we choose a maximal ideal $\mathcal P$ containing $p$ so that $\mathbb F= {\bf R}/\mathcal P$ is algebraically closed of characteristic $p$ and $^*$ is the natural epimorphism). 

We now follow the proof of \cite[Thm.~5.10]{Navarro98} to show that $(\psi_{C})^*$ vanishes on the set $G^0$ of $p$-regular elements of $G$ for every $p$-block $C$ of $G$, where $\displaystyle \psi_{C}=\sum_{\chi \in {\rm Irr}(C|\theta)}\chi(1)\chi$.
Given $y \in G^0$ (so $y \in G$ such that $(o(y), p)=1$), we have that
$$\psi_{C}(y)=\sum_{\xi \in {\rm Irr}(C)}[\psi, \xi]\xi(y)=\sum_{\xi \in {\rm Irr}(C)}[\psi, \xi]\left(\sum_{\varphi \in {\rm IBr(G)}}d_{\xi, \varphi}\varphi(y)\right)\, .$$
Here, following the notation of \cite[Chapter 2]{Navarro98} we used the symbol $d_{\xi, \varphi}$ to denote the corresponding decomposition number.
In particular, 
$$\psi_{C}(y)=\sum_{\varphi \in {\rm IBr}(G)}\left(\sum_{\xi \in {\rm Irr}(C)}[\psi, \xi]d_{\xi, \varphi}\right)\varphi(y)\, .$$
As $\psi=\sum_{C \in {\rm Bl}(G)}\psi_C$, we then have that
$$0=\psi(y)^*=\sum_{\varphi \in {\rm IBr}(G)}\left(\sum_{\xi \in {\rm Irr}(G)}[\psi, \xi]d_{\xi, \varphi}\right)^*\varphi(y)^*\, .$$
As the trace functions $\{ \varphi^* \ | \ \varphi \in {\rm IBr}(G)\}$ are $\mathbb F$-linearly independent, by \cite[Thm.~1.19]{Navarro98},
we conclude that for any $\varphi \in {\rm IBr}(G)$ we have that
$$\left(\sum_{\xi \in {\rm Irr}(G)}[\psi, \xi]d_{\xi, \varphi}\right)^*=0\, .$$
Now if $\varphi$ does not lie in $C$ then $$\sum_{\xi \in {\rm Irr}(C)}[\psi, \xi]d_{\xi, \varphi}=0\, .$$
If $\varphi$ lies in $C$ then  
$$0=\left(\sum_{\xi \in {\rm Irr}(G)}[\psi, \xi]d_{\xi, \varphi}\right)^*=\left(\sum_{\xi \in {\rm Irr}(C)}[\psi, \xi]d_{\xi, \varphi}\right)^*\, .$$
In particular, we have shown that $(\psi_{C}(1))^*=0$. In other words, the positive integer
$$\sum_{\chi \in {\rm Irr}(C |  \theta)}\chi(1)^2$$
is divisible by $p$, so is the positive integer
$$\sum_{\chi \in {\rm Irr}_0(C |  \theta)}\chi(1)^2\, ,$$
where we are using Lemma \ref{lem:Murai} to see that it is positive. This implies the first part of the statement. If $p\leq 3$, then $m^2 \equiv 1 \mod p$ for every integer $m$ coprime to $p$, so the second part of the statement also follows.
\end{proof}

As mentioned in the Introduction, notice that above we have proven that, for blocks of maximal defect, the main result of \cite{Lar23} holds without the $p$-solvability hypothesis on $G$.

\smallskip

We close this part with an easy to prove yet useful observation.

\begin{lem}\label{lem:extension} 
   Let $G$ be a finite group and $p$ be a prime. Let $N$ be a normal subgroup of $G$. Let $\theta\in{\rm Irr}_0(B_0(N))$. Suppose that $p$ divides $[G:N]$ and 
    $\theta$ extends to $G$. 
Then $k_0(B_0(G)|\theta)\geq k_0(\bar B)$ for some block $\bar B$ of maximal defect of $G/N$. 
If $\theta$ extends to some $\chi \in {\rm Irr}(B_0(G))$ then $k_0(B_0(G)|\theta)\geq k_0(B_0(G/N))$.
\end{lem}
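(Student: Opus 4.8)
The idea is to use Gallagher's theorem together with the multiplicativity of characters lying in blocks. Fix an extension $\hat\theta \in \Irr(G)$ of $\theta$. By Gallagher's theorem \cite[Cor.~6.17]{Isaacs76}, the map $\beta \mapsto \beta\hat\theta$ is a bijection from $\Irr(G/N)$ onto $\Irr(G|\theta)$, and $(\beta\hat\theta)(1)=\beta(1)\theta(1)$. Since $\theta$ has height zero in $B_0(N)$ and $N$ has a Sylow $p$-subgroup $N\cap P$ (where $P\in\syl p G$), we have $\theta(1)_p = |N|_p/|N\cap P|_p$... more precisely $\theta(1)$ is coprime to $p$ if and only if $\theta$ has $p'$-degree; here ``height zero in $B_0(N)$'' means exactly $p \nmid \theta(1)$. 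Hence $\beta\hat\theta$ has $p'$-degree if and only if $\beta(1)$ is coprime to $p$. Thus the map $\beta\mapsto\beta\hat\theta$ restricts to a bijection between $\Irr_0(\bar B)$, for a block $\bar B$ of $G/N$ of maximal defect $P/N$ (note $p\mid[G:N]$ forces $PN/N>1$), and a subset of $\Irr_0(G|\theta)$ — provided we can guarantee that $\beta\hat\theta$ lands in $B_0(G)$, or at least in a single block, for all $\beta$ in one block $\bar B$ of maximal defect.

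So the main step is to choose $\hat\theta$ and $\bar B$ so that $\{\beta\hat\theta : \beta\in\Irr(\bar B)\}\subseteq\Irr(B_0(G))$. For the first assertion, I would argue as follows. Since $\theta$ lies in $B_0(N)$, some block $B_0(G)$ character lies over $\theta$; more concretely, I want to exhibit one block $\bar B$ of $G/N$ of maximal defect such that $\beta\hat\theta\in\Irr(B_0(G))$ for every $\beta\in\Irr(\bar B)$. The cleanest route is via central characters: the block of $G$ containing $\beta\hat\theta$ is determined by the central character $\omega_{\beta\hat\theta}$, and on $p$-regular classes $\omega_{\beta\hat\theta}\equiv\omega_\beta\cdot\omega_{\hat\theta}$ after the $*$-map. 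The function $\beta\mapsto(\text{block of }\beta\hat\theta)$ is then constant on blocks of $G/N$ — this is precisely the statement that tensoring by a fixed character induces a well-defined map $\mathrm{Bl}(G/N)\to\mathrm{Bl}(G)$ preserving the defect-group containment; in fact it sends blocks of maximal defect to blocks of maximal defect because $\hat\theta(1)$ is a $p'$-number (the degree of $\theta$), so $\beta\hat\theta(1)_p=\beta(1)_p$. Picking $\bar B$ to be the block of $G/N$ containing the character $\delta$ where $\hat\theta=\hat\theta$ is arranged so that $\delta\hat\theta$ is the identity — wait, this needs care. Instead, I would simply invoke Lemma \ref{lem:extensionprincipalblock}'s philosophy: we don't get to pick $\hat\theta$ in $B_0(G)$ for free, so we merely assert $\bar B$ is \emph{whatever} block of $G/N$ of maximal defect gets mapped into the block of $G$ containing $\hat\theta$; but $\hat\theta$ restricted to $N$ is $\theta\in B_0(N)$, and a block of $G$ lying over $B_0(N)$ need not be $B_0(G)$, so honestly the first assertion as stated only gives ``some block $\bar B$ of maximal defect'' with $k_0(B_0(G)|\theta)\geq k_0(\bar B)$ — and this works because $1_{G/N}\hat\theta=\hat\theta$ shows the relevant block $\bar B$ contains $1_{G/N}$ precisely when $\hat\theta\in B_0(G)$, which is the hypothesis of the second assertion.

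For the second assertion, assume $\theta$ extends to $\chi\in\Irr(B_0(G))$. Take $\hat\theta=\chi$. Then for $\beta\in\Irr(B_0(G/N))$, viewing $\beta$ as a character of $G$ with $N$ in its kernel, I claim $\beta\chi\in\Irr(B_0(G))$: indeed $\beta\in\Irr(B_0(G))$ by the containment $\Irr(B_0(G/N))\subseteq\Irr(B_0(G))$ recalled in Section~2, and the principal block is closed under tensoring with itself since $1_G\otimes 1_G=1_G$ and the block-of-a-product map sends $(B_0(G),B_0(G))\mapsto B_0(G)$ (both $\beta$ and $\chi$ lie over the principal block of every relevant subgroup, and Brauer's third main theorem / \cite[Thm.~9.2]{Navarro98} type arguments give $\beta\chi\in B_0(G)$ — concretely, on $p$-regular classes $\omega_{\beta\chi}^*=\omega_\beta^*\omega_\chi^*=\omega_{1_G}^*\omega_{1_G}^*=\omega_{1_G}^*$). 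Combined with the Gallagher bijection and the degree bookkeeping above ($p\nmid(\beta\chi)(1)\iff p\nmid\beta(1)$), this gives an injection $\Irr_0(B_0(G/N))\hookrightarrow\Irr_0(B_0(G)|\theta)$, hence $k_0(B_0(G)|\theta)\geq k_0(B_0(G/N))$.

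The main obstacle I anticipate is the bookkeeping in the first assertion: pinning down \emph{which} block $\bar B$ of $G/N$ of maximal defect appears, and verifying it genuinely has maximal defect $P/N$ (this uses $p\mid[G:N]$ so that $P/N$ is nontrivial, and uses that tensoring by the $p'$-degree character $\hat\theta$ does not change the $p$-part of degrees, hence preserves height-zero-ness and the maximal-defect property). Everything else — Gallagher, the $*$-map computation of central characters, the degree count — is routine once the target block is correctly identified.
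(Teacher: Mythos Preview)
Your overall strategy matches the paper's: fix an extension $\tilde\theta$ of $\theta$, use Gallagher's bijection $\beta\mapsto\beta\tilde\theta$, and exploit that tensoring by $\tilde\theta$ sends a full block $\bar B$ of $G/N$ into a single block of $G$. However, there is a genuine gap in your treatment of the \emph{first} assertion.

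You correctly observe that an arbitrary extension $\tilde\theta$ need not lie in $B_0(G)$, and then try to take $\bar B$ to be ``whatever block of $G/N$ gets mapped into the block containing $\tilde\theta$''. But that block of $G$ need not be $B_0(G)$, so the characters $\beta\tilde\theta$ you produce need not lie in $\Irr_0(B_0(G)\mid\theta)$ at all --- you are counting characters in the wrong block. You never explain how to choose $\bar B$ so that $\beta\tilde\theta\in B_0(G)$ for $\beta\in\bar B$. The paper resolves this by reversing the order of choices: it first picks some $\chi\in\Irr_{p'}(B_0(G)\mid\theta)$ (which exists since $\theta$ extends to $PN$, by Lemma~\ref{lem:Murai}), writes $\chi=\beta\tilde\theta$ via Gallagher, and \emph{defines} $\bar B$ to be the block of $G/N$ containing this particular $\beta$. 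Then $\bar B$ has maximal defect because $\beta$ has $p'$-degree, and the fact that $\xi\tilde\theta\in B_0(G)$ for every $\xi\in\Irr(\bar B)$ follows from \cite[Lem.~2.4]{Riz19}.

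A second, smaller issue: your central-character justification ``$\omega_{\beta\chi}^*=\omega_\beta^*\omega_\chi^*$'' is not literally correct, since $\omega_{\beta\chi}(\hat K)=|K|\beta(g)\chi(g)/(\beta(1)\chi(1))$ while $\omega_\beta(\hat K)\omega_\chi(\hat K)$ carries an extra factor of $|K|$. The conclusion that tensoring with a fixed character is block-constant is true, but needs a cleaner argument; the paper simply cites \cite[Lem.~2.4]{Riz19} for the general case and \cite[Lem.~5.1]{NV17} for the identification $\bar B=B_0(G/N)$ in the second assertion.
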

\begin{proof}
Write simply $B_0=B_0(G)$. Let $P/N\in{\rm Syl}_p(G/N)$. By hypothesis,  $P/N\in{\rm Syl}_p(G/N)$ and $\theta$ extends to $P$. By Theorem \ref{thm:thetatheorem} $k_0(B_0|\theta)\geq 2$ and if $p\leq 3$ then $p$ divides $k_0(B_0|\theta)$. 

Let $\tilde{\theta}\in{\rm Irr}(G)$ be an extension of $\theta$ and let $\chi \in {\rm Irr}_{p'}(B_0|\theta)$. By Gallagher's theory, $\chi=\beta\tilde{\theta}$ for some  $\beta\in{\rm Irr}_{p'}(G/N)$. Now the $p$-block $\bar{B}$ of $G/N$ containing $\beta$ is of maximal defect.
Using  \cite[Lem.~2.4]{Riz19}, for instance, we have that for every  $\xi\in{\rm Irr}_{p'}(\bar{B})$ the character $\xi\tilde{\theta}\in{\rm Irr}_0(B_0|\theta)$. Hence  $k_0(B_0|\theta)\geq k_0(\bar{B})$.  If we can choose $\tilde{\theta}$ in $B_0$ then $\bar B=B_0(G/N)$ by \cite[Lem.~5.1]{NV17}, for instance. 
\end{proof}

\section{Statements for almost simple groups}\label{Sec:almostsimple}

Our goal in the next several sections is to prove Theorems \ref{thm:conditionsonsimples} and \ref{thm:theoremAforalmostsimple} stated below. The latter is  Theorem A from the Introduction for almost simple groups $A$ with $[A : {\rm Soc}(A)]_3\leq 3$.

\begin{thm}\label{thm:conditionsonsimples} Let $S$ be a nonabelian simple group of order divisible by $3$.  Then:
\begin{enumerate}
    \item[(a)] Let $X\in{\rm Syl}_3({\rm Aut}(S))$. One of the following holds:
\begin{enumerate}
\item[(a1)] There exist $1_S\neq \theta_1,\theta_2\in{\rm Irr}_0(B_0(S))$ nonconjugate in ${\rm Aut}(S)$ and $X$-invariant; or
\item[(a2)] there exist an $X$-invariant $1_S\neq \theta\in{\rm Irr}_0(B_0(S))$  such that for  every $S\leq T \leq {\rm Aut}(S)_\theta$, we have $\theta$ extends to some character in $B_0(T)$.
\end{enumerate}

\item[(b)] The set of degrees of $3'$-irreducible characters in the principal $3$-block
 of $S$ has size at least 3.

\item[(c)] If $S$ has order divisible by $9$, then there exist $1_S\neq \theta_1,\theta_2, \theta_3\in{\rm Irr}_0(B_0(S))$ non-conjugate in ${\rm Aut}(S)$. 
\end{enumerate}
\end{thm}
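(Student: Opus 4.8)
The plan is to prove Theorem \ref{thm:conditionsonsimples} by a case analysis on the isomorphism type of the nonabelian simple group $S$, following the Classification of Finite Simple Groups, since all three statements (a), (b), (c) are fundamentally statements about the explicit character theory of the principal $3$-block of $S$ and the action of $\operatorname{Aut}(S)$ on it. For a fixed $S$, the core data we need are: the list $\operatorname{Irr}_0(B_0(S))$ of $3'$-degree characters in the principal $3$-block (equivalently, by Brauer's height-zero theorem for the principal block / the structure of $B_0(S)$, these can be read off once a Sylow $3$-subgroup and its normalizer structure are understood), the fusion/orbits of these characters under $\operatorname{Out}(S)$, and their behavior in $\operatorname{Syl}_3(\operatorname{Aut}(S))$. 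So the first step is to stratify: (i) the sporadic groups and the Tits group, handled by direct inspection of \texttt{GAP}/ATLAS character table data; (ii) the alternating groups $\mathfrak{A}_n$ with $3 \mid n!$, treated in Section \ref{???} [the paper's alternating-groups section]; (iii) the simple groups of Lie type, split further into defining characteristic $3$ and non-defining characteristic, treated in the Lie-type section.

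For part (b), the plan is to observe that $k_0(B_0(S)) \geq 6$ or $\geq 9$ once we are in a situation where we expect $[X:X'] \geq 9$ is not forced, but more directly: we want at least three distinct values among $\{\chi(1) : \chi \in \operatorname{Irr}_0(B_0(S))\}$. The trivial character contributes the degree $1$. We then need two further distinct $3'$-degrees occurring in $B_0(S)$. For groups of Lie type in non-defining characteristic this follows from the explicit description of $B_0$ via $d$-Harish-Chandra theory / Broué–Malle–Michel, together with the fact that the unipotent characters in $B_0$ already furnish several distinct $3'$-degrees (e.g. the Steinberg character has $3'$-degree only in defining characteristic, but low-rank unipotent characters give the needed spread); in defining characteristic $3$ one uses that $B_0(S)$ contains \emph{all} $3'$-degree characters and Lusztig's parametrization gives many such degrees. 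For alternating groups one uses hook-length combinatorics: the $3$-core determines the block, and for the principal block one exhibits explicit partitions with distinct $3'$-degree hook products. The sporadic cases are a finite check. The genuinely small cases (say $|S|_3 = 3$ and $S$ small, like $\operatorname{PSL}_2(q)$ with $q \equiv \pm 1 \pmod 9$ excluded) must be inspected individually, and one must be careful that $\operatorname{PSL}_2(q)$ for $q = 4, 7, 8, \dots$ genuinely has three distinct $3'$-degrees in $B_0$ — this is where I expect a couple of annoying small exceptions to need hand-checking, but I expect none to actually fail (b).

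For parts (a) and (c), the plan is similar but now we must track the $\operatorname{Aut}(S)$-action. For (c), when $9 \mid |S|$, we want three nontrivial $3'$-degree characters in $B_0(S)$ that are pairwise non-$\operatorname{Aut}(S)$-conjugate. The strategy: produce characters in $B_0(S)$ with pairwise distinct degrees (three nontrivial ones), since characters of different degrees cannot be conjugate; failing that (when degrees coincide), use rationality/field-of-values or explicit knowledge of the $\operatorname{Out}(S)$-action (diagonal, field, graph automorphisms act in understood ways on Lusztig series, and on $\operatorname{Irr}(\mathfrak{A}_n)$ the only outer automorphism for $n \neq 6$ swaps the two constituents of a self-conjugate partition's restriction). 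Here the hypothesis $9 \mid |S|$ is used to guarantee $B_0(S)$ is large enough: when $|S|_3 = 3$ the principal block is Brauer-tree-like or has a cyclic/small defect group and might only have two nontrivial height-zero characters up to conjugacy, which is exactly why (c) needs the extra divisibility. For (a), the dichotomy (a1)/(a2) is designed so that either we find two nonconjugate $X$-invariant nontrivial characters (the generic case — follows from (c)-type arguments once one checks $X$-invariance, which holds e.g. for unipotent characters under field automorphisms of $3$-power order only in controlled ways), or, in the tight cases, a single $X$-invariant character that extends compatibly to every intermediate group up to its inertia group — this extension statement is the delicate part and will be handled using Lemma \ref{lem:extensionprincipalblock}, Gallagher, and the theory of character triples / the fact that for alternating and Lie type groups the relevant cohomological obstructions vanish (often because a suitable $3'$-central extension or the local structure forces extendibility, cf. the known solutions to the inductive McKay/Alperin–McKay conditions).

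The main obstacle, I expect, will be establishing the extension clause (a2): showing a chosen $X$-invariant $\theta \in \operatorname{Irr}_0(B_0(S))$ extends to a character \emph{in the principal block} of every $T$ with $S \leq T \leq \operatorname{Aut}(S)_\theta$. Extendibility to $\operatorname{Aut}(S)_\theta$ as an ordinary character is typically known (from work on the McKay conjecture), but the principal-block refinement requires combining it with Lemma \ref{lem:extensionprincipalblock}/Theorem \ref{thm:NRSV}-type control and checking that the extension can be chosen with trivial determinantal or block-theoretic obstruction — this is where the argument must descend to cases and where the Lie-type groups in defining characteristic $3$ (many field automorphisms, large $\operatorname{Out}$) versus non-defining characteristic behave quite differently. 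A secondary obstacle is simply the bookkeeping of $\operatorname{Out}(S)$-orbits on $\operatorname{Irr}_0(B_0(S))$ for the groups of Lie type, which requires careful use of the parametrization of unipotent and semisimple characters and the explicit action of diagonal/field/graph automorphisms on them, with the triality case ${}^3D_4$, the unitary groups, and $\operatorname{PSL}_n/\operatorname{PSU}_n$ with $3 \mid \gcd(n, q \mp 1)$ (extra diagonal automorphisms of order $3$) being the most delicate.
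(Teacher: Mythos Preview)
Your plan follows essentially the same architecture as the paper's proof: CFSG case split into sporadic (GAP), alternating (explicit partitions with distinct $3'$-degrees via hook lengths, then Olsson's $3$-core tower count for (c)), and Lie type split by defining vs.\ non-defining characteristic, using distinct character degrees to certify non-$\Aut(S)$-conjugacy and unipotent characters as the backbone in non-defining characteristic. That much is on target.

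Two places where your intuition is off and would cost you time. First, your mechanism for the principal-block extension clause (a2) is wrong: Lemma~\ref{lem:extensionprincipalblock} concerns linear characters of a \emph{normal $p$-subgroup} and is not what produces the extension here. The paper's actual tool is Lemma~\ref{lem:Tlemma}: once $\wt\chi\in\Irr(B_0(\wt S))$ extends to $\Aut(S)_{\wt\chi}$ and restricts irreducibly to $S$, the fact that $\Aut(S)/\wt S$ and $\wt S/S$ are abelian forces every character above $\wt\chi$ to be an extension, and \cite[Thm.~9.4]{Navarro98} then places one of them in $B_0$; no cohomological obstruction analysis or inductive-McKay machinery is needed. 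Second, you misidentify the (a2) cases: the paper shows (a1) holds everywhere \emph{except} $S=\PSL_2(q)$ and $S=\PSL_3^\epsilon(q)$ with $3\mid(q+\epsilon)$ (Remark~\ref{remark:conditions}), precisely the cases where $\Out(S)$ has normal cyclic Sylow $3$-subgroup --- not triality $\type{D}_4$ or $\PSL_n^\epsilon$ with $3\mid\gcd(n,q-\epsilon)$, which are handled under (a1) via explicit semisimple/unipotent characters. (Also, a slip: the Steinberg character has $3'$-degree in \emph{non}-defining characteristic, not defining --- it is one of the workhorse unipotent characters in Proposition~\ref{prop:4unipsext}.)
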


\begin{rem}\label{remark:conditions}
We will see that the only cases in which we rely on (a2) rather than (a1) are when $S=\PSL_2(q)$ or when $3\nmid(q-\epsilon)$ and $S= \PSL_3^\epsilon(q)$. In those (a2) cases, $\Out(S)$ has a normal cyclic Sylow $3$-subgroup.
For $p\geq 5$, the $p$-version of condition (b) above was shown to hold in \cite{GRSS20}. Note that condition (c) above holds if $S$ has abelian Sylow $3$-subgroups by \cite[Thm.~2.1]{HSV24}, so when proving (c) we may assume that $S$ has non-abelian Sylow $3$-subgroups.
\end{rem}

\begin{thm}\label{thm:theoremAforalmostsimple} Let $A$ be an almost simple group with $[A:{\rm Soc}(A)]_3\leq 3$. Write $k_0(B_0)$ to denote the number of irreducible characters of degree coprime to $3$ in the principal $3$-block of $A$ and $P\in \syl 3 A$.
Then $[P:P']=9$ if, and only if, $k_0(B_0)\in \{ 6, 9 \}$.
\end{thm}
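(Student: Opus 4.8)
The plan is to combine the block-theoretic machinery of Section~\ref{Sec:principalblocks} with Theorem~\ref{thm:conditionsonsimples} and the explicit, family-by-family analysis of simple groups carried out in the following two sections (alternating groups, then groups of Lie type), invoking the Classification of Finite Simple Groups to split into these cases and treating the finitely many sporadic possibilities by direct computation. Throughout, write $S={\rm Soc}(A)$, fix $Q=P\cap S\in\syl 3S$, and note that $[P:Q]=[A:S]_3\in\{1,3\}$ and that $B_0:=B_0(A)$ covers $b_0:=B_0(S)$, with defect groups $P\supseteq Q$ respectively. Recall also that, since $P$ is a $3$-group, $[P:P']\geq 9$ with equality exactly when $P$ is abelian of order $9$ or $P/P'\cong C_3\times C_3$ (a cyclic abelianization forces $P$ to be abelian), while $[P:P']=3$ forces $|P|=3$.

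First I would dispose of the case $|P|=3$: then $B_0$ has cyclic defect group of order $3$, so every irreducible character of $B_0$ has height zero and $k_0(B_0)=k(B_0)=3$ by Brauer's theory of blocks with cyclic defect group; since also $[P:P']=3\neq 9$, both sides of the asserted equivalence fail and there is nothing more to prove. From now on $9\mid|A|$, so $|P|\geq 9$, and by Theorem~\ref{Landrock} we have $3\mid k_0(B_0)$. In view of the trichotomy $[P:P']=3,\ 9,\ \geq 27$ and the parallel trichotomy $k_0(B_0)=3,\ \in\{6,9\},\ \geq 12$, it remains to prove: (i) if $[P:P']=9$ then $6\leq k_0(B_0)\leq 9$; and (ii) if $[P:P']\geq 27$ then $k_0(B_0)\geq 12$.

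For the lower bounds I would use Theorem~\ref{thm:conditionsonsimples} together with Theorem~\ref{thm:thetatheorem}. Choosing $P$ inside a Sylow $3$-subgroup $X$ of ${\rm Aut}(S)$, any $X$-invariant $\theta\in{\rm Irr}_0(b_0)$ satisfies $P\leq A_\theta$, so $3\nmid[A:A_\theta]$, and as $PS/S$ is cyclic of order at most $3$ such a $\theta$ extends to $PS$. If $[A:S]_3=1$ (whence $9\mid|S|$) one applies part~(c): the three ${\rm Aut}(S)$-nonconjugate nontrivial characters $\theta_i\in{\rm Irr}_0(b_0)$ each lie below a height zero character of $B_0$ — the degree and block conditions being automatic because $[A:S]_3=1$ — and these, with $1_A$, give $k_0(B_0)\geq 4$, hence $\geq 6$. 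If $[A:S]_3=3$ one uses part~(a): in case~(a1), Theorem~\ref{thm:thetatheorem} applied to $\theta_1,\theta_2$ gives $k_0(B_0\mid\theta_i)\geq 3$, and with $k_0(B_0\mid 1_S)\geq k_0(B_0(A/S))=3$ one gets $k_0(B_0)\geq 9$; in the (a2) cases one instead feeds the hypothesis that $\theta$ extends into $B_0(A_\theta)$ into Lemma~\ref{lem:extension} and the Clifford/Brauer correspondences to obtain $k_0(B_0\mid\theta)\geq 3$, hence $k_0(B_0)\geq 6$. This settles the lower bound $k_0(B_0)\geq 6$ in (i). To upgrade it to $k_0(B_0)\geq 12$ in (ii) — where necessarily $9\mid|S|$ since $|Q|\geq 9$ — one must exploit the finer description of ${\rm Irr}_0(b_0)$: for groups of Lie type the number of height zero characters in $b_0$ (and hence in $B_0$) grows with the field once the Sylow $3$-subgroup is noncyclic of order $\geq 27$, and for the alternating and sporadic groups the bound is checked directly.

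The main obstacle — the technical core of the argument — is the exact determination of $k_0(B_0)$ in the borderline case $[P:P']=9$, in particular the upper bound $k_0(B_0)\leq 9$, for which there is no general block-theoretic inequality. One must first classify, for each family of simple groups $S$ and each almost simple $A$ with ${\rm Soc}(A)=S$ and $[A:S]_3\leq 3$, precisely when $[P:P']=9$ (a mixture of small sporadic examples and infinite families such as $\PSL_2(q)$, $\PSL_3^{\pm}(q)$, and their almost simple overgroups arising from field automorphisms of order $3$), and then compute $k_0(B_0(A))$ in each such case, verifying that the value always lies in $\{6,9\}$. It is this classification and the accompanying character-theoretic computations — carried out family by family in the next two sections, with the sporadic groups handled via the character-table library — that occupy the bulk of the proof.
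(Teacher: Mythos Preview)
Your outline is correct and follows the paper's approach: the result is established family by family via the Classification, with sporadic and small cases handled computationally (Proposition~\ref{prop:spor}), alternating groups combinatorially (Section~\ref{sec:alternating}), and groups of Lie type via an explicit classification of when $[P:P']=9$ combined with Deligne--Lusztig and block-theoretic arguments to control $k_0(B_0)$. Two minor remarks: the uniform lower bound $k_0(B_0)\geq 6$ that you extract from Theorem~\ref{thm:conditionsonsimples} already follows more cheaply from \cite[Thm.~C]{NST18} (since $[P:P']>3$ forces $k_0(B_0)>3$), and the list of families with $[P:P']=9$ is considerably longer than your ``such as $\PSL_2(q)$, $\PSL_3^{\pm}(q)$'' suggests --- see Proposition~\ref{prop:SylowSideNondef} --- so the case analysis you defer is indeed the bulk of the work.
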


\begin{prop}\label{prop:spor}
Theorems \ref{thm:conditionsonsimples} and \ref{thm:theoremAforalmostsimple} hold when $S$ is a simple sporadic group,  a group of Lie type with exceptional Schur multiplier, or one of:  $\tw{2}\type{G}_2(3)'$, $\tw{2}\type{F}_4(2)'$, $\PSL_2(27)$, $\PSL_2(81)$, $\PSL_3^\epsilon(3)$, $\PSL_4^\epsilon(3)$, or $\PSp_4(3)$.
\end{prop}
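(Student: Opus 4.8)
The proof of Proposition \ref{prop:spor} is inherently computational, since the groups listed are a fixed finite collection (the sporadics, the handful of groups of Lie type with exceptional Schur multiplier, and the short explicit list of small Lie type groups). The plan is to verify Theorems \ref{thm:conditionsonsimples} and \ref{thm:theoremAforalmostsimple} for each group on this list directly, using known character-table data, the decomposition matrices available in the literature and in \texttt{GAP}'s character table library (including the library of Brauer tables), and explicit knowledge of $\Out(S)$ and its action on $\Irr(S)$. For each simple group $S$ in the list, I would first compute $B_0(S)$ for $p=3$ — equivalently, identify $\Irr_0(B_0(S))$ as the set of $3'$-degree characters $\chi$ with $\omega_\chi$ agreeing with the principal block idempotent, which is routine from the character table together with the class of $3$-singular elements — and then read off the degrees of its height-zero characters to get part (b), and count the $\Aut(S)$-orbits on $\Irr_0(B_0(S))\setminus\{1_S\}$ to get part (c) (for those $S$ with $9\mid |S|$).

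For part (a), I would compute $X\in\Syl_3(\Aut(S))$ and its action on $\Irr_0(B_0(S))$; in the generic situation one exhibits two $X$-fixed nontrivial characters in distinct $\Aut(S)$-classes, i.e.\ verifies (a1). For the groups in the list where (a1) can fail — by Remark \ref{remark:conditions} the relevant small cases are $\PSL_2(27)$, $\PSL_2(81)$, and $\PSL_3^\epsilon(3)$ — I would instead establish (a2): pick a convenient $X$-invariant $1_S\neq\theta\in\Irr_0(B_0(S))$ and, for each intermediate group $S\le T\le \Aut(S)_\theta$, check that $\theta$ extends to $T$ (using that $\Out(S)$ has a cyclic normal Sylow $3$-subgroup in these cases, so the relevant obstruction cocycles vanish, e.g.\ via \cite[Thm.~6.26, 11.7]{Isaacs76} or direct inspection of the character table of $T$) and that some extension lies in $B_0(T)$ — here Theorem \ref{thm:NRSV}, Lemma \ref{lem:extensionprincipalblock}, or simply a direct block computation in $T$ suffices since $T$ is again a small explicit group.

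For Theorem \ref{thm:theoremAforalmostsimple}, I would go through the finitely many almost simple groups $A$ with ${\rm Soc}(A)=S$ on the list and $[A:S]_3\le 3$: for each such $A$ compute $P\in\Syl_3(A)$, determine $[P:P']$ (from the explicit structure of $P$, or directly with \texttt{GAP}), compute $k_0(B_0(A))$, and check the equivalence $[P:P']=9\iff k_0(B_0(A))\in\{6,9\}$. Since the Sylow $3$-subgroups of these groups are small, $[P:P']$ is easy to evaluate, and $k_0(B_0(A))$ follows from the character table of $A$ (available in the library, together with the action needed to build $A$ from $S$). The main obstacle is not conceptual but bookkeeping: one must be careful to treat every almost simple extension of each listed socle, to correctly identify the principal $3$-block in each case (especially distinguishing height-zero characters when a defect group is nonabelian), and to handle the groups with exceptional Schur multipliers, where the relevant covering groups and their character tables must be invoked correctly. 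I would organize the verification group-by-group in a table recording $S$, $\Out(S)$, the list of $\Irr_0(B_0(S))$-degrees, the $\Aut(S)$-orbit structure, and, for each $A$, the pair $([P:P'],\,k_0(B_0(A)))$, so that (a)–(c) and Theorem \ref{thm:theoremAforalmostsimple} can be read off directly.
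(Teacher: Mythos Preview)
Your proposal is correct and takes essentially the same approach as the paper: a direct computational verification in \texttt{GAP} over the finite list of groups. The paper's proof is even terser than yours, simply citing computation in \cite{GAP} and noting that for some of the larger groups of Lie type with exceptional Schur multiplier it also invokes Propositions \ref{prop:SylowSideNondef} and \ref{prop:nondefmostA3sylow} (the general classification of when $[P:P']=9$ in non-defining characteristic) to handle the Sylow side rather than computing $[P:P']$ directly; your plan to compute $[P:P']$ explicitly in each case is equally valid.
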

\begin{proof}
A list of exceptional Schur multipliers may be found, for example, in \cite[Table~24.3]{MT11}. The result follows from computation in  \cite{GAP}, also taking into account Propositions \ref{prop:SylowSideNondef} and \ref{prop:nondefmostA3sylow} below for some groups of Lie type with exceptional Schur multipliers. 
\end{proof}

The following appeared within the proof of \cite[Thm.~2.4]{GRSS20}. It shows how we will use the statement of Theorem \ref{thm:conditionsonsimples}.(a2) in the context of minimal normal non-abelian subgroups. 

\begin{thm}\label{tensorinduction}
Suppose that $N\triangleleft\, G$ is a direct product $N=S_1\times \cdots \times S_t$ of simple non-abelian groups of order divisible by $p$ transitively permuted by $G$ by conjugation.
Let $\theta=\theta_1\times \cdots \times \theta_t \in {\rm Irr}(N)$ be $G$-invariant. If $\theta_1$ extends to some irreducible character of degree coprime to $p$ in the principal $p$-block of $T$ for every  $S_1\leq T \leq {\rm Aut}(S_1)_{\theta_1}$, then $\theta$ extends to some $\chi \in {\rm Irr}_{p'}(B_0(G))$. 
\end{thm}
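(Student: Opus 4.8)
The strategy is tensor induction. Write $G_1 = \Stab_G(S_1)$ for the stabilizer of the first factor under the conjugation action; since $G$ transitively permutes $S_1,\dots,S_t$, we have $[G:G_1]=t$, and choosing coset representatives $g_1=1,g_2,\dots,g_t$ we may identify $S_i = S_1^{g_i}$ and $\theta_i=\theta_1^{g_i}$. The first step is to understand the action of $G_1$ on $S_1$: since $\bfC_{G_1}(S_1)$ acts trivially, $G_1/\bfC_{G_1}(S_1)$ embeds into $\Aut(S_1)$, and because $\theta=\theta_1\times\cdots\times\theta_t$ is $G$-invariant, $\theta_1$ is $G_1$-invariant, so the image $T_1$ of $G_1$ in $\Aut(S_1)$ lies in $\Aut(S_1)_{\theta_1}$. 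One should take a moment to note that $\bfC_{G_1}(S_1) \supseteq S_2\times\cdots\times S_t$ and in fact $N\bfC_{G_1}(S_1)$ sits inside $G_1$ with $G_1/(S_1\bfC_{G_1}(S_1)) \cong T_1/\Inn(S_1) \leq \Out(S_1)_{\theta_1}$; the point is that $T_1$ is genuinely of the form "$S_1 \leq T \leq \Aut(S_1)_{\theta_1}$" up to the inner/outer bookkeeping, so the hypothesis applies to it.

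Next, apply the hypothesis to $T:=T_1$: there is an extension $\hat\theta_1 \in \Irr_{p'}(B_0(T_1))$ of $\theta_1$. Inflate $\hat\theta_1$ through $G_1 \to G_1/\bfC_{G_1}(S_1) \hookrightarrow \Aut(S_1)$, but that only handles the $S_1$-part; the correct object is obtained by "spreading $\hat\theta_1$ over all the factors". Concretely, one builds a representation of $G$ on the tensor product $V^{\otimes t}$, where $V$ affords $\hat\theta_1$ as a $G_1$-module: for $g\in G$ permuting the factors via the permutation $\sigma=\sigma(g)\in S_t$ (so $S_i^{g}=S_{\sigma(i)}$), decompose $g g_i = g_{\sigma(i)} h_i$ with $h_i \in G_1$, and let $g$ act on $v_1\otimes\cdots\otimes v_t$ by permuting the tensor slots according to $\sigma$ and acting on the slot landing in position $\sigma(i)$ by $h_i$ via $\hat\theta_1$. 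This is the standard tensor-induction construction (as in Curtis--Reiner or Isaacs' book in the context of \cite[Thm.~6.26]{Isaacs76}), and one checks it is a well-defined representation of $G$ whose character $\chi$ restricts to $N$ as $\theta_1\times\cdots\times\theta_t=\theta$ — so $\chi$ is irreducible (it lies over the $G$-invariant irreducible $\theta$ and has the right degree $\theta_1(1)^t=\theta(1)$, forcing $\chi_N=\theta$ and $\chi$ irreducible) and extends $\theta$. Its degree is $\hat\theta_1(1)^t$, which is coprime to $p$ since $\hat\theta_1(1)=\theta_1(1)$ is.

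It remains to check $\chi \in \Irr(B_0(G))$. For this I would use the characterization of the principal block via $p$-regular elements, i.e. $\chi\in B_0(G)$ iff $\sum_{x\in G^0}\chi(x)\neq 0$ (as recalled in Section \ref{Sec:principalblocks}), or equivalently Brauer's third main theorem together with the fact that $\hat\theta_1\in B_0(T_1)$. The cleanest route: for a $p$-regular $g\in G^0$ the permutation $\sigma(g)$ need not be trivial, but the value $\chi(g)$ is a product/sum expression in the values of $\hat\theta_1$ on $p$-regular elements of $G_1$ coming from cycles of $\sigma(g)$ (each cycle of length $\ell$ contributes, via the trace of a "twisted permutation" matrix, a term $\hat\theta_1$ evaluated at an element whose image is a product of $\ell$ of the $h_i$'s, and that element is $p$-regular because $g$ is); since $\hat\theta_1$ lies in the principal block of $T_1$ — so $\hat\theta_1^* $ behaves like the trivial character under the $*$-map on $p$-regular classes in the relevant sense — one gets $\chi^*$ agreeing with (trivial character)$^{\otimes t}$-type behavior on $G^0$, hence $\chi\in B_0(G)$ by Brauer's third main theorem / the block-orthogonality criterion. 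The main obstacle is precisely this last block-theoretic bookkeeping: making the tensor-induction formula for character values on $p$-regular elements explicit enough to conclude membership in the principal block, rather than merely that $\chi$ is an extension of $\theta$ of $p'$-degree. In fact, since this statement is quoted as "appeared within the proof of \cite[Thm.~2.4]{GRSS20}", I would expect the cleanest writeup to simply invoke the construction and verification from there, citing \cite[Thm.~2.4]{GRSS20} (and \cite[Thm.~6.26]{Isaacs76} for the tensor-induction extension) and then noting that the principal-block membership follows from Brauer's third main theorem because $\hat\theta_1$ was chosen in $B_0(T_1)$.
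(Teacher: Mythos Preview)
Your overall architecture --- extend $\theta_1$ to $\hat\theta_1\in\Irr_{p'}(B_0(\bfN_G(S_1)/\bfC_G(S_1)))$, inflate, and tensor-induce to $G$ --- matches the paper exactly, and the verification that $\chi_N=\theta$ (hence $\chi$ is irreducible of $p'$-degree) is the same. Where you diverge is in establishing $\chi\in B_0(G)$, and this is where your proposal has a genuine gap.

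Your plan is a direct computation: write $\chi(g)$ for $p$-regular $g$ via the cycle formula for tensor induction, observe that the cycle contributions are values of $\hat\theta_1$ at (hopefully) $p$-regular elements, and then invoke ``$\hat\theta_1^*$ behaves like the trivial character on $p$-regular classes''. The last assertion is not correct as stated: membership in the principal block means the \emph{central character} of $\hat\theta_1$ agrees with that of $1$ modulo the maximal ideal, not that $\hat\theta_1$ itself reduces to the trivial character pointwise on $p$-regular elements. Turning the cycle formula into a statement about central characters of $\chi$ is not automatic, and your sketch does not supply the missing bridge. So as written, the principal-block step is unjustified.

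The paper avoids this computation entirely by passing through the normal subgroup $M=\bigcap_i \bfN_G(S_i)$. On $M$, the tensor-induced character $\chi$ restricts to a genuine product of the $G$-conjugates of $\hat\theta_1|_M$, each of which lies in $B_0(M)$; since an irreducible product of principal-block characters stays in the principal block, $\chi_M\in\Irr_{p'}(B_0(M))$. Finally, letting $Q=P\cap M\in\Syl_p(M)$ with $Q_i=Q\cap S_i$, one has $\cent{G}{Q}\subseteq\cent{G}{Q_i}\subseteq\bfN_G(S_i)$ for every $i$, whence $\cent{G}{Q}\subseteq M$, and Lemma~\ref{lemma:onlyblockabove} forces $B_0(G)$ to be the \emph{only} block of $G$ covering $B_0(M)$. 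Thus $\chi\in B_0(G)$ with no character-value computation at all. This intermediate subgroup $M$ and the application of Lemma~\ref{lemma:onlyblockabove} are the missing ideas in your argument.
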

\begin{proof} We may view $\theta_1$ as a character of $S_1 {\bf C}_G (S_1)/{\bf C}_G (S_1)$. Of course, we have that $S_1 {\bf C}_G (S_1)/{\bf C}_G (S_1)$ is normal in ${\bf N}_G (S_1)/{\bf C}_G (S_1)$. By hypothesis, $\theta_1$ extends to $\hat {\theta_1}\in {\rm Irr_{p'}}(B_0({\bf N}_G (S_1)/{\bf C}_G (S_1))) $.
Let $\displaystyle M=\bigcap_{i=1}^t {\bf N}_G(S_i) \subseteq {\bf N}_G(S_1)$. We have that $\hat{\theta_1}|_M\in {\rm Irr}_{p'}(B_0(M))$. Consider $\chi =\hat{\theta_1}^{\otimes G}$ the tensor induction of $\hat \theta_1$ as in \cite[Section 10.2]{Navarro18}. By  \cite[Cor.~10.5]{Navarro18}, we have that $\chi_N= \theta$. Notice that $\chi_M$ is a product of $G$-conjugates of $\hat{\theta_1}_M$ (see the proof of \cite[Thm.~2.4]{GRSS20} for further details), and so $\chi _M \in {\rm Irr}_{p'}(B_0(M))$.
Let $Q=P\cap M\in{\rm Syl}_p(M)$ and let $Q_i=Q\cap S_i\in {\rm Syl}_p(S_i)$. Since $\cent G Q\subseteq\cent G {Q_i}\subseteq\norm {G}{S_i}$, we have that $\cent G {Q}\subseteq M$ and then $B_0(G)$ is the only block covering $B_0(M)$ by Lemma \ref{lemma:onlyblockabove}. Therefore, $\chi\in{\rm Irr}_{p'}(B_0(G))$, as wanted.
\end{proof}

The following result will come in handy when proving the case $[A:{\rm Soc}(A)]_3=1$ of Theorem \ref{thm:theoremAforalmostsimple}.

\begin{prop} \label{prop:Noelialemma}
Let $p=3$, let $S$ be a nonabelian simple group of order divisible by $p$, and let $S\leq A\leq{\rm Aut}(S)$ with $[A:S]_3=1$. Suppose that there exists an $A$-invariant nontrivial $\theta\in{\rm Irr}_{p'}(B_0(S))$ and that there are at least 5 $A$-orbits of characters in ${\rm Irr}_{p'}(B_0(S))$. Suppose that $k_0(B_0(A))\in\{6,9\}$. Then $k_0(B_0(S))\leq 18$.
\end{prop}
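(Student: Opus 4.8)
The strategy is to transfer the count $k_0(B_0(A))$ down to $S$ by exploiting the $A$-invariant character $\theta$ and Gallagher-type bookkeeping, then to use the lower bound of $5$ on the number of $A$-orbits in ${\rm Irr}_{p'}(B_0(S))$ to force a contradiction with $k_0(B_0(A))\in\{6,9\}$ whenever $k_0(B_0(S))$ were too large. First I would fix $P\in\syl 3 A$; since $[A:S]_3=1$, we have $P\leq S$, so $B_0(A)$ covers $B_0(S)$, and in fact by Lemma~\ref{lemma:onlyblockabove} (applied with $M=S$, noting $\cent A P\leq \cent A{P\cap S}$ lands in... well, using that $S$ contains a full Sylow $3$-subgroup) $B_0(A)$ is the only block of $A$ covering $B_0(S)$. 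Hence every irreducible constituent of $\chi_S$ for $\chi\in{\rm Irr}_{p'}(B_0(A))$ lies in ${\rm Irr}_{p'}(B_0(S))$ (degrees stay coprime to $3$ since $[A:S]$ is a $3'$-number), and conversely every $\psi\in{\rm Irr}_{p'}(B_0(S))$ is covered by some character of ${\rm Irr}_{p'}(B_0(A))$. So ${\rm Irr}_{p'}(B_0(A))$ partitions, via restriction, over the $A$-orbits on ${\rm Irr}_{p'}(B_0(S))$, and for each orbit the characters of $A$ lying above it number at most (size of the relevant relative character group) but at least $1$.

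\smallskip

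Next, the key point is the contribution of the trivial orbit $\{1_S\}$ and the orbit of $\theta$. Above $1_S$ we get exactly ${\rm Irr}_{p'}(B_0(A/S'))$-type characters — more precisely, the linear-ish characters of $A/S$ in the principal block — but since $[A:S]_3=1$, $A/S$ has a normal (indeed the whole) $3'$-Hall story is not quite it; rather, by Alperin--Dade (Theorem~\ref{alperin-dade}) applied to $A/S$... hmm, instead I would argue: the characters of $B_0(A)$ lying over $1_S$ are exactly ${\rm Irr}(B_0(A/S))$ and those of $3'$-degree among them number $k_0(B_0(A/S))\geq 1$. Above the $A$-orbit of $\theta$: since $\theta$ is $A$-invariant and has $3'$-degree lying in $B_0(S)$, and $[A:S]$ is a $3'$-number, $\theta$ extends to $A$ (Gallagher/character-triple with $3'$-index gives an extension, and one checks an extension can be chosen in $B_0(A)$ using that the index is $3'$ — e.g.\ via Theorem~\ref{alperin-dade} or directly since every block over $B_0(S)$ equals $B_0(A)$). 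Then Gallagher's theorem gives $k_0(B_0(A)|\theta)=k_0(B_0(A/S))$ as well. The other $\geq 3$ nontrivial non-$\theta$ orbits each contribute at least one character of $3'$-degree to $B_0(A)$. Summing: $k_0(B_0(A))\geq k_0(B_0(A/S)) + k_0(B_0(A/S)) + 3 = 2k_0(B_0(A/S))+3$. With $k_0(B_0(A))\in\{6,9\}$ this forces $k_0(B_0(A/S))\leq 3$, and in fact $k_0(B_0(A/S))\in\{1,2,3\}$.

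\smallskip

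Finally, to bound $k_0(B_0(S))$ I would turn the above around. Each $A$-orbit $\mathcal O$ in ${\rm Irr}_{p'}(B_0(S))$ of size $|\mathcal O|$ is covered by some $\chi\in{\rm Irr}_{p'}(B_0(A))$ with $|\mathcal O|$ dividing... no: $|\mathcal O|=[A:A_\psi]$ for $\psi\in\mathcal O$, and $\chi_S$ is the sum over $\mathcal O$ with multiplicities, so $\chi(1)=e\cdot|\mathcal O|\cdot\psi(1)$ for the ramification $e$, hence $|\mathcal O|\mid \chi(1)/\psi(1)\mid [A:S]$, a $3'$-number — fine but I want the reverse count. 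The cleanest route: fix an orbit representative $\psi$; the number of characters of $S$ in that orbit is $[A:A_\psi]$, and the number of $\chi\in{\rm Irr}_{p'}(B_0(A))$ lying over $\psi$ is at most $|{\rm Irr}(A_\psi/S \text{ relative to the projective rep})|\leq$ (number of $3'$-degree characters of $A_\psi/S$ in the relevant block) $\leq k(A_\psi/S)_{3'}$, and dually the total over all of $\mathcal O$ is at least $1$. So $|{\rm Irr}_{p'}(B_0(S))| = \sum_{\text{orbits }\mathcal O}|\mathcal O|$, and I bound each $|\mathcal O|\leq [A:S]\leq$ a bounded number; but this doesn't immediately give $18$. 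The sharper idea is instead: $k_0(B_0(S))=\sum_\mathcal O |\mathcal O|$ while $k_0(B_0(A))=\sum_\mathcal O(\#\chi\text{ over }\mathcal O)$, and for each orbit $\#\chi\text{ over }\mathcal O\geq$ (number of extensions / projective characters) which, combined with the fact that $|\mathcal O|$ and the number of $\chi$'s over it are governed by the same subgroup $A_\psi/S$ of $A/S$, yields $|\mathcal O|\leq |A/S|_{3'}\cdot(\#\chi\text{ over }\mathcal O)$ roughly — more carefully $|\mathcal O|\cdot(\text{avg }k(A_\psi/S)_{3'})\geq$ something. I would instead split into cases on $|A/S|$: since $k_0(B_0(A/S))\leq 3$ and $A/S$ is (essentially) the outer part, $A/S$ is small/solvable with a cyclic-ish structure in the relevant examples, so $|A/S|\leq$ some small bound (here at most $6$, giving orbit sizes at most $6$, and with $k_0(B_0(A))\leq 9$ distributed over $\geq 5$ orbits one gets $k_0(B_0(S))\leq 9\cdot 2 = 18$ after accounting that the trivial and $\theta$-orbits are size $1$).

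\smallskip

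\textbf{Main obstacle.} The delicate step is the quantitative comparison between the number of $A$-characters over a given $S$-orbit and the size of that orbit — i.e.\ controlling ramification and the structure of $A/S$ precisely enough to land at the bound $18$ rather than something weaker. I expect this requires: (i) pinning down that an $A$-invariant $\theta\in{\rm Irr}_{p'}(B_0(S))$ actually extends to a character \emph{in $B_0(A)$} (needs the "only block covering" fact plus a Gallagher/Clifford-theory argument with $3'$-index), and (ii) a careful orbit-counting identity $\sum_{\mathcal O}(\text{\#}A\text{-chars over }\mathcal O)\geq 5$ refined using that at least two orbits ($1_S$ and $\theta$) each contribute exactly $k_0(B_0(A/S))$ characters, and that $|A/S|$ is forced to be small by $k_0(B_0(A/S))\leq 3$. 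Handling the mixed-block subtleties (that a constituent of $\psi^A$ could a priori lie outside $B_0(A)$) is dispatched by Lemma~\ref{lemma:onlyblockabove}, which is why that lemma is the linchpin of the argument.
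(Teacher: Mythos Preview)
Your proposal has a genuine gap: the central missing idea is the intermediate subgroup $M=S\cent{A}{P}$, which the paper introduces precisely because your direct approach through $S$ and $A/S$ breaks down at two points.

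First, your application of Lemma~\ref{lemma:onlyblockabove} with $M=S$ does not go through. That lemma requires $\cent{A}{Q}\leq M$ for $Q\in\syl{3}{M}$; with $M=S$ this means $\cent{A}{P}\leq S$, which is simply false in general (outer automorphisms of $3'$-order may well centralize $P$). The paper instead applies the lemma with $M=S\cent{A}{P}$, where the hypothesis is automatic, and then uses the Alperin--Dade correspondence (Theorem~\ref{alperin-dade}) to identify $k_0(B_0(M))=k_0(B_0(S))$. This is what makes all of $\Irr(A/M)$ land in $B_0(A)$, giving the crucial bound $k(A/M)\leq k_0(B_0(A))\leq 9$.

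Second, and relatedly, your key inequality $k_0(B_0(A))\geq 2k_0(B_0(A/S))+3$ is vacuous as written: since $[A:S]_3=1$, the group $A/S$ is a $3'$-group, so $B_0(A/S)=\{1_{A/S}\}$ and $k_0(B_0(A/S))=1$ always. What you actually want over $1_S$ (assuming uniqueness of the covering block) is all of $\Irr(A/S)$, giving $k(A/S)$ characters, not $k_0(B_0(A/S))$. But then you also need that the $A$-invariant $\theta$ \emph{extends} to $A$ to get another $k(A/S)$ characters over it, and your justification (``Gallagher/character-triple with $3'$-index gives an extension'') is not a theorem: an invariant character of a normal subgroup of $3'$-index need not extend. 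The paper sidesteps this by lifting $\theta$ canonically to $\tilde\theta\in\Irr_{3'}(B_0(M))$ via Alperin--Dade (automatically $A$-invariant by naturality of that bijection), and then analyzing extendibility of $\tilde\theta$ case-by-case over the very short list of $3'$-groups $A/M$ with $k(A/M)\in\{2,4,5\}$. It is exactly in the $k(A/M)=4$ case, with $A/M\cong C_2\times C_2$ and $\tilde\theta$ non-extendible, that the bound $18$ is attained; your outline never isolates this case.

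In short: replace $S$ by $M=S\cent{A}{P}$ throughout, use Alperin--Dade to transport both $k_0$ and $\theta$ from $S$ to $M$, and then do the finite case analysis on $A/M$ rather than trying to bound $A/S$.
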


\begin{proof}
In this case $P\in{\rm Syl}_p(S)$ and $A=S\norm A P$ by the Frattini argument, so $M=S\cent A P$ is normal in $A$. By Lemma \ref{lemma:onlyblockabove}, $B_0(A)$ is the only $p$-block of $A$ covering $B_0(M)$, so in particular $k(A/M)\leq k_0(B_0(A))\leq 9$, but since there are at least 5 orbits of characters in ${\rm Irr}_{p'}(B_0(S))$ and $[A:M]_3=1$, then $k(A/M)\in\{1,2,4,5\}$. If $A=M$, by Theorem \ref{alperin-dade} we have that $k_0(B_0(S))=k_0(B_0(A))$ and we are done. So we may assume $A>M$. Let $\tilde{\theta}\in{\rm Irr}_{p'}(B_0(M))$ be the extension of $\theta$ to $M$ given by Theorem \ref{alperin-dade}. We claim that $\tilde{\theta}$ is $A$-invariant. Indeed, let $a\in A$ and notice that $\tilde{\theta}^a\in{\rm Irr}_{p'}(B_0(M))$ is also an extension of $\theta^a=\theta$. Since restriction is a bijection in Theorem \ref{alperin-dade}, we have that $\tilde{\theta}^a=\tilde{\theta}$ and the claim is proved.

Suppose $k(A/M)=2$. In this case $A/M\cong C_2$. Since $1_M$ and $\tilde\theta$ extend to $A$, we have at most 5 other $A$-orbits in ${\rm Irr}_{p'}(B_0(M))$ of size at most 2, so $k_0(B_0(S))=k_0(B_0(M))\leq 12$.

Next, suppose that $k(A/M)=4$. Since $|A:M|_3=1$, then $A/M\in\{C_4,C_2\times C_2, D_{10}\}$. Since $\tilde{\theta}$ is $A$-invariant, if $A/M=D_{10}$, or $A/M=C_4$, we would have that $\tilde{\theta}$ extends to $A$, and we have too many characters in ${\rm Irr}_{p'}(B_0(A))$. So necessarily $A/M=C_2\times C_2$ and ${\rm Irr}(A|\tilde{\theta})=\{\chi\}$, then there are at most 4 other orbits of size at most 4, and then $k_0(B_0(S))\leq 18$, as wanted.

Finally, suppose that $k(A/M)=5$, so $A/M\in\{C_5,D_8,Q_8,D_{14},C_5\rtimes C_4\}$ (again we are using that $|A:M|$ is not divisible by 3 to exclude some groups having 5 conjugacy classes). In this case, every nontrivial $A$-orbit in ${\rm Irr}_{p'}(B_0(M))$ has a unique $\chi\in{\rm Irr}(A)$ over it, so in particular $\tilde{\theta}$ is fully ramified in $A$. But then $[A:M]$ needs to be a square, a contradiction.
\end{proof}

\section{Alternating and Symmetric Groups}\label{sec:alternating}

Here we first show that Theorem \ref{thm:conditionsonsimples} holds for simple alternating groups. Then,  we show that Theorem \ref{thm:theoremAforalmostsimple} holds for symmetric and alternating groups. 
Given a partition $\lambda$ of a natural number $n$, we let $\chi\6\lambda$ be the corresponding irreducible character of the symmetric group $S_n$. Moreover, we let $C_3(\lambda)$ and $\lambda'$ denote the $3$-core and the conjugate partition of $\lambda$, respectively. We refer the reader to \cite{Olsson93} for a complete account of the basic facts concerning the combinatorics of partitions.

Let $n\geq 5$ be a natural number and let $A_n$ denote the alternating group of rank $n$. We recall that $\mathrm{Aut}(A_n)=S_n$ for all $n\neq 6$. On the other hand, $\mathrm{Aut}(A_6)/A_6\cong C_2\times C_2$. In particular, if $X\in \mathrm{Syl}_3(\mathrm{Aut}(A_n))$ then $X\leq A_n$ and every irreducible character of $A_n$ is obviously $X$-invariant.

Let $a\in \{0,1,2\}$ be such that $n=3w+a$, for some $w\in\mathbb{N}$. Let $\lambda_1$ and $\lambda_2$ be the partitions of $n$ defined as follows: 
$$\lambda_1= \begin{cases}
		(n-1,1) & \text{if } a=0,\\
		 (n-2,2) & \text{if } a=1,\\
		  (2,2, 1\6{n-4}) & \text{if } a=2.
	\end{cases}\ \ \lambda_2=\begin{cases}
		(n-2,1\62) & \text{if } a=0,\\
		 (n-4,2, 1\62) & \text{if } a=1,\\
		  (2, 1\6{n-2}) & \text{if } a=2.
	\end{cases}$$
For any $i\in\{1,2\}$, it is easy to verify that $C_3(\lambda_i)=(a)$ and that $\lambda_i\neq \lambda_i'$. It follows that $\chi\6{\lambda_i}\in\mathrm{Irr}(B_0(S_n))$ and therefore that $\theta_i:=(\chi\6{\lambda_i})_{A_n}\in\mathrm{Irr}(B_0(A_n))$. 
An easy application of the hook length formula \cite[20.1]{James} shows that: 
$$\theta_1(1)= \begin{cases}
		n-1 & \text{if } a=0,\\
		 n(n-3)/2 & \text{if } a=1,\\
		  n(n-3)/2 & \text{if } a=2.
	\end{cases}\ \ \theta_2(1)=\begin{cases}
		(n-1)(n-2)/2 & \text{if } a=0,\\
		 (n\62-2n)(n-3)(n-5)/8 & \text{if } a=1,\\
		  n-1 & \text{if } a=2.
	\end{cases}$$

It follows that $\theta_1(1)$ and $\theta_2(1)$ are greater than $1$, coprime to $3$, and distinct integers.
This in turn implies that conditions ($a_1$) and (b) of Theorem \ref{thm:conditionsonsimples} hold for simple alternating groups. 

In order to prove that condition (c) holds as well, we can assume that $n\geq 6$ as $9$ should divide the order of $A_n$. We further assume that $n\geq 7$, as the case $n=6$ can be easily treated by hand (it is easy to see that $\mathrm{Irr}_0(B_0(A_6))$ contains $3$ characters of degrees $5$, $8$ and $10$ respectively). 
We recall that Olsson's theory of $3$-core towers \cite[Cor.~11.8]{Olsson93} allows us to give a precise formula for $k_0(B_0(S_n)))$. In particular, if $n=\sum_{k=0}\6ta_k3\6{k}$ is the $3$-adic expansion of $n$ then we have that $$k_0(B_0(S_n))=\prod_{k=1}\6tc(3\6k, a_k).$$
Here $c(M, a)$ denotes the number of sequences of length $M$ of the form $(\gamma_1, \ldots, \gamma_M)$ where $\gamma_1,\ldots, \gamma_M$ are $3$-core partitions whose sizes sum to $a$.
In particular, we deduce that $k_0(B_0(S_n)))\geq c(3\6t, a_t)\geq c(3\6t, 1)=3\6t$, this in turn implies that $k_0(B_0(S_n)))\geq 9$, whenever $n\geq 9$. Hence, for $n\geq 9$ there are at least $5$ non $\mathrm{Aut}(A_n)$-conjugate irreducible characters of degree coprime to $3$ in the principal block of $A_n$. For a slightly different reason the same argument applies to the cases $n\in\{7,8\}$. Here we have that $n=2\cdot 3 + a$ for some $a\in\{1,2\}$. It is easy to check that $c(3, 2)=9$. Exactly as before we deduce that there are at least $5$ non $\mathrm{Aut}(A_n)$-conjugate irreducible characters of degree coprime to $3$ in the principal block of $A_n$. We conclude that condition (c) of Theorem \ref{thm:conditionsonsimples} holds for $A_n$. 

We end this section with the verification of Theorem \ref{thm:theoremAforalmostsimple} for symmetric and alternating groups. 
For $k\in\mathbb{N}$ let $X_k$ denote a Sylow $3$-subgroup of $S_{3\6k}$. We know that $X_k\cong C_3\wr C_3\wr\cdots\wr C_3$ the iterated wreath product of $k$ copies of the cyclic group of order $3$. 
It is not difficult to show that $[X_k: X_k']=3\6k$ (it is very easy to show that $X_k$ has exactly $3\6k$ linear characters. A slightly more involved exercise is to show that $X_k/ X_k'$ is isomorphic to the direct product of $k$ copies of $C_3$). 
Let $n=\sum_{k=0}\6ta_k3\6{k}$ be the $3$-adic expansion of $n$ as above. Given $X\in\mathrm{Syl}_3(S_n)=\mathrm{Syl}_3(A_n)$, we know from \cite[4.1.22]{JK} that $$X=\prod_{k=1}\6t (X_k)\6{\times a_k}.$$
Hence, we have that $[X: X']=\prod_{k=1}\6{t}3\6{k\cdot a_k}$. If we now assume that $[X:X']=9$ we immediately deduce that $6\leq n\leq 11$. Using \cite[Cor.~11.8]{Olsson93} and \cite[Prop.~4.4]{OlssonZZZ} we obtain that 
$\{k_0(B_0(S_n))\ |\ 6\leq n\leq 11\}=\{9\}$ and that $\{k_0(B_0(A_n))\ |\ 6\leq n\leq 11\}=\{6, 9\}$. This proves one implication of Theorem \ref{thm:theoremAforalmostsimple}. 
The converse is similar. The formulas provided in \cite[Cor.~11.8]{Olsson93} and \cite[Prop.~4.4]{OlssonZZZ} imply that if $k_0(B_0(A_n)), k_0(B_0(S_n))\in\{6,9\}$ then $n\in\{6,7,\ldots, 11\}$. As discussed above we know that in these cases we have that $[X:X']=9$.

\section{Groups of Lie type}

Let $p$ be a prime and let $\bG$ be a connected reductive algebraic group over $\bar\FF_p$. Let $F\colon \bG\rightarrow\bG$ be a Steinberg endomorphism. Then the group of fixed points $G=\bG^F$ is a group of Lie type. In this situation, we will write $G^\ast$ for the group $(\bG^\ast)^F$ with $(\bG^\ast, F)$ in duality with $(\bG, F)$ as in \cite[Def.~1.5.17]{GM20}. 

We summarize some relevant Deligne--Lusztig theory, for which we refer the reader to, e.g. \cite[Sec.~2.6]{GM20}. The set $\irr{G}$ is partitioned into rational Lusztig series $\mathcal{E}(G,s)$ where $s$ ranges over the semisimple elements of $G^\ast$, up to $G^\ast$-conjugacy. When $s=1$, the characters in the set $\mathcal{E}(G,1)$ are known as unipotent characters. The set $\mathcal{E}(G,s)$ is in bijection with the set of unipotent characters of $\cent{G^\ast}{s}$, and we refer to those corresponding to unipotent characters of $\cent{G^\ast}{s}$ lying above the trivial character of $(\cent{\bG^\ast}{s}^\circ)^F $ as semisimple characters. In particular, we assume that a choice of this Jordan decomposition has been made and will denote by $\chi_s\in\mathcal{E}(G,s)$ the semsisimple character corresponding to $1_{\cent{G^\ast}{s}}$. (Although, when $\cent{\bG^\ast}{s}$ is connected, no choice is necessary.) The degree of $\chi\in\mathcal{E}(G, s)$ corresponding to $\psi\in\mathcal{E}(\cent{G^\ast}{s}, 1)$ is $[G^\ast: \cent{G^\ast}{s}]_{p'}\psi(1)$, so that $\chi_s(1)=[G^\ast: \cent{G^\ast}{s}]_{p'}$. 

By 
\cite[Prop.~8.26 and (8.19)]{CE04}, we have a bijection  $\zent{G^\ast}\rightarrow\irr{G/\textbf{O}^{p'}(G)}$, where if $z\mapsto \hat z$, then $\mathcal{E}(G,s)\otimes \hat z=\mathcal{E}(G, sz)$.

\subsection{Our Setting}\label{sec:setting}

We will work primarily in the following setting. Let $S$ be a simple group of Lie type defined in characteristic $p$ such that there is a simple, simply connected algebraic  group $\bG$ over $\bar\FF_p$ and a Steinberg endomorphism $F\colon \bG\rightarrow\bG$ such that $S=G/\zent{G}$ with $G=\bG^F$. (Recall that the Tits group $\tw{2}\type{F}_4(2)'$ was considered in Proposition \ref{prop:spor}, so we may omit this in our setting.)
We can write $\Aut(S)=\wt{S}\rtimes \mathcal{D}$, where $\wt{S}$ is the group of inner-diagonal automorphisms of $S$ and $\mathcal{D}$ is an appropriate group generated by graph and field automorphisms. (See, e.g. \cite[Thm.~24.24]{MT11}.) In particular, we will write $F_p$ for a generating field automorphism induced by the map $x\mapsto x^p$ on $\bar\FF_p$ and $F_{p^i}=F_p^i$ for a positive integer $i$.  Further, it will sometimes be useful to identify $\wt{S}$ with the fixed points $\bG_{\mathrm{ad}}^F$ of the adjoint group of the same type as $\bG$. (See \cite[Prop.~24.21]{MT11} and the subsequent discussion.) If $\varphi\in\Aut(S)$, then a series $\mathcal{E}(G, s)$ is mapped to a series $\mathcal{E}(G, s^{\varphi^\ast})$ for an appropriate dual automorphism $\varphi^\ast$ of $G^\ast$, by \cite[Prop.~7.2]{taylor}.

We will also often use that, from \cite[Lem.~4.4 and Rem.~4.6]{NT13}, we have the characters in $\mathcal{E}(G,s)$ for $s\in [G^\ast, G^\ast]$ are trivial on $\zent{G}$.

\smallskip

For the moment, let $\ell$ be any prime and write $B_0(X)$ for the principal $\ell$-block of a group $X$. 
Now, when $\ell\neq p$, that is $\ell$ is not the defining characteristic of $G$, a unipotent block is one containing unipotent characters, and the set of characters lying in unipotent blocks is exactly the union of the series $\mathcal{E}(G,s)$ with $s\in G^\ast$ an $\ell$-element, by \cite[Thm.~9.12]{CE04}. In particular, $\irr{B_0(G)}$ is contained in this union. By  \cite[Cor.~3.4]{hiss} and \cite[Thm.~21.13]{CE04}, the character $\chi_s$ is contained in $\irr{B_0(S)}$ whenever $s$ is a semisimple $\ell$-element and either $\zent{\bG^\ast}$ is connected or $\ell$ is good for $\bG$ and $\cent{\bG^\ast}{s}$ is connected. 

On the other hand, when $\ell=p$ is the defining characteristic, we have $\irr{B_0(S)}=\irr{S}\setminus\{\mathrm{St}_S\}$ by \cite[Thm.~6.18]{CE04}.

The following observations regarding extensions will  be useful.

\begin{lem}\label{lem:tildeScoprime}
Keep the setting above. Let $\ell\neq p$ be a non-defining prime dividing $|S|$ but not dividing $|\wt{S}/S|$.  Then restriction provides a bijection between $\irr{B_0(\wt{S})}$ and $\irr{B_0(S)}$.
\end{lem}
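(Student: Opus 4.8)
The plan is to apply the Alperin--Dade theorem (Theorem \ref{alperin-dade}) to the normal subgroup $S \trianglelefteq \wt{S}$. Since $\ell$ divides $|S|$ but not $|\wt{S}/S|$, the quotient $\wt{S}/S$ is an $\ell'$-group, so the hypothesis on the index in Theorem \ref{alperin-dade} is met. The only remaining thing to check is the factorization $\wt{S} = S\,\bfC_{\wt{S}}(R)$ for $R \in \Syl_\ell(\wt{S})$. First I would note that, because $\ell \nmid |\wt{S}/S|$, any $R \in \Syl_\ell(\wt{S})$ already lies in $S$, so $R \in \Syl_\ell(S)$. Then by the Frattini argument $\wt{S} = S\,\bfN_{\wt{S}}(R)$, and it suffices to show $\bfN_{\wt{S}}(R) = \bfN_S(R)\,\bfC_{\wt{S}}(R)$, equivalently that $\bfN_{\wt{S}}(R)/\bfC_{\wt{S}}(R)$ is realized already inside $S$; but $\bfN_{\wt{S}}(R)/\bfC_{\wt{S}}(R) \hookrightarrow \Aut(R)$ and $\wt{S}/S$ is an $\ell'$-group acting on the $\ell$-group $R$, so in fact the cleanest route is: $\wt{S} = R\,\bfC_{\wt{S}}(R)\cdot(\text{something in } S)$. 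Concretely, by a standard transfer/Frattini-type argument, since $\wt{S}/S$ is an $\ell'$-group and $R$ is a Sylow $\ell$-subgroup of both $S$ and $\wt{S}$, we get $\wt{S} = S\,\bfN_{\wt{S}}(R)$ and then $\bfN_{\wt{S}}(R) = \bfN_S(R)(\bfN_{\wt{S}}(R)\cap C)$ with $C = \bfC_{\wt{S}}(R)$, using that $|\bfN_{\wt{S}}(R):\bfN_S(R)|$ divides $|\wt{S}:S|$ and hence is coprime to $\ell$, while $\bfN_{\wt{S}}(R)/C$ embeds in $\Aut(R)$; an order count forces $\bfN_{\wt{S}}(R) = \bfN_S(R)\,C$, whence $\wt{S} = S\,\bfC_{\wt{S}}(R)$ as desired.

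With the factorization $\wt{S} = S\,\bfC_{\wt{S}}(R)$ in hand, Theorem \ref{alperin-dade} applies directly with $G = \wt{S}$, $N = S$, $P = R$: restriction of characters gives a natural bijection $\irr{B_0(\wt{S})} \to \irr{B_0(S)}$, which is exactly the claim.

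The main (and really the only) obstacle is the elementary group theory establishing $\wt{S} = S\,\bfC_{\wt{S}}(R)$; once that is settled, the statement is an immediate invocation of Alperin--Dade. In the write-up I would streamline this by simply citing that a finite group $H$ with normal subgroup $K$ of $\ell'$-index and $R \in \Syl_\ell(K) = \Syl_\ell(H)$ satisfies $H = K\,\bfC_H(R)$ precisely when $\bfN_H(R) = \bfN_K(R)\,\bfC_H(R)$, and the latter holds here by the order-coprimality argument above (this is a well-known lemma, sometimes phrased via the fact that coprime outer automorphisms acting on $R$ cannot be detected modulo $K$). One should double-check that $\ell \ne p$ (non-defining) is genuinely needed only to ensure $B_0(\wt S)$ and $B_0(S)$ are the relevant blocks in the sense of Theorem \ref{alperin-dade} and that $\ell \mid |S|$ guarantees the blocks have nontrivial defect, but these are not obstacles — they are part of the hypotheses.
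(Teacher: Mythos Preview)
Your plan to invoke Alperin--Dade is natural, but the argument you give for the key hypothesis $\wt{S}=S\,\bfC_{\wt{S}}(R)$ does not work. The ``order count'' you allude to is not valid: knowing that $|\bfN_{\wt{S}}(R):\bfN_S(R)|$ is prime to $\ell$ and that $\bfN_{\wt{S}}(R)/\bfC_{\wt{S}}(R)\hookrightarrow\Aut(R)$ says nothing, because $\Aut(R)$ is full of $\ell'$-elements. As an abstract statement the ``well-known lemma'' you cite is false: take $R=C_7$, $K=R\rtimes C_2$ (dihedral of order $14$), and $H=R\rtimes C_6$ (Frobenius of order $42$). Then $K\trianglelefteq H$ with $|H:K|=3$ coprime to $\ell=7$, $R\in\Syl_7(K)=\Syl_7(H)$, yet $\bfC_H(R)=R$, so $K\,\bfC_H(R)=K\neq H$. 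Thus the factorization you need cannot be obtained from generic group theory; if it holds for $S\trianglelefteq\wt S$ at all, it would require genuine input from the structure of groups of Lie type, which you do not supply.

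The paper sidesteps Alperin--Dade entirely. Its proof uses that characters in $B_0(\wt S)$ lie in Lusztig series $\mathcal{E}(\wt S,s)$ with $s$ a semisimple $\ell$-element, together with the fact that the linear characters $\hat z\in\Irr(\wt S/S)$ correspond to central elements $z$ of $\ell'$-order and tensoring by $\hat z$ moves $\mathcal{E}(\wt S,s)$ to $\mathcal{E}(\wt S,sz)$. Since an $\ell$-element $s$ cannot be conjugate to $sz$ for $z\neq 1$ of $\ell'$-order, one gets both that every $\wt\chi\in\Irr(B_0(\wt S))$ restricts irreducibly to $S$ and that $B_0(\wt S)$ contains a unique extension of each $\chi\in\Irr(B_0(S))$. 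This is a genuinely different (and more robust) route than the one you propose.
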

\begin{proof}
Let $\chi\in\irr{B_0(S)}$ and let $\wt\chi\in\irr{B_0(\wt{S})\mid\chi}$, which exists by \cite[Thm.~9.2]{Navarro98}. Since $\wt{S}/S$ is abelian and restrictions from $\wt{S}$ to $S$ are multiplicity-free (see e.g. \cite[Thm.~15.11]{CE04}), we have by Clifford theory that the number of constituents of $\wt\chi|_S$ is the number of $\hat{z}\in\irr{\wt{S}/S}$ such that $\wt\chi\hat{z}=\wt\chi$ (see e.g. \cite[Lem.~1.4]{RSV21}). 
By \cite[Thm.~9.12]{CE04}, the characters in $B_0(\wt{S})$ lie in Lusztig series indexed by semisimple $\ell$-classes of $(\bG_{\mathrm{ad}}^\ast)^F$ and as discussed above, 
 the irreducible characters of $\wt{S}/S=\bG_{\mathrm{ad}}^F/\textbf{O}^{p'}(\bG_{\mathrm{ad}}^F)$ are in bijection with the elements $z$ of $\zent{\bG_{\mathrm{ad}}^\ast}^F$, and tensoring with $\hat z$ sends the series $\mathcal{E}(\wt{S}, s)$ to $\mathcal{E}(\wt{S}, sz)$. Since $\ell$ does  not divide the order of any such $z$, it follows that $\wt\chi|_S=\chi$. Now, since $\wt{S}/S$ is abelian, every character of $\wt S$ above $\chi$ is an extension, and hence of the form $\wt{\chi}\hat z$ for some $z\in\zent{\bG_{\mathrm{ad}}^\ast}^F$.  Again, since $\ell$ does not divide the order of  any such $z\neq 1$,  we therefore see that $B_0(\wt{S})$ contains only one extension of $\chi$.
\end{proof}

\begin{lem}\label{lem:Tlemma}
Keep the setting above and assume $S\neq \type{D}_4(q)$ and let $\ell$ be a prime dividing $|S|$. Suppose $\wt \chi\in\irr{B_0(\wt{S})}$ extends to $\Aut(S)_{\wt \chi}$ and restricts irreducibly to $\chi\in\irr{S}$. Then for every every $S\leq T \leq {\rm Aut}(S)_{\wt \chi}$, we have $\chi$ extends to some character in $B_0(T)$. 
\end{lem}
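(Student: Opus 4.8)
The plan is to write down an explicit extension of $\chi$ to $T$ and then adjust it by a linear character so that it falls into the principal block, using the characterization $\irr{B_0(G)}=\{\psi\in\irr G\mid\sum_{x\in G,\ \ell\nmid o(x)}\psi(x)\neq 0\}$ recalled in Section~\ref{Sec:principalblocks}.

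First I would fix an extension $\hat\chi\in\irr(\mathcal A)$ of $\widetilde\chi$, where $\mathcal A:=\Aut(S)_{\widetilde\chi}$; this exists by hypothesis, and one notes $S\trianglelefteq\mathcal A$ and $\widetilde S\trianglelefteq\mathcal A$ (indeed $\widetilde S\leq\mathcal A$, since conjugation by $\widetilde S$ fixes every irreducible character of $\widetilde S$). For any subgroup $M$ with $S\leq M\leq\mathcal A$, the character $\hat\chi|_M$ has degree $\hat\chi(1)=\widetilde\chi(1)=\chi(1)$ and restricts to $\chi$ on $S$, hence is irreducible; so $\hat\chi|_M$ is an $M$-invariant extension of $\chi$ (and of $\widetilde\chi$ when $\widetilde S\leq M$). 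In particular $\chi$ already extends to $T$, and only the assertion about the principal block remains.

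The crucial point is to introduce $D:=T\cap\widetilde S$ and to observe that $D$ is normal not only in $T$ but also in $\widetilde S$: since $\widetilde S/S$ is abelian and $S\leq D\leq\widetilde S$, we get $[\widetilde S,D]\leq[\widetilde S,\widetilde S]\leq S\leq D$. Consequently, as $\widetilde\chi\in\irr(B_0(\widetilde S))$ and $D\trianglelefteq\widetilde S$, every constituent of $\widetilde\chi|_D$ lies in $B_0(D)$ by \cite[Thm.~9.4]{Navarro98}; since $\widetilde\chi|_D=\hat\chi|_D$ is irreducible, this forces $\hat\chi|_D\in\irr(B_0(D))$. On the other hand $T/D\cong T\widetilde S/\widetilde S$ embeds in $\Aut(S)/\widetilde S$, which is abelian precisely because $S\neq\type{D}_4(q)$; this is the only place the hypothesis $S\neq\type{D}_4(q)$ is used, and recognizing that $D=T\cap\widetilde S$ is the correct normal subgroup — separating the ``diagonal'' part, sitting inside $\widetilde S$ where the principal block is under control, from the abelian ``field/graph'' part $T/D$ — is the main (if modest) obstacle.

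It then remains to correct $\hat\chi|_T$ by a linear twist. By Gallagher's theorem the extensions of $\hat\chi|_D$ to $T$ are exactly the characters $\eta\cdot(\hat\chi|_T)$ with $\eta\in\irr(T/D)$ (all linear, as $T/D$ is abelian), and
\[
\sum_{\eta\in\irr(T/D)}\ \ \sum_{\substack{x\in T\\ \ell\nmid o(x)}}\eta(x)\hat\chi(x)\ =\ [T:D]\!\!\sum_{\substack{x\in D\\ \ell\nmid o(x)}}\!\!\hat\chi(x)\ \neq\ 0,
\]
the inequality holding because $\hat\chi|_D\in\irr(B_0(D))$. Hence some $\eta_0\in\irr(T/D)$ satisfies $\sum_{x\in T,\ \ell\nmid o(x)}(\eta_0\hat\chi)(x)\neq 0$, so $\eta_0\cdot(\hat\chi|_T)\in\irr(B_0(T))$; since $S\leq D\leq\Ker\eta_0$, this character restricts to $\chi$ on $S$, and is therefore the desired extension of $\chi$ lying in $B_0(T)$.
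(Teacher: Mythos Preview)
Your proof is correct and follows essentially the same route as the paper: both arguments isolate $D=T\cap\wt S$, use that $\wt\chi|_D\in\irr{B_0(D)}$ because $D\trianglelefteq\wt S$, and then exploit that $T/D$ is abelian (the sole consequence of $S\neq\type{D}_4(q)$) to find an extension in $B_0(T)$. The only tactical difference is in the last step: where the paper simply cites \cite[Thm.~9.4]{Navarro98} to get a character of $B_0(T)$ over $\hat\chi|_D$ (automatically an extension since $T/D$ is abelian), you instead give the explicit orthogonality computation $\sum_\eta\sum_{x}\eta(x)\hat\chi(x)=[T:D]\sum_{x\in D}\hat\chi(x)$ to locate the correct linear twist, which is a pleasant self-contained alternative.
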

\begin{proof}
First, with our assumption on $S$, we have $\Aut(S)/\wt{S}$  and $\wt{S}/S$ are both abelian. Then every character in $\Irr(\Aut(S)_{\wt\chi}\mid \wt\chi)$ is an extension, and hence there is an extension  $\hat\chi$  of $\wt\chi$ in $B_0(\Aut(S)_{\wt\chi})$ by \cite[Thm.~9.4]{Navarro98}.
Now, let $S\leq T\leq \Aut(S)_{\wt{\chi}}$.  Then $\hat{\chi}|_{T\cap\wt{S}}=\wt\chi|_{T\cap\wt{S}}$ lies in $B_0(T\cap \wt{S})$ since $B_0(\wt{S})$ must cover a unique block of the normal subgroup $T\cap \wt{S}$.
By \cite[Thm.~9.4]{Navarro98}, $B_0(T)$ contains a character lying above $\hat{\chi}|_{T\cap\wt{S}}$.  Since $T/(T\cap\wt{S})$ is abelian, again this character is an
extension, completing the proof.
\end{proof}

For the remainder of the section, we will return to the case $\ell=3$, so that $B_0(X)$ denotes the principal $3$-block of a group $X$.

\subsection{Defining Characteristic}

Here we deal with the case that $p=3$. That is, $S$ is defined in characteristic $3$.

\begin{prop}\label{prop:defininginitial}
Let $q$ be a power of $3$ and let $S$ be one of the simple groups $\type{G}_2(q)$, $\type{D}_4(q)$, $\tw{3}\type{D}_4(q)$, $\type{F}_4(q)$, $\type{E}_6^\pm(q)$, $\type{E}_7(q)$, or $\type{E}_8(q)$. Let $X\in\Syl_3(\Aut(S))$. 
Then there exist at least four $X$-invariant characters with distinct degrees in $\irra{3'}{B_0(S)}$. Further, if $S=\tw{2}\type{G}_2(q^2)$, with $q^2=3^{2n+1}>3$, then there are six distinct character degrees in $\irra{3'}{B_0(S)}$ and at least three not $\Aut(S)$-conjugate, nontrivial $X$-invariant characters, two of which have distinct degrees.
In particular, Theorem \ref{thm:conditionsonsimples} (with (a1)), 
holds in these cases.
\end{prop}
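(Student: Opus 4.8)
The statement concerns the simple exceptional groups in characteristic $3$ (together with the Ree groups $\tw{2}\type{G}_2(q^2)$), and we want to produce enough $X$-invariant characters of $3'$-degree in $B_0(S)$ with distinct degrees. The guiding principle is that, in defining characteristic, we have the very clean description $\irr{B_0(S)}=\irr{S}\setminus\{\mathrm{St}_S\}$ from \cite[Thm.~6.18]{CE04}. Thus \emph{every} irreducible character of $S$ other than the Steinberg character automatically lies in the principal block, and the problem reduces to exhibiting four characters of $3'$-degree, with pairwise distinct degrees, that are fixed by a Sylow $3$-subgroup $X$ of $\Aut(S)$. Since $\Aut(S)$ is generated by inner-diagonal and field automorphisms (there are no diagonal automorphisms for these types in characteristic $3$ that contribute a $3$-part to $|\wt S/S|$ except in the $\type{E}_6^\pm$ and $\type{D}_4$ cases, which must be handled with a little care), and since the unipotent characters are all invariant under diagonal and graph automorphisms and are permuted by field automorphisms according to their known rationality, the $X$-invariance of a unipotent character will typically follow once we check it is fixed by $F_3$ (equivalently, has values in $\QQ$ or an appropriate field fixed by the relevant power of $F_3$).

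\textbf{Key steps.} First I would fix, for each family ($\type{G}_2$, $\type{D}_4$, $\tw{3}\type{D}_4$, $\type{F}_4$, $\type{E}_6^\pm$, $\type{E}_7$, $\type{E}_8$), an explicit short list of four unipotent characters with distinct, $3'$-degrees: e.g. the trivial character $1_S$, the two characters lying in the principal series parametrized by the reflection characters of the Weyl group (the ``$\phi$'' unipotent characters of small $a$-value), and one more unipotent character such as a cuspidal one or a suitable component of a Harish-Chandra series, chosen so that its generic degree polynomial evaluated at $q$ is prime to $3$. The degrees of unipotent characters are given by known polynomials in $q$ (Lusztig; see the tables in \cite{GM20} or \cite{Carter}), and since $q$ is a power of $3$ one reads off the $3$-part immediately: the $3'$-degree condition amounts to the polynomial having constant term or leading behaviour coprime to $3$, which is visible from the explicit formulas. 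That these four have distinct degrees is a finite check against the degree tables. Next, for $X$-invariance: unipotent characters are fixed by all inner, diagonal, and graph automorphisms (by the uniqueness in Lusztig's classification and the compatibility of Jordan decomposition with automorphisms, \cite[Prop.~7.2]{taylor}), so only field automorphisms remain; a unipotent character is fixed by $F_{3}$ precisely when its generic degree is stable, which holds for the rational unipotent characters — in particular for the ones I selected, which can be chosen rational. For the Ree groups $\tw{2}\type{G}_2(q^2)$ the unipotent characters include pairs of algebraically conjugate cuspidal characters; here I would instead exhibit \emph{six} distinct degrees among $3'$-characters (again from the known character table of $\tw{2}\type{G}_2(q^2)$, due to Ward) and pick three non-$\Aut(S)$-conjugate nontrivial characters — the trivial, one rational unipotent character, and one semisimple/regular character — that are $X$-invariant, noting $\Out(S)$ is cyclic of order $2n+1$ generated by a field automorphism so $X$-invariance is just invariance under the relevant power of $F_3$.

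\textbf{Conclusion and main obstacle.} Once four such characters are in hand, conditions (a1) and (b) of Theorem \ref{thm:conditionsonsimples} are immediate: (b) because we have at least three (indeed four) distinct $3'$-degrees in $\irr{B_0(S)}$, and (a1) because we have two nontrivial $X$-invariant characters of distinct degree, hence non-conjugate in $\Aut(S)$ (automorphisms preserve degree). Theorem \ref{thm:theoremAforalmostsimple} for these $S$ then follows by the standard reduction together with the remaining propositions of Section 5 (once $k_0(B_0)$ is pinned down; for these groups $[P:P']$ is never $9$ since the Sylow $3$-subgroups are large and far from having abelianization of order $9$, and correspondingly $k_0(B_0(S))$ is large — this will be recorded when we assemble the Lie type case). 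The main obstacle I anticipate is not conceptual but bookkeeping: verifying the $3'$-degree condition and the distinctness of degrees uniformly across all the exceptional families and their twisted forms, and being careful in the $\type{E}_6^\pm$ and $\type{D}_4$ cases about the diagonal and (for $\type{D}_4$) triality automorphisms when checking $X$-invariance — though for $\type{D}_4$ the statement only requires invariance under a Sylow $3$-subgroup of $\Aut(S)$, and the unipotent characters of $\type{D}_4(q)$ are triality-stable except for the three ``$\phi_{\text{something}}$'' characters permuted cyclically by triality, so one simply avoids those three when choosing the list.
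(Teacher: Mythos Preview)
Your overall framework is sound (using $\irr{B_0(S)}=\irr{S}\setminus\{\mathrm{St}_S\}$ in defining characteristic, and checking $X$-invariance via the action of field/graph automorphisms), and your treatment of $\tw{2}\type{G}_2(q^2)$ via Ward's character table matches the paper. However, the core step for the other groups has a genuine gap.

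You propose to exhibit four \emph{unipotent} characters of $3'$-degree. In defining characteristic this cannot work: the generic degree of every unipotent character of $\bG^F$ is of the form $n_\rho^{-1}q^{a_\rho}\prod_i\Phi_i(q)^{m_i}$, and the $a$-value $a_\rho$ is strictly positive for every unipotent character other than the trivial one. Since here $q$ is a power of $3$, every nontrivial unipotent character therefore has degree divisible by $q$, hence by $3$. Your ``two characters \dots\ parametrized by the reflection characters of the Weyl group'' already have $a$-value $1$ (e.g.\ $\phi_{2,1}$ for $\type{G}_2$), and any cuspidal unipotent has even larger $a$-value. So $\irra{3'}{B_0(S)}$ contains exactly one unipotent character, namely $1_S$, and your list never gets past the first entry.

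The paper avoids this by a completely different, and softer, argument. It consults L\"ubeck's tables of \emph{all} character degrees of $\wt S$ (and $G$) together with their multiplicities, and observes that a $3$-group $X$ acting on the set of irreducible characters of a fixed degree must fix at least one of them whenever that multiplicity is prime to $3$. Checking the tables, one finds for each $S$ at least three nontrivial $3'$-degrees whose multiplicity (both in $\wt S$ and upon restriction to $S$) is not divisible by $3$; together with $1_S$ this gives the four required $X$-invariant characters of distinct degree. No explicit characters need to be named, and in particular no unipotent characters beyond $1_S$ are used. If you want to salvage an explicit approach, you would need to work with semisimple characters $\chi_s$ (which do have $p'$-degree) and control their $X$-invariance via the action of $F_3$ on semisimple classes of $G^\ast$; this is exactly what the paper does later for types $\mathsf A$, $\mathsf B$, $\mathsf C$, $\mathsf D$ in Lemmas~\ref{lem:typeAdefining} and~\ref{lem:typeBCDdefining}, but for the exceptional groups the multiplicity argument is quicker.
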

\begin{proof}
Recalling that $B_0(S)=\Irr(S)\setminus\{\mathrm{St}_S\}$, we have $\Irr_{3'}(B_0(S))=\Irr_{3'}(S)$. Keeping the notation of Sec.~\ref{sec:setting}, we have  
$\wt{S}=S=G$ unless $S=\type{E}_7(q)$, in which case $|\wt{S}/S|=|\zent{G}|=2$ or $S=\type{D}_4(q)$, in which case $|\wt{S}/S|=|\zent{G}|=4$.
 The character degrees of $\wt{S}$ and $G$ and their multiplicities have been computed by L{\"u}beck and are available at \cite{lubeckwebsite}. From this information, we see that in each case other than $\tw{2}\type{G}_2(q^2)$, there are at least three nontrivial $3'$-character degrees of $\wt S$ whose multiplicities are not divisible by $3$ and whose constituents on $S$ have distinct degrees of multiplicity not divisible by $3$. When $S=\tw{2}\type{G}_2(q^2)$ with $q^2>3$, we see five distinct nontrivial character degrees, two of which have multiplicity prime to $3$. We see directly from the character table in \cite{ward} that there are at least three nontrivial, non-$\Aut(S)$-conjugate, $X$-invariant characters in this case.   
\end{proof}

\begin{lem}\label{lem:typeAdefining}
Let $S=\PSL_n^\epsilon( q)$, where $q$ is a power of $3$ and $n\geq 2$. Then Theorem \ref{thm:conditionsonsimples} holds for $S$. Further, (a1) holds for $n\geq 4$.
\end{lem}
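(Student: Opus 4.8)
The plan is to treat $S=\PSL_n^\epsilon(q)$ with $q$ a power of $3$ by splitting into the small-rank cases already handled and the generic case $n\geq 4$, exploiting that in defining characteristic $\Irr_{3'}(B_0(S))=\Irr_{3'}(S)=\Irr_{3'}(S)$ and that this set is exactly $\Irr(S)\setminus\{\St_S\}$ intersected with the $3'$-characters, by \cite[Thm.~6.18]{CE04}. First I would dispose of the cases $n=2$, $n=3$, and $n=4$ with $q\in\{3\}$ (and other tiny $q$) where the generic degree argument is too coarse: the groups $\PSL_2(27)$, $\PSL_2(81)$, $\PSL_3^\epsilon(3)$, $\PSL_4^\epsilon(3)$, $\PSp_4(3)$, etc., are covered by Proposition \ref{prop:spor}, and for the remaining $\PSL_2(q)$, $\PSL_3^\epsilon(q)$ one uses the explicit character degrees. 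For $\PSL_2(q)$ with $q=3^f>3$, the principal block contains the two characters of degree $(q-1)$ and $(q+1)$ and the trivial character, and a further semisimple character of degree $q\pm1$ or of half that size, giving at least three $3'$-degrees; here $\Out(S)$ has cyclic Sylow $3$-subgroup (field automorphisms only), so I would verify condition (a2) via Lemma \ref{lem:Tlemma}, checking that a suitable non-trivial character of $\wt S=S$ extends to $\Aut(S)_{\wt\chi}$ in the principal block — which follows since $\Aut(S)/S$ is cyclic, hence any extension argument through \cite[Thm.~9.4]{Navarro98} applies. The case $\PSL_3^\epsilon(q)$ with $3\nmid(q-\epsilon)$ is analogous and again only needs (a2), matching Remark \ref{remark:conditions}; when $3\mid(q-\epsilon)$ one has enough room to get (a1).

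For the generic case $n\geq 4$ (with $q$ large enough that $\PSL_4^\epsilon(3)$ is excluded), the strategy is to produce three nontrivial, pairwise non-$\Aut(S)$-conjugate, $X$-invariant characters of distinct $3'$-degree in $B_0(S)$, which gives (a1), (b), and (c) simultaneously. Since $X\in\Syl_3(\Aut(S))$ and $\Aut(S)/\wt S$ is a $3$-group only through field and graph automorphisms, while diagonal automorphisms have order dividing $\gcd(n,q-\epsilon)$, I would first pass to $\wt S=\PGL_n^\epsilon(q)$ (where the center of $G=\SL_n^\epsilon(q)$ is no longer an obstruction) and use L{\"u}beck's tables \cite{lubeckwebsite} of degrees and multiplicities: for $n\geq 4$ there are at least three nontrivial $3'$-character degrees of $\wt S$ with multiplicity prime to $3$ whose constituents on $S$ again have distinct $3'$-degrees with multiplicity prime to $3$, exactly as in the proof of Proposition \ref{prop:defininginitial}. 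A multiplicity coprime to $3$ forces the corresponding $\Aut(S)$-orbit on that degree-class to have a fixed point under the $3$-group $X$ (since $X$ permutes a set of size prime to $3$), yielding the desired $X$-invariant characters; distinctness of degrees gives they are not $\Aut(S)$-conjugate. One must also note that these are unipotent or semisimple characters lying in $B_0(S)$ — e.g. the unipotent characters attached to the partitions $(n-1,1)$ and $(n-2,1^2)$ type hook characters, and a suitable semisimple character $\chi_s$ with $s$ a $3'$-element — all of which lie in $B_0(S)$ since in defining characteristic the principal block is everything but the Steinberg character.

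The main obstacle I anticipate is not the generic degree count but the bookkeeping at the boundary: ensuring that for every genuinely small $(n,q)$ the three required degrees are still available with multiplicity prime to $3$ and distinct constituent degrees on $S$, especially where the center of $\SL_n^\epsilon(q)$ is large (so restriction from $\wt S$ to $S$ can split degrees) and where graph automorphisms of order $2$ can fuse two of the candidate characters. For those finitely many residual groups I would fall back on direct computation in \cite{GAP}, invoking Proposition \ref{prop:spor} where applicable, and for the $\PSL_2$ and non-$3$-divisible-$\PSL_3^\epsilon$ families I would carefully check the (a2) extension condition rather than (a1), as flagged in Remark \ref{remark:conditions}. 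A secondary, more technical point is confirming the $X$-invariance claim when $X$ involves a field automorphism of $3$-power order: here one uses that a field automorphism of order $3^a$ fixes every unipotent character and acts on semisimple classes compatibly with the norm map, so a $3'$-semisimple element whose class is $F_p$-stable gives an $X$-invariant $\chi_s$; choosing such an $s$ in the relevant maximal torus is routine for $n\geq 4$.
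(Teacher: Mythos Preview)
Your overall strategy has a genuine gap for large $n$: L{\"u}beck's tables \cite{lubeckwebsite} only cover groups of Lie type of rank less than $9$, so the multiplicity-prime-to-$3$ argument you borrow from Proposition~\ref{prop:defininginitial} is simply unavailable once $n\geq 10$. The paper's proof avoids this by working uniformly with explicit semisimple elements: it invokes \cite[Thm.~6.8]{malle07} to identify $\Irr_{3'}(B_0(S))$ with the deflations of semisimple characters $\chi_s$ of $G=\SL_n^\epsilon(q)$ trivial on the centre, and then writes down concrete elements $\wt s_1,\wt s_2,\wt s_3\in[\wt G^\ast,\wt G^\ast]$ (with eigenvalues such as $\{\zeta,\zeta^{-1},1,\dots\}$, $\{\zeta,\zeta,\zeta^{-2},1,\dots\}$, $\{\xi,\xi^{-1},1,\dots\}$ for suitable roots of unity) whose centralizers have pairwise distinct $3'$-parts. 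For the $X$-invariance needed in (a1) and (a2) it specialises $\zeta$ or $\xi$ to a fourth root of unity so that $F_3$ fixes the class, and then appeals to \cite[Prop.~3.4 and Lem.~2.13]{Spaeth12} for the extension to $\Aut(S)$.

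There is also a concrete error in your aside: the unipotent characters attached to hook partitions like $(n-1,1)$ or $(n-2,1^2)$ have degree divisible by $q$ (their generic degree has a positive power of $q$ as a factor), hence by $3$; in defining characteristic the only unipotent character of $3'$-degree is the trivial one, and the Steinberg character is excluded from $B_0$. So your candidate ``three characters'' cannot include any nontrivial unipotent character. Finally, a small slip: for $q$ a power of $3$ one has $|\wt S/S|=\gcd(2,q-1)=2$, so $\wt S\neq S$ for $\PSL_2(q)$; the paper handles (a2) there not via Lemma~\ref{lem:Tlemma} but by observing directly that $\Aut(S)_{\chi_1}/S$ is cyclic, so every character above $\chi_1$ is an extension and one of them lies in the principal block.
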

\begin{proof}

Note that in this case, $XS/S$ must be generated by field automorphisms.
As before, $\Irr_{3'}(B_0(S))=\Irr_{3'}(S)$ and we may assume by Proposition \ref{prop:spor}  that $S$ has a nonexceptional Schur multiplier.

First suppose $S=\PSL_2(q)$ with $q\geq 9$.  In this case, the character table for $S$ is well-known, even available in GAP4 \cite{GAP}. The two characters $\chi_1, \chi_2$ of degree $\frac{q\pm1}{2}$ must be invariant under $X\in\Syl_3(\Aut(S))$ but fuse in $\wt{S}$. Here $\mathcal{D}$ is cyclic, so $\Aut(S)_{\chi_1}/S$ is cyclic. Then every character lying above $\chi_1$ in  $\Aut(S)_{\chi_1}$ is an extension, so some  extension $\hat\chi$ lies in $B_0(\Aut(S)_{\chi_1})$. For $S\leq T\leq \Aut(S)_{\chi_1}$, we then have the restriction of $\hat\chi$ to $T$ lies in $B_0(T)$ and extends $\chi_1$, giving (a2). There are also characters of degree $q+1$ and $q-1$  in $\Irr(S)$, giving (b) and (c).

We now assume $n\geq 3$. If $(n,q)\in\{(3,3), (4,3)\}$, we apply Proposition \ref{prop:spor}, so we may further assume that $q\geq 9$ or $n\geq 5$. From \cite[Thm.~6.8]{malle07}, we see that $\Irr_{3'}(B_0(S))$ consists of the deflations of the semisimple  characters $\chi_s$ of $G=\SL_n^\epsilon(q)$ that are trivial on $\zent{G}$. Recall that such a character has degree $[G^\ast: \cent{G^\ast}{s}]_{3'}$. 
Then for (b) and (c), it suffices by the discussion at the beginning of the section to exhibit three nontrivial classes of semisimple $s\in G^\ast$ satisfying  $s\in [G^\ast, G^\ast]$, and with distinct $|\cent{G^\ast}{s}|_{3'}$. We further write $\wt{G}:=\GL_n^\epsilon(q)$, so $[\wt{G}^\ast, \wt{G}^\ast]\cong G$. We will in fact exhibit three nontrivial $\wt{s}\in [\wt{G}^\ast, \wt{G}^\ast]$ such that $\chi_{\wt{s}}\in\Irr(\wt{G})$ is irreducible on restriction to $G$ and with distinct $|\cent{\wt G^\ast}{\wt s}|_{3'}$. Note that the centralizer structures of semisimple elements are described in \cite[Prop.~(1A)]{FS82}.

 If $q\geq 9$, then there is some $\zeta\in C_{q-\epsilon}$ with $|\zeta|>2$. Let $\wt s_1$ be the element $\diag(\zeta, \zeta^{-1}, 1,\cdots,1)$ in $[\wt{G}^\ast, \wt{G}^\ast]$. Then $|\cent{\wt{G}^\ast}{\wt{s}_1}|=(q-\epsilon)^2\times |\GL_{n-2}^\epsilon(q)|$ and the character $\chi_{\wt{s}_1}$ is irreducible on restriction to $G$ since $\wt{s}_1$ is not $\wt{G}^\ast$-conjugate to any $\wt{s}_1z$ for $z\in \zent{\wt{G}^\ast}$. Similarly, letting $\wt{s}_2:=\diag(\zeta, \zeta, \zeta^{-2},1,\cdots 1)$, we have $\chi_{\wt{s}_2}$ is irreducible on restriction to $G$ and $|\cent{\wt{G}^\ast}{\wt{s}_2}|=(q-\epsilon)\times |\GL_2^\epsilon(q)|\times |\GL_{n-3}^\epsilon(q)|$.
 Taking $\wt{s}_3$ to have eigenvalues $\{\xi, \xi^{-1}, 1,\cdots,1\}$ with $|\xi|>2$ and $|\xi|\mid (q+\epsilon)$, we again have $\chi_{\wt{s}_3}$ is also irreducible on $G$ and now $|\cent{\wt{G}^\ast}{\wt{s}_3}|=(q^2-1)\times |\GL_{n-2}^\epsilon(q)|$. Hence (b) and (c) hold.
 
 Further, choosing $\zeta$ or $\xi$ to be a fourth root of unity $i$, depending on whether $q\equiv \epsilon\pmod 4$ or $q\equiv -\epsilon\pmod 4$, the corresponding character $\chi_{\wt{s}}=\chi_{\wt{s}_1}$, resp $\chi_{\wt{s}_3}$, above is further invariant under field automorphisms, as $F_3$ stabilizes the class of $\wt{s}$. Then this character is $X$-invariant. Further, we see that this class is even $\mathcal{D}$-invariant, so $\chi_{\wt{s}}$ extends to $\Aut(S)_{\chi_{\wt{s}}}=\Aut(S)$ by \cite[Prop.~3.4 and proof of Lem.~2.13]{Spaeth12}, giving (a2) when combined with Lemma \ref{lem:Tlemma}. (Indeed,  recall that $\zent{G}$ is a nonexceptional Schur multiplier for $S$. Further, the factor set constructed in \cite[Lem.~2.13]{Spaeth12} is trivial
in this case since $\chi_{\wt{s}}$ is $\mathcal{D}$-invariant.) Considering $\wt{s}_3$ to have nontrivial eigenvalues $\{i, i, -i, -i\}$ when $n\geq 4$, we obtain a second $X$-invariant character not $\Aut(S)$-conjugate to $\chi_{\wt{s}}$, yielding further (a1) when $n\geq 4$.
 
 Then finally, we may assume $n\geq 5$ and $q=3$. Here (a1) is satisfied in exactly the same way by the restriction of the same $\chi_{\wt{s}}$ and $\chi_{\wt{s}_3}$. We  obtain (b) and (c) in this case by using $\wt{s}$ from above, together with  $\wt{s}_3$ and $\wt{s}_4$, where $\wt{s}_4$ has  nontrivial eigenvalues $\{-1, -1\}$.  
\end{proof}

 \begin{lem}\label{lem:typeBCDdefining}
Let $q$ be a power of $3$ and let $S$ be a simple group $\PSp_{2n}( q)$ with $n\geq 2$, $\POmega_{2n+1}(q)$ with $n\geq 3$, or $\POmega_{2n}^\pm(q)$ with $n\geq 4$. Let $X\in\Syl_3(\Aut(S))$. Then there exist at least four $X$-invariant characters with distinct degrees in $\irra{3'}{B_0(S)}$.    
In particular, Theorem \ref{thm:conditionsonsimples} holds for $S$ with (a1).
\end{lem}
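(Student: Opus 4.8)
The plan is to mimic the strategy used for the defining-characteristic cases of type $\type{G}_2$, $\type{F}_4$, etc., in Proposition \ref{prop:defininginitial}, together with the type $A$ argument in Lemma \ref{lem:typeAdefining}: since $p=3$ is the defining characteristic, we have $\Irr_{3'}(B_0(S)) = \Irr_{3'}(S)$, so it suffices to produce four $X$-invariant irreducible characters of $S$ of distinct $3'$-degrees (which immediately gives (a1), (b), and (c), since (a1) plus distinct degrees gives two nonconjugate $X$-invariant nontrivial characters, and four distinct nontrivial degrees give the three nonconjugate characters needed for (c)). First I would recall the structure of $\Aut(S)$: writing $G = \bG^F$ with $\bG$ simple simply connected of the relevant type, the Sylow $3$-subgroup $X$ of $\Aut(S)$ meets $\wt S$ in a Sylow $3$-subgroup of $\wt S$, while $XS/S$ lies in the (cyclic, $3$-power order part of) the group of field automorphisms, since graph automorphisms of $\type{B}_n,\type{C}_n$ have order $\le 2$ and the graph automorphisms of $\type{D}_n$ have order dividing $6$ but the nontrivial $3$-part only occurs for $\type{D}_4$, which is excluded in the present lemma's list only as $\POmega_8^\pm$... so for $\type{D}_4$ I would need to be slightly careful, but the triality case is presumably handled (or one restricts to $X$-invariance under a chosen Sylow $3$-subgroup, which can be taken to normalize a fixed maximal torus).

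The key step is then to exhibit the four characters. For this I would use the semisimple (and in the symplectic/orthogonal case also certain unipotent) characters of $G$, whose degrees are $[G^\ast : \cent{G^\ast}{s}]_{3'}$ for semisimple $s\in G^\ast$. Concretely: the trivial character, plus three nontrivial semisimple characters $\chi_{s_1}, \chi_{s_2}, \chi_{s_3}$ coming from semisimple elements $s_i \in [G^\ast,G^\ast]$ whose centralizer orders have distinct $3'$-parts and which are trivial on $\zent G$ (so they deflate to $S$), chosen moreover so that their $G^\ast$-classes are fixed by $F_3$ (hence the characters are $X$-invariant). The natural candidates are involutions or elements of order $4$: e.g. $s_1 = \diag(-I_{2a}, I_{2n-2a})$-type elements giving centralizers of the form $\Sp_{2a}(q)\times\Sp_{2n-2a}(q)$ (resp. the orthogonal analogues), for two or three different values of $a$, which are automatically $F_3$-stable since $-1\in\FF_3$; a fourth distinct degree can be obtained from a semisimple element of order $4$ (when $q \equiv \pm1 \pmod 4$, a fourth root of unity lies in $\FF_q$ or $\FF_{q^2}$, and its class is $F_3$-stable), or from a regular unipotent-type argument, or simply from the Steinberg-adjacent "large" semisimple characters. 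One checks irreducibility of the restriction to $S$ by verifying $s_i$ is not $\wt G^\ast$-conjugate to $s_i z$ for $1\ne z\in\zent{\wt G^\ast}$, exactly as in Lemma \ref{lem:typeAdefining}; when $\zent{G}$ is a $2$-group (which it always is here) and the $s_i$ are chosen with "enough distinct eigenvalue multiplicities", this is routine.

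The main obstacle I anticipate is the bookkeeping to guarantee \emph{four distinct $3'$-degrees} simultaneously with $X$-invariance for \emph{all} the families $\type{B}_n$ ($n\ge3$), $\type{C}_n$ ($n\ge2$), $\type{D}_n^\pm$ ($n\ge4$) in a uniform way, handling the small rank cases ($\PSp_4(q)$, $\POmega_7(q)$, $\POmega_8^\pm(q)$) where there are fewer choices of semisimple-element type and where $\PSp_4(3)$ and the $\POmega_8^\pm$ triality subtleties appear — the former being already dispatched by Proposition \ref{prop:spor}. I would therefore organize the proof by: (i) reduce to $q\ge 9$ or $n$ large by citing Proposition \ref{prop:spor} for the finitely many exceptions (in particular $\PSp_4(3)$); (ii) pin down $X$ and the relevant notion of $X$-invariance (field automorphisms, plus note that for $\type{D}_4$ one takes $X$ to contain a triality element and still finds triality-invariant degrees among the $1$, $\St$-complement list, falling back on Lübeck's explicit degree data \cite{lubeckwebsite} if needed); (iii) write down the explicit semisimple elements $s_1,s_2,s_3$ (reflection-type involutions with two blocks of distinct sizes, plus one order-$4$ element), compute $|\cent{G^\ast}{s_i}|_{3'}$ from \cite[Prop.~(1A)]{FS82} or the analogous classical-group centralizer formulas, and verify distinctness of the four degrees and irreducible restriction to $S$; and (iv) conclude (a1), (b), (c) formally. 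Where a uniform choice is awkward I would simply defer to L\"ubeck's tables of low-dimensional degrees to read off four suitable $3'$-degrees with prime-to-$3$ multiplicities and $F_3$-stable labels, just as in the proof of Proposition \ref{prop:defininginitial}.
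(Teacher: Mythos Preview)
Your proposal is correct and follows essentially the same strategy as the paper: reduce to $\Irr_{3'}(S)$, observe that $XS/S$ consists of field automorphisms, and exhibit three nontrivial semisimple characters $\chi_{s_i}$ of the overlying classical group $H$ with $F_3$-stable semisimple classes and distinct $3'$-centralizer orders. The paper's execution differs only in the details: it disposes of $\POmega_8^+(q)=\type{D}_4(q)$ by pointing to Proposition~\ref{prop:defininginitial} (rather than wrestling with triality directly), handles $\PSp_4(q)$ via Srinivasan's explicit character table, and for $n\geq 3$ chooses the three elements to have nontrivial eigenvalues $\{-1,-1,-1,-1\}$, $\{i,-i,-i,i\}$, and $\{\zeta,\zeta^3,-1,\zeta^{-1},\zeta^{-3},-1\}$ with $|\zeta|=8$ (embedded via $\SL_n(q)\hookrightarrow [H^\ast,H^\ast]$), rather than the variable-block involutions you propose. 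The paper also does not insist on irreducible restriction to $\Omega$: it notes that at most two of the $\chi_{s_i}$ can split (each into two pieces, since $|H/\Omega|\leq 2$) and checks that the resulting degrees remain pairwise distinct, which is slightly more robust than your irreducibility check.
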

\begin{proof}

Again, we have 
$\Irr_{3'}(B_0(S))=\Irr_{3'}(S)$. The case $S= \type{D}_4(q)=\POmega_{8}^+(q)$ is covered in Proposition \ref{prop:defininginitial}, so we assume $S$ is not this group. Then note that $XS/S$ must be generated by field automorphisms.
Let $H=\Sp_{2n}(q)$, $\SO_{2n+1}(q)$, or $\SO_{2n}^\pm(q)$, respectively, and write $\Omega:=\textbf{O}^{p'}(H)$, so that $\Omega$ is perfect, $S=\Omega/\zent{\Omega}$, and $H/\Omega$ is a $2$-group. 
If $S=\PSp_4(q)$, we can see the statement directly from the character table constructed in \cite{srinivasan}. Hence, we may assume that $n\geq 3$.

Note that we can naturally embed $\GL_n(q)$ into $H^\ast\cong \SO_{2n+1}(q)$, $\Sp_{2n}(q)$,  or $\SO_{2n}^+(q)$, and can embed $\GL_{n-1}(q)$ into $\SO_{2n}^-(q)$. Then elements of $\SL_n(q)$ will embed as elements of $[H^\ast, H^\ast]$. In particular let $s_1, s_2, s_3$ be  elements  of $H^\ast$ with nontrivial eigenvalues $\{-1, -1, -1, -1\}$,  $\{i, -i, -i, i\}$, and $\{\zeta, \zeta^3, -1, \zeta^{-1}, \zeta^{-3}, -1\}$, where $\zeta\in\FF_{q^2}^\times$ has order $8$. Then these lie in $[H^\ast, H^\ast]$, so that the corresponding semisimple characters of $H$ are trivial on $\zent{H}$. Further, the class of each $s_i$ is stable under $F_3$, and hence $X$, so the $\chi_{s_i}$ are $X$-invarant. For each $i$, let $\chi_i\in\Irr(\Omega\mid\chi_{s_i})$. Since $|H/\Omega|\leq 2$, we see each $\chi_i$ is also $X$-invariant. We also see the $|\cent{{H}^\ast}{\wt{s}_i}|_{3'}$ are distinct from each other, using the centralizer structures described in \cite[(1.13) and discussion after]{FS89}. Further, of these, only $s_1$ or $s_3$ can potentially be conjugate to $s_iz$ for  $1\neq z\in\zent{H}$, but in the case that the restrictions of $\chi_{s_1}$ and/or $\chi_{s_3}$ are not irreducible (and hence split into exactly two characters),
we see that $\chi_1, \chi_2, \chi_3$ still have distinct degrees. 
\end{proof}

For a group $X$ with principal $3$-block $B_0$ and $A\leq \Aut(X)$, we will write $k_{A,0}(B_0)$ for the number of distinct $A$-orbits in $\Irr_{3'}(B_0)$.
\begin{prop}\label{prop:defining10}
Let  $S=G/\zent{G}$ be a simple group of Lie type such that $G=\bG^F$ for $\bG$  a simple simply connected algebraic group and $F\colon \bG\rightarrow\bG$ a Frobenius endomorphism defining $G$ over $\FF_q$ with $q=3^a$. Assume that $S$ is not isomorphic to an alternating group or one of the groups covered in Proposition \ref{prop:spor}.   Then $k_{\Aut(S),0}(B_0(S))\geq 10$.
\end{prop}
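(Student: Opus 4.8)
The goal is to show $k_{\Aut(S),0}(B_0(S))\geq 10$ for every simple group $S$ of Lie type in defining characteristic $3$ not already handled. The basic strategy is to produce, in each family, at least $10$ pairwise non-$\Aut(S)$-conjugate characters in $\Irr_{3'}(B_0(S))=\Irr_{3'}(S)\setminus\{\mathrm{St}_S\}$; we will exhibit them as (deflations of) semisimple characters $\chi_s$ of $G=\bG^F$ with $s\in[G^\ast,G^\ast]$, so that $\chi_s$ is automatically trivial on $\zent G$, following Lemmas \ref{lem:typeAdefining}, \ref{lem:typeBCDdefining} and Proposition \ref{prop:defininginitial}. Two such characters are $\Aut(S)$-conjugate only if the corresponding semisimple classes in $G^\ast$ (equivalently $\wt G^\ast$, after accounting for diagonal and graph-field automorphisms) are related by an automorphism; in particular, distinct $3'$-parts of centralizer orders $|\cent{G^\ast}{s}|_{3'}$ already force non-conjugacy, and this will be the main device. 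So the reduction is: for each family, exhibit semisimple elements $s\in[G^\ast,G^\ast]$ realizing at least $10$ distinct values of $|\cent{G^\ast}{s}|_{3'}$, or more carefully, at least $10$ orbits.

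\textbf{Key steps.}
First I would split by Lie type. For the exceptional groups $\type G_2(q)$, $\tw3\type D_4(q)$, $\type F_4(q)$, $\type E_6^\pm(q)$, $\type E_7(q)$, $\type E_8(q)$ and for $\tw2\type G_2(q^2)$, Proposition \ref{prop:defininginitial} already gives four $X$-invariant characters of distinct degrees; here I would simply go back to L{\"u}beck's data \cite{lubeckwebsite} for the generic character degrees of $\wt S$ and $G$ and count: for $q=3^a$ large enough these families have far more than $10$ semisimple classes with distinct centralizer $3'$-orders, and only finitely many small $q$ need separate (computational) checking, which together with Proposition \ref{prop:spor} disposes of them. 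For the classical families the argument is uniform in the rank $n$: as in Lemma \ref{lem:typeAdefining} and Lemma \ref{lem:typeBCDdefining}, I would take diagonal semisimple elements of $\wt G^\ast=\GL_n^\epsilon(q)$, resp.\ $\Sp_{2n}(q)$, $\SO_{2n+1}(q)$, $\SO_{2n}^\pm(q)$, whose eigenvalue multisets are chosen so that (i) the element lies in $[\wt G^\ast,\wt G^\ast]$, (ii) the centralizer is a product $\prod_i\GL_{m_i}^{\pm}(q^{d_i})$ whose $3'$-part $\prod_i|\GL_{m_i}^{\pm}(q^{d_i})|_{3'}$ is controlled by the partition data, and (iii) these $3'$-orders are pairwise distinct across the chosen list. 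Since we may assume $q\geq 9$ or $n$ is large (small cases being covered by the earlier lemmas and Proposition \ref{prop:spor}), there is enough room: when $q\geq 9$ one has many available eigenvalues of distinct small orders in $C_{q-\epsilon}$ and $C_{q+\epsilon}$, and one builds blocks of sizes $1,2,3,\dots$ to vary the centralizer; when $q=3$ one instead varies the number and type of Jordan blocks over $\FF_{3^k}$ for various $k$. In each case I would just list ten explicit elements and the corresponding centralizer orders.

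\textbf{Main obstacle.}
The delicate point is \emph{non-conjugacy under all of $\Aut(S)$}, not merely distinctness of degrees: field and graph automorphisms can identify Lusztig series, so one must check that the chosen semisimple classes have distinct $|\cent{\wt G^\ast}{s}|_{3'}$ (a field-automorphism invariant, since it only depends on the "type" of the centralizer data and not on $q$ being replaced by a power) \emph{and} that no two of our elements are related by the duality/graph automorphism — e.g.\ the transpose-inverse map in type $A$ or the triality-free graph automorphisms in types $B$, $C$, $D$; this is why $\type D_4(q)$ must be excluded and treated via Proposition \ref{prop:defininginitial}. A further subtlety, already visible in Lemma \ref{lem:typeBCDdefining}, is that when $s$ is conjugate to $sz$ for $1\ne z\in\zent{\wt G^\ast}$ the semisimple character $\chi_s$ may restrict reducibly to $\Omega=\textbf{O}^{p'}(H)$, splitting into two constituents; one then has to make sure the resulting constituents still contribute distinct degrees (and that we still have $\geq 10$ orbits after such a split, or avoid such $s$ altogether). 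Finally, bounding "large enough $q$" precisely — so that the remaining finite list is exactly what Proposition \ref{prop:spor} and the small-rank lemmas already cover — requires a careful but routine accounting; I expect this bookkeeping, rather than any conceptual difficulty, to be the bulk of the work.
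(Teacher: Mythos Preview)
Your plan is workable in principle, but it takes a far more laborious route than the paper does, and the ``main obstacle'' you flag is in fact avoided entirely by the paper's argument.

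The paper does not construct explicit semisimple elements at all. Instead it invokes a uniform counting bound from \cite[Sec.~(2D)]{HSF23}: the number of $\Aut(S)$-orbits on semisimple characters of $G$ trivial on $\zent G$ is at least
\[
k \;\geq\; \frac{q^r}{|\zent G|\cdot |\Out(S)|},
\]
where $r$ is the rank of $\bG$. (The numerator comes from counting semisimple classes; dividing by $|\zent G|$ passes to $S$, and dividing by $|\Out(S)|$ bounds orbit sizes crudely from above.) One then simply plugs in the known values of $|\zent G|$ and $|\Out(S)|$ for each family and checks that this exceeds $10$ for all but a short explicit list of small-rank, small-$q$ cases; those are finished off in GAP by counting distinct $3'$-degrees. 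No semisimple element is ever written down, and the non-conjugacy issue you worry about never arises: the bound already counts orbits.

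Your approach---building ten concrete $s\in[G^\ast,G^\ast]$ with pairwise distinct $|\cent{G^\ast}{s}|_{3'}$---would also succeed, and it has the virtue of being self-contained (not citing the bound from \cite{HSF23}) and of producing explicit witnesses. But it multiplies the bookkeeping: you must handle each Lie type separately, manage the graph/field action case by case, and deal with the restriction-splitting issue in types $B$, $C$, $D$. The paper's counting argument sidesteps all of this in a few lines. A minor slip: you write $\Irr_{3'}(B_0(S))=\Irr_{3'}(S)\setminus\{\mathrm{St}_S\}$, but $\mathrm{St}_S$ already has $3$-power degree, so the set difference is vacuous and $\Irr_{3'}(B_0(S))=\Irr_{3'}(S)$.
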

\begin{proof}
Write $k:=k_{\Aut(S),0}(B_0(S))$. Arguing as in \cite[Sec.~(2D)]{HSF23},  we have 
\[k\geq \frac{q^r}{|\zent{G}||\Out(S)|}\] where $r$ is the rank of $\bG$. (Indeed, note that this gives a bound for orbits of semisimple, hence $3'$, characters, although this was not needed in loc. cit.)

If $G=\SL_2(q)$, this gives $k\geq \frac{3^a}{4a}$, which is larger than 10 for $a\geq 5$. Note that our assumptions exclude the case $a=1,2$ here. 
If $G=\SL_n^\epsilon(q)$ with $n\geq 3$, we have $|\zent{G}|=(n,q-\epsilon)$ and $|\Out(S)|=2a(n,q-\epsilon)$, so we see from this bound that $k\geq 10$ unless possibly $q=3$ with $n=3,4$. If $S=
\type{D}_n^\pm(q)$ with $n\geq 5$, we have $|\zent{G}|\leq 4$ and $|\Out(S)|\leq 8a$, so $k\geq 3^{an}/(32a)\geq 10$ for $n\geq 6$ or for $a\geq 2$ if $n=5$. For $S=\type{D}_4^\pm(q)$, we instead obtain $k\geq  3^{4a}/(96a)\geq 10$ for $a\geq 2$.  If $S=\type{B}_n(q)$ or $\type{C}_n(q)$ with $n\geq 2$, we have $k\geq  3^{an}/(4a)\geq 10$ for $n\geq 4$ or for $a\geq 2$. In the cases $S=\type{D}_5^\pm(3), \type{D}_4^\pm(3),$ $\type{C}_3(3)$, or $\type{B}_3(3)$, we see from GAP that there are at least 10 distinct $3'$-character degrees.

In the exceptional groups, the same bound for $k$ gives $k\geq 10$, except possibly if $S=\type{G}_2(3)$. In the latter case we can again see the statement explicitly from the character table in GAP.
\end{proof}

\begin{prop}\label{prop:thmAdefining}
Let $S$ be a simple group of Lie type defined in characteristic $3$. Then Theorem \ref{thm:theoremAforalmostsimple} holds for $S$.
\end{prop}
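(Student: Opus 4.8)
The statement to prove is Theorem~\ref{thm:theoremAforalmostsimple} for a simple group $S$ of Lie type in defining characteristic $3$; that is, we must show $[P:P']=9$ (for $P\in\Syl_3(A)$, $S\le A\le \Aut(S)$ with $[A:S]_3\le 3$) if and only if $k_0(B_0(A))\in\{6,9\}$.

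First I would dispose of the local side. Writing $S=G/\Z(G)$ with $G=\bG^F$, $\bG$ simple simply connected, one has $\Syl_3(S)$ essentially a full Sylow $3$-subgroup of $G$, which for $q=3^a$ is the unipotent radical $\U^F$ of a Borel. For these groups the abelianization $\U^F/(\U^F)'$ is well understood: it is a product of root subgroups for the simple roots (roughly $\FF_q^{\,r}$ where $r$ is the rank), so $|\U^F:(\U^F)'|=q^r$ up to small corrections coming from $\Z(G)$, $\Out(S)$, and low-rank coincidences. The condition $[P:P']=9=3^2$ is extremely restrictive: combined with $[A:S]_3\le 3$ it forces the $3$-part of $|P|$ to be tiny, so only finitely many $(S,a)$ survive — essentially $S=\PSL_2(9)=A_6$ (excluded, it is alternating), $\PSL_2(27),\PSL_2(81),\PSL_3^\epsilon(3),\PSL_4^\epsilon(3),\PSp_4(3)$ and a handful of other rank-$1$/rank-$2$ groups over $\FF_3$. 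I would make a short explicit case analysis of $[\U^F:(\U^F)']$ (and of $[P:P']$ when $A>S$ contributes a field or graph automorphism of order $3$) to pin down this finite list.

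Then I would observe that every group on that finite list has already been handled: $\PSL_2(27),\PSL_2(81),\PSL_3^\epsilon(3),\PSL_4^\epsilon(3),\PSp_4(3)$ are exactly the defining-characteristic exceptions covered by Proposition~\ref{prop:spor}, and $\PSL_2(9)\cong A_6$ falls under Section~\ref{sec:alternating}. So for those, Theorem~\ref{thm:theoremAforalmostsimple} holds directly. For every other $S$ in defining characteristic, the work above shows $[P:P']>9$, so the left-hand side of the equivalence fails; it then suffices to show the right-hand side also fails, i.e. $k_0(B_0(A))\notin\{6,9\}$. For this I invoke Proposition~\ref{prop:defining10}: if $S$ is not alternating and not one of the Proposition~\ref{prop:spor} groups, then $k_{\Aut(S),0}(B_0(S))\ge 10$, i.e. there are at least $10$ $\Aut(S)$-orbits on $\Irr_{3'}(B_0(S))=\Irr_{3'}(S)\setminus\{\St_S\}$. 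Since $A/S$ embeds in $\Out(S)$ and distinct $A$-orbits of $3'$-characters of $S$ each contribute at least one character of $\Irr_{3'}(B_0(A))$ (using that $\Irr(B_0(A)\mid\theta)\ne\emptyset$ whenever $\theta\in\Irr(B_0(S))$, and that characters over distinct $S$-orbits are distinct), we get $k_0(B_0(A))\ge k_{\Aut(S),0}(B_0(S))\ge 10>9$, so $k_0(B_0(A))\notin\{6,9\}$, as needed.

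The main obstacle is the first step: getting a clean, uniform description of $[\U^F:(\U^F)']$ (and its interaction with outer automorphisms of order $3$) that is sharp enough to produce the exact finite exceptional list, since the abelianization of the Sylow $3$-subgroup depends on the root system in a type-by-type way and there are genuine coincidences in small rank over $\FF_3$ (and twisted types $\tw2\type{G}_2$, $\tw3\type{D}_4$, etc.) that must be checked by hand. Once that list is correct, everything else is bookkeeping: the listed groups are covered by Propositions~\ref{prop:spor} and the alternating-group section, and all remaining defining-characteristic $S$ are eliminated by the orbit count of Proposition~\ref{prop:defining10}.
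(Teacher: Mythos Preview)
Your outline is close to the paper's approach on the local side and in the case $[A:S]_3=1$, but there is a genuine gap when $[A:S]_3=3$.

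The problem is the sentence ``distinct $A$-orbits of $3'$-characters of $S$ each contribute at least one character of $\Irr_{3'}(B_0(A))$.'' The justification you give, that $\Irr(B_0(A)\mid\theta)\neq\emptyset$, only produces a character of $B_0(A)$ above $\theta$, not one of $3'$-degree. When $|A/S|_3=3$, a character $\chi\in\Irr(B_0(A)\mid\theta)$ has $\chi(1)=e\,t\,\theta(1)$ with $et\mid [A_\theta:S]$, and this is divisible by $3$ whenever $\theta$ is not $P$-invariant. Lemma~\ref{lem:Murai} only guarantees a height-zero character above $\theta$ when $\theta$ extends to $PS$, i.e.\ when $\theta$ is $P$-invariant. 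So Proposition~\ref{prop:defining10}, which counts $\Aut(S)$-orbits, does not by itself give $k_0(B_0(A))\geq 10$ when $[A:S]_3=3$; you would need $10$ $P$-invariant orbits, and that is not what Proposition~\ref{prop:defining10} provides.

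The paper closes this gap differently: for $S\neq\PSL_n^\epsilon(q)$ it uses Proposition~\ref{prop:defininginitial} and Lemma~\ref{lem:typeBCDdefining} to exhibit at least four $X$-invariant (hence $P$-invariant), pairwise non-$\Aut(S)$-conjugate characters in $\Irr_{3'}(B_0(S))$, and then applies Theorem~\ref{thm:thetatheorem} to get $k_0(B_0(A))\geq 12$. For $S=\PSL_n^\epsilon(q)$ it constructs three explicit semisimple classes of $G^\ast$ fixed by the relevant field automorphism $\varphi^\ast$ of order $3$, again obtaining four $P$-invariant, non-$A$-conjugate characters. A smaller omission: Proposition~\ref{prop:defining10} does not cover the Ree groups $\tw{2}\type{G}_2(q^2)$ (its hypothesis is a genuine Frobenius endomorphism), so they still require the separate treatment you flagged as the ``main obstacle'' but did not carry out; the paper handles them using the explicit character table and Proposition~\ref{prop:Noelialemma}.
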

\begin{proof}
We may assume that $S$ is not isomorphic to an alternating group or one of the groups considered in Proposition \ref{prop:spor}. Again, recall that $\irra{3'}{S}=\irra{3'}{B_0(S)}$.

So, we may assume that $S$ and $G$ are as in Proposition \ref{prop:defining10} or that $S=G=\tw{2}\type{G}_2(q^2)$. Then $|Z(G)|$ is not divisible by $3$ and the Sylow $3$-subgroup for $S$ is isomorphic to one of $G$, which is of the form $U=\mathbf{U}^F$ with $\mathbf{U}$ the unipotent radical of an $F$-stable Borel subgroup $\mathbf{B}=\mathbf{U}\bT$ of $\bG$. Here $\bT$ is an $F$-stable maximal torus.

If $S=\tw{2}\type{G}_2(q^2)$ with $q^2=3^{2a+1}$ and $a\geq 1$, then a Sylow $3$-subgroup $U$ of $S$ satisfies $[U:U']=q^2\neq 9$ by \cite[Thm.(2)]{ward}.  From the character table in \cite{ward}, we see that $|\irra{3'}{S}|>18$ and as discussed in Proposition \ref{prop:defininginitial}, there are at least six not $\Aut(S)$-conjugate characters in $\irra{3'}{B_0(S)}$ and at least four of these are $X$-invariant, where $X\in\mathrm{Syl}_3(A)$. Hence $k_0(B_0(A))>9$, using Theorem \ref{thm:thetatheorem} when $|A/S|_3=3$,  and Proposition \ref{prop:Noelialemma} when $|A/S|_3=1$.

In the cases that $S$ is as in Proposition \ref{prop:defining10}, since $G\neq \type{G}_2(3)$, we have by \cite[Lem.~2.2]{dignelehrermichel} that $U/U'$ is isomorphic to the direct product of the additive groups $\FF_{q^{|\omega|}}$, where $\omega$ runs over the orbits of the action induced by $F$ on the simple roots for $\bG$ with respect to $\bT$. We see then that $U/U'$ has size 9 exactly when $(r,q)\in\{(2,3), (1,9)\}$ but $G\neq \type{G}_2(3)$, where $r$ is the rank of $\bG$. These cases are covered in Proposition \ref{prop:spor} or Section \ref{sec:alternating}.
In the remaining cases, we therefore have $[U:U']\neq 9$, and we see from Proposition \ref{prop:defining10} that $k_{\Aut(S),0}(B_0(S))\geq 10$ (and hence $k_0(B_0(A))\geq 10$ if $|A/S|_3=1$). 
This completes the proof  when $|A/S|_3=1$.

Now, assume $|A/S|_3=3$. A Sylow $3$-subgroup $P$ of $A$ can be identified with $U\rtimes C$ where $C$ is a cyclic group of graph-field automorphisms of size $3$ stabilizing $U$. From here, arguing  exactly as in the last two paragraphs of the proof of \cite[Prop.~4.9]{NRSV21} yields $P/P'$ is not $2$-generated, and hence $[P:P']\neq 9$.

From the discussion of $\tw{2}\type{G}_2(q^2)$ above, note that we may now assume that $S$ is as in Proposition \ref{prop:defining10}. If $S$ is not $\PSL_n^\epsilon(q)$, then Proposition \ref{prop:defininginitial} and Lemma \ref{lem:typeBCDdefining} yield at least four $P$-invariant characters in $\irra{3'}{B_0(S)}$ that are not $A$-conjugate. Then Theorem \ref{thm:thetatheorem} yields at last 12 characters in $\irra{3'}{B_0(A)}$.  

We are left to  argue that $k_0(B_0(A))>9$ in the case that $S=\PSL_n^\epsilon(q)$ with $n\geq 2$ and  $S$ is not covered in Proposition \ref{prop:spor}. 
Let $G=\SL_n^\epsilon(q)$, $\wt{S}=\PGL_n^\epsilon(q)$, and $\wt{G}=\GL_n^\epsilon(q)$.
Since $A/S$ has order divisible by $3$, it must be that $A$ contains a field automorphism $\varphi$ of order $3$, so $q=q_0^3$ for some $3$-power $q_0$. 
 Note that by the discussion at the beginning of the section,  it suffices to show that there are three nontrivial classes of semisimple elements $s\in G^\ast$ that are not $\Aut(S)$-conjugate and such that $s\in [G^\ast, G^\ast]$ and $s$ is invariant under $\varphi^\ast$. This will yield four $P$-invariant, not $A$-conjugate  members of $\irra{3'}{B_0(S)}$, so that $k_0({B_0(A)})>9$ by Theorem \ref{thm:thetatheorem}.

First, let $S=\PSL_2(q)$, so we  assume $q_0\geq 9$.  The semisimple elements of $G^\ast$ whose preimages in $\wt{G}^\ast$ have eigenvalues $\{i, -i\}$, $\{\zeta, \zeta^{-1}\}$, and $\{\xi, \xi^{-1}\}$ with $|\zeta|\mid (q_0-1)$ and  $|\xi|\mid (q_0+1)$ but  $|\zeta|, |\xi|>4$ yield three nontrivial classes with the properties discussed above. 
 Next, consider $\PSL_n^\epsilon(q)$ with $n\geq 3$. Let $\zeta\in\FF_{q^2}^\times$ have order $8$. Then the semisimple classes in $G^\ast$ whose preimages in $\wt{G}^\ast$ have nontrivial eigenvalues $\{-1, -1\}$, $\{i, -i\}$, and $\{\zeta, \zeta^{3}, -1\}$ satisfy our properties, and the proof is complete. 
\end{proof}

\subsection{Non-Defining Characteristic}

We now move to the case that $p\neq 3$. That is, $S$ is a simple group of Lie type defined in characteristic distinct from $3$. We will keep the setting of Section \ref{sec:setting}. By an abuse of terminology, we will say that $S$ is defined over $\FF_q$ if either $S=\tw{2}\type{F}_4(q^2)$ or if $F$ is a Frobenius map defining $\bG$ over $\FF_q$.

\begin{prop}\label{prop:4unipsext}
Let $S$ be a simple group of Lie type defined over $\FF_q$ with $3\nmid q$ and $3\mid |S|$. Then $\Irr_{3'}(B_0(\wt{S}))$ contains at least $4$ unipotent characters that extend to $\Aut(S)$ and restrict irreducibly to $\Irr_{3'}(B_0({S}))$, except if
\begin{itemize}
\item $S=\PSL_4^\epsilon(q)$ with $3\mid (q-\epsilon)$ or $\PSp_{4}(2^a)$, in which case there are at least $3$ such characters;
\item $S=\PSL_3^\epsilon(q)$, in which case there are $3$ such characters if $3\mid (q-\epsilon)$ and $2$ if $3\mid (q+\epsilon)$; and
\item $S=\PSL_2(q)$, in which case there are $2$ such characters.
\end{itemize}
Further, these four, respectively $3, 3, 2, 2$, characters can be taken with distinct degrees. In particular, $B_0(\Aut(S))$ contains an extension of each of these characters. 
\end{prop}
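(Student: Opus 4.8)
The strategy is to work uniformly in $\wt S$, using that unipotent characters are preserved by all automorphisms and that $\wt S/S$ acts on $\Irr(\wt S)$ through tensoring by the linear characters $\hat z$ with $z\in\zent{\bG^\ast}^F$. First I would recall, via \cite[Thm.~9.12]{CE04}, that a unipotent character of $\wt S$ lies in the principal $3$-block exactly when its associated unipotent block is the principal one; for unipotent characters this can be read off from the known distribution of unipotent characters into $\ell$-blocks (e.g.\ Harish-Chandra theory / the results of \cite{CE04}), and the relevant count is governed by the order of $q$ modulo $3$, i.e.\ by whether $3\mid(q-1)$ or $3\mid(q+1)$ (and the analogous twisted conditions). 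The key point is that the trivial character plus a controlled family of small-degree unipotent characters—the reflection character and its exterior powers, the Steinberg-adjacent unipotent characters, etc.—always lie in $B_0(\wt S)$ and have $3'$-degree once $3\mid|S|$, and one simply needs to exhibit four of them with distinct degrees.

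The extendibility statement is the place where I would invoke the now-standard machinery on extensions of unipotent characters: by \cite[Prop.~3.4]{Spaeth12} (together with the fact that unipotent characters are $\wt S$-invariant and the relevant cohomological obstruction vanishes for unipotent characters, cf.\ the proof of \cite[Lem.~2.13]{Spaeth12}), every unipotent character of $\wt S$ extends to $\Aut(S)$. Combined with \cite[Thm.~9.4]{Navarro98}, an extension can be chosen inside $B_0(\Aut(S))$ once the character itself lies in $B_0(\wt S)$—this gives the final sentence of the statement. For the irreducibility of the restriction to $S$, I would use that a unipotent character $\chi$ of $\wt S$ restricts irreducibly to $S$ iff $\chi\hat z\neq\chi$ for all $1\neq z\in\zent{\bG^\ast}^F$; since tensoring by $\hat z$ sends $\mathcal E(\wt S,1)$ to $\mathcal E(\wt S, z)$ and these series are disjoint for $z\neq 1$, \emph{every} unipotent character restricts irreducibly to $S$—so this requirement is automatic and costs nothing.

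The remaining work is a type-by-type census to check that the required number of distinct $3'$-unipotent degrees in $B_0$ is available. I would organize it as: (i) classical types $\type{B}_n,\type{C}_n,\type{D}_n^\pm$ with $n$ large enough—here the unipotent characters are parametrized by symbols, there are many of small degree, and four in $B_0$ with distinct degrees are easy to produce (the principal-block condition on unipotents is a condition on the $3$-core/$3$-quotient of the symbol); (ii) exceptional types, handled with Lübeck's data \cite{lubeckwebsite} / \cite{CE04} tables, where the generic degree polynomials make the count transparent and the bound of four is comfortably met; (iii) the small-rank exceptions $\PSL_2(q)$, $\PSL_3^\epsilon(q)$, $\PSL_4^\epsilon(q)$, $\PSp_4(2^a)$, where the number of unipotent characters is genuinely small—for $\PSL_2(q)$ only $1_S$ and $\St_S$ are unipotent so only the two characters $1$ and (the $3'$-part of) the reflection-type constituents near the cuspidal ones survive, forcing the count down to $2$, and similarly $\PSL_3^\epsilon$, $\PSL_4^\epsilon$ with $3\mid(q-\epsilon)$, and $\PSp_4$ have exactly the stated deficiency once one removes $\St$ and the unipotent characters whose block is non-principal (which happens precisely under the divisibility condition $3\mid(q-\epsilon)$ shifting which unipotents land in $B_0$). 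Throughout I would exclude $S=\type{D}_4(q)$ only where triality would complicate the $\wt S/S$-action, but note that unipotent characters are still fixed by triality so even that case is fine for this statement. \textbf{Main obstacle.} The genuinely delicate point is pinning down \emph{which} unipotent characters lie in the principal $3$-block in each type as a function of $q\bmod 3$ (and the twisted analogues), since this is what produces the exact exceptional list; the extension and irreducibility parts are essentially formal given the cited results, and the distinct-degrees part is a finite check once the block membership is settled.
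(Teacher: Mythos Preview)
Your overall strategy is sound and is essentially what the paper's cited source \cite{RSV20} carries out: the paper's own proof here is simply a reference to \cite[Props.~3.9--3.11]{RSV20} for the census and tables, plus \cite[Lem.~3.3]{GMS} for the final sentence. So you are proposing to reconstruct an argument the paper outsources.

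That said, three points in your sketch need correction:

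\textbf{(i) Extension of unipotent characters.} The reference \cite[Prop.~3.4]{Spaeth12} concerns the defining-characteristic situation; the result you want (that unipotent characters of $\wt S$ extend to their inertia group in $\Aut(S)$) is \cite[Thms.~2.4, 2.5]{malle08}. More importantly, your parenthetical claim that ``unipotent characters are still fixed by triality'' is false: for $S=\type D_4(q)$ the triality automorphism genuinely permutes some unipotent characters, and the same phenomenon occurs for the exceptional graph automorphisms of $\PSp_4(2^a)$, $\type F_4(2^a)$, etc. This does not destroy the argument---one simply selects unipotent characters that \emph{are} $\Aut(S)$-invariant (the trivial, Steinberg, and enough others)---but the blanket assertion is wrong and must be replaced by that selection.

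\textbf{(ii) Extension lying in $B_0(\Aut(S))$.} Citing \cite[Thm.~9.4]{Navarro98} only gives you some character of $B_0(\Aut(S))$ lying above $\chi$; it does not by itself guarantee that character is an \emph{extension}. When $\Aut(S)/\wt S$ is abelian this is automatic (every character above $\chi$ is an extension by Gallagher), but for $S=\type D_4(q)$ the quotient $\Aut(S)/\wt S$ can involve $\mathfrak S_3$ and the step needs an extra word. The paper handles this uniformly via \cite[Lem.~3.3]{GMS}; you should either cite that or supply the short additional argument for the non-abelian-quotient case.

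\textbf{(iii) Your ``main obstacle''.} You are right that the real work is determining which unipotent characters of $3'$-degree lie in $B_0$; this is exactly what the tables in \cite{RSV20} record, case by case according to $d_3(q)$ (or $d_3(\epsilon q)$), and is where the listed exceptions arise.
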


\begin{proof}
The first part is proved in \cite[Propositions 3.9--3.11]{RSV20}. The Steinberg and trivial characters are always two of these characters, and we refer the reader to  Tables 2-5 of loc. cit. for the  additional explicitly chosen unipotent characters. In fact,  the nontrivial, non-Steinberg characters listed there have distinct degree, using the formula in \cite[Sec.~4.4]{GM20}. The last statement follows from the structure of $\Aut(S)/S$ and is proven, for example,  in \cite[Lemma 3.3]{GMS}.
\end{proof}

\begin{cor}\label{cor:nondefining}
Theorem \ref{thm:conditionsonsimples} holds for $S$ a group of Lie type defined over $\FF_q$ with $3\nmid q$. Further, (a1) holds unless $S=\PSL_2(q)$ or $3\mid (q+\epsilon)$ and $S=\PSL_3^\epsilon(q)$.
\end{cor}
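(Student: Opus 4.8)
The plan is to deduce Corollary \ref{cor:nondefining} directly from Proposition \ref{prop:4unipsext} together with the machinery established in Section \ref{Sec:principalblocks} and the earlier lemmas of this section. First I would fix $S$ a simple group of Lie type defined over $\FF_q$ with $3 \nmid q$ and $3 \mid |S|$, set $X \in \Syl_3(\Aut(S))$, and let $\wt S$ be the group of inner-diagonal automorphisms as in Section \ref{sec:setting}. Since $3 \nmid q$, the prime $3$ is non-defining, so by \cite[Thm.~9.12]{CE04} the characters of $B_0(\wt S)$ lie in Lusztig series indexed by semisimple $3$-elements; in particular unipotent characters (the case $s = 1$) always lie in $B_0(\wt S)$, and likewise in $B_0(S)$, since $1$ is a $3$-element and the relevant centralizer-connectedness/good-prime hypotheses of \cite[Cor.~3.4]{hiss} and \cite[Thm.~21.13]{CE04} are harmless for unipotent characters. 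So the characters produced by Proposition \ref{prop:4unipsext} are genuinely members of $\Irr_{3'}(B_0(S))$.

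Next I would handle the generic case, i.e. $S$ not one of $\PSL_2(q)$, $\PSL_3^\epsilon(q)$, $\PSL_4^\epsilon(q)$ with $3\mid(q-\epsilon)$, $\PSp_4(2^a)$. Here Proposition \ref{prop:4unipsext} gives at least four unipotent characters in $\Irr_{3'}(B_0(\wt S))$, of distinct degrees, each extending to $\Aut(S)$ and restricting irreducibly to $\Irr_{3'}(B_0(S))$. Unipotent characters are $\Aut(S)$-invariant (field and graph automorphisms permute unipotent characters, but the standard parametrization shows these particular ones — trivial, Steinberg, and the explicitly chosen ones in Tables 2--5 of \cite{RSV20} — are fixed, being characterized by their degree), so in particular they are $X$-invariant. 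Discarding the trivial character, this leaves at least three nontrivial $X$-invariant characters in $\Irr_0(B_0(S))$ with distinct degrees, which are pairwise non-$\Aut(S)$-conjugate since their degrees differ. Two of them give condition (a1); all three give (b) and (c). For $\PSL_4^\epsilon(q)$ with $3 \mid (q-\epsilon)$ and for $\PSp_4(2^a)$, Proposition \ref{prop:4unipsext} still yields three such characters of distinct degrees, so after removing the trivial one we still have two nontrivial $X$-invariant non-conjugate characters, again giving (a1), (b), (c); for the condition-(c) count one checks directly (or invokes Remark \ref{remark:conditions}, since these groups have non-abelian Sylow $3$-subgroups only when $9 \mid |S|$, and then a third non-conjugate character is produced by a short explicit argument or by \cite{GAP} via Proposition \ref{prop:spor} when $q$ is small).

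For the exceptional cases $S = \PSL_3^\epsilon(q)$ and $S = \PSL_2(q)$ I would instead rely on condition (a2). When $S = \PSL_2(q)$ or $S = \PSL_3^\epsilon(q)$ with $3 \mid (q+\epsilon)$, Proposition \ref{prop:4unipsext} provides (at least) two unipotent characters of distinct degree, namely $1_S$ and $\mathrm{St}_S$... but $\mathrm{St}_S$ has degree a power of $p$, which is coprime to $3$, and it lies in $B_0(\wt S)$ here. Actually in the $3 \mid (q+\epsilon)$ case $\Out(S)$ has a normal cyclic Sylow $3$-subgroup (as recorded in Remark \ref{remark:conditions}), and I would take the nontrivial distinct-degree character $\wt\chi \in \Irr_{3'}(B_0(\wt S))$ from Proposition \ref{prop:4unipsext} — which is $\mathrm{St}_S$ or the other listed unipotent character — extending to $\Aut(S)$ and restricting irreducibly to $\chi \in \Irr_{3'}(B_0(S))$; since $S \neq \type{D}_4(q)$, Lemma \ref{lem:Tlemma} then applies and gives exactly condition (a2): for every $S \le T \le \Aut(S)_{\wt\chi} = \Aut(S)$ the character $\chi$ extends to some character of $B_0(T)$. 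Conditions (b) and (c) in these cases follow from the well-known character tables: for $\PSL_2(q)$ one uses the characters of degree $q\pm1$ and the two of degree $(q\pm1)/2$; for $\PSL_3^\epsilon(q)$ with $3\mid (q+\epsilon)$ one exhibits three non-conjugate $3'$-characters of $B_0(S)$ among the unipotent characters and suitable semisimple characters $\chi_{\wt s}$ for $\wt s \in [\wt G^\ast, \wt G^\ast]$ of order dividing $q-\epsilon$, checking their degrees $[\wt G^\ast : \cent{\wt G^\ast}{\wt s}]_{3'}$ are distinct and the restrictions to $G$ irreducible via the centralizer descriptions of \cite[Prop.~(1A)]{FS82}; for (c) when $9 \mid |S|$ one produces a third by enlarging the torus part of $\wt s$. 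The small cases $\PSL_2(27), \PSL_2(81), \PSL_3^\epsilon(3)$ are already dispatched in Proposition \ref{prop:spor}.

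The main obstacle is the bookkeeping for condition (c) in the borderline families — ensuring that when $9 \mid |S|$ there really are \emph{three} pairwise non-$\Aut(S)$-conjugate height-zero characters in $B_0(S)$, not merely two — since for $\PSL_2(q)$, $\PSL_3^\epsilon(q)$, $\PSp_4(2^a)$ and $\PSL_4^\epsilon(q)$ with $3\mid(q-\epsilon)$ the unipotent supply alone can fall just short. I would resolve this by supplementing the unipotent characters with explicitly chosen semisimple characters $\chi_{\wt s}$ (trivial on $\zent G$, i.e. $\wt s \in [\wt G^\ast,\wt G^\ast]$, with distinct $|\cent{\wt G^\ast}{\wt s}|_{3'}$ and irreducible restriction to $G$), exactly in the spirit of the arguments in Lemma \ref{lem:typeAdefining}, and for the genuinely small $q$ invoking Proposition \ref{prop:spor}; Remark \ref{remark:conditions} reduces the work further by allowing us to assume non-abelian Sylow $3$-subgroups throughout the verification of (c).
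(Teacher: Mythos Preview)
Your overall strategy coincides with the paper's: Proposition~\ref{prop:4unipsext} supplies the unipotent characters in the generic case, and Lemma~\ref{lem:Tlemma} converts the two-character situation into condition~(a2). But the bookkeeping in the exceptional families contains real gaps.

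First, $\PSL_3^\epsilon(q)$ with $3\mid(q-\epsilon)$ falls between your cases: it is excluded from your generic paragraph, yet your ``exceptional'' paragraph opens with all $\PSL_3^\epsilon(q)$ and then silently restricts to $3\mid(q+\epsilon)$. Since Proposition~\ref{prop:4unipsext} provides three unipotent characters here, (a1) follows exactly as for $\PSL_4^\epsilon(q)$; you only need to say so explicitly to justify the ``Further'' clause.

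Second, your treatment of (b) and (c) for $\PSL_2(q)$ is incorrect. The characters of degree $(q\pm 1)/2$ lie in the Lusztig series indexed by the involution of $G^\ast=\PGL_2(q)$, which is not a $3$-element; hence they are \emph{not} in $B_0(S)$ and cannot be used. Even if they were, they are swapped by the diagonal automorphism and so are $\Aut(S)$-conjugate. Likewise, only one of the degrees $q+1$, $q-1$ occurs in $\Irr_{3'}(B_0(S))$, namely $q+\epsilon$ where $3\mid(q-\epsilon)$. The paper handles (b) by producing a single non-unipotent semisimple character $\chi_s$ with $s$ a nontrivial $3$-element (giving the three degrees $1$, $q$, $q+\epsilon$), and handles (c) for both $\PSL_2(q)$ and $\PSL_3^\epsilon(q)$ with $3\mid(q+\epsilon)$ in one stroke by observing that their Sylow $3$-subgroups are abelian, so Remark~\ref{remark:conditions} (via \cite[Thm.~2.1]{HSV24}) applies.

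Third, for (c) in $\PSL_4^\epsilon(q)$ with $3\mid(q-\epsilon)$, $\PSp_4(2^a)$, and $\PSL_3^\epsilon(q)$ with $3\mid(q-\epsilon)$, ``one checks directly'' is not an argument. The paper's uniform device is to exhibit one non-unipotent member of $\Irr_{3'}(B_0(S))$: choose a $3$-element $s\in[G^\ast,G^\ast]$ centralizing a Sylow $3$-subgroup of $G^\ast$ (e.g.\ $(\omega,\omega^{-1},1)$ for $\SL_3^\epsilon(q)$), and use \cite[Thm.~21.13]{CE04} or the uniqueness of the unipotent block from \cite{FS82} to place $\chi_s$ in $B_0$. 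Since non-unipotent characters are never $\Aut(S)$-conjugate to unipotent ones, this supplies the missing fourth orbit.
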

\begin{proof}
This follows from Proposition \ref{prop:4unipsext} if $S$ is not one of the excluded cases there, and part (a) follows also in those cases. (Note that we achieve (a1) except in the case $S=\PSL_2(q)$ and the case $3\mid (q+\epsilon)$ with $S=\PSL_3^\epsilon(q)$, where we obtain (a2) when combined with Lemma \ref{lem:Tlemma}.)

Now let $S=\PSL_2(q)$ or let  $S=\PSL_3^\epsilon(q)=\SL_3^\epsilon(q)$ with $3\mid (q+\epsilon)$. In this case, Sylow $3$-subgroups of $S$ are abelian, and (c) follows from \cite[Thm.~2.1]{HSV23}.  For (b), it suffices to know that there is at least one non-unipotent member in $\irra{3'}{B_0(S)}=\irr{B_0(S)}$. Let $G=\SL_2(q)$, resp. $\SL_3^\epsilon(q)$. In this case, any semisimple $3$-element of $G^\ast$ lies in $[G^\ast, G^\ast]$ and has connected centralizer, so the corresponding semisimple character of $G$ must be trivial on $\zent{G}$ and lie in $B_0(G)$ by the discussion in Sec. \ref{sec:setting}. 

In the remaining cases, it remains to show (c), so we need to exhibit one additional character in $\irra{3'}{B_0(S)}$ not $\Aut(S)$-conjugate to the three unipotent characters. Namely, it suffices again to know there is a non-unipotent member of  $\irra{3'}{B_0(S)}$. 
If $S=\PSp_{4}(2^a)=\Sp_4(2^a)=G$, then any $3$-element $s$ of $G^\ast$ again yields a semisimple character in $B_0(S)$, for the same reasoning. Choosing $s$ to be in $\zent{P^\ast}$ with $P^\ast\in\Syl_3(G^\ast)$ further gives such a character with $3'$-degree. 

Finally, assume $3\mid (q-\epsilon)$ and that $S=\PSL_4^\epsilon(q)$  or $S=\PSL_3^\epsilon(q)$.  Let $G=\SL_4^\epsilon(q)$, respectively $\SL_3^\epsilon(q)$. Then $G$ has a unique unipotent $3$-block using \cite{FS82}, and hence the semisimple character $\chi_s$ lies in $\irr{B_0(G)}$ for any $3$-element $s\in G^\ast$. In the case $G=\SL_4^\epsilon(q)$, we have $3\nmid [G^\ast: [G^\ast, G^\ast]]$, so taking $s$ to be any element in the center of a Sylow $3$-subgroup of $[G^\ast, G^\ast]$, we again obtain $\chi_s\in\irra{3'}{B_0(G)}$ is trivial on the center. Finally, let $G=\SL_3^\epsilon(q)$. The element $(\omega, \omega^{-1}, 1)$ of $\SL_3^\epsilon(q)$, where $|\omega|=3$, commutes with a Sylow $3$-subgroup of $\GL_3^\epsilon(q)$ up to multiplication by $\zent{\GL_3(q)}$, and hence yields a $3$-element $s$ of $[G^\ast,G^\ast]$ centralizing a Sylow $3$-subgroup of $G^\ast$, again yielding a semisimple character $\chi_s\in\Irr_{3'}(B_0(S))$. 
\end{proof}

We remark that this completes the proof of Theorem \ref{thm:conditionsonsimples}.

\subsection{Sylow Subgroups with $[P:P']=9$ in Non-Defining Characteristic}

Here we discuss when the condition $[P:P']=9$ holds for groups of Lie type defined in characteristic $p\neq 3$. It is worth noting that when $[P:P']=9$, we necessarily have $[P:\Phi(P)]\leq 9$, and $P$ is either cyclic or $2$-generated. It will  also be useful to note that when $G=H\wr K$ is a standard wreath product, we have $G/G'\cong H/H'\times K/K'$, for example from \cite[Prob.~1.6.20]{RobinsonGroupTheory}.

Throughout, for an integer $m$ with $3\nmid m$, we will write $d_3(m)$ for the order of $m$ modulo $3$. If $3\mid m$, we write $m_3$ for the highest power of $3$ dividing $m$.

\subsubsection{$\PSL_2(q)$ and $\PSL_3^\epsilon(q)$}

We begin with the case of $\PSL_2(q)$ and $\PSL_3^\epsilon(q)$.

\begin{prop}\label{prop:SL2sylow}
Let $q$ be a power of a prime $p\neq 3$ and let $S=\PSL_2(q)$ or let $3\mid (q+\epsilon)$ and $S=\PSL^\epsilon_3(q)$. Let $A$ be an almost simple group $S\lhd A\leq \Aut(S)$ with $|A/S|_3\leq 3$ and let $P\in\Syl_3(A)$. Then $[P:P']=9$ if and only if $(q^2-1)_3=9$. Further, if $3\nmid |A/S|$, then  $A=S\cent{A}{P}$.
\end{prop}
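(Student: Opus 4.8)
The plan is to handle $\PSL_2(q)$ and $\PSL_3^\epsilon(q)$ (with $3\mid(q+\epsilon)$) somewhat separately, since in both cases a Sylow $3$-subgroup $Q$ of $S$ is abelian, but its structure and the outer automorphisms acting on it differ. First I would pin down $Q\in\Syl_3(S)$: for $S=\PSL_2(q)$, $Q$ is cyclic of order $(q^2-1)_3$ (it lies in a cyclic torus of order $(q-\epsilon_0)/\gcd$ for the appropriate sign); for $S=\PSL_3^\epsilon(q)$ with $3\mid(q+\epsilon)$, one has $3\nmid(q-\epsilon)$ so $|S|_3=(q+\epsilon)_3=(q^2-1)_3$, and $Q$ is again cyclic, sitting inside a Coxeter-type torus. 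In either case $|Q|=(q^2-1)_3$. Then I would analyze $\Out(S)$: it is generated by diagonal, field, and (for type $A$, $\epsilon=+$) graph automorphisms. Since $3\nmid|A/S|$ by hypothesis when we want the last clause, and more generally $|A/S|_3\le 3$, the possible $3$-part of $\Out(S)$ comes only from field automorphisms (the diagonal part has order $\gcd(2,q-1)$ or $\gcd(3,q-\epsilon)$, the latter being prime to $3$ here; the graph part has order $2$).

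Next I would compute $P\in\Syl_3(A)$ and $[P:P']$. If $3\nmid|A/S|$, then $P=Q$ is cyclic, so $P'=1$ and $[P:P']=|Q|=(q^2-1)_3$; this equals $9$ iff $(q^2-1)_3=9$, giving both directions immediately. Moreover, since $P=Q\le S$ is abelian, $P\le\cent A P$, and by a Frattini argument $A=S\norm A P$; I would then invoke that $\norm S P/\cent S P$ has order prime to... — more carefully, since $P$ is an abelian Sylow $3$-subgroup of $S$ and also of $A$, a standard argument (e.g.\ using that $N_A(P)/C_A(P)$ embeds in $\Aut(P)$, combined with the fact that the relevant $3'$-field/graph/diagonal automorphisms act, but we only need $A = S C_A(P)$) shows $A=S\cent A P$: indeed $\norm A P = \norm S P \cent A P$ would need justification, but the cleanest route is: $A=S\norm A P$ by Frattini, and $\norm A P/S \cong \norm A P/(\norm A P \cap S)\cdot(\text{stuff})$ — actually since $|A/S|$ is prime to $3$ and $\norm A P$ contains $P$, I would argue $\norm A P = (\norm A P\cap S)\norm{A}{P}$... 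Let me instead say: because $P$ is cyclic and $\Aut(P)$ is abelian, any field/graph automorphism of $3'$-order normalizing $P$ in fact centralizes... no. The honest key step here is to check directly that the $3'$-automorphisms in $A/S$ can be chosen to centralize $P$; since $P=Q$ lies in a maximally split-or-Coxeter torus that is itself stabilized by field automorphisms acting as $x\mapsto x^{q_0}$, and $P$ being the $3$-part of a cyclic group on which such a map acts, one gets $A = S\cent A P$ after replacing the complement appropriately.

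When $3\mid|A/S|$ (so $|A/S|_3=3$), $A$ contains a field automorphism $\varphi$ of order $3$, hence $q=q_0^3$. Here $P=Q_0\rtimes\langle\varphi\rangle$ where $Q_0=C_Q(\varphi)$ is the Sylow $3$-subgroup of the corresponding group over $\FF_{q_0}$; this uses that $C_Q(\varphi)$ is cyclic of order $(q_0^2-1)_3$ and $[Q:Q_0]=3$ (the standard "base change" for Sylow subgroups of Lie type groups, as in the cyclic-torus case, since $d_3(q)=d_3(q_0)$ because $3\mid q_0^2-1\iff 3\mid q^2-1$ and in fact $(q^2-1)_3 = 3\,(q_0^2-1)_3$ here). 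I would then compute $[P:P']$ for $P\cong C_{3^{a+1}}\rtimes C_3$ acting faithfully-on-the-top or via the natural Galois action: $P'=[Q_0,\varphi]$ has index $9$ precisely when $(q_0^2-1)_3=3$, i.e. $(q^2-1)_3 = 9$; and if $(q_0^2-1)_3\ge 9$ then $[P:P']\ge 27$, while if $(q^2-1)_3=3$ the field automorphism case can't even arise. So again $[P:P']=9\iff(q^2-1)_3=9$.

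The main obstacle I anticipate is the bookkeeping for $A=S\cent A P$ in the $3\nmid|A/S|$ case — one must genuinely verify that whatever $3'$-automorphisms appear in $A/S$ (combinations of diagonal, graph, and $3'$-field automorphisms) can be realized by elements centralizing the cyclic group $P$, rather than merely normalizing it; this requires knowing the precise torus containing $P$ and how $\Out(S)$ acts on it, and for $\PSL_3^\epsilon(q)$ the graph automorphism inverting the Coxeter torus needs care (it normalizes but does not centralize $P$ in general, so one must check it does not contribute to $A/S$, or that $A=S\cent A P$ still holds after accounting for it via the Frattini argument). The Lie-type-over-$\FF_{q_0}$ structure of $P$ in the $|A/S|_3=3$ case is routine given the cyclic torus picture and is the same mechanism already used in \cite{NRSV21}.
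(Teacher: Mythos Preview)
Your approach matches the paper's: identify $Q\in\Syl_3(S)$ as cyclic of order $(q^2-1)_3$, handle the two cases $3\nmid|A/S|$ and $|A/S|_3=3$ separately, and in the latter use $q=q_0^3$ with $(q_0^2-1)_3=(q^2-1)_3/3$. Two points need correction or completion.

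\emph{Notation in the $|A/S|_3=3$ case.} You write $P=Q_0\rtimes\langle\varphi\rangle$ with $Q_0=C_Q(\varphi)$; this has the wrong order. The Sylow subgroup of $A$ is $P=Q\rtimes\langle\varphi\rangle$ with $Q$ the full Sylow of $S$, and then $P'=[Q,\varphi]$ (since $Q$ is abelian), so $P/P'\cong Q/[Q,\varphi]\times\langle\varphi\rangle$. What is true is that $|Q/[Q,\varphi]|=|C_Q(\varphi)|=(q_0^2-1)_3=3^{b-1}$ (the first equality because $a\mapsto[a,\varphi]$ on the abelian group $Q$ has kernel $C_Q(\varphi)$ and image $[Q,\varphi]$), giving $[P:P']=3^b$, so $[P:P']=9$ iff $b=2$. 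This is exactly the paper's computation, once the notation is fixed.

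\emph{The step $A=S\cent A P$ when $3\nmid|A/S|$.} You correctly flag this as the obstacle but do not supply the resolving observation. The paper's argument is: write $A\leq \wt S\rtimes\langle\tau,F'\rangle$ with $F'=F_p^{3^a}$ (so $F'$ has $3'$-order) and $\tau$ the graph automorphism for $\PSL_3(q)$, trivial otherwise. One checks that each of $F'$, $\tau$, and the diagonal automorphisms act on the cyclic group $P$ either trivially or by inversion (for $F'$, this is because $(q^2-1)_3$ already divides $p^{2\cdot 3^a}-1$, so $F'^2$ centralizes $P$). Since $\norm S P$ contains an element $g$ inverting $P$, any outer automorphism $\sigma$ acting by $\pm 1$ on $P$ satisfies $\sigma\in g^{\epsilon}\cent A P$ for suitable $\epsilon\in\{0,1\}$, hence $\sigma\in S\cent A P$. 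Together with Frattini this gives $A=S\norm A P=S\cent A P$. Your worry that $\tau$ ``normalizes but does not centralize $P$'' is exactly right; the point is that inversion is already realized inside $S$.
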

\begin{proof}
Note that in this case $3\nmid |\wt{S}/S|$. First suppose that $3\nmid |A/S|$, so we may take $P\in\Syl_3(S)$. Let $q=p^f$ with $f=3^a\cdot m$ and $3\nmid m$. Then $A\leq \wt{S}\rtimes \langle \tau, F'\rangle$ with $F'=F_p^{3^a}$ and $\tau$ a graph automorphism in the case of $\PSL_3(q)$ and trivial otherwise. Here $P$ is cyclic of order $(q^2-1)_3$, and we can identify $P$ with a Sylow $3$-subgroup of a cyclic Sylow $d$-torus of $G=\SL_2(q)$ resp. $\SL_3^\epsilon(q)$, where $d=d_3(q)$, resp $d_3(\epsilon q)=2$. The automorphism $F'$ acts on this cyclic torus by $p^{d\cdot 3^a}$ powers, so we see $F'$ acts trivially on $P$ or by inversion, since we may write $(q^2-1)=\prod_{k\mid m}\Phi_k(p^{2\cdot 3^a})$ and since $3\mid (p^{2\cdot 3^a}-1)$, we have $3\nmid \Phi_k(p^{2\cdot 3^a})$ for any $1\neq k\mid m$. 
In either case, we may find $g\in S$ such that $F'\in g\cent{A}{P}$. Similarly, $\tau$ acts by inversion on $P$ and $\wt{S}/S$ is either trivial or order $2$ acting trivially on the Sylow $d$ torus containing $P$, and we see $A=S\cent{A}{P}$ as claimed. Further, in this case $P$ is abelian, so $[P:P']=9$ if and only if $(q^2-1)_3=9$.

Now let $|A/S|_3= 3$. Then $P\cong P_1\rtimes C_3$, where $C_3=\langle F_{q_0}\rangle$ with $q=q_0^3$ and $P_1\in\Syl_3(S)$. Since $P_1$ is abelian, note then that $P/P'\cong P_1/  [P_1,F_{q_0}] \times \langle F_{q_0}\rangle$. 
Let $(q^2-1)_3=3^b$. Then $(q^2-1)=(q_0^2-1)(q_0^4+q_0^2+1)$ and the second factor is divisible by $3$ exactly once, so $(q_0^2-1)_3=3^{b-1}$. Then we can see directly that $P_1/  [P_1,F_{q_0}]$ has size $3^{b-1}$, so that $[P:P']=9$ exactly when $b=2$.
\end{proof}

We next complete the picture for $S=\PSL_3^\epsilon(q)$.

\begin{prop}\label{prop:SL3sylow}
Let $q$ be a power of a prime and let $\epsilon\in\{\pm1\}$ with $3\mid (q-\epsilon)$. Let $G=\SL_3^\epsilon(q)$,     $S=\PSL_3^\epsilon(q)$, and $\wt{S}=\PGL_3^\epsilon(q)$. 
 \begin{enumerate}
\item Let  $R\in\Syl_3(X)$, where $X\in\{S, \wt{S}, G, G\langle \delta\rangle\}$, where $\delta$ is a diagonal automorphism of order $3$. Then 
 $[R:R']=9$.
 \item If $S\leq A\leq\Aut(S)$ with $|A/S|_3=3$, then $R\in\Syl_3(A)$ satisfies $[ R: R']\neq 9$ if and only if the Sylow $3$-subgroup of $A/S$ induces a field automorphism on $S$.
\item If $S\leq A\leq \Aut(S)$ with $3\nmid |A/S|$ and $R\in\Syl_3(A)$, then $[A:M]\leq 2$, where $M=S\cent{A}{R}$. 
\end{enumerate}

\end{prop}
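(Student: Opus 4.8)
The plan is to analyze the Sylow $3$-subgroup structure of $\SL_3^\epsilon(q)$ directly, using the hypothesis $3 \mid (q-\epsilon)$. Let $3^a = (q-\epsilon)_3$, so $a \geq 1$. First I would set up the standard description of $\Syl_3(\GL_3^\epsilon(q))$: since $3 \mid (q-\epsilon)$, the relevant torus is the maximal torus $T \cong C_{q-\epsilon}^3$ of diagonal matrices (in the appropriate twisted sense), and its normalizer modulo $T$ meets a Sylow $3$-subgroup in the order-$3$ Weyl group element cyclically permuting the three coordinates. Thus a Sylow $3$-subgroup $\wt{R}_0$ of $\wt{G} := \GL_3^\epsilon(q)$ is $(C_{3^a})^3 \rtimes C_3$ with $C_3$ acting by cyclic permutation. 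For part (1), I would compute $[\wt{R}_0:\wt{R}_0']$: the commutator of the base group under the cyclic shift is $\{(x,y,z) : xyz = 1\}$-type, of index $3^a$, and the $C_3$ on top is central in the quotient, giving $[\wt{R}_0:\wt{R}_0'] = 3^a \cdot 3 / 3^{?}$—more carefully, $\wt R_0/\wt R_0' \cong C_{3^a} \times C_3$, so $[\wt R_0 : \wt R_0'] = 3^{a+1}$... but wait, we want index $9$ for $G$, not $\wt G$. The point is that passing from $\wt G$ to $G = \SL_3^\epsilon(q)$ removes one factor of $C_{3^a}$ from the torus part (the determinant-one condition), and passing to $S = \PSL_3^\epsilon(q)$ or adding $\langle\delta\rangle$ only shifts by the central/diagonal part which here has order $3$ exactly once. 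So for each $X \in \{S, \wt S, G, G\langle\delta\rangle\}$, the Sylow $3$-subgroup has abelianization of the form $C_3 \times C_3$: I would verify this case by case, tracking how the determinant condition and the center $\zent{G} \cong C_3$ interact with the abelianization (using that the cyclic shift fixes the "diagonal" subgroup $\{(x,x,x)\}$ pointwise, which is exactly where $\zent{G}$ and the diagonal automorphism live). The key computational fact is that $R_0 := R_0 \cap G$ for $R_0 \in \Syl_3(\wt G)$ has $R_0/R_0' \cong C_3 \times C_3$ regardless of $a$, because the base group $\{(x,y,z): xyz=1\} \cong C_{3^a}^2$ has commutator-quotient-under-shift equal to $C_{3^a}$ (index $3^a$ in a group of order $3^{2a}$), times the $C_3$ on top.

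For part (2), suppose $S \leq A \leq \Aut(S)$ with $|A/S|_3 = 3$. Write $q = p^f$. The Sylow $3$-subgroup of $\Out(S) = \wt S/S \rtimes \langle \text{graph}, \text{field}\rangle$ of order $3$ is generated by either a diagonal automorphism $\delta$ (when $3 \mid f$ does not hold but $(q-\epsilon)_3 \geq 3$... actually when the $3$-part comes from the diagonal part $\gcd(3,q-\epsilon)$), a field automorphism $\varphi$ of order $3$ (when $3 \mid f$), or a product. I would split into the cases according to what the order-$3$ subgroup of $A/S$ induces. If it induces a diagonal automorphism, then $R$ is (up to the center quotient) a Sylow $3$-subgroup of $G\langle\delta\rangle$, and part (1) gives $[R:R']=9$. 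If it induces a field automorphism $\varphi$ of order $3$ (so $q = q_0^3$), then $R \cong R_1 \rtimes \langle\varphi\rangle$ with $R_1 \in \Syl_3(S)$ abelian-by-$C_3$; I would argue as in the proof of \cite[Prop.~4.9]{NRSV21} (invoked in Proposition \ref{prop:thmAdefining}) that $R/R'$ is then not $2$-generated — intuitively because $\varphi$ acts on the three-dimensional (over $\FF_{3^a}$) base group with too large a fixed space, adding an extra generator — so $[R:R'] > 9$. The mixed case (order-$3$ subgroup induces a graph-field or diagonal-times-field automorphism) reduces to the field-automorphism case since the diagonal/graph part does not help $2$-generation. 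The main obstacle here is the careful bookkeeping of the action of a field automorphism on the nonabelian $3$-group $R_1$ and confirming the fixed-point structure forces a third generator; I expect to borrow the mechanism from \cite[Prop.~4.9]{NRSV21} essentially verbatim.

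For part (3), suppose $3 \nmid |A/S|$, so $R \in \Syl_3(S)$ (identified inside $\wt S$) and $A \leq \wt S \rtimes \langle \tau, F'\rangle$ where $F' = F_p^{3^a}$ with $f = 3^a m$, $3 \nmid m$, and $\tau$ is the graph automorphism of order $2$. I would show that $F'$ centralizes $R$ modulo $S$: since $R$ sits inside the Sylow $d$-torus part with $d = d_3(\epsilon q) = 1$ here, and $F'$ acts on the relevant cyclic factors $C_{3^a}$ by raising to the $p^{3^a}$-power, and since $3 \nmid \Phi_k(p^{3^a})$ for $1 \neq k \mid m$ (because $3 \mid p^{3^a} - \epsilon'$ already accounts for the full $3$-part), $F'$ acts on $R$ either trivially or by an automorphism realized by an element of $S$ — so $F' \in S\cent A R$ up to the factor coming from $\tau$ and from $\wt S/S$. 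The graph automorphism $\tau$ and $\wt S/S \cong C_{\gcd(3,q-\epsilon)}$ each contribute at most the factor of $2$ from $\tau$ acting by inverse-transpose (inversion) on $R$, which is not inner in general. Hence $[A : S\cent A R] \leq 2$. The one subtlety is checking that the $\wt S/S$-part (order $3$ here!) actually lies in $\cent A R \cdot S$, which follows because $R$ already contains a full Sylow $3$ of $\wt S$ restricted appropriately — i.e. $\wt S = S \cent{\wt S}{R}$ by the Frattini argument applied inside $\wt S$ (as $R \in \Syl_3(\wt S)$ when... hmm, need $R \in \Syl_3(\wt S)$, which holds since $|\wt S/S|_3 = 3$ equals the extra torus factor, so actually $\Syl_3(S) \neq \Syl_3(\wt S)$) — so more carefully: a Sylow $3$ of $\wt S$ containing $R$ normalizes $R$ and maps onto $\wt S/S$, giving $\wt S = S N_{\wt S}(R)$, and then $N_{\wt S}(R)/\cent{\wt S}{R}$ embeds in $\Aut(R)$; since $R/R' \cong C_3 \times C_3$ the relevant action is controlled and one checks it is by a $3$-element hence trivial modulo inner, forcing the diagonal contribution into $S\cent A R$. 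I expect part (3) to be the fiddliest, with the main obstacle being the precise interaction of the diagonal automorphisms of order $3$ with $\cent A R$; the resolution is that these diagonal automorphisms are realized by elements of $\wt S$ that, being $3$-elements normalizing $R$, centralize $R/R'$ and (by a standard argument on $3$-groups acting on $3$-groups) can be adjusted by inner automorphisms of $S$ to centralize $R$ itself, which is why only the order-$2$ graph part survives in $[A:M]$.
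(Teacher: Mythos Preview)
Your overall strategy for part (i) matches the paper's: use the wreath-product description $\wt R_0\cong C_{3^a}\wr C_3$ and track commutators through the four groups. The paper does this more concretely, exhibiting explicit elements $h_1,h_2$ of $P'$ generating a subgroup of order $3^{2a-1}$; your sketch is a bit wobbly (the ``$3^?$'' aside) but fixable.

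There is a genuine error in your plan for part (ii). You write that the mixed case (the order-$3$ element of $A/S$ inducing $\delta F_{q_0}$) ``reduces to the field-automorphism case since the diagonal/graph part does not help $2$-generation''. This is precisely backwards. The paper computes directly that when $\hat Q=Q\langle\delta F_{q_0}\rangle$ one has $(P\langle\delta F_{q_0}\rangle)'=\wt P'$, giving $[\hat Q:\hat Q']=9$, whereas when $\hat Q=Q\langle F_{q_0}\rangle$ one has $(P\langle F_{q_0}\rangle)'=P'$, giving $[\hat Q:\hat Q']=27$. So the $\delta F_{q_0}$ case behaves like the \emph{diagonal} case, not the field case---this is exactly the content of the ``if and only if'' in the statement. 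The heuristic that ``adding a field automorphism forces a third generator'' fails here because twisting by $\delta$ enlarges the commutator subgroup from $P'$ up to $\wt P'$, collapsing the extra generator. You will need an actual computation distinguishing $F_{q_0}$ from $\delta F_{q_0}$; the argument from \cite[Prop.~4.9]{NRSV21} will not do this for you.

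For part (iii) you are making your life much harder than necessary. Since $3\mid(q-\epsilon)$ we have $|\wt S/S|=3$, so the hypothesis $3\nmid|A/S|$ immediately forces $A\cap\wt S=S$: there are \emph{no} diagonal automorphisms in $A$ at all, and your entire final paragraph (Frattini argument inside $\wt S$, $3$-elements normalizing $R$, etc.) is addressing a non-issue. With that observed, $A\leq S\rtimes\langle\tau,F_0\rangle$ (or $S\rtimes\langle F_0\rangle$ when $\epsilon=-1$), and the paper finishes by a direct check that one of $F_0$, $\tau F_0$, or $F_0^2$ centralizes $P$ depending on $p\bmod 3$ and $\epsilon$, leaving at most an index-$2$ quotient.
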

\begin{proof}

Throughout, we let $P$ denote a Sylow $3$-subgroup of $G$ and $\wt{P}$ a Sylow $3$-subgroup of $\wt{G}:=\GL_3^\epsilon(q)$ with $P=\wt{P}\cap G$. Let $Z=\zent{G}$, so $|Z|=3$. Then $Q:=P/Z= PZ/Z\in\Syl_3(S)$ and $\wt{Q}:=\wt{P}\zent{\wt{G}}/\zent{\wt{G}}\in\Syl_3(\wt{S})$.

(i) 
Let $(q-\epsilon)_3=3^b$. Then $\wt{P}\cong C_{3^b}\wr C_3$ and $\wt P'=\{(x,y,z)\in C_{3^b}^3\mid xyz=1\}$. Then we see $P=\wt{P}'\rtimes C_3$ and $P'=[\wt{P}', C_3]$. Since $P$ is not cyclic, we see $[P:P']\neq 3$, and hence $\wt P'\neq P'$. We claim that there is a subgroup $H\leq P'$ with $|H|=3^{2b-1}$, which will force $[P:P']=9$.

Let $a$ be a generator for $C_{3^b}$ and $c$ a generator for $C_3$, chosen such that $[x,c]=(x_1x_2^{-1}, x_2x_3^{-1}, x_3x_1^{-1})$ for $x=(x_1,x_2,x_3)$ in the base group $C_{3^b}^3$. The elements $g_1:=(a,a^{-1},1)$ and $g_2:=(a,a,a^{-2})$ lie in $\wt{P}'$, and $P'$ contains $h_1:=[g_1,c]=(a^2, a^{-1}, a^{-1})$ and $h_2:=[g_2,c]=(1, a^3,a^{-3})$. But note that $|h_1|=3^b$, $|h_2|=3^{b-1}$, and $\langle h_1\rangle \cap\langle h_2\rangle=1$, so $H=\langle h_1, h_2\rangle$ is the desired subgroup. Hence we have shown that $[P:P']=9$.

Now, we have $[Q:Q']=[P:P'Z]$. But note that $Z=\{(x,x,x)\in C_{3^b}^3\mid x^3=1\}=\zent{P}$. Then $Z\leq P'$ since $Z\cap P'\neq 1$, and we therefore have $[Q:Q']=[P:P']=9$.

 Now write $\wt{Z}_3:=\zent{\wt{G}}\cap \wt{P}=\zent{\wt P}$ and note $\wt{Q}\cong\wt{P}/\wt Z_3$. We have $[\wt{Q}:\wt{Q}']=[\wt{P}:\wt{P}'\wt{Z}_3]=[\wt{P}:\wt{P}']/[\wt{Z}_3:\wt{Z}_3\cap\wt{P}']=[\wt{P}:\wt{P}']/[\wt{Z}_3:Z]=[\wt{P}:\wt{P}']/3^{b-1}=3^{b+1}/3^{b-1}=9$.
Now note that $P\langle\delta\rangle\in\Syl_3(G\langle\delta\rangle)$ and $\wt{Q}\cong P\langle\delta\rangle/Z$. Recalling  that $Z\leq P'\leq (P\langle\delta\rangle)'$, we have from before, $9=[\wt{Q}:\wt{Q}']=[P\langle\delta\rangle:(P\langle\delta\rangle)'Z]=[P\langle\delta\rangle:(P\langle\delta\rangle)']$ as claimed.

(ii)  Now suppose that $|A/S|_3=3$. If $A$ contains $\wt{S}$,  then we are done by (i).  So, we may assume $3\nmid [A\cap\wt{S}:S]$. Here, the statement can be seen using  \cite[Lem.~5.1]{NS23}. Alternatively, we may argue as follows.  A Sylow $3$-subgroup of $A$ is of the form $\hat Q=Q\langle F_{q_0}\rangle=P\langle F_{q_0}\rangle/Z$ or $\hat Q=Q\langle \delta F_{q_0}\rangle=P\langle \delta F_{q_0}\rangle/Z$, where $P$, $Z$, $Q$ and $\delta$ are as before and $q=q_0^3$. 
In the case that  $\hat Q=Q\langle \delta F_{q_0}\rangle$, we can compute that $(P\langle \delta F_{q_0}\rangle)'=\wt{P}'$, giving $[\hat Q:\hat Q']=9$. If instead 
 $\hat Q=Q\rtimes\langle F_{q_0}\rangle$, we can compute that $(P\langle F_{q_0}\rangle)'=P'$, yielding that $[\hat Q:\hat Q']=27$.

(iii) Let $q=p^f$ with $f={3^a\cdot m}$ and $(m,3)=1$ and let $F_0:=F_{p^{3^a}}$. In this case, $|\wt{S}/S|=3$, so since $3\nmid |A/S|$, we have $A\leq S\rtimes \langle \tau, F_0\rangle$, where $\tau$ denotes a graph automorphism if $\epsilon=1$ and $A\leq S\rtimes \langle F_0\rangle$ if $\epsilon=-1$. 
First suppose $\epsilon=1$. We choose $\tau$ to be induced by the transpose-inverse automorphism on $G$. Note that $\langle \tau, F_0\rangle=\langle \tau, \tau F_0\rangle \cong C_2\times C_m$. Let $M=S\cent{A}{Q}$. We compute that $F_0$ centralizes $P$ if $p\equiv 1\pmod 3$ and $\tau F_0$ centralizes $P$ if $p\equiv 2\pmod 3$, yielding the claim in this case. 
Now, if $\epsilon=-1$, then $\langle F_0\rangle\cong C_{2m}$. Here we calculate that $F_0$ centralizes $P$ if $p\equiv 2\pmod 3$ but acts by inversion on $\wt{P}'$ if $p\equiv 1\pmod 3$. But in any case,  $F_0^2$ centralizes $P$, completing the proof.
\end{proof}

\subsubsection{Other Linear and Unitary Groups}

Throughout this subsection, we let $G=\SL_n^\epsilon(q)$, $\wt{G}:=\GL_n^\epsilon(q)$, $S=\PSL_n^\epsilon(q)$, and $\wt{S}:=\PGL_n^\epsilon(q)$. We will write $\wt{P}\in\Syl_3(\wt{G})$, $P\in\Syl_3(G)$ with $P=\wt{P}\cap G$, $\wt{Q}\in\Syl_3(\wt{S})$ with $\wt{Q}=\wt{P}\zent{\wt{G}}/\zent{\wt{G}}$, and $Q\in\Syl_3(S)$ with $Q=\wt{Q}\cap S\cong P\zent{G}/\zent{G}$.

The situation for $\GL_n^\epsilon(q)$  will be useful for the remaining classical groups, so we next discuss this case.

\begin{prop}\label{prop:GLSylow}
Let $n\geq 1$ and let $\wt P$ be a Sylow $3$-subgroup of $\GL_n^\epsilon(q)$. 

If   $3\mid (q-\epsilon)$, then $[\wt P: \wt P']=9$ if and only if one of the following holds:
\begin{itemize}
\item $n=1$ and $(q-\epsilon)_3=9$; or
\item $n\in\{2, 3\}$ and $(q-\epsilon)_3=3$.
\end{itemize}

If $3\mid (q+\epsilon)$, then $[\wt P:\wt P']=9$ if and only if one of the following holds:

\begin{itemize}
\item $n\in\{2,3\}$ and $(q+\epsilon)_3=9$; or
\item $4\leq n\leq 7$ and $(q+\epsilon)_3=3$.
\end{itemize}

\end{prop}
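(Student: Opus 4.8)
The plan is to reduce everything to understanding the Sylow $3$-subgroup $\wt P$ of $\GL_n^\epsilon(q)$ as an explicit iterated wreath product, and then compute its abelianization by standard wreath-product combinatorics. Write $d = d_3(\epsilon q)$, so $d=1$ when $3\mid(q-\epsilon)$ and $d=2$ when $3\mid(q+\epsilon)$, and set $(q^d-1)_3 = 3^b$. The classical description (going back to Weir, see also \cite{FS82}) gives $\wt P \cong \prod_i (C_{3^b}\wr W_i)$, where the product runs over the base-$3$ digits $a_i$ of $\lfloor n/d\rfloor$ and $W_i \in \Syl_3(S_{3^i})$ is an iterated wreath product of $a_i$ copies built from $C_3\wr C_3\wr\cdots\wr C_3$; the factor of $\GL_d^\epsilon(q)$ itself contributes the cyclic $C_{3^b}$ in the bottom level because a Sylow $3$-subgroup of $\GL_d^\epsilon(q)$ is cyclic of order $3^b$. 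Concretely, for $3\mid(q-\epsilon)$ the rank-one building block is $C_{3^b}$ and $\lfloor n/1\rfloor = n$; for $3\mid(q+\epsilon)$ the rank-one building block sits inside $\GL_2^\epsilon(q)$ as a cyclic group of order $(q^2-1)_3 = 3^b$, and $\lfloor n/2\rfloor$ controls how many of them are wreathed together.

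Next I would use two abelianization facts repeatedly. First, for a standard wreath product $H\wr K$ one has $(H\wr K)/(H\wr K)' \cong H/H' \times K/K'$ (cited in the paper from \cite{RobinsonGroupTheory}); in particular $[(H\wr K):(H\wr K)'] = [H:H']\cdot[K:K']$. Second, a direct product's abelianization is the product of the abelianizations. Combining these, $[\wt P:\wt P'] = \prod_i [C_{3^b}\wr W_i : (C_{3^b}\wr W_i)']$, and since $W_i$ is itself an iterated wreath product of $C_3$'s of "length" recording the digit $a_i$, the exponent of $3$ in $[\wt P:\wt P']$ becomes a weighted sum of the base-$3$ digits of $\lfloor n/d\rfloor$, together with a contribution $b$ coming from the bottom cyclic factor $C_{3^b}$ whenever $\lfloor n/d\rfloor \geq 1$. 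Writing this out: if $m := \lfloor n/d\rfloor = \sum_j a_j 3^j$ is the base-$3$ expansion, then $\log_3[\wt P:\wt P'] = b\cdot[\text{number of... }] + (\text{sum of } j\cdot a_j \text{ type terms})$ — one must be a little careful: the correct statement is that each block $\GL_d^\epsilon(q)$ appearing contributes a $C_{3^b}$, and wreathing $3^j$ of them through $W_j\in\Syl_3(S_{3^j})$ gives abelianization of $3$-rank $b + j$ (since $\Syl_3(S_{3^j})$ has abelianization of rank $j$). So $\log_3[\wt P:\wt P'] = \sum_j a_j(b+j)$ when $m\geq 1$, and $\wt P$ is trivial when $m=0$ (i.e. $3\nmid|\GL_n^\epsilon(q)|$). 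Setting this equal to $2$ and solving the Diophantine condition $\sum_j a_j(b+j) = 2$ with $a_j\in\{0,1,2\}$, $b\geq 1$ yields exactly: either a single digit $a_0=2$ with $b=1$ (giving $m\in\{2\}$... rather $m$ with digit $a_0=2$, so $m=2$), or a single digit $a_0=1$ with $b=2$ (so $m=1$), or a single digit $a_1=1$ with $b=1$ (so $m=3$) — but the last needs $b+1=2$, consistent. One then translates $m=\lfloor n/d\rfloor\in\{1,2,3\}$ and the value of $b$ back into the stated ranges of $n$ and conditions on $(q\mp\epsilon)_3$: for $d=1$ this gives $n=1, (q-\epsilon)_3=9$ or $n\in\{2,3\}, (q-\epsilon)_3=3$; for $d=2$ this gives $\lfloor n/2\rfloor\in\{1\}$ with $(q^2-1)_3=9$, i.e. $n\in\{2,3\}$ and $(q+\epsilon)_3=9$, or $\lfloor n/2\rfloor\in\{2,3\}$ with $(q^2-1)_3=3$, i.e. $4\leq n\leq 7$ and $(q+\epsilon)_3=3$ (using that $(q^2-1)_3 = (q+\epsilon)_3$ when $3\mid(q+\epsilon)$, since then $3\nmid(q-\epsilon)$).

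The main obstacle I expect is pinning down the precise wreath-product structure of $\wt P$ — in particular getting the bottom cyclic factor and the way it interacts with the permutation wreathing exactly right, and being careful about the $d=2$ case where the relevant building block $C_{3^b}$ sits inside $\GL_2^\epsilon(q)$ rather than $\GL_1^\epsilon(q)$, so that it is $\lfloor n/2\rfloor$ rather than $n$ that gets base-$3$ expanded, and leftover factors $\GL_1^\epsilon(q)$ or $\GL_{n - 2\lfloor n/2\rfloor}^\epsilon(q)$ contribute nothing to the $3$-part. I would handle the small cases $n\leq 3$ (for $d=1$) and $n\leq 3$ (for $d=2$) by direct inspection — e.g. $\wt P$ cyclic of order $3^b$ for $n=1, d=1$ gives $[\wt P:\wt P']=3^b$, so $=9$ iff $b=2$; for $n\in\{2,3\}$ with $d=1$, $\wt P\cong C_{3^b}\wr C_3$ (the $C_3$ permuting a subset of coordinates), whose abelianization has order $3^{b+1}$, giving $=9$ iff $b=1$ — and then the general reduction above handles the larger $n$ uniformly. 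The verification that no other digit combinations solve $\sum_j a_j(b+j)=2$ is a finite check that I would state explicitly. Throughout, I can invoke the identity $(H\wr K)/(H\wr K)'\cong H/H'\times K/K'$ from \cite{RobinsonGroupTheory} as already noted in the text, and the centralizer/Sylow structure from \cite{FS82}.
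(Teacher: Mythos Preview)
Your approach is essentially the same as the paper's: both use Weir's decomposition $\wt P \cong \prod_i \wt P_i^{a_i}$ with $\wt P_i = C_{3^b}\wr C_3\wr\cdots\wr C_3$ ($i$ copies of $C_3$), together with the wreath-product abelianization identity to get $|\wt P_i/\wt P_i'| = 3^{b+i}$, and then reduce to checking which $n$ and $b$ give total exponent $2$. Your packaging via the single Diophantine condition $\sum_j a_j(b+j)=2$ is a bit more uniform than the paper's case-by-case reduction (the paper first cuts down to $\sum a_i\leq 2$ via a generation argument, then treats $n=1$, $n=2$, and $n=3^i$ separately), but the substance is identical.

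One slip to fix: in your ``direct inspection'' you write that for $n=2$ with $d=1$ one has $\wt P\cong C_{3^b}\wr C_3$. This is false; for $n=2$ the $3$-adic expansion has $a_0=2$, so $\wt P\cong C_{3^b}\times C_{3^b}$, with abelianization of order $3^{2b}$ rather than $3^{b+1}$. The conclusion $b=1$ happens to coincide, but the structure (and the reason) is different. Your general formula already handles $n=2$ correctly, so this only affects the side remark, not the main argument.
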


\begin{proof}
First, suppose $3\mid (q-\epsilon)$. Let $n=a_0+3a_1+3^2a_2+\cdots+3^ta_t$ be the $3$-adic decomposition of $n$. Then by \cite{weir}, we have $\wt P\cong \prod_{i=0}^t \wt P_i^{a_i}$, where $\wt P_i\in\Syl_3(\GL_{3^i}^\epsilon(q))$. From this we already see that $P$ is at least $3$-generated if $\sum_{i=0}^t a_i \geq 3$, so we assume $\sum_{i=0}^t a_i\leq 2$. 

Now, we have $\wt P_0\in\Syl_3(C_{q-\epsilon})$ is isomorphic to $C_{3^b}$, where $3^b=(q-\epsilon)_3$. So, we see if $n=1$, we have $[\wt P:\wt P']=|\wt P|=9$ if and only if $(q-\epsilon)_3=9$, giving the first item. 

Again by \cite{weir}, for $i\geq 1$, we have $\wt P_i=\wt P_{i-1}\wr C_3=C_{3^b}\wr C_3\wr C_3\wr\cdots\wr C_3$, with $i$ copies of $C_3$ in this iterated wreath product.  
If $\sum_{i=0}^t a_i=2$, then the condition that $\wt P$ is generated by at most two elements forces that each $\wt P_i$ with $a_i\neq0$ is cyclic. This only happens for $i=0$, and hence this is the case $n=2$. Since the  Sylow $3$-subgroup of $\GL_2^\epsilon(q)$ is of the form $C_{3^b}\times C_{3^b}$, we have $[\wt P:\wt P']=9$ if and only if $b=1$ in this situation.

Hence we may assume $n=3^i$ for some $i\geq 1$, and $\wt P=\wt P_i$. Then by the comment about wreath products at the beginning of the section, we have $\wt P_i/\wt P_i'\cong \wt P_{i-1}/\wt P_{i-1}'\times C_3\cong \wt P_0/\wt P_0' \times C_3^i\cong C_{3^b}\times C_3^i$, and the statement follows.

Now suppose $3\mid (q+\epsilon)$. Then letting $w=\lfloor \frac{n}{2}\rfloor$, we can instead write $w=a_0+3a_1+\cdots+3^ta_t$. The work of \cite{weir} again  shows $P$ is isomorphic to $\prod_{i=0}^t \wt P_i^{a_i}$, where now $\wt P_i\in\Syl_3(\GL_{2\cdot 3^i}^\epsilon(q))$. In this case, each $\wt P_i$ is of the same form as before, where now $3^b=(q+\epsilon)_3$, and the result follows. 
\end{proof}

\begin{prop}\label{prop:SLSylow}
Let $n\geq 3$ and let $P$ be a Sylow $3$-subgroup of $G=\SL_n^\epsilon(q)$ with $3\mid (q-\epsilon)$.   Then $[P:P']=9$ if and only if $n=3$ or if $n=4$ and $(q-\epsilon)_3=3$.
\end{prop}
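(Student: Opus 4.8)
The plan is to relate the Sylow $3$-subgroup $P$ of $G=\SL_n^\epsilon(q)$ to the Sylow $3$-subgroup $\wt{P}$ of $\wt{G}=\GL_n^\epsilon(q)$ computed in Proposition \ref{prop:GLSylow}, using the fact that $P=\wt{P}\cap G$ and $\wt{G}/G\cong C_{q-\epsilon}$ via the determinant, so $\wt{P}/P$ is cyclic of order $(q-\epsilon)_3=3^b$. First I would reduce to the range $\sum a_i\le 2$ in the $3$-adic decomposition $n=a_0+3a_1+\cdots$, exactly as in the proof of Proposition \ref{prop:GLSylow}, since $P$ containing a direct factor that is a Sylow $3$-subgroup of $\SL_{3^i}^\epsilon(q)$ or the relevant $\GL$-factor and hence at least $3$-generated rules out $[P:P']=9$ once $n$ is large. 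This pins us down to the cases $n=3$, $n=4$, and $n=3^i$ with $i\ge 2$ (the last to be excluded).

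Next, for the surviving small cases I would compute $P'$ directly. For $n=3$ this is already done: Proposition \ref{prop:SL3sylow}(i) gives $[P:P']=9$ (and indeed $Z=\zent G\le P'$ there), so $n=3$ is always a ``yes'' regardless of $b$, matching the statement. For $n=4$, write $\wt{P}\cong \wt{P_0}\times\wt{P_1}$ where $\wt{P_0}=C_{3^b}$ sits in a $\GL_1$ block and $\wt{P_1}\in\Syl_3(\GL_3^\epsilon(q))$, i.e. $\wt{P_1}\cong C_{3^b}\wr C_3$. The subgroup $P=\wt{P}\cap\SL_4^\epsilon(q)$ is the kernel of the product-of-determinants map to $C_{3^b}$. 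I would then compute $P/P'$: since $\wt{P}/\wt{P}'\cong C_{3^b}\times(C_{3^b}\times C_3)$ and $P$ has index $3^b$ in $\wt P$, one gets $[P:P']=[\wt P:\wt P']/3^b\cdot[\text{correction}]$ — more carefully, $P'$ will contain all of $\wt{P_1}'$ together with a ``diagonal'' copy of $C_{3^b}$ coming from $[\wt P_1,C_3]$ as in Proposition \ref{prop:SL3sylow}(i), and a short count shows $[P:P']=3^{b+1}$. Hence $[P:P']=9$ iff $b=1$, i.e. $(q-\epsilon)_3=3$, matching the statement.

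Finally, for $n=3^i$ with $i\ge 2$ (and $n=4$ with $b\ge 2$, and any remaining $\sum a_i=2$ case that survived), I would show $P$ is at least $3$-generated, hence $[P:\Phi(P)]\ge 27$ and so $[P:P']\ge 27>9$. For $n=3^i$, $\wt P=C_{3^b}\wr C_3\wr\cdots\wr C_3$ ($i$ copies of $C_3$), so $\wt{P}/\wt{P}'\cong C_{3^b}\times C_3^i$ by the wreath-product abelianization formula quoted in the subsection preamble; intersecting with $\SL$ only removes one cyclic $C_{3^b}$ direction from the abelianization (the determinant is a single homomorphism onto a cyclic group), so $P/P'$ still surjects onto $C_3^{i}$ (when $b=1$) or $C_{3}^{i+1}$-ish, in any case onto $C_3^3$ once $i\ge 2$, forcing $[P:P']\ge 27$. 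I would make the ``removes one direction'' step precise by noting $P/(P\cap\wt{P}')$ embeds in $\wt{P}/\wt{P}'$ with cokernel of order dividing $3^b$, whence $[P:P'\cap\wt{P}']\ge 3^{i}$ when $b=1$.

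The main obstacle is the $n=4$ computation of $P'$: unlike $n=3$, here $\wt{P}$ is a direct product of two non-isomorphic pieces and $P$ is the kernel of a determinant map that mixes them, so one must carefully track which commutators land in $P'$ and verify that the ``extra'' cyclic factor of order $3^b$ genuinely survives in $P/P'$ rather than collapsing. I expect to handle this by exhibiting explicit elements (an element of the $\GL_1$-block times a diagonal element of the $\GL_3$-block, both scaled to have product of determinants $1$) and computing their images in $P/P'$, showing $P/P'\cong C_{3^b}\times C_3$ so that $[P:P']=3^{b+1}$; the rest of the argument is bookkeeping with the wreath-product abelianization formula already recalled in this section.
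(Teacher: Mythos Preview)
Your overall strategy coincides with the paper's: both bound $[P:P']$ from below via the chain $P'\le\wt{P}'\le P\le\wt{P}$ and the explicit structure of $\wt{P}/\wt{P}'$. The paper makes this precise by writing $[P:P']=3^{-b}[\wt P:\wt P']\cdot[\wt P':P']\ge 3^m$ with $m=b(\sum a_i-1)+\sum ia_i$, and then checks that $m\le 2$ only when $n=3$, when $n=4$ with $b=1$, or when $n=9$. Your reduction ``to $n\in\{3,4\}\cup\{3^i:i\ge 2\}$'' is vaguer and, as written, skips cases like $n=6,10,12$ with $\sum a_i=2$; these do fall to the same inequality once you compute $m$, but you should say so rather than invoke an unclear ``direct factor'' argument (note that $P$, unlike $\wt P$, is not a direct product, so inheriting $3$-generation from a factor of $\wt P$ needs care). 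Your $n=4$ computation that $P'=\wt{P}'$ and $[P:P']=3^{b+1}$ is correct and in fact more informative than the paper's treatment, which for $b=1$ simply appeals to GAP.

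The genuine gap is at $n=9$. Your argument there asserts that $P/P'$ surjects onto $C_3^3$, but the method you describe only yields a surjection $P/P'\twoheadrightarrow P/\wt{P}'$, and for $n=9$ one has $\wt{P}/\wt{P}'\cong C_{3^b}\times C_3^2$ with the determinant killing exactly the $C_{3^b}$ factor, so $P/\wt{P}'\cong C_3^2$ regardless of $b$. Hence you only get $[P:P']\ge 9$, which is precisely the value you are trying to exclude. The paper closes this gap by a separate argument: writing $P=P_T\rtimes P_W$ with $P_T\cong C_{3^b}^{\,8}$ and $P_W\cong C_3\wr C_3$, one checks that $P$ cannot be $2$-generated. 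Concretely, beyond the two independent homomorphisms $P\to C_3$ factoring through $P_W$, the $P_W$-coinvariants of $P_T$ are nontrivial (one-dimensional over $\FF_3$), furnishing a third independent homomorphism $P\to C_3$; hence $[P:\Phi(P)]\ge 27$ and therefore $[P:P']\ge 27$. You need to supply this (or an equivalent) extra step for $n=9$.
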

\begin{proof}
From the discussion in the proof of Proposition \ref{prop:GLSylow}, we have $\wt{P}/\wt{P}'\cong \prod_{i=0}^t (C_{3^b}\times C_3^i)^{a_i}$. So $[\wt{P}:\wt{P}']=3^{b\sum a_i+\sum ia_i}$.

Further, we have $[\wt{P}:P]=3^b$, $P'\leq  \wt{P}'\leq P\leq \wt{P}$, and $[P:P']=3^{-b}[\wt{P}:\wt{P}'][\wt{P}':P']$. Then $[P:P']$ is divisible by $3^m$ with $m={b(\sum a_i-1)+\sum ia_i}$. Note that $m>2$ unless we are in one of the following situations:   $n=3$; $n=4$ and $b=1$; or $n=9$.

In the latter case, we can see explicitly that $P=P_T\rtimes P_W$ with $P_T\cong C_{3^b}^8$ and $P_W\cong C_3\wr C_3$ has a minimal generating set of three elements, and hence   $[P:P']\neq 9$. The case $n=3$ is covered in Proposition \ref{prop:SL3sylow}. Finally, if $n=4$ with $b=1$, then $P$ is isomorphic to the Sylow $3$-subgroup for $\SL_4(4)$, and we see explicitly that $[P:P']=9$  in this case, for example in GAP.
\end{proof}

\begin{lem}
\label{lem:CenterSL}
Let $n\geq 3$ and let $P$ be a Sylow $3$-subgroup of $G=\SL_n^\epsilon(q)$ with $3\mid (q-\epsilon)$.  Let $Z\in\Syl_3(\zent{G})$. Then $Z\leq  \wt{P}'$.
\end{lem}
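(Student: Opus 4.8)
The plan is to compute the image of $Z$ in the abelianization $\wt{P}/\wt{P}'$ and show it is trivial. First I would recall from the proof of Proposition~\ref{prop:GLSylow} the structure of $\wt{P}$: writing $n=\sum_{i=0}^t a_i3^i$ for the $3$-adic expansion of $n$, one has $\wt{P}\cong\prod_{i=0}^t\wt{P}_i^{a_i}$, where $\wt{P}_i\in\Syl_3(\GL_{3^i}^\epsilon(q))$ is the iterated wreath product $C_{3^b}\wr C_3\wr\cdots\wr C_3$ with $i$ copies of $C_3$ and $3^b=(q-\epsilon)_3$. Here the displayed direct-product decomposition comes from a decomposition of the natural module into $a_i$ blocks of dimension $3^i$, and consequently $\wt{P}'\cong\prod_{i}(\wt{P}_i')^{a_i}$. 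The subgroup $Z$ consists of the scalar matrices $\zeta I_n$ with $\zeta^{3^c}=1$, where $3^c=\gcd(n,q-\epsilon)_3$; writing $n_3=3^d$ we have $c=\min(d,b)\le d$. Such a scalar is central in $\GL_n^\epsilon(q)$, hence lies in $\wt{P}$, and under the decomposition above its component in each factor $\wt{P}_i$ is the scalar matrix $z_i:=\zeta I_{3^i}$. Thus the statement reduces to proving $z_i\in\wt{P}_i'$ for every $i$ with $a_i\neq 0$ and every $\zeta$ with $\zeta^{3^c}=1$.

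The key computation is to identify the image of $z_i$ in $\wt{P}_i/\wt{P}_i'\cong C_{3^b}\times C_3^{\,i}$, which I would carry out by induction on $i$. Inside $\wt{P}_i=\wt{P}_{i-1}\wr C_3$, the scalar $z_i$ is block-diagonal with each of the three diagonal blocks equal to $z_{i-1}$, so it is the diagonal element $(z_{i-1},z_{i-1},z_{i-1})$ of the base group $\wt{P}_{i-1}^{\,3}$. Using the standard description of the abelianization of a wreath product --- namely $(H\wr C_3)^{\mathrm{ab}}\cong H^{\mathrm{ab}}\times C_3$, with the base group $H^3$ projecting onto $H^{\mathrm{ab}}$ via $(h_1,h_2,h_3)\mapsto\overline{h_1h_2h_3}$ --- the image of $z_i$ is the cube of the image of $z_{i-1}$ in the $C_{3^b}$-component and trivial in the new $C_3$-component. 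Since $z_0=\zeta$, induction gives that the image of $z_i$ in $\wt{P}_i/\wt{P}_i'$ equals $\zeta^{3^i}$ in the $C_{3^b}$-component and is trivial elsewhere; in particular $z_i\in\wt{P}_i'$ if and only if $\zeta^{3^i}=1$.

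To finish, I would note that the $3$-adic valuation $d$ of $n$ equals $\min\{i:a_i\neq 0\}$, so $a_i\neq 0$ forces $i\ge d\ge c$. Hence for any $\zeta$ with $\zeta^{3^c}=1$ we have $\zeta^{3^i}=1$, so $z_i\in\wt{P}_i'$ for all $i$ with $a_i\neq 0$, and therefore $Z\le\wt{P}'$.

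I expect the main obstacle to be the bookkeeping in the inductive step: correctly matching the scalar matrix $\zeta I_{3^i}$ with the appropriate diagonal element of the base group of $\wt{P}_{i-1}\wr C_3$, and tracking the cube introduced at each level, together with the identifications $Z\le\zent{\SL_n^\epsilon(q)}\le\zent{\GL_n^\epsilon(q)}$ sitting inside each wreath factor. None of this is deep; the reduction to the factors via the direct-product decomposition and the final valuation inequality are routine.
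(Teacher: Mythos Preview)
Your proof is correct and follows essentially the same route as the paper's. Both arguments reduce via the block decomposition $\wt{P}\cong\prod_i\wt{P}_i^{a_i}$ to showing that the scalar $\zeta I_{3^i}$ lies in $\wt{P}_i'$ whenever $\zeta^{3^i}=1$, and both prove this by induction on $i$ through the wreath structure $\wt{P}_i=\wt{P}_{i-1}\wr C_3$. The only difference is phrasing: the paper cites Olsson's explicit description $\wt{P}_i'=\{(x_1,x_2,x_3)\in\wt{P}_{i-1}^3\mid x_1x_2x_3\in\wt{P}_{i-1}'\}$ and checks membership directly, while you equivalently compute the image in the abelianization via the product map $(h_1,h_2,h_3)\mapsto\overline{h_1h_2h_3}$; and the paper first isolates the case $n=3^i$ before passing to general $n$, whereas you reduce to the factors first. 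These are cosmetic variations of the same argument.
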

\begin{proof}
First, assume $n=3^i\geq 3$. For $i=1$, we see explicitly that $Z=\zent{P}$, so that $Z\cap P'\neq 1$, and hence $Z\leq P'\leq \wt{P}'$ since it has size $3$. We now induct on $i$. Write $P_i=\wt{P}_i\cap G$, where $\wt{P}_i\in\Syl_3(\GL_{3^i}^\epsilon(q))$. 
We have $\wt{P}_i'=\{(x_1,x_2,x_3)\in \wt{P}_{i-1}^3 \mid x_1x_2x_3 \in \wt{P}_{i-1}'\}$ by \cite[Lem.~(1.4)]{olsson76}.
Further, we can write $Z=\langle z\rangle $ where $z=\zeta\cdot I_{3^i}=(x,x,x)$ with $x=\zeta\cdot I_{3^{i-1}}$ and $\zeta\in C_{q-\epsilon}$ such that $(\zeta^3)^{3^{i-1}}=\zeta^{3^i}=1$. Note that we have $x\in \wt{P}_{i-1}$ with $x^3\in \zent{\SL_{3^{i-1}}^\epsilon(q)}$. So $x^3\in \wt{P}_{i-1}'$ by induction and we conclude $z\in\wt{P}_i'$.

Now, suppose that $n=a_0+a_13+a_23^2+\cdots +a_t3^t=\sum a_i3^i$ is the $3$-adic expansion of $n$, and let $j$ be the smallest index such that $a_j\neq 0$. Then $Z$ is generated by some $\zeta I_n$ with $|\zeta|= \gcd(3^j, 3^b)=\gcd(n, q-\epsilon)$, and we see $\zeta^{3^i}=1$ for each $i$ such that $a_i\neq 0$. That is, $\zeta I_n\leq \prod_{i=0}^t Z_{3^i}^{a_i}$, where $Z_{3^i}$ is a Sylow $3$-subgroup of $\zent{\SL_{3^i}^\epsilon(q)}$. By above, we have $Z_{3^i}\leq \wt{P}_i'$, so we have $Z\leq \prod (\wt{P}_i')^{a_i} =\wt{P}'$ as desired.
\end{proof}

\begin{cor}\label{prop:PSLSylow}
Let $n\geq 3$ and let $Q$ be a Sylow $3$-subgroup of $S=\PSL_n^\epsilon(q)$ with $3\mid (q-\epsilon)$.   Then $[Q:Q']=9$ if and only if $n=3$ or if $n=4$ and $(q-\epsilon)_3=3$.

\end{cor}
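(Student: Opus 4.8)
The plan is to deduce Corollary \ref{prop:PSLSylow} from the preceding three results -- Proposition \ref{prop:SLSylow}, Lemma \ref{lem:CenterSL}, and (for the $n=3$ case) Proposition \ref{prop:SL3sylow} -- by a quotient argument. Recall that $Q\cong P\zent{G}/\zent{G}\cong P/Z$, where $P\in\Syl_3(G)$, $G=\SL_n^\epsilon(q)$, and $Z\in\Syl_3(\zent{G})$. So the key identity to establish is $[Q:Q']=[P/Z:(P/Z)']=[P:P'Z]$.

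\begin{proof}
First suppose $n=3$. Then by Proposition \ref{prop:SL3sylow}(i) we have $[Q:Q']=9$, and on the other hand the claimed condition ``$n=3$'' holds, so there is nothing more to check.

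Now assume $n\geq 4$. Since $\zent{G}$ has order coprime to $p$ (indeed $\zent{\SL_n^\epsilon(q)}$ is cyclic of order $(n,q-\epsilon)$), a Sylow $3$-subgroup of $\zent{G}$ is $Z=\Syl_3(\zent{G})$ and $Q\cong P/Z$. Hence $[Q:Q']=[P/Z:(P/Z)']$. Since $(P/Z)'=P'Z/Z$, we obtain $[Q:Q']=[P:P'Z]$. By Lemma \ref{lem:CenterSL}, $Z\leq \wt{P}'$; but $\wt{P}'\leq P$, and in fact the computation in the proof of Proposition \ref{prop:SLSylow} shows $P'\leq \wt{P}'\leq P$. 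We need $Z\leq P'$, not merely $Z\leq \wt{P}'$. To see this, note that $Z$ has order $3$ (or is trivial), so it suffices to show $Z\cap P'\neq 1$ in the nontrivial case; this follows from Lemma \ref{lem:CenterSL} once we observe that $\wt{P}'\cap P=P'$ is not automatic, so instead we argue directly: since $|Z|\leq 3$ and $Z\leq \wt{P}'$, if $Z\neq 1$ then $Z$ is a subgroup of order $3$ contained in $\wt P'\cap Z(\wt P)$, and by Lemma \ref{lem:CenterSL} applied to the relevant $n=3^i$ factor we get $Z\leq P'$. Therefore $P'Z=P'$ and $[Q:Q']=[P:P']$.

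The result now follows immediately from Proposition \ref{prop:SLSylow}: we have $[P:P']=9$ if and only if $n=4$ and $(q-\epsilon)_3=3$ (the case $n=3$ having been handled separately above), and hence $[Q:Q']=9$ under exactly the same condition, namely $n=3$ or $n=4$ with $(q-\epsilon)_3=3$.
\end{proof}

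The main obstacle is the bookkeeping to pass from $Z\leq\wt P'$ (the conclusion of Lemma \ref{lem:CenterSL}) to $Z\leq P'$, i.e., confirming that the order-$3$ central subgroup $Z$ actually lies in the smaller derived subgroup $P'$ and not merely in $\wt P'$. Once that is in place, the statement is a direct translation of Proposition \ref{prop:SLSylow} through the central quotient, since killing a central $3$-subgroup that already lies inside $P'$ does not change the abelianization.
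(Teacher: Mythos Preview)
Your argument has a genuine gap at exactly the point you flag as the ``main obstacle'': the passage from $Z\leq\wt P'$ (which is all Lemma~\ref{lem:CenterSL} gives) to $Z\leq P'$. Two things go wrong. First, the assertion that $|Z|\leq 3$ is false: $Z\in\Syl_3(\zent{\SL_n^\epsilon(q)})$ has order $\gcd(n,q-\epsilon)_3$, so for instance when $n=9$ and $(q-\epsilon)_3\geq 9$ we have $|Z|=9$, and for higher powers of $3$ dividing both $n$ and $q-\epsilon$ it is larger still. Second, even in the cases where $|Z|=3$, the sentence ``by Lemma~\ref{lem:CenterSL} applied to the relevant $n=3^i$ factor we get $Z\leq P'$'' is not a deduction: that lemma places $Z$ inside $\wt P'$, and there is no reason given why the order-$3$ central element must already lie in the smaller group $P'$. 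The containment $\wt P'\cap P=P'$ you would need is not established anywhere.

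The paper's proof avoids ever claiming $Z\leq P'$ in general. Instead it uses Lemma~\ref{lem:CenterSL} to get $P'Z\leq\wt P'$, whence $[Q:Q']=[P:P'Z]\geq[P:\wt P']$, and then invokes the lower bound $[P:\wt P']\geq 3^m$ with $m=b(\sum a_i-1)+\sum ia_i$ from the proof of Proposition~\ref{prop:SLSylow}. This reduces the problem to a short finite list of $n$ (namely $n=6$ and $n=3^i$), and these are handled separately: for $n=6$ one checks $Z\leq P_3'\times P_3'\leq P'$ directly, while for $n=9$ one verifies that $P/Z$ still has a minimal generating set of size three, so $[Q:Q']>9$ without needing to know whether $Z\leq P'$. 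Your quotient strategy is the right starting point, but the reduction to Proposition~\ref{prop:SLSylow} requires this extra case analysis rather than a blanket $Z\leq P'$.
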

\begin{proof}
Let $G=\SL_n^\epsilon(q)$ and write $Z:=\zent{G}$. Let  $Z_3\in\Syl_3(Z)$. Then  $Q=PZ/Z\cong P/Z_3$ with $P\in\Syl_3(G)$. We have $[Q:Q']=[P/Z_3:P'Z_3/Z_3]=[P:P'Z_3]$. If $Z_3\leq P'$, we are done by Proposition \ref{prop:SLSylow}. Write $n=\sum a_i3^i$ and $(q-\epsilon)_3=3^b$ as before. If $n$ is not a power of $3$, then $\sum a_i\geq 2$ and we have $[Q:Q']\geq 3^{m-b}\geq 3^{\sum ia_i}$, where $m$ is as in the proof of Proposition \ref{prop:SLSylow}. For $n>9$, we have $\sum ia_i\geq 3$. Hence we are left to consider the cases $n=6$ and $n=3^i$.
If $n=3$, we saw in the proof of Lemma \ref{lem:CenterSL} that $Z_3\leq P'$. If $n=6$, then we again have $Z_3\leq P'$, since $P\cong P_3\times P_3$, and we have $Z_3\leq P_{3}'\times P_{3}'\leq P'$.

So let $n=3^i\geq 9$.  By Lemma \ref{lem:CenterSL} we have $P'Z_3\leq \wt{P}'$. Then exactly as in the proof of Proposition \ref{prop:SLSylow}, we see $[P:P'Z_3]$ is divisible by $3^m$, and we reduce to the same cases: $n=3$ or $n=9$.
Since $Z_3\leq P'$ when $n=3$, this case follows from Proposition \ref{prop:SLSylow}. When $n=9$,  we see again that $P/Z_3$ has a minimal generating set of size three, so $[Q:Q']\neq 9$.
\end{proof}

\begin{cor}\label{cor:PSLdiagSylow}
Let $n\geq 3$ and let $S=\PSL_n^\epsilon(q)$ with $3\mid (n, q-\epsilon)$. Let $S\leq A\leq \Aut(S)$ such that $|A/S|_3 = 3$ and $A$ contains a diagonal automorphism of order $3$.  Let $R\in\Syl_3(A)$ with $[R:R']=9$. Then $n=3$.
\end{cor}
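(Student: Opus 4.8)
The plan is to reduce to the structure of Sylow $3$-subgroups of $\wt G = \GL_n^\epsilon(q)$ established in Proposition \ref{prop:GLSylow} and its refinements. Since $3 \mid (q-\epsilon)$ and $A$ contains a diagonal automorphism of order $3$, a Sylow $3$-subgroup $R$ of $A$ is of the form $R = P\langle \delta\rangle Z/Z$ modulo the relevant centers, or more precisely contains a copy of a Sylow $3$-subgroup of $S\langle\delta\rangle$, which by the discussion preceding the Corollary is isomorphic to a Sylow $3$-subgroup $\wt Q$ of $\wt S = \PGL_n^\epsilon(q)$. So the key reduction is: $R$ has $\wt Q$ as a section (in fact $R/R_{\mathrm{field}} \cong \wt Q$ where $R_{\mathrm{field}}$ is the part coming from a possible order-$3$ field automorphism), and hence $[\wt Q : \wt Q'] \leq [R : R']$; if $A/S$ has no field-automorphism part then $R \cong \wt Q$ (when $3 \nmid |A/S\cap\wt S|$ beyond the diagonal part) or $R$ surjects onto $\wt Q$.

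First I would pin down the case $n = 3$: here Proposition \ref{prop:SL3sylow}(i) already gives $[R:R'] = 9$ for $R \in \Syl_3(G\langle\delta\rangle)$, and one checks the same for $R \in \Syl_3(A)$ when $A$ has no extra field automorphism — this is the case that must survive. Then for $n \geq 4$ I would show $[R:R'] \neq 9$. Using that $\wt Q \in \Syl_3(\PGL_n^\epsilon(q))$ is a section of $R$, it suffices to show $[\wt Q : \wt Q'] \neq 9$, or rather that $\wt Q$ (hence $R$) is at least $3$-generated when $n \geq 4$. From the proof of Proposition \ref{prop:GLSylow}, $\wt P \in \Syl_3(\GL_n^\epsilon(q))$ satisfies $\wt P \cong \prod_i \wt P_i^{a_i}$ with $\wt P_i \in \Syl_3(\GL_{3^i}^\epsilon(q))$ (writing $n = \sum a_i 3^i$), and $\wt P/\wt P'$ has rank $\sum a_i \cdot (\text{rank of } \wt P_0/\wt P_0') + \sum i a_i = \sum a_i + \sum i a_i$ when $(q-\epsilon)_3 = 3$, and larger otherwise. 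Passing to $\wt Q = \wt P \zent{\wt G}/\zent{\wt G}$ changes the abelianization only by killing at most one generator (the central $C_{3^b}$ direction), so $\wt Q$ has minimal generating set of size at least $\sum a_i + \sum i a_i - 1$. For $n \geq 4$ with $3 \mid n$ this quantity is at least $3$ except possibly for small $n$, so I would enumerate: $n = 4, 5, 7, 8$ (where $3\nmid n$, so the diagonal automorphism hypothesis $3\mid(n,q-\epsilon)$ fails — these are excluded), leaving $n = 6, 9, 12, \dots$; for $n = 6$ one has $\wt P \cong \wt P_0^2 \times (\text{stuff})$ giving rank $\geq 2 + $ contributions, and for $n = 9$ one uses the explicit computation that the relevant group is $3$-generated (as in the proof of Proposition \ref{prop:SLSylow}).

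The main obstacle I expect is handling the interplay between the three "twisting" operations simultaneously: quotienting by $\zent{G}$ (or $\zent{\wt G}$), adjoining the order-$3$ diagonal automorphism $\delta$, and the possible contribution of the factor $A\cap\wt S$ strictly between $S$ and $\wt S$ — keeping careful track of which abelian sections survive in $R/R'$. The cleanest route is probably to observe that $R$ has a section isomorphic to $\wt Q = \Syl_3(\PGL_n^\epsilon(q))$ (because $G\langle\delta\rangle/\zent{G}$ has Sylow $3$-subgroup $\cong \wt Q$, a fact visible in the proof of Proposition \ref{prop:SL3sylow}(i), and because adjoining a field automorphism of order $3$ only enlarges $R$), so $[\wt Q:\wt Q'] \mid [R:R']$-divisibility-wise, $3^{\mathrm{rk}(\wt Q/\wt Q')} \leq [R:R']$; then the whole problem collapses to computing $\mathrm{rk}(\wt Q/\wt Q')$, which is done via the wreath-product formula $G/G' \cong H/H' \times K/K'$ for $G = H \wr K$ cited at the start of the subsection, together with Lemma \ref{lem:CenterSL} to control what happens modulo the center. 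I would close by noting that combining this with Corollary \ref{prop:PSLSylow} and Proposition \ref{prop:SL3sylow} shows $n = 3$ is forced.

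\begin{proof}
Let $G=\SL_n^\epsilon(q)$, $\wt G=\GL_n^\epsilon(q)$, $\wt S=\PGL_n^\epsilon(q)$, and keep the notation $\wt P\in\Syl_3(\wt G)$, $P=\wt P\cap G\in\Syl_3(G)$, $\wt Q=\wt P\zent{\wt G}/\zent{\wt G}\in\Syl_3(\wt S)$, $Q=\wt Q\cap S\in\Syl_3(S)$, as in the previous results. Since $|A/S|_3=3$ and $A$ contains a diagonal automorphism $\delta$ of order $3$, a Sylow $3$-subgroup $R$ of $A$ contains a Sylow $3$-subgroup of $S\langle\delta\rangle$, which (as recalled before the statement) is isomorphic to $\wt Q$. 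Moreover $R/(R\cap\Inn(S)\cdot\langle\delta\rangle\,\mathrm{part})$ is cyclic generated by the image of a possible field automorphism; in all cases $\wt Q$ is a section of $R$, so $[\wt Q:\wt Q']$ divides $[R:R']$ in the sense that $R/R'$ admits $\wt Q/\wt Q'$ as a section, and in particular the minimal number of generators of $\wt Q$ is at most that of $R$.

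Thus it suffices to show that for $n\geq 4$ with $3\mid (n,q-\epsilon)$ the group $\wt Q$ requires at least three generators, while for $n=3$ we do have $[R:R']=9$.

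\smallskip
\noindent\emph{The case $n=3$.} This is exactly Proposition~\ref{prop:SL3sylow}(i)--(ii): for $X\in\{G\langle\delta\rangle,\wt S\}$ a Sylow $3$-subgroup $R$ of $X$ satisfies $[R:R']=9$, and by part (ii) the same holds for $R\in\Syl_3(A)$ as long as the Sylow $3$-subgroup of $A/S$ does not induce a field automorphism; but a field automorphism and a diagonal automorphism cannot both contribute a factor of $3$ to $|A/S|_3=3$, so since $\delta$ is present no field automorphism occurs in $R$, and $[R:R']=9$.

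\smallskip
\noindent\emph{The case $n\geq 4$.} Write $n=\sum_{i\geq 0} a_i 3^i$ in base $3$ and $(q-\epsilon)_3=3^b$. By the structure recalled in the proof of Proposition~\ref{prop:GLSylow}, $\wt P\cong\prod_i \wt P_i^{a_i}$ with $\wt P_i\in\Syl_3(\GL_{3^i}^\epsilon(q))$, where $\wt P_0\cong C_{3^b}$ and $\wt P_i=\wt P_{i-1}\wr C_3$ for $i\geq 1$. By the wreath product identity $H\wr C_3/(H\wr C_3)'\cong H/H'\times C_3$ noted at the start of this subsection, an easy induction gives
\[
\wt P_i/\wt P_i'\cong C_{3^b}\times C_3^{\,i},
\]
and hence $\wt P/\wt P'\cong\prod_i\bigl(C_{3^b}\times C_3^{\,i}\bigr)^{a_i}$, which has rank $d:=\sum_i a_i+\sum_i i\,a_i$ (when $b=1$) and at least this rank in general. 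Passing from $\wt P$ to $\wt Q=\wt P\zent{\wt G}/\zent{\wt G}$ kills at most one cyclic direction of the abelianization, since $\zent{\wt G}\cap\wt P$ is cyclic; so $\wt Q$ requires at least $d-1$ generators.

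Now $3\mid n$ forces $a_0=0$, and $n\geq 4$ with $3\mid n$ means $n\in\{6,9,12,\dots\}$. If $n=6$, then $a_1=2$ and $d-1=2+2-1=3$. If $n=9$, then $a_2=1$, and here we argue directly as in the proof of Proposition~\ref{prop:SLSylow}: the relevant Sylow $3$-subgroup has $P_T\rtimes P_W$ with $P_T\cong C_{3^b}^8$ and $P_W\cong C_3\wr C_3$, and one checks it (hence its image in $\wt S$) needs a minimal generating set of size three. For all other $n$ with $3\mid n$ and $n\geq 12$ we have $d-1\geq \sum_i i\,a_i -1\geq 3$. In every case $\wt Q$, and therefore $R$, is at least $3$-generated, so $[R:R']\geq [R:\Phi(R)]\geq 27>9$.

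\smallskip
Combining the two cases, $[R:R']=9$ forces $n=3$, as claimed. (Alternatively, this also follows from Corollary~\ref{prop:PSLSylow} together with Proposition~\ref{prop:SL3sylow}, since adjoining $\delta$ to $Q$ does not decrease the number of generators below that of $\wt Q$.)
\end{proof}
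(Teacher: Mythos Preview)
Your proof has a genuine gap. The key assertion that a Sylow $3$-subgroup of $S\langle\delta\rangle$ is isomorphic to $\wt Q\in\Syl_3(\wt S)$ is not justified for general $n$: what is ``recalled before the statement'' is the isomorphism $\wt Q\cong P\langle\delta\rangle/Z$ from the proof of Proposition~\ref{prop:SL3sylow}, but that computation was carried out specifically for $n=3$. In general one has $R\cong P\langle\delta\rangle/Z$ with $|R|=3|Q|$, while $|\wt Q|=(n,q-\epsilon)_3\,|Q|$; so $R\cong\wt Q$ holds if and only if $(n,q-\epsilon)_3=3$. For instance when $n=9$ and $9\mid(q-\epsilon)$ the two groups have different orders, and then $R$ is (up to conjugacy) a proper \emph{subgroup} of $\wt Q$, not a quotient. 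Since subgroups of $p$-groups can require more generators than the ambient group, your inequality ``$\mathrm{rk}(\wt Q)\leq\mathrm{rk}(R)$'' does not follow, and the generator-count argument collapses precisely in the cases $9\mid n$, $9\mid(q-\epsilon)$. Your separate treatment of $n=9$ does not rescue this either: you invoke the $3$-generation of $P\in\Syl_3(G)$ from Proposition~\ref{prop:SLSylow}, but $R=P\langle\delta\rangle/Z$ is neither $P$ nor its image in $\wt S$, and no argument is given to transfer the bound.

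The paper's proof avoids this difficulty entirely. Rather than comparing $R$ to $\wt Q$, it works with the chain
\[
(P\langle\delta\rangle)'\;\leq\;(P\langle\delta\rangle)'Z\;\leq\;\wt P'\;\leq\;P\;\leq\;P\langle\delta\rangle\;\leq\;\wt P,
\]
using Lemma~\ref{lem:CenterSL} (that $Z\leq\wt P'$) to get
\[
[R:R']=[P\langle\delta\rangle:(P\langle\delta\rangle)'Z]\;\geq\;[P\langle\delta\rangle:\wt P']=3^{1-b}[\wt P:\wt P']\;\geq\;3\cdot 3^{\sum_i i a_i}.
\]
Thus $[R:R']=9$ forces $\sum_i i a_i\leq 1$, i.e.\ $n\in\{3,4,5\}$, and $3\mid n$ gives $n=3$. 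This bounds $[R:R']$ directly and uniformly, with no case analysis and no comparison to $\wt Q$. (Incidentally, since the Corollary only asserts the implication $[R:R']=9\Rightarrow n=3$, no separate argument for the case $n=3$ is needed.)
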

\begin{proof}
We may write $R=(P\langle\delta\rangle)/Z\leq \wt{P}/Z$, where $P$ and $\wt{P}$ are Sylow $3$-subgroups of $G$ and $\wt{G}$ as before, $\delta$ is a diagonal automorphism of order $3$, and $Z\in\Syl_3(\zent{G})$. Recall from Lemma \ref{lem:CenterSL} that $Z\leq \wt{P}'$. Further, notice that $(P\langle\delta\rangle)'\leq (P\langle\delta\rangle)'Z\leq \wt{P}'\leq P\leq P\langle\delta\rangle\leq \wt P$, so that $[R:R']=[P\langle\delta\rangle:(P\langle\delta\rangle)'Z]\geq [P\langle\delta\rangle:\wt{P}']=3^{1-b}[\wt{P}:\wt{P}']\geq 3\prod_{i=0}^t 3^{ia_i}$, where the last equality is from the calculations in the proof of Proposition \ref{prop:GLSylow}. Then if $[R:R']=9$, we must have $\sum_i ia_i=1$, and hence $n\in\{3,4,5\}$. Then since $3\mid n$, we must have $n=3$ as claimed.
\end{proof}

\subsubsection{The Remaining Cases}

We now summarize (when combined with Propositions \ref{prop:SL2sylow} and \ref{prop:SL3sylow}) when $[P:P']=9$ holds for simple groups of Lie type in non-defining characteristic.

\begin{prop}\label{prop:SylowSideNondef}
Let $S$ be a simple group of Lie type defined in characteristic $p\neq 3$ and let $P\in\Syl_3(S)$. Assume that $S\not\in\{\PSL_2(q), \PSL_3^\epsilon(q)\}$. Then $[P:P']=9$ if and only if $S$ is one of:
\begin{enumerate}
\item $\PSL_4^\epsilon(q)$ when $3\mid\mid(q-\epsilon)$;
\item $\PSL_n^\epsilon(q)$ with $4\leq n\leq 7$ and $3\mid\mid(q+\epsilon)$;
\item $\PSp_{2n}(q)$ or $\POmega_{2n+1}(q)$ with $n\in\{2,3\}$ and  $3\mid\mid(q^2-1)$;
\item $\POmega_8^{-}(q)$ when $3\mid\mid(q^2-1)$;

\item $\type{G}_2(q)$, $\tw{3}\type{D}_4(q)$, or  $\tw{2}\type{F}_4(q^2)'$. 
\end{enumerate}
In (ii)-(v), we further have $\wt{S}A=\wt S\cent{A}{P}$ if $S\leq A\leq \Aut(S)$ with $3\nmid|A/S|$.
\end{prop}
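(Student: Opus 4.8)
The plan is to handle the four families (ii)--(v) one at a time, reusing the work already done for general linear and unitary groups and appealing to the known Sylow $3$-subgroup structure in each case. For (i), the statement $[P:P']=9$ when $3\mathbin\|(q-\epsilon)$ and $S=\PSL_4^\epsilon(q)$ is exactly Corollary \ref{prop:PSLSylow} with $n=4$ and $(q-\epsilon)_3=3$. For (ii), where $3\mid(q+\epsilon)$ and $4\le n\le 7$, a Sylow $3$-subgroup of $S=\PSL_n^\epsilon(q)$ coincides with one of $\GL_n^\epsilon(q)$ (since $3\nmid(q-\epsilon)$ forces $3\nmid|\zent{\SL_n^\epsilon(q)}|$ and $3\nmid|\wt S/S|$), so the condition is read off directly from Proposition \ref{prop:GLSylow}. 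For (iii)--(v), I would use the embeddings of $\GL_m^{\pm}(q)$ (or, in the $\tw{3}\type{D}_4$ and $\type{G}_2$ cases, of tori and small rank subgroups) into the relevant classical or exceptional group to describe $P$ up to isomorphism: for $\Sp_{2n}(q)$, $\SO_{2n+1}(q)$, and $\POmega_{2n}^{-}(q)$ with $3\mathbin\|(q^2-1)$, a Sylow $3$-subgroup is a Sylow $3$-subgroup of a subgroup $\GL_k^{\eta}(q)$ with $k=d_3$-part of the rank, where $\eta$ is chosen so that $3\mid(q-\eta)$, and then Proposition \ref{prop:GLSylow} (together with the wreath-product remark $G/G'\cong H/H'\times K/K'$) gives $[P:P']=9$ precisely in the listed rank range; the exceptional cases $\type{G}_2(q)$, $\tw3\type{D}_4(q)$, $\tw2\type{F}_4(q^2)'$ have Sylow $3$-subgroups whose structure is recorded in the literature (generalized Sylow $d$-tori normalized by small Weyl-type groups), and I would simply quote the relevant description and compute $[P:P']$.

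The other half of each case is to show that these are the \emph{only} possibilities, i.e. that outside the listed families $[P:P']\neq 9$. The cleanest route is the torus-plus-relative-Weyl-group decomposition: for $3\nmid q$, a Sylow $3$-subgroup $P$ of $\bG^F$ is contained in $\norm{\bG^F}{\bT}$ for a Sylow $d$-torus $\bT$ with $d=d_3(q)\in\{1,2\}$, and $P=P_{\bT}\rtimes P_W$ with $P_{\bT}=P\cap\bT^F$ and $P_W$ a Sylow $3$-subgroup of the relative Weyl group $W(d)=\norm{\bG^F}{\bT}/\bT^F$. One then bounds $[P:P']$ from below by exhibiting enough independent commutators $[t,w]$ with $t\in P_{\bT}$, $w\in P_W$, exactly as in the proofs of Propositions \ref{prop:SLSylow} and \ref{cor:PSLdiagSylow}: if either the $3$-rank of $\bT^F$ or of $W$ is large, or if $(q^2-1)_3\ge 27$, then $P$ needs at least three generators or has $[P:P']\ge 27$. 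This reduces to a short finite list of (type, rank, $q$) to inspect by hand or in \cite{GAP}, and matching survivors against the stated list finishes the classification. The sporadic small cases $\PSL_2(q)$, $\PSL_3^\epsilon(q)$ are explicitly excluded by hypothesis and handled in Propositions \ref{prop:SL2sylow} and \ref{prop:SL3sylow}.

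For the final clause --- that $\wt S A=\wt S\cent{A}{P}$ whenever $S\le A\le\Aut(S)$ with $3\nmid|A/S|$ in cases (ii)--(v) --- I would argue as in Proposition \ref{prop:SL2sylow}: since $3\nmid|A/S|$, we may take $P\in\Syl_3(\wt S)$, and $A\wt S/\wt S$ is generated by graph and field automorphisms of $3'$-order modulo diagonal ones. Writing $q=p^f$ with $f=3^a m$, $3\nmid m$, the subgroup of $\Aut(S)$ we must control modulo $\wt S\cent{A}{P}$ is generated by $F':=F_p^{3^a}$ and a graph automorphism $\gamma$. The cyclotomic factorization $(q^2-1)=\prod_{k\mid m}\Phi_k(p^{2\cdot 3^a})$ shows $3\nmid\Phi_k(p^{2\cdot3^a})$ for $1\ne k\mid m$, so $F'$ acts on the abelian group $P$ (contained in a Sylow $d$-torus of $\wt S$) as a $3'$-power map, hence its action is realized by an element of $\wt S$ commuting with $P$; the graph automorphism acts by an order-$2$ (inversion-type) map, again absorbed into $\wt S\cent A P$. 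Thus every element of $A$ is congruent modulo $\wt S\cent A P$ to an element of $\wt S$, giving $A\subseteq \wt S\cent A P$.

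The main obstacle I expect is not conceptual but bookkeeping: assembling the precise Sylow $3$-subgroup descriptions for the orthogonal, symplectic, and especially the exceptional groups $\tw3\type{D}_4(q)$ and $\tw2\type{F}_4(q^2)'$ uniformly enough that the commutator-quotient computation is clean, and being careful about the small ambiguous ranks (e.g. distinguishing $n=4$ from $n=5,6,7$ in the $3\mid(q+\epsilon)$ unitary/linear case, and the borderline $\POmega_8^\pm$ versus $\POmega_{2n}^-$ cases) where $[P:P']$ sits right at the threshold $9$ versus $27$. Those are the places where one genuinely has to compute rather than quote, and where an explicit GAP check on the few residual groups is the safest finish.
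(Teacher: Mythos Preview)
Your overall strategy mirrors the paper's closely: reduce the linear/unitary cases to Propositions \ref{prop:GLSylow} and \ref{prop:PSLSylow}; handle the remaining classical groups by identifying their Sylow $3$-subgroups with those of suitable $\GL_m(q)$ via Weir's descriptions; treat $\type{G}_2$, $\tw3\type{D}_4$, $\tw2\type{F}_4$ by recognizing their Sylow $3$-subgroups as (extensions of) those of $\SL_3^\pm$-type groups so that Proposition \ref{prop:SL3sylow} applies; and rule out the large exceptionals $\type{F}_4$, $\type{E}_6^\pm$, $\type{E}_7$, $\type{E}_8$ via the decomposition $P=P_T\rtimes P_W$ together with the observation that $P_W$ is already at least $2$-generated with $[P_W:P_W']\geq 9$. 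This is essentially what the paper does.

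There is, however, a gap in your argument for the clause $\wt SA=\wt S\cent AP$. You assert that ``$F'$ acts on the abelian group $P$ (contained in a Sylow $d$-torus of $\wt S$) as a $3'$-power map.'' But in most of cases (ii)--(v) the group $P$ is \emph{not} abelian and does not lie inside any torus: for instance in $\PSp_6(q)$ or $\PSL_6^\epsilon(q)$ with $3\mathbin\|(q^2-1)$ one has $P\cong C_3\wr C_3$, and for $\type{G}_2(q)$, $\tw3\type{D}_4(q)$, $\tw2\type{F}_4(q^2)'$ the Sylow $3$-subgroup is (an extension of) a Sylow $3$-subgroup of $\SL_3^\pm$, again nonabelian. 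Indeed, if $P$ were abelian of order $9$ there would be nothing to prove about $[P:P']$. What one actually has is $P=P_T\rtimes P_W$ with $P_W\neq 1$, and the argument must treat the two pieces separately: arrange that $F'$ (or a suitable product with the graph automorphism) centralizes $P_W$, and then run your cyclotomic argument on the toral part $P_T$ alone. The paper does not spell this out in full either, but it points to the explicit structure of $P$ obtained from Weir and, for case (v), to the concrete computation in the proof of Proposition \ref{prop:SL3sylow} showing that $F_0$ or $\tau F_0$ already centralizes $P$. Your sketch has the right shape, but as written the key step rests on a false premise about $P$.
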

\begin{proof}

First let $S=\PSL_n^\epsilon(q)$. If $3\mid (q-\epsilon)$, we are done by Proposition \ref{prop:PSLSylow}. So assume $3\mid (q+\epsilon)$. Then $P$ can be identified with a Sylow $3$-subgroup of $\wt{G}=\GL_n^\epsilon(q)$ and the statement about $[P:P']$ is Proposition \ref{prop:GLSylow}.

Now suppose that $S$ is of one of the remaining classical types, so that $S=\PSp_{2n}(q)$ with $n\geq 2$, $\POmega_{2n+1}(q)$ with $n\geq 3$, or $\POmega_{2n}^\pm(q)$ with $n\geq 4$. In these cases, $P$ can be identified with a Sylow $3$-subgroup of the corresponding symplectic or orthogonal group $\Sp_{2n}(q)$, $\SO_{2n+1}(q)$, or $\SO_{2n}^\pm(q)$. Weir \cite{weir} has again described the Sylow subgroups in these cases. In the first two cases, $P$ can be identified with a Sylow $3$-subgroup of $\GL_n(q)$ if $3\mid (q-1)$ and  of $\GL_{2n}(q)$ if $3\mid (q+1)$. The Sylow $3$-subgroups of $\SO_{2n}^\pm(q)$ can be identified with those of either $\SO_{2n+1}(q)$ or $\SO_{2n-1}(q)$. Note that in the latter situation when $n=4$, the order polynomials show that  $P$ is in $\SO_{7}(q)$ in the case $-$ and $\SO_9(q)$ in the case $+$. Together with Proposition \ref{prop:GLSylow}, this now gives the first statement for the classical groups. Arguments just like in Proposition \ref{prop:SL2sylow} and using the explicit structure of $P$ yield that $\wt{S}A=\wt{S}\cent{A}{P}$ in  cases (ii)-(iv).

Now let $S$ be an exceptional group of Lie type, including $\tw{3}\type{D}_4(q)$ and the Ree groups $\tw{2}\type{F}_4(q^2)'$. (Recall that the orders of the Suzuki groups $\tw{2}\type{B}_2(q^2)$ are not divisible by $3$ and that the Ree groups $\tw{2}\type{G}_2(q^2)$ are defined in characteristic $3$.) If $S=\type{G}_2(q)$ and $\epsilon\in\{\pm1\}$ such that $3\mid (q-\epsilon)$, then we have $P\in\Syl_3(S)$ is isomorphic to a Sylow $3$-subgroup of $\SL_3^\epsilon(q)$.  Similarly,
when $S=\tw{3}\type{D}_4(q)$, we see from \cite[Prop.~2.2]{deriziotismichler} that $P$ is an extension of a Sylow $3$-subgroup of $\SL_3^\epsilon(q)$  by a diagonal automorphism of order three, where  $3\mid (q-\epsilon)$. 
Next, let $S=\tw{2}\type{F}_4(q^2)'$, with $q^2=2^{2a+1}$. Note that $3\mid (q^2+1)$. Here by \cite[Main Thm.]{malle91max}, $S$ has a maximal subgroup $\SU_3(q^2):2$ and $P$ is isomorphic to a Sylow $3$-subgroup of $\SU_3(q^2)$. Then in all of these cases, $[P:P']=9$ by Proposition \ref{prop:SL3sylow}. Now, note that in these cases $A/S$ is generated by field automorphisms. Further, recalling from the proof of Proposition \ref{prop:SL3sylow} that  either $F_0$ or $\tau F_0$ centralizes $P$, we obtain that $A=S\cent{A}{P}$ in these cases.

Now consider the remaining cases $S=\type{F}_4(q)$, $\type{E}_6^\epsilon(q)$, $\type{E}_7(q)$, and $\type{E}_8(q)$ and let $G$ be as in our setting \ref{sec:setting}. If $S\neq \type{E}^\epsilon_6(q)$ with $3\mid (q-\epsilon)$ we may identify $P$ with a Sylow $3$-subgroup $\hat P$ of $G$. If $S= \type{E}^\epsilon_6(q)$ with $3\mid (q-\epsilon)$, we have $P= \hat P/Z$, where $\hat P\in\Syl_3(G)$ and $Z=\zent{G}$ has size 3. By \cite[Thm.~4.10.2]{GLS}, we may write $\hat P=P_T\rtimes P_W$, where $P_T$ is homocyclic of exponent $3^b$ with $3^b=(q^2-1)_3$ and $P_W\in\Syl_3(W_d)$, where $W_d=\norm{G}{T}/T$ is the relative Weyl group for a Sylow-$d_3(q)$-torus $T$ of $(\bG, F)$. Further, note that $Z\leq P_T$ in the case of $\type{E}_6^\epsilon(q)$. 
In these cases, the relative Weyl groups are listed in \cite[Table 3.2]{GM20}. In each case, we see that $[P_W:P_W']\geq 9$ and $P_W$ is generated by at least two elements. From this structure, we see $P$ cannot be $2$-generated and $[P:P']>9$.
\end{proof}

Finally, we complete the picture when $|A/S|_3=3$.

\begin{prop}\label{prop:nondefmostA3sylow}
Let $S$ be a simple group of Lie type defined in characteristic distinct from $3$ such that $S$ is not  $\PSL_3^\epsilon(q)$ nor $\PSL_2(q)$. Let $A$ be an almost simple group $S\leq A\leq \Aut(S)$ such that $|A/S|_3=3$. Let $P\in\Syl_3(A)$. Then $[P:P']>9$.
\end{prop}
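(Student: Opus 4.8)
The plan is to reduce to the cases analyzed in the previous propositions and then handle the genuinely new situations (where the Sylow $3$-subgroup of $A/S$ acts via field or graph-field automorphisms) by the same kind of explicit commutator calculation used in Propositions \ref{prop:SL3sylow} and \ref{prop:thmAdefining}. First I would dispose of the exceptional groups: for $S=\type{F}_4(q)$, $\type{E}_6^\epsilon(q)$, $\type{E}_7(q)$, $\type{E}_8(q)$ we already know from the proof of Proposition \ref{prop:SylowSideNondef} that even $P\cap S$ (a Sylow $3$-subgroup of $S$) has $[P\cap S:(P\cap S)']>9$, and since $P\cap S\leq P$ with $P/(P\cap S)$ cyclic of order $3$, any generating set of $P$ restricts to a generating set of $P/(P\cap S)(P\cap S)'$-image, forcing $P$ to be at least $3$-generated; hence $[P:P']>9$. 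For $S=\type{G}_2(q)$, $\tw{3}\type{D}_4(q)$, $\tw{2}\type{F}_4(q^2)'$ and the other classical types, I would similarly note that the extra automorphism is a field (or graph-field) automorphism of order $3$, and treat these via the wreath-product / explicit structure description.

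Next I would treat the linear and unitary groups. For $S=\PSL_n^\epsilon(q)$ with $n\geq 4$, write $P=R\langle\sigma\rangle/Z$ (or $R\langle\sigma\rangle$ if $3\nmid(n,q-\epsilon)$), where $R\in\Syl_3(\SL_n^\epsilon(q))$, $Z\in\Syl_3(\zent{\SL_n^\epsilon(q)})$, and $\sigma$ is an automorphism of order $3$ in $A/S$. If $\sigma$ is (up to multiplication by inner and diagonal automorphisms) a diagonal automorphism, Corollary \ref{cor:PSLdiagSylow} already shows $[P:P']=9$ forces $n=3$, a contradiction. So $\sigma$ induces a field or graph-field automorphism, hence $q=q_0^3$. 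As in the proof of Proposition \ref{prop:thmAdefining} (last paragraph) and Proposition \ref{prop:SL2sylow}, I would bound $[P:P']$ from below: the field automorphism centralizes a sub-Sylow coming from $\GL^\epsilon$ over $\FF_{q_0}$, and combining the lower bound $[R:R']$ coming from Proposition \ref{prop:SLSylow}/Corollary \ref{prop:PSLSylow} with the contribution of $\langle\sigma\rangle$ and the part of $R/R'$ not killed by $[\cdot,\sigma]$, one gets $[P:P']\geq 3\cdot[R\cap(\text{fixed points}):\cdots]$, which exceeds $9$ outside the small cases already excluded (those being in Proposition \ref{prop:spor} or having $n\leq 3$). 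For the remaining classical groups $\PSp_{2n}(q)$, $\POmega_{2n+1}(q)$, $\POmega_{2n}^\pm(q)$, I would again use Weir's description \cite{weir} to identify $P\cap S$ with a Sylow $3$-subgroup of a $\GL$, reduce to the linear case, and add the order-$3$ field automorphism, checking that $[P:P']>9$ in every subcase where $[P\cap S:(P\cap S)']=9$ was possible.

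The main obstacle I expect is the careful bookkeeping in the borderline linear/unitary cases: one must verify that when $[P\cap S:(P\cap S)']$ is exactly $9$ (the cases in Proposition \ref{prop:SylowSideNondef}(i)--(iv)), adjoining the extra order-$3$ field (or graph-field) automorphism $\sigma$ genuinely increases $[P:P']$ rather than, say, merely permuting the abelianization without adding a new generator. Concretely, the danger is a case where $\sigma$ acts so that $P/P'$ is still of order $9$; I would rule this out by showing that $\langle\sigma P'\rangle$ is a direct factor of order $3$ in $P/P'$ (since $\sigma$ has order $3$ and $\langle\sigma\rangle\cap (P\cap S)=1$ and $(P\cap S)P'/P'$ already has order $\geq 3$), using that $(P\cap S)/(P\cap S)'$ surjects onto something of order $\geq 3$ fixed enough by $\sigma$—this is exactly the ``$P$ is not $2$-generated'' argument from the last two paragraphs of the proof of \cite[Prop.~4.9]{NRSV21} invoked in Proposition \ref{prop:thmAdefining}. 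A secondary nuisance is keeping track of which small $(n,q)$ pairs and which groups with exceptional Schur multiplier have already been handled in Proposition \ref{prop:spor}, so that the general estimates may be applied without circularity.
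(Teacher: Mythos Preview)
Your plan has the right overall shape, but the central reduction in your first paragraph contains a genuine gap. You claim that for $S\in\{\type{F}_4(q),\type{E}_6^\epsilon(q),\type{E}_7(q),\type{E}_8(q)\}$, knowing $[P\cap S:(P\cap S)']>9$ and $|P/(P\cap S)|=3$ forces $P$ to be at least $3$-generated. This implication is false in general: take $P=C_3\wr C_3$ with base group $P_1=C_3^3$; then $[P_1:P_1']=27>9$ but $[P:P']=9$. Your ``fix'' in the obstacle paragraph---showing $\langle\sigma P'\rangle$ is a direct factor of $P/P'$---is correct and gives $[P:P']=3\cdot[P_1:P_1'[P_1,\sigma]]$, but this only reduces the problem to bounding the $\sigma$-coinvariants of $P_1/P_1'$ below by $9$, which you never actually do. Invoking \cite[Prop.~4.9]{NRSV21} doesn't help directly: that argument is specific to the defining-characteristic unipotent radical and its root-subgroup structure. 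A related omission is that for $\type{E}_6^\epsilon(q)$ with $3\mid(q-\epsilon)$, the order-$3$ automorphism can be diagonal or of the form $\delta F_{q_0}$, and your first paragraph lumps this in with the field-automorphism case without justification.

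The paper avoids this by working uniformly with the decomposition $\hat P_1=P_T\rtimes P_W$ from \cite[Thm.~4.10.2]{GLS}. The key observation is that a field (or triality) automorphism $\varphi$ of order $3$ can be chosen to centralize $P_W$ and normalize $P_T$; this immediately gives $\hat P'\leq P_TP_W'$ and hence $[P:P']\geq 3[P_W:P_W']$, reducing everything to the case $P_W\cong C_3$, which is then finished by noting $P_1/Z$ is not cyclic. For the genuinely exceptional situations---$\PSL_n^\epsilon(q)$ with $3\mid(n,q-\epsilon)$ and $\type{E}_6^\epsilon(q)$ with $3\mid(q-\epsilon)$---the paper invokes \cite[Thm.~5.1]{NS23} (which you do not mention) together with Corollary~\ref{cor:PSLdiagSylow}, and for the mixed $\delta F_{q_0}$ case in $\type{E}_6^\epsilon$ gives a separate explicit computation. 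Your case-by-case strategy via Weir's description could in principle be made to work for the classical groups, but it would require the same coinvariant analysis you have not supplied, and it does not cover the diagonal/$\delta F_{q_0}$ possibilities in $\type{E}_6^\epsilon$.
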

\begin{proof}
First, note that $S=\tw{2}\type{F}_4(2)'$ has outer automorphism of order $2$, so is not being considered here. Then we may let $S=G/\zent{G}$ where $G=\bG^F$ with $\bG$ simple of simply connected type, and let $d:=d_3(q)\in\{1,2\}$ if $F$ is a Frobenius map. If $S=\tw{2}\type{F}_4(q^2)$ (with $q^2>2$), we make the slight alteration on $d$ according to \cite[3.5.3]{GM20}. Let $P_1\in\Syl_3(S)$ and $\hat P_1\in \Syl_3(G)$ such that $P_1=\hat P_1/Z$ with $Z\in\Syl_3(\zent{G})$.  Then as before, $
\hat P_1$ is of the form $\hat P_1=P_T\rtimes P_W$ by \cite[Thm.~4.10.2]{GLS}, where $P_T\in\Syl_3(T)$ for a Sylow $d$-torus $T$ and $P_W\in\Syl_3(N_G(T)/T)$. Then $P_1=(P_T/Z)\rtimes P_W$.

First, assume that  $P=P_1\rtimes\langle \varphi\rangle$, where $\varphi=F_{q_0}$ is a field automorphism of order $3$ (in this case $q=q_0^3$ for some prime power $q_0$) or $S=\type{D}_4(q)$ and $\varphi\in\{F_{q_0}, \tau, \tau F_{q_0}\}$ where $\tau$ is a triality graph automorphism. 
 Write $\hat P=\hat P_1\rtimes\langle \varphi \rangle$. We can choose the decomposition so that $\varphi$ centralizes $P_W$ and normalizes $P_T$. 
 Then we calculate that $\hat P'\leq P_TP_W'$. In particular, we have $[ P: P']\geq 3[P_W:P_W']$, and we may assume that $P_W$ is cyclic of size $3$. However, since $P_1/Z$ is not cyclic, we still see from the structure above that $P$ cannot be $2$-generated, completing the proof in this case.
 
 Then we are left with the case that $S=\PSL_n^\epsilon(q)$ with $3\mid (n, q-\epsilon)$ or $\type{E}_6^\epsilon(q)$ with $3\mid (q-\epsilon)$.
First, suppose  $S=\PSL_n^\epsilon(q)$. If $3\mid|\wt{S}/S|$ and $q=q_0^3$ for some prime power $q_0$,  then \cite[Thm.~5.1]{NS23} yields that if $[P:P']=9$, then $n=3$. If $3\mid |(A\cap\wt{S})/S|$, then Corollary \ref{cor:PSLdiagSylow} also tells us that $n=3$. Hence in this case, we may assume  that $P=P_1\rtimes\langle F_{q_0}\rangle$, and we are done by the first part of the proof.

Finally, suppose that $S=\type{E}_6^\epsilon(q)$ with $3\mid (q-\epsilon)$. If $P$ is not of the form $P=P_1\rtimes\langle F_{q_0}\rangle$ as above, then $P$ is either a Sylow $3$-subgroup $\wt{P}$ of the adjoint version $\wt{S}=\type{E}_6^\epsilon(q)_{\mathrm{ad}}$, or $P=P_1\langle\delta F_{q_0}\rangle$, where $\delta$ is a diagonal automorphism of order $3$ induced by $\wt{S}/S$. In the latter case, we have $P$ is an index-3 subgroup of $\wt{P}\rtimes\langle F_{q_0}\rangle$. Now, by \cite[Thm.~4.10.2]{GLS}, we also have $\wt{P}=\wt{P}_T\rtimes P_W$, and we note that by the same reasoning as in Proposition \ref{prop:SylowSideNondef}, $|\wt{P}/\wt{P}'|>9$. (See also the proof of \cite[5.2]{NS23}.) Now let $P=P_1\langle\delta F_{q_0}\rangle\leq \wt{P}\langle F_{q_0}\rangle$. Note that $P_1$ has index 9 in $\wt{P}\langle F_{q_0}\rangle$, and therefore $P_1$ contains $(\wt{P}\langle F_{q_0}\rangle)'=\wt{P}'[\wt{P}, \langle F_{q_0}\rangle]$, which contains $P'$. But here, we see similar to before that $P_1/(\wt{P}\langle F_{q_0}\rangle)'$ has a quotient $P_W/P_W'$, which has size 9. But then $[P:P']=3[P_1:P']\geq 27[(\wt{P}\langle F_{q_0}\rangle)':P']$, completing the proof.
\end{proof}

\subsection{Theorem \ref{thm:theoremAforalmostsimple} in Non-Defining Characteristic}

In this section, we let $p\neq 3$ and let $q$ be a power of $p$. With the classification of when $[P:P']=9$ now complete, here we turn our attention to the ``character side" of proving Theorem \ref{thm:theoremAforalmostsimple}. We begin by completing the cases of $\PSL_2(q)$ and $\PSL_3^\epsilon(q)$.

\begin{prop}\label{prop:SL23nondef}
Let $S=\PSL_2(q)$ or $\PSL_3^\epsilon(q)$ with $3\nmid q$. Then Theorem \ref{thm:theoremAforalmostsimple} holds for $S$.
\end{prop}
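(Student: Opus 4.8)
The plan is to verify Theorem~\ref{thm:theoremAforalmostsimple} directly for the remaining small-rank groups $S=\PSL_2(q)$ and $S=\PSL_3^\epsilon(q)$ (with $3\nmid q$), using the Sylow-side classification already established in Propositions~\ref{prop:SL2sylow} and \ref{prop:SL3sylow} together with the character-theoretic tools of Section~\ref{Sec:principalblocks}. Fix an almost simple $A$ with $S\leq A\leq\Aut(S)$ and $[A:S]_3\leq 3$, and let $P\in\Syl_3(A)$. The structure of the argument is: (1) translate the condition $[P:P']=9$ into an arithmetic condition on $q$ via the cited propositions; (2) on the ``only if'' side, assuming $[P:P']=9$, produce exactly $6$ or $9$ height-zero characters in $B_0(A)$; (3) on the ``if'' side, assuming $[P:P']\neq 9$, show $k_0(B_0(A))\notin\{6,9\}$.

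First I would dispose of the case $3\nmid[A:S]$. Here Propositions~\ref{prop:SL2sylow} and \ref{prop:SL3sylow}(iii) tell us that $A=S\cent A P$ up to an index-$2$ subgroup $M=S\cent A P$, so by the Alperin--Dade Theorem~\ref{alperin-dade} we have $k_0(B_0(A))=k_0(B_0(M))$, and $k_0(B_0(M))$ is computed from $k_0(B_0(S))$ by Clifford theory over the quotient $M/S$, which is a $3'$-group of order $1$ or $2$. Thus everything reduces to knowing $\Irr_{3'}(B_0(S))$ together with the action of $\Aut(S)$ on it. For $\PSL_2(q)$ the principal-block height-zero characters are classically known (the trivial and Steinberg-type characters of degree $q\pm1$, the two ``halves'' of degree $(q\pm1)/2$, and a family of linear-in-$q$ degree characters), and one reads off that $k_0(B_0(S))$ and the orbit counts under diagonal/field/graph automorphisms yield exactly the value $6$ or $9$ precisely when $(q^2-1)_3=9$; similarly for $\PSL_3^\epsilon(q)$ one uses the explicit generic character table (or the description of $\Irr_{3'}(B_0(S))$ via semisimple $3$-elements as in Lemma~\ref{lem:typeAdefining} and Corollary~\ref{cor:nondefining}) to count. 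When $3\mid(q+\epsilon)$ and $S=\PSL_3^\epsilon(q)$, Sylow $3$-subgroups are abelian and the count is governed by $(q+\epsilon)_3$.

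For the case $[A:S]_3=3$, I would use Theorem~\ref{thm:thetatheorem} (Theorem~B) as the main engine: by Corollary~\ref{cor:nondefining}, $\Irr_{3'}(B_0(S))$ contains either two $\Aut(S)$-invariant nontrivial characters (case (a1)) or one $X$-invariant $\theta$ extending to $B_0(T)$ for all intermediate $T$ (case (a2)); combined with $1_S$, each such character, being $X$-invariant with $X\in\Syl_3(A)$ contained in $A$, forces (via Theorem~\ref{thm:thetatheorem}, noting $3\mid[A:S]$, so $k_0(B_0(A)\mid\theta)$ is divisible by $3$ and $\geq 2$, hence $\geq 3$) a contribution divisible by $3$. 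Counting $1_S$ (contributing $k_0(B_0(A/S))$ or at least $3$ via Lemma~\ref{lem:extension}) plus the contributions of the nontrivial invariant characters, one gets that $k_0(B_0(A))$ exceeds $9$ whenever there are ``too many'' invariant characters, and the delicate cases where $[P:P']=9$ actually occurs (e.g.\ $S=\PSL_3^\epsilon(q)$ with $3\mid(q-\epsilon)$ and $A$ inducing a diagonal automorphism, per Proposition~\ref{prop:SL3sylow}(ii) and Corollary~\ref{cor:PSLdiagSylow}) are pinned down arithmetically and checked to give exactly $6$ or $9$. The case $S=\PSL_2(q)$ with $[A:S]_3=3$ is handled similarly, using that Sylow $3$-subgroups of $A$ are then of the form $P_1\rtimes C_3$ and the explicit character table of $S$ together with the Clifford theory over the (cyclic or elementary abelian) $3$-quotient of $A$ containing the field automorphism.

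The main obstacle I anticipate is the bookkeeping on the ``character side'' in the mixed case where $A$ induces both diagonal and field automorphisms of order divisible by $3$ on $\PSL_3^\epsilon(q)$: one must simultaneously control which semisimple $3$-elements of $G^\ast$ give $X$-invariant characters, how $\Irr_{3'}(B_0(S))$ decomposes into $A$-orbits, and whether the invariant characters extend to $B_0$ of the relevant subgroups, so that Theorem~\ref{thm:thetatheorem} and Lemma~\ref{lem:extension} can be applied to get exact (not merely lower-bound) counts of $6$ or $9$. For the generic (large $q$) situation the counts are comfortably above $9$ and the argument is soft; the real work is in the boundary cases $(q^2-1)_3=9$ (for $\PSL_2(q)$ and for $\PSL_3^\epsilon(q)$ with the appropriate divisibility), where we need the count to land \emph{exactly} in $\{6,9\}$, and I expect this to require either direct computation in \cite{GAP} for the smallest $q$ (as already licensed by Proposition~\ref{prop:spor}) or a careful generic-character-table analysis citing the known decomposition of $\Irr(B_0(\PSL_3^\epsilon(q)))$.
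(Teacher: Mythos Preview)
Your overall architecture matches the paper's: split according to whether $3\mid(q+\epsilon)$ or $3\mid(q-\epsilon)$ (for $\PSL_3^\epsilon$), reduce the $3\nmid|A/S|$ case via Alperin--Dade and the structural Propositions~\ref{prop:SL2sylow}, \ref{prop:SL3sylow}, and in the $|A/S|_3=3$ case use Theorem~\ref{thm:thetatheorem} to push $k_0(B_0(A))$ above $9$ whenever there are enough invariant characters. One small slip: after $M=S\cent{A}{P}$, Alperin--Dade already gives $k_0(B_0(M))=k_0(B_0(S))$; the Clifford step you need is over $A/M$ (of order $\leq 2$), not over $M/S$.

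The real gap is the one you yourself flag at the end: Theorem~\ref{thm:thetatheorem} and Lemma~\ref{lem:extension} only give \emph{lower} bounds, so they cannot by themselves pin down $k_0(B_0(A))\in\{6,9\}$ in the boundary cases. The paper does not leave this to GAP or a generic-table reading; it uses specific published inputs you should cite. In the abelian-Sylow cases ($\PSL_2(q)$ and $\PSL_3^\epsilon(q)$ with $3\mid(q+\epsilon)$), the exact value of $k_0(B_0(S))$ comes from \cite[Thm.~5.12]{malle19} and \cite[Prop.~2.13]{MO83}, and one checks $k_0(B_0(S))\leq 9$ iff $|P_1|\leq 9$; for $|A/S|_3=3$ one writes down \emph{all} of $\Irr_{3'}(B_0(S))$ (two unipotents plus the semisimple $\chi_s$ with $s$ a $3$-element) and sees that exactly three of them are $F_{q_0}$-invariant when $(q^2-1)_3=9$, giving precisely $9$ characters upstairs. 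For $\PSL_3^\epsilon(q)$ with $3\mid(q-\epsilon)$, the paper uses Malle's explicit description of $\Irr_{3'}(S)$ from \cite[Secs.~3.1--3.2]{malle08}: $|\Irr_{3'}(B_0(S))|=6$, three unipotent and three constituents under one semisimple $\wt\chi_s$. The key case-split you anticipate is then resolved concretely: if the order-$3$ outer automorphism is a field automorphism, all six characters are invariant and Theorem~\ref{thm:thetatheorem} gives $k_0(B_0(A))\geq 12$; if it is diagonal or $\delta F_{q_0}$, the three non-unipotent characters fuse and the three unipotents each extend (via \cite[Thms.~2.4--2.5]{malle08}), yielding exactly $9$, with the residual index-$2$ quotient handled by the $\tau$-action on $\beta,\beta^{-1}\in\Irr(\bar S/S)$. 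You should replace the ``read off''/``checked'' placeholders with these references and explicit orbit counts.
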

\begin{proof}
Let $S\leq A\leq\Aut(S)$ such that $|A/S|_3\leq 3$. Let $G:=\SL_2(q)$, resp. $\SL_3^\epsilon(q)$.  

First, suppose that $S=\PSL_2(q)$ or that $3\mid (q+\epsilon)$ and $S=\PSL^\epsilon_3(q)$. Recall that the ``Sylow side" has been discussed in Proposition \ref{prop:SL2sylow}, and we keep the notation from that proof. Since $\Aut(S)=\wt{S}\rtimes\langle \tau, F_p\rangle$ and the only outer automorphisms of order $3$ are field automorphisms,  we can write $A$ as $M\rtimes \langle F_{q_0}^i\rangle$ where $q=q_0^3$, $i\in\{0,1\}$, and $3\nmid |M/S|$. Let $P=P_1\rtimes \langle F_{q_0}^i\rangle$ with $P_1\in\Syl_3(S)$. By Proposition \ref{prop:SL2sylow}, we have $M=S\cent{M}{P_1}$ and $\wt{S}=S\cent{\wt{S}}{P_1}$ since $3\nmid |\wt{S}/S|$, so that $k_0(B_0(M))=k_0(B_0(S))=k_0(B_0(G))$ by the Alperin--Dade correspondence Theorem \ref{alperin-dade} and the fact that $3\nmid|\zent{G}|$.
We see using \cite[Thm.~5.12]{malle19} and \cite[Prop.~2.13]{MO83} that $k_0(B_0(S))\leq 9$ exactly when $|P_1|\leq 9$ in this case.

Then suppose that $ |A/S|_3=3$. The set $\irra{3'}{B_0(S)}$ consists of the unipotent characters $1_S$ and $\mathrm{St}_S$, together with the deflation of the semisimple  characters $\chi_s$ where $s\in G^\ast=\PGL_n^\epsilon(q)$ (for the appropriate $n, \epsilon$) lifts to an element of $\wt{G}^\ast=\GL_n^\epsilon(q)$ with nontrivial eigenvalues $\{\zeta, \zeta^{-1}\}$ with $|\zeta|\mid (q^2-1)_3:=3^b$.
Here we see the characters $1_S, \mathrm{St}_S$, and the deflation of  $\chi_s$ with $|\zeta|$ dividing $(q_0^2-1)_3=3^{b-1}$  are invariant under $F_{q_0}$ and are further the only characters in $\irra{3'}{B_0(S)}$ invariant under $F_{q_0}$.  If $b=2$, this yields only $1_S, \mathrm{St}_S$, and $\chi_s$ with  $|\zeta|=3$, so there are exactly 9 characters in $\irra{3'}{B_0(A)}$, lying above the three corresponding characters in $\irra{3'
}{B_0(M)}$. On the other hand, if $b\geq 3$, we obtain at least a fourth not $\Aut(S)$-conjugate character in $B_0(S)$ using $|\zeta|=9$, giving more than $9$ characters in $\irra{3'}{B_0(A)}$ using Theorem \ref{thm:thetatheorem}. When combined with Proposition \ref{prop:SL2sylow}, this completes the proof in this case.

Now let $S=\PSL_3^\epsilon(q)$ with $3\mid (q-\epsilon)$. We keep the notation and setting of the proof of Proposition \ref{prop:SL3sylow}. The set $\irra{3'}{S}$ and action of $\Aut(S)$ on $\irra{3'}{S}$ is described in \cite[Secs.~3.1, 3.2]{malle08}.  

 We have $|\irra{3'}{S}|=6$, with three of these characters coming from the deflation of restrictions of unipotent characters of $\wt{G}$, and the remaining three coming from the deflation of the three constituents of $\irr{G\mid \wt{\chi}_s}$, where $\wt{\chi}_s$ is  a semisimple character of $\wt{G}$ corresponding to a semisimple element $s$ in $\wt{G}^\ast$ of order $3$. From \cite{FS82}, we see that $\wt{\chi}_s$ and the unipotent characters all lie in $B_0(\wt{G})$, so these six characters all lie in $B_0(S)$ and $k(B_0(S))=6$.

Let $P\in\Syl_3(S)$ and suppose we have $3\nmid |A/S|$. Write $M:=S\cent{A}{P}$ as before. From the considerations in \cite[Secs.~3.1, 3.2]{malle08} and discussion in the proof of Proposition \ref{prop:SL3sylow}, we see that in the cases that $[A:M]=2$, the generator of $A/M$ interchanges two of the characters in $\irr{G\mid \wt{\chi}_s}$ and fixes the remaining four characters in $\irra{3'}{S}$. Using Theorem \ref{alperin-dade}, this yields that $k_0(B_0(A))=6$ if $A=M$ and $k_0(B_0(A))=9$ if $[A:M]=2$.

Now, let $|A/S|_3=3$.  Let $Y$ be a Sylow $3$-subgroup of $A/S$. If $Y$ induces a field automorphism of order $3$ on $S$, then again, we have the three unipotent characters are $Y$-invariant. We further see by adapting the descriptions of the characters in \cite[Sec.~3.1]{malle08} that $F_{q_0}$ will stabilize the characters in   $\irr{G\mid \wt{\chi}_s}$, so they are $Y$-invariant. Since these four characters are not $\Aut(S)$-conjugate, we have $k_0(B_0(A))\geq 12$ by Theorem \ref{thm:thetatheorem}.

 Hence, we may assume that $Y$ induces $\delta$ or $\delta F_{q_0}$ on $S$. Then the three characters in $\irr{G\mid \wt{\chi}_s}$ fuse in $A$, and the only characters in $\irra{3'}{B_0(A)}$ are those lying above the three unipotent characters, which extend to $A$ by \cite[Thms.~2.4, 2.5]{malle08}. Let $\bar{S}:=SY$. We have $\irra{3'}{B_0(\bar S)}$ contains the 9 extensions of the three unipotent characters of $S$. Let $\beta\in\irr{\bar S/S}$ be of order $3$. Note that the graph automorphism $\tau$ commutes with $F_{q_0}$ and inverts $\delta$, and therefore inverts $\beta$.
 Now, let $M:=\bar{S}\cent{A}{P}$ with $P\in\Syl_3(A)$, so  $k_0(B_0(M))=9$ by Theorem \ref{alperin-dade}. We have $[A:M]\leq 2$ by the same proof as in Proposition \ref{prop:SL3sylow}(iii). Then when $|A/M|=2$, we have $A$ fuses  two of the characters of $M$ above each unipotent character, corresponding to the fusion of $\beta$ with $\beta^{-1}$. This yields that  $k_0(B_0(A))=9$, as desired.
\end{proof}

The following follows from Proposition \ref{prop:4unipsext} above.

\begin{cor}\label{cor:nondefmostA3}
Let $S$ be a simple group of Lie type defined in characteristic distinct from $3$ such that $S$ is not  $\PSL_3^\epsilon(q)$ nor $\PSL_2(q)$. Let $A$ be an almost simple group $S\leq A\leq \Aut(S)$ such that $|A/S|$ is divisible by 3. Then $k_0(B_0(A))>9$.
\end{cor}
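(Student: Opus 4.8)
The plan is to combine the extension statement of Proposition~\ref{prop:4unipsext} with the block-theoretic machinery of Section~\ref{Sec:principalblocks}, in the spirit of the proofs of Proposition~\ref{prop:thmAdefining} and Proposition~\ref{prop:SL23nondef}. Since $3\mid |A/S|$ and, by hypothesis, $S\neq\PSL_2(q),\PSL_3^\epsilon(q)$, the outer automorphism group contributes a field (or graph-field) automorphism of order $3$; in particular $3\mid[A:S]$ and we may fix $P\in\Syl_3(A)$ and $P_1=P\cap S\in\Syl_3(S)$. First I would recall from Proposition~\ref{prop:4unipsext} that $\Irr_{3'}(B_0(\wt S))$ contains at least four unipotent characters of distinct degrees that extend to $\Aut(S)$ and restrict irreducibly to $\Irr_{3'}(B_0(S))$ (the relevant excluded cases $\PSL_4^\epsilon(q)$ with $3\mid(q-\epsilon)$, $\PSp_4(2^a)$, etc., still provide at least three such characters, and I would treat these by exhibiting an additional $X$-invariant non-unipotent character exactly as in the proof of Corollary~\ref{cor:nondefining}, or simply check them separately). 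Unipotent characters are trivial on $\zent{G}$ and invariant under all of $\Aut(S)$ — in particular under $P$ — so we obtain at least four $P$-invariant, pairwise non-$\Aut(S)$-conjugate characters $\theta_1=1_S,\theta_2,\theta_3,\theta_4$ in $\Irr_{3'}(B_0(S))$, each extending to $\Aut(S)$.

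Next I would apply Theorem~\ref{thm:thetatheorem} (Theorem B) with $N=S$, $G=A$, and $\theta=\theta_i$: since $P S/S = PS/S > 1$ as $3\mid[A:S]$, and $B_0(A)$ has defect group $P$ covering $B_0(S)$ with defect group $P\cap S=P_1$, and since each $\theta_i$ extends to $A$ (hence to $PS$), we get $k_0(B_0(A)\mid\theta_i)\geq 2$ and, because $p=3\leq 3$, that $3\mid k_0(B_0(A)\mid\theta_i)$, so $k_0(B_0(A)\mid\theta_i)\geq 3$ for each $i$. Because the $\theta_i$ lie in $\Irr(B_0(S))$ and are pairwise non-conjugate under $\Aut(S)\supseteq A$, the sets $\Irr_0(B_0(A)\mid\theta_i)$ are pairwise disjoint (distinct $S$-constituents, hence distinct $A$-characters), and moreover every character in each set lies in $\Irr_{3'}(B_0(A))$ by the block considerations at the start of Section~\ref{Sec:principalblocks} (constituents of characters over a character of $B_0(S)$ lie in $B_0(A)$, and $3'$-degree is inherited since $\theta_i(1)$ is a $3'$-number and the extensions have $3'$-degree by Gallagher). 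Summing over $i=1,\dots,4$ therefore gives $k_0(B_0(A))\geq 4\cdot 3 = 12 > 9$.

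For the handful of cases where Proposition~\ref{prop:4unipsext} yields only three such characters ($\PSL_4^\epsilon(q)$ with $3\mid(q-\epsilon)$, $\PSp_4(2^a)$) the same argument gives $k_0(B_0(A))\geq 9$, so I would squeeze out one more non-$\Aut(S)$-conjugate $P$-invariant character in $\Irr_{3'}(B_0(S))$: in these cases one takes a suitable semisimple $3$-element $s$ of $G^\ast$ (chosen, as in the proof of Corollary~\ref{cor:nondefining}, to lie in $[G^\ast,G^\ast]$, to centralize a Sylow $3$-subgroup of $G^\ast$ so that $\chi_s$ has $3'$-degree, and with $F_3$-stable class so that $\chi_s$ is $X$-invariant hence $P$-invariant), yielding $\chi_s\in\Irr_{3'}(B_0(S))$ non-unipotent and therefore a fourth orbit; then $k_0(B_0(A))\geq 12$ again. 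The main obstacle I anticipate is the bookkeeping needed to guarantee disjointness of the orbit-fibres $\Irr_0(B_0(A)\mid\theta_i)$ together with the verification that the extra non-unipotent character in the three exceptional families is genuinely $P$-invariant and lies in the principal block — but these are exactly the kind of centralizer computations recorded in \cite{FS82} and already used in Corollary~\ref{cor:nondefining}, so no new ideas are required.
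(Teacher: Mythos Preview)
Your proposal is correct and follows essentially the same approach as the paper's proof: combine the four (resp.\ three) unipotent characters from Proposition~\ref{prop:4unipsext} with Theorem~\ref{thm:thetatheorem} to obtain at least $12$ (resp.\ $9$) height-zero characters in $B_0(A)$, and in the exceptional families $\PSL_4^\epsilon(q)$ with $3\mid(q-\epsilon)$ and $\PSp_4(2^a)$ supplement with a non-unipotent semisimple character whose class is invariant under field automorphisms of $3$-power order. The only difference is that in those exceptional cases the paper invokes Lemma~\ref{lem:Murai} to produce just one additional character (giving $\geq 10$) rather than applying Theorem~\ref{thm:thetatheorem} again as you do (giving $\geq 12$); both suffice, and note that your ``$F_3$-stable'' should read ``stable under field automorphisms of $3$-power order'', since $3$ is not the defining characteristic here.
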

\begin{proof}
If $S$ is not in the list of exceptions in Proposition \ref{prop:4unipsext}, then we have at least four unipotent characters in $\Irr_{3'}(B_0(S))$ that extend to $\Aut(S)$. Then by Theorem \ref{thm:thetatheorem}, we have at least 12 characters in $\Irr_{3'}(B_0(A))$ lying above these four characters of $S$.

Next, suppose that $S=\PSL_4^\epsilon(q)$ with $3\mid (q-\epsilon)$ or $\PSp_4(2^a)$. Then Proposition \ref{prop:4unipsext} and Theorem \ref{thm:thetatheorem} yield at least 9 characters in $\Irr_{3'}(B_0(A))$ lying above unipotent characters in $S$. It suffices to show that there is at least one additional $3'$-degree character. 
Here the class of  $s$ discussed in the proof of Corollary \ref{cor:nondefining} can be chosen to be invariant under field automorphisms of $3$-power order. 
 Using Lemma \ref{lem:Murai}, this completes the proof since the only outer automorphisms of $3$-power order in this case are field automorphisms.
\end{proof}

\begin{prop}\label{prop:10nondefexcept}
Let $q$ be a power of a prime distinct from $3$ and let $S$ be a simple group of Lie type $\type{F}_4(q)$, $\type{E}_6^\epsilon(q)$, $\type{E}_7(q)$, or $\type{E}_8(q)$. Then $k_{\Aut(S),0}(B_0(S))\geq 10$.
\end{prop}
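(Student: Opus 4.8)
The plan is to exhibit, uniformly in $q$, at least $20$ unipotent characters of $3'$-degree lying in $B_0(S)$ and then observe that these already break into at least $10$ orbits under $\Aut(S)$. The orbit count is the cheap part: field automorphisms and diagonal automorphisms fix every unipotent character of $S$, so the action of $\Aut(S)$ on unipotent characters factors through a group of order at most $2$ generated by a graph automorphism, and this graph automorphism is present only when $S=\type{E}_6(q)$ (untwisted) or $S=\type{F}_4(q)$ with $q$ even. Hence the number of $\Aut(S)$-orbits among our characters is at least half of $20$; one has to be slightly more careful in the two cases where a graph automorphism occurs, but since that automorphism fixes all but a bounded number of unipotent characters (for $\type{F}_4$ it fixes $13$ of the $25$ members of the principal series and swaps $6$ pairs) the orbit count stays at least $10$.

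For the first part I would set $d=d_3(q)\in\{1,2\}$ (with the twisted-group conventions of \cite[3.5.3]{GM20} where needed). By the theory of unipotent $\ell$-blocks — Cabanes--Enguehard, see \cite[Ch.~22--23]{CE04}, together with Enguehard's treatment of the bad-prime case — the principal $3$-block $B_0(S)$ contains the whole $\Phi_d$-principal series of unipotent characters, i.e. the constituents of $R^{\bG}_{\bT_d}(1)$ for a Sylow $\Phi_d$-torus $\bT_d$ of $(\bG,F)$; since $\Phi_d$ occurs with full multiplicity in the order polynomial of each of $\type{F}_4,\type{E}_6^\epsilon,\type{E}_7,\type{E}_8$ we have $C_{\bG}(\bT_d)=\bT_d$, and this series is parametrized by $\Irr(W_d)$ for the relative Weyl group $W_d=N_{\bG}(\bT_d)/\bT_d$, which is $W(\type{F}_4)$, $W(\type{F}_4)$ or $W(\type{E}_6)$, $W(\type{E}_7)$, $W(\type{E}_8)$ respectively; in every case $|\Irr(W_d)|\ge 25$. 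It then remains to see that at least $20$ of these characters have $3'$-degree for \emph{every} admissible $q$, and this is pure cyclotomic arithmetic: the generic degree of a member of the $\Phi_d$-principal series (with trivial associated constant, which holds for all but a few of them) is, up to a power of $q$, a product $\prod_{e\neq d}\Phi_e(q)^{m_e}$, and for $q$ coprime to $3$ one has $3\mid\Phi_e(q)$ only when $e\in\{3^j d:j\ge 1\}$; among such $e$ only $\Phi_{3d}$ — and, for $\type{E}_7$ and $\type{E}_8$, also $\Phi_{9d}$ — occurs in the relevant order polynomials. Thus a member of the series has $3'$-degree for all $q$ coprime to $3$ as soon as its generic degree avoids these one or two cyclotomic polynomials, and inspecting the lists of unipotent degrees (e.g. \cite[Sec.~4.4]{GM20}) shows that at least $20$ members do so in each of the four cases.

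The step I expect to be the main obstacle is the block-theoretic input: $3$ is a bad prime for all four types, so the clean good-prime classification cannot be invoked verbatim to conclude that $B_0(S)$ contains the full $\Phi_d$-principal series. Since Enguehard's bad-prime unipotent blocks are obtained by amalgamating the generic ones, what is really needed is to confirm that the block containing $R^{\bG}_{\bT_d}(1)$ is the principal one and to track which, if any, unipotent characters of $3'$-degree get absorbed or lost; if one prefers to bypass the structural statement, the same conclusion follows by reading off from Enguehard's tables the number of unipotent characters of $3'$-degree in $B_0(S)$, which exceeds $20$ in every case. The tightest case is $\type{F}_4(2^a)$, where the $25$-element principal series, the order-$2$ graph automorphism, and the finitely many degrees divisible by $3$ all conspire; there, if the unipotent count were not comfortably above $10$ after all reductions, one would supplement it by the additional $3'$-characters in $B_0(S)$ coming from Lusztig series $\mathcal{E}(G,s)$ with $s$ a nontrivial $3$-element centralizing a Sylow $3$-subgroup of $G^\ast$.
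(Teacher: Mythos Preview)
Your overall strategy---count unipotent characters of $3'$-degree in $B_0(S)$ and then bound the $\Aut(S)$-orbits using that only a graph automorphism of order at most $2$ can move unipotent characters---is exactly the one the paper uses. The block-theoretic worry you flag is also harmless: the paper simply cites \cite[Lem.~3.6]{RSV21} to get that \emph{every} unipotent character of $3'$-degree lies in $B_0(S)$, so there is no need to analyse the $\Phi_d$-principal series or invoke Enguehard's bad-prime tables.

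The genuine gap is numerical. Your claim that at least $20$ of the $\Phi_d$-principal-series characters have $3'$-degree is false for $\type{E}_6^\epsilon(q)$ with $3\mid(q-\epsilon)$. In that case there are exactly $9$ unipotent characters of $3'$-degree in distinct $\Aut(S)$-orbits; for $\tw{2}\type{E}_6(q)$ with $3\mid(q+1)$ there is no graph automorphism, so this really means only $9$ such characters altogether, not $20$. The slip is in the sentence ``$3\mid\Phi_e(q)$ only when $e\in\{3^jd:j\ge 1\}$'': that should read $j\ge 0$, but more to the point, even granting that $\Phi_d$ does not occur in the relevant generic degrees, the factors $\Phi_{3d}$ and $\Phi_{9d}$ occur far more often than you allow. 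A look at the unipotent degree tables for $\type{E}_6$ (e.g.\ \cite[Sec.~13]{carter}) shows that $\Phi_3$ (resp.\ $\Phi_6$) appears in the degree of the majority of unipotent characters, leaving only $9$ with $3'$-degree. Consequently the ``halve $20$'' argument collapses precisely here, and this is the case your final paragraph does \emph{not} anticipate (you single out $\type{F}_4(2^a)$, which is in fact fine). The paper closes the $\type{E}_6^\epsilon$ gap by producing one additional \emph{non}-unipotent character in $\Irr_{3'}(B_0(S))$, appealing to the proof of \cite[Prop.~4.7]{NST18}; any fix to your argument will need something of this sort for $3\mid(q-\epsilon)$.
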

\begin{proof}
First, we remark that all unipotent characters of $3'$-degree lie in $B_0(S)$, as in \cite[Lem.~3.6]{RSV21}. Then from the character degrees of unipotent characters available in \cite[Section 13]{carter}, together with the knowledge of the action of $\Aut(S)$ on unipotent characters (see \cite[Thm.~2.5]{malle08}), we see there are at least 10 unipotent characters of $3'$-degree of $S$ in different $\Aut(S)$-orbits in these cases, except if $S=\type{E}_6^\epsilon(q)$ with $3\mid (q-\epsilon)$. In the latter case, there are 9 such unipotent characters, and the proof of \cite[Prop.~4.7]{NST18} additionally yields at least one non-unipotent character in $\Irr_{3'}(B_0(S))$, which cannot be $\Aut(S)$-conjugate to a unipotent character.
\end{proof}

\begin{prop}\label{prop:9unips}
Let $q$ be a power of  $p\neq 3$ and let $S$ be one of the simple groups $\PSL_n^\epsilon(q)$ with $n\geq 2$; $\PSp_{2n}(q)$ with $n\geq 2$; $\POmega_{2n+1}(q)$ with $n\geq 3$; or $\POmega_{2n}^\pm(q)$ with $n\geq 4$. Let $B_0(S)$ be the principal $3$-block of $S$. Then there are at least $9$  unipotent characters in $\irra{3'}{B_0(S)}$ in distinct $\Aut(S)$-orbits, unless: 
\begin{itemize}
\item $S=\PSL_n^\epsilon(q)$ with $3\mid (q-\epsilon)$ and $n\leq 5$;
\item $S=\PSL_n^\epsilon(q)$ with $3\mid (q+\epsilon)$ and $n\leq 7$; 
\item $S=\PSp_4(q)$, $\PSp_6(q)$, or $\POmega_7(q)$; or 
\item $S=\POmega_{8}^\pm(q)$.

\end{itemize}

\end{prop}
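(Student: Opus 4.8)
The plan is to count unipotent characters of $3'$-degree in the principal block, organized by the order $d = d_3(q)$ of $q$ modulo $3$ (so $d=1$ if $3\mid (q-\epsilon)$ in the natural case and $d=2$ otherwise), and to track the $\Aut(S)$-orbits on them. First I would recall, as in \cite[Lem.~3.6]{RSV21}, that a unipotent character of $3'$-degree always lies in a unipotent block, and in fact in $B_0(S)$ precisely when the relevant $d$-core/$d$-quotient combinatorics places it there; for the principal block this amounts to the unipotent character being parametrized by a symbol (or partition in type $A$) whose $d$-core is as small as possible. The action of $\Aut(S)$ on unipotent characters is described by \cite[Thm.~2.5]{malle08}: field automorphisms fix all unipotent characters, and graph automorphisms act trivially except in types $A_{n}$, $D_n$, and $E_6$, where they act by the obvious symbol/partition symmetry (transpose in type $A$).

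Next I would handle the linear and unitary groups. Here unipotent characters of $\PSL_n^\epsilon(q)$ are labelled by partitions $\lambda \vdash n$, with degree the $q$-analogue of the hook-length formula, and the $3'$-condition together with membership in $B_0$ translates into a condition on the $e$-core of $\lambda$ where $e = d_3(q)$ or $d_3(\epsilon q)$. When $3 \mid (q-\epsilon)$ the relevant $e$ is $1$ (well, $e=1$ forces the whole partition into the core—one must be careful; in practice the principal block condition is that the $3$-core tower is as in the symmetric group case), and the count of $3'$-degree unipotents in $B_0$ grows with $n$; a direct combinatorial count (analogous to the $c(M,a)$ function of \cite[Cor.~11.8]{Olsson93} used in Section \ref{sec:alternating}, or simply enumerating partitions with bounded core and checking the graph-automorphism-conjugacy by whether $\lambda = \lambda'$) shows there are at least $9$ orbits once $n \geq 6$ (for $3\mid(q-\epsilon)$) or $n \geq 8$ (for $3\mid(q+\epsilon)$), matching the stated exceptions. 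For the symplectic and orthogonal groups I would similarly enumerate unipotent characters via symbols, use the known degree formulae from \cite[Section 13]{carter}, extract the $3'$-degree and principal-block conditions as a condition on the defect of the symbol, and count symbols up to the symbol symmetry that realizes the graph automorphism in type $D$; again the count exceeds $9$ once the rank is large enough, leaving $\PSp_4, \PSp_6, \POmega_7, \POmega_8^\pm$ (and the small linear/unitary cases) as genuine exceptions. The small-rank exceptions are then exactly the cases where an explicit check (or GAP, as already invoked for several groups) shows fewer than $9$ such orbits.

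For the computation itself, the main obstacle will be the bookkeeping for the orthogonal groups: the symbol combinatorics controlling which unipotent characters have $3'$-degree and lie in $B_0(S)$ is more delicate than in type $A$ because of the interplay between the two choices of $d$ (depending on $3 \mid (q-1)$ versus $3 \mid (q+1)$), the defect/rank of the symbol, and the fact that in type $D_n$ the graph automorphism identifies "degenerate" symbols in a way that can either merge two orbits or not, and additionally can swap $\POmega_{2n}^+$ with a twisted form. I would organize this by first fixing $d \in \{1,2\}$, reducing the $3'$-degree condition to requiring the $d$-weight of the symbol to be small, then listing the finitely many symbol shapes of each small rank and checking the count by hand, and for ranks above the threshold giving a crude but sufficient lower bound (e.g.\ exhibiting $9$ explicit symbols of distinct degree, so orbit-distinctness is automatic) rather than an exact count. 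The linear/unitary part is comparatively routine given the hook-length formula and the transpose symmetry.

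Finally, I would note that in every case where the count of $3'$-degree unipotent characters in $B_0(S)$ in distinct $\Aut(S)$-orbits is at least $9$, this is stronger than what is needed downstream, and that the listed exceptions are precisely the groups where either $[P:P']$ may equal $9$ (handled in Propositions \ref{prop:SylowSideNondef} and \ref{prop:SL23nondef}) or where a separate small-group or non-unipotent-character argument is required; so the statement as phrased, with exactly that exception list, is what the later sections consume.
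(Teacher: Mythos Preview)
Your strategy---direct enumeration of partitions/symbols and checking $3'$-degree and principal-block conditions---could be carried through, but it contains a factual error and misses the uniform shortcut the paper uses.

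The error: you claim that in type $A$ the graph automorphism acts on unipotent characters by transposing the indexing partition. This is false. By \cite[Thm.~2.5]{malle08}, every unipotent character of $\PSL_n^\epsilon(q)$ is fixed by all of $\Aut(S)$; the only classical types where graph automorphisms genuinely move unipotent characters are $\type{D}_n(q)$ and $\Sp_4(2^a)$. (You may be conflating this with Alvis--Curtis duality, which does exchange $\chi^\lambda$ and $\pm\chi^{\lambda'}$, but that is not induced by an automorphism of $S$.) The mistake is conservative---it can only undercount orbits---so it would not break your bound, but it makes the type-$A$ case look harder than it is and signals a misconception.

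More substantively, the paper bypasses almost all of the symbol combinatorics you anticipate. Rather than enumerating symbols and testing degree conditions, it invokes the $d$-Harish-Chandra theory of \cite{BMM, malle07} (as packaged in \cite[Prop.~3.2]{HSV23}): the number of height-zero unipotent characters in $B_0(S)$ is at least $|\Irr_{3'}(W_d)|$, where $W_d$ is the relative Weyl group of a Sylow $d$-torus. These groups are known explicitly---$C_e\wr S_w$ in type $A$, $C_2\wr S_w$ in types $B/C$, and $C_2\wr S_n$ or $G(2,2,n)$ in type $D$---and counting their $3'$-degree irreducibles is elementary. This collapses your ``delicate bookkeeping for the orthogonal groups'' to a one-line wreath-product count; only $\type{D}_n(q)$ needs a brief extra step (doubling the bound to absorb the order-$2$ graph automorphism, then handling $\type{D}_5$ via the explicit value $|\Irr_{3'}(G(2,2,5))|=15$). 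Your combinatorial route would eventually reach the same conclusion, but at the cost of reproducing case-by-case what the relative Weyl group already encodes.
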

\begin{proof}
Let $H=\GL_n^\epsilon(q)$, $\Sp_{2n}(q)$, $\SO_{2n+1}(q)$, or $\SO^\pm_{2n}(q)$, respectively, and let $d=d_3(q)$. In the case of $\PSL_n^\epsilon(q)$, further define  $e:=d_3(\epsilon q)$. In the other cases, let  $e:=d_3(q^2)$, so that $e=1$. Write $n=ew+r$ with $0\leq r<e$. Note that since  $e\in\{1,2\}$ and $e=1$ in the cases of symplectic and orthogonal groups, the case $w=1$ only happens for $\PSL_2(q)$ or $\PSL_3^\epsilon(q)$ when $e=2$.

Using the results of \cite{BMM, malle07} and arguing as in part (IV) of the proof of \cite[Prop.~3.2]{HSV23}, we see the number of unipotent characters in $\irra{3'}{B_0(S)}$ and of $\irra{3'}{B_0(H)}$ is at least $|\irra{3'}{W_d}|$, where $W_d$ is the relative Weyl group of a Sylow $d$-torus of $H$. Moreover, from \cite[Sec.~3A]{BMM}, we have $W_d=C_e\wr S_w$ if $S=\PSL_n^\epsilon(q)$, $C_{2}\wr S_w$ if $S=\PSp_{2n}(q)$ or $\POmega_{2n+1}(q)$, and is either $C_2\wr S_w$ or is the index-two subgroup $G(2,2,n)$ in the case $S=\POmega_{2n}^\pm(q)$. 

From this, we see that the number of unipotent characters in $B_0(S)$ of height zero is at least $9$ unless we are in one of the stated exceptions.
Further, by \cite[Thm.~2.5]{malle08}, these are $\Aut(S)$-invariant unless $S=\type{D}_n(q)$ or $\Sp_4(2^a)$, in which case the graph automorphisms permute certain unipotent characters. Except for $\type{D}_4(q)$, the graph automorphisms have order 2. If $S=\type{D}_n(q)$, then the second-to-last paragraph of the proof of \cite[Prop.~3.2]{HSV23} yields that there are at least $2|\irra{3'}{S_n}|$ unipotent characters in $\irra{3'}{B_0(S)}$, which is therefore at least 18 for $n\geq 6$. Then we see we still have at least 9 $\Aut(S)$-orbits of unipotent characters, except possibly if $S=\type{D}_5(q)$.  
However, in this case we  see explicitly that the Weyl group $G(2,2,5)$ has 15 characters of $3'$-degree. Hence there are at least 15 unipotent characters in $\irra{3'}{B_0(S)}$. Since at least two of these (namely, $1_S$ and $\mathrm{St}_S$) are $\Aut(S)$-invariant, this yields again at least 9 $\Aut(S)$-orbits of unipotent characters in this case.
\end{proof}

Finally, we complete the proof of Theorem \ref{thm:theoremAforalmostsimple}.

\begin{thm}
Theorem \ref{thm:theoremAforalmostsimple} holds whenever $S$ is a simple group of Lie type.
\end{thm}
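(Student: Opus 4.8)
The plan is to assemble the results of this section together with Proposition~\ref{prop:thmAdefining}. By that proposition we may assume $S$ is defined in characteristic $p\neq 3$; by Proposition~\ref{prop:spor} we may assume $S$ has non-exceptional Schur multiplier and is none of the groups listed there; and by Proposition~\ref{prop:SL23nondef} we may assume $S\notin\{\PSL_2(q),\PSL_3^\epsilon(q)\}$. Fix $S\leq A\leq\Aut(S)$ with $[A:S]_3\leq 3$ and $P\in\Syl_3(A)$; we must prove $[P:P']=9$ if and only if $k_0(B_0(A))\in\{6,9\}$. If $3\mid|A/S|$ then $[P:P']>9$ by Proposition~\ref{prop:nondefmostA3sylow} and $k_0(B_0(A))>9$ by Corollary~\ref{cor:nondefmostA3}, so both conditions fail and we are done. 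Thus assume $3\nmid|A/S|$; then $P\in\Syl_3(S)$, by the Frattini argument $A=S\norm A P$, and $M:=S\cent A P$ is normal in $A$ with $M/S$ and $A/M$ both $3'$-groups and $M=S\cent M P$. By the Alperin--Dade Theorem~\ref{alperin-dade}, restriction is a natural, hence $A$-equivariant, bijection $\Irr(B_0(M))\to\Irr(B_0(S))$, so $k_0(B_0(M))=k_0(B_0(S))$ and $A$ has as many orbits on $\irra{3'}{B_0(M)}$ as on $\irra{3'}{B_0(S)}$. Since $\cent A P\subseteq M$, Lemma~\ref{lemma:onlyblockabove} shows $B_0(A)$ is the unique block of $A$ over $B_0(M)$; and since each $\theta\in\irra{3'}{B_0(M)}$ extends trivially to $PM=M$, Lemma~\ref{lem:Murai} produces a height-zero character of $B_0(A)$ over every such $\theta$, these being distinct for $\theta$ in distinct $A$-orbits. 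Hence $k_0(B_0(A))$ is at least the number of $A$-orbits, and so at least the number of $\Aut(S)$-orbits, on $\irra{3'}{B_0(S)}$.

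Suppose first $[P:P']\neq 9$; we must show $k_0(B_0(A))\notin\{6,9\}$, for which the bound above reduces us to exhibiting at least ten $\Aut(S)$-orbits on $\irra{3'}{B_0(S)}$. For $S\in\{\type{F}_4(q),\type{E}_6^\epsilon(q),\type{E}_7(q),\type{E}_8(q)\}$ this is Proposition~\ref{prop:10nondefexcept}. For the remaining classical $S$ not among the small-rank exceptions of Proposition~\ref{prop:9unips}, that proposition yields nine $\Aut(S)$-orbits of unipotent characters in $B_0(S)$, and a semisimple character $\chi_s$ with $s$ a nontrivial $3$-element of $[G^\ast,G^\ast]$ centralizing a Sylow $3$-subgroup of $G^\ast$ (as in the proof of Corollary~\ref{cor:nondefining}) gives a tenth, necessarily non-unipotent, orbit. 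The only groups left are those that are small-rank exceptions of Proposition~\ref{prop:9unips} but are \emph{not} among the families (i)--(iv) of Proposition~\ref{prop:SylowSideNondef}: the finitely many families $\PSL_n^\epsilon(q)$ with $4\leq n\leq 7$, $\PSp_4(q)$, $\PSp_6(q)$, $\POmega_7(q)$, $\POmega_8^\pm(q)$, in which $[P:P']\neq 9$ forces the $3$-part of the relevant $q\mp 1$ or $q^2-1$ to exceed $3$. For each of these I would check directly, using the explicit unipotent degrees from \cite{carter}, the semisimple classes of $G^\ast$ from \cite{FS82,FS89}, and the $\Aut(S)$-action from \cite{malle08}, that $\irra{3'}{B_0(S)}$ still splits into at least ten $\Aut(S)$-orbits, whence $k_0(B_0(A))>9$.

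Suppose now $[P:P']=9$, so $S$ lies in the list (i)--(v) of Proposition~\ref{prop:SylowSideNondef}, and the task becomes an exact count. For the exceptional members $\type{G}_2(q)$, $\tw{3}\type{D}_4(q)$, $\tw{2}\type{F}_4(q^2)'$, whose Sylow $3$-subgroups agree with those of $\SL_3^\epsilon(q)$ or $\SU_3(q^2)$, and for the linear, unitary, symplectic and orthogonal members (i)--(iv), I would describe $\irra{3'}{B_0(S)}$ explicitly --- the relevant unipotent constituents together with the semisimple characters $\chi_s$ for $s$ in the finitely many pertinent semisimple classes, all of which lie in $B_0$ by the discussion of Section~\ref{sec:setting} and \cite{FS82,FS89} --- read off the $\Aut(S)$-action from \cite{malle08}, and then recover $k_0(B_0(A))$ from $k_0(B_0(M))=k_0(B_0(S))$ via Clifford theory over $M$: each $A$-invariant $\theta\in\irra{3'}{B_0(M)}$ contributes $[A:M]$ extensions to $B_0(A)$ (here $A/M$ is cyclic and $[A:M]\leq 2$, controlled by Propositions~\ref{prop:SL2sylow}, \ref{prop:SL3sylow} and the last line of Proposition~\ref{prop:SylowSideNondef}), while a non-invariant $A$-orbit fuses to a single character. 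Running this family by family should give $k_0(B_0(A))\in\{6,9\}$ throughout, the recurring pattern being that $k_0(B_0(S))\in\{6,9\}$ and that, when $[A:M]=2$, exactly the right characters extend or fuse to keep $k_0(B_0(A))\in\{6,9\}$.

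The hard part is this last step. In the direction $[P:P']\neq 9$ a crude lower bound of $10$ suffices, but in the direction $[P:P']=9$ one must pin down the exact value of $k_0(B_0(A))$ and rule out intermediate values such as $7$, $8$ or $12$; doing so requires controlling simultaneously the full height-zero character set of the principal $3$-block of the relevant small-rank classical and exceptional groups, the precise sizes of the $\Aut(S)$-orbits on it, and the index $[A:M]$. Managing this interplay case by case --- together with the finitely many sporadic-parameter subcases that must be checked by explicit computation --- is where the bulk of the remaining work lies.
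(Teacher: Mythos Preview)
Your outline is essentially the paper's own, with the same initial reductions, the same split on $|A/S|_3$, and the same endgame of working through the small-rank exceptions to Proposition~\ref{prop:9unips}. The paper does not, however, carry out the exact orbit-and-Clifford counts you sketch for the $[P:P']=9$ families. Instead it reaches $k_0(B_0(A))\leq 9$ (and hence $\in\{6,9\}$ by Theorem~\ref{Landrock} and \cite[Thm.~C]{NST18}) by citing closed formulas for $k_0(B_0(S))$: \cite{malle91}, \cite{hissshamash}, \cite{deriziotismichler} for $\type{G}_2$, $\tw{3}\type{D}_4$, $\tw{2}\type{F}_4$, and \cite[Thm.~5.12, 5.17]{malle19} together with \cite[Prop.~2.13]{MO83} for the linear, unitary, symplectic and orthogonal families. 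For the $[P:P']\neq 9$ subcases of the small-rank classical exceptions, rather than exhibiting ten $\Aut(S)$-orbits as you propose, the paper mostly just checks (again via the formulas) that $k_0(B_0(S))>18$ and then invokes Proposition~\ref{prop:Noelialemma}; only a handful of borderline cases (e.g.\ $\PSL_6^\epsilon(q)$, $\PSL_7^\epsilon(q)$ with $(q+\epsilon)_3=9$, $\PSp_6(q)$ with $(q^2-1)_3=9$, and $\POmega_8^\pm(q)$) receive the kind of explicit semisimple-class analysis you describe. One technical point: the statement in Proposition~\ref{prop:SylowSideNondef} is $\wt{S}A=\wt{S}\cent{A}{P}$, not $A=S\cent{A}{P}$, so the paper sometimes passes through $\wt{S}$ (using Lemma~\ref{lem:tildeScoprime}) rather than bounding $[A:M]$ directly as you do. Your plan would work, but Proposition~\ref{prop:Noelialemma} and the cited $k_0$ formulas are the shortcuts that turn the ``hard part'' you flag into a short case check.
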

\begin{proof}
By Propositions \ref{prop:thmAdefining} and \ref{prop:SL23nondef}, we may assume $S$ is defined in characteristic $p\neq 3$ and that $S$ is not one of $\PSL_2(q)$ or $\PSL_3^\epsilon(q)$. 
If $|A/S|_3=3$, then the result now follows from Proposition \ref{prop:nondefmostA3sylow} and Corollary \ref{cor:nondefmostA3}.
Hence, we may further assume that $S\leq A\leq\Aut(S)$ with $3\nmid |A/S|$.

First, suppose $S$ is of exceptional type. Since $3$ does not divide the order of $\tw{2}\type{B}_2(q^2)$, note that $S$ is not a Suzuki group.
If $S$ is one of $\tw{2}\type{F}_4(q^2)'$, $\type{G}_2(q)$, $\tw{3}\type{D}_4(q)$, we have $k_0(B_0(S))\leq 9$ using the results of \cite{malle91}, \cite{hissshamash}, and \cite{deriziotismichler}, respectively. Since $A=S\cent{A}{P}$ by Proposition \ref{prop:SylowSideNondef}, we see $k_0(B_0(A))\leq 9$ in these cases.
In the remaining exceptional cases, we have $k_0(B_0(A))\geq 10$ by Proposition \ref{prop:10nondefexcept}, and hence we are done when combined with Proposition \ref{prop:SylowSideNondef}.

Now let $S$ be of classical type such that $S$ is not one of the listed exceptions in Proposition \ref{prop:9unips}. Then there are at least 9   orbits under $A$ in $\irra{3'}{B_0(S)}$ from unipotent characters. It suffices in these cases to show there is at least one non-unipotent member of $\irra{3'}{B_0(S)}$, as such a character cannot be $\Aut(S)$-conjugate to a unipotent character.  Here \cite[Lem.~3.3 and Thm.~3.5]{GSV} and their proofs yield a character $\chi\in\irra{3'}{S}$ that is the deflation of a semisimple character $\chi_s\in\irr{G}$, for $s$ a semisimple $3$-element of $G^\ast$.  Then by \cite[Thm.~21.13]{CE04}, we have $\chi_s\in\irra{3'}{B_0(G)}$, so $\chi\in\irra{3'}{B_0(S)}$.

We now deal with the various exceptions pointed out in Proposition \ref{prop:9unips}. The distribution of the unipotent characters of the relevant groups into blocks can be determined by their combinatorial data by \cite{FS82, FS89}. We note that the centralizer structures of semisimple elements in relevant classical groups are also given in loc. cit.     Throughout, we will let $\omega\in\FF_{q^2}^\times$ denote an element of order $3$. We will also let $F_0$ denote the Frobenius map $F_p^{3^a}$, where $q=p^f$ with $f=3^am$ and $3\nmid m$. 

Suppose  $S=\PSL_n^\epsilon(q)$ for some $n\geq 4$ in the list. We will write $G=\SL_n^\epsilon(q)$, $\wt{S}=\PGL_n^\epsilon(q)$, and $\wt{G}=\GL_n^\epsilon(q)$ as before. In each of the relevant cases, we have $3\nmid |\zent{G}|$, so that $\Irr_{3'}(B_0(S))=\Irr_{3'}(B_0(G))$. In this case, we further recall that by \cite[Thm.~2.4, 2.5]{malle08}, every unipotent character extends to $\Aut(S)$. Further, in this case $A\leq\wt{S}\rtimes \langle \tau, F_0\rangle$, where $\tau$ denotes a graph automorphism. (When $\epsilon=-1$, $\tau$ may be identified with the element $F_0^m$ of $\langle F_0\rangle$ of order $2$.) Let $F_0'$ denote $F_0$ if $p\equiv \epsilon \pmod 3$; $\tau F_0$ if $p\equiv -1\pmod 3$ and $\epsilon=1$, and $F_0^2$ if $p\equiv 1\pmod 3$ and $\epsilon=-1$. Then $F_0'$ has index two in the abelian group $\langle \tau, F_0\rangle$, and we write $\tau'$ for a representative of the nontrivial coset. 

First, let $S=\PSL_4^\epsilon(q)$ with $3\mid (q-\epsilon)$.  Here we see that $B_0(S)$ contains all unipotent characters, and hence $B_0(S)$ is the unique unipotent block by \cite[Thm.~9.12]{CE04}. Hence $\irra{3'}{B_0(S)}$ contains three unipotent characters. 
We have $\wt{S}/S$ and $\zent{G}$ are cyclic groups of order prime to $3$, so $\irr{B_0(S)}=\irr{B_0(G)}$ and Lemma \ref{lem:tildeScoprime}  applies to any $\wt{S}$-invariant character in $\irr{B_0(S)}$.
 Let $s'\in \wt{G}^\ast$ be the element $\mathrm{diag}(\omega, \omega, \omega, 1)$ and let $s$ be its image in $G^\ast$ under the natural projection map. Then the class of $s$ and of $s^{-1}$ yield Lusztig series of $G$ that are stable under $\wt{S}$ and $F_0'$ and are interchanged by $\tau'$. Further, since $\cent{\wt{G}^\ast}{s'}=\cent{\wt{G}^\ast}{(s')^{-1}}\cong \GL_3^\epsilon(q)\times C_{q-\epsilon}$ and the three unipotent characters of $\GL_3^\epsilon(q)$ have (distinct) degrees prime to $3$, we see the six characters in these series all lie in $\irra{3'}{B_0(G)}=\irra{3'}{B_0(S)}$. Now, write $M:=\wt{S}\cent{A}{P}$. Then exactly as in Proposition \ref{prop:SL3sylow}(iii), we have $[\wt{S}A:M]\leq 2$, with the nontrivial coset (when it exists) acting on $M$ via $\tau'$. 
 If $(q-\epsilon)_3=3 $, then the classes of $s$ and $s^{-1}$ are the only classes of $3$-elements in $G^\ast$ that yield Lusztig series with $3'$-degree characters. (This can be verified by using \cite[Thm.~5.12]{malle19} and \cite[Prop.~2.13]{MO83} to obtain $k_0(B_0(S))$.) Hence in this case, we have $k_0(B_0(A))= 9$ using Lemma \ref{lem:tildeScoprime}, Theorem \ref{alperin-dade}, and the above analysis about $\tau'$ on these characters.
Now suppose that $(q-\epsilon)_3=3^b\geq 9$.  In this case, we obtain three additional characters in $\irra{3'}{B_0(G)}$, lying in the Lusztig series corresponding to the image in $G^\ast$ of the class representative $(\zeta, \zeta,\zeta,\zeta^{-3})\in\wt{G}^\ast$, where $|\zeta|=3^b$. Since these characters cannot be $\Aut(S)$-conjugate to any of the other 9 characters discussed, we see $k_0(B_0(A))>9$ in this case. 

Now, in the remaining cases, we could make similar explicit arguments. Instead, we will use \cite[Thm.~5.12]{malle19} and \cite[Prop.~2.13]{MO83} to obtain $k_0(B_0(S))$, and apply Proposition \ref{prop:Noelialemma}. In each case, we have at least five unipotent characters in $\Irr_{3'}(B_0(S))$, yielding 5 characters that extend to $A$. Here \cite[Prop.~2.13]{MO83} gives us $k_0(B_0(\wt{G}))$. Then by \cite[Thm.~5.12]{malle19} we have $k_0(B_0(G))=k_0(B_0(\wt{G}))/3^b$ if $3\mid (q-\epsilon)$. If $3\mid (q+\epsilon)$, we instead see that $k_0(B_0(S))=k_0(B_0(\wt{S}))=k_0(B_0(\wt{G}))$ since $3\nmid |\zent{\wt{G}}|$ and using Lemma \ref{lem:tildeScoprime}, since the characters in series indexed by $3$-elements of $\wt{G}^\ast$ must restrict irreducibly to $G$. From this (and using Proposition \ref{prop:SylowSideNondef} and Theorem \ref{alperin-dade}), we obtain that $k_0(B_0(S))=k_0(B_0(A))\leq 9$ whenever $[P:P']=9$, and further that $k_0(B_0(S))>18$ whenever $[P:P']>9$, unless $S=\PSL_6^\epsilon(q)$ or $\PSL_7^\epsilon(q)$ with $(q+\epsilon)_3=9$. Then we are done by Proposition \ref{prop:Noelialemma} unless we are in the  latter case. 

So now suppose $S=\PSL_6^\epsilon(q)$ or $\PSL_7^\epsilon(q)$ with $(q+\epsilon)_3=9$. We see that $B_0(G)$ is the unique unipotent block of maximal defect and contains six unipotent characters of $3'$-degree. 
Recall that $P$ can be identified with a Sylow $3$-subgroup of $\GL_6^\epsilon(q)$ in this case, as can a Sylow $3$-subgroup of $G^\ast$.  Let $s$ be a semisimple element of $G^\ast$ corresponding to an element whose pullback to $\wt{G}^\ast$ has nontrivial eigenvalues $\{\omega, \omega, \omega, \omega^{-1}, \omega^{-1}, \omega^{-1}\}$.  Then $s$ centralizes a Sylow $3$-subgroup.
Since $\cent{\wt{G}^\ast}{s}\cong \GL_3(q^2)\times V$ with $V$ a torus, this series contains three characters of distinct $3'$-degrees. Further, the class of $s$ is invariant under $\Aut(S)$, so these characters are $\Aut(S)$-invariant. 
We then have additional characters in $\irra{3'}{B_0(G)}$ in a series corresponding to eigenvalues $\{\zeta I_3, \zeta^{-1} I_3\}$ with $|\zeta|=9$, yielding  characters not $\Aut(S)$-conjugate to the 9 discussed above, and hence $k_0(B_0(A))>9$.

Together with Proposition \ref{prop:SylowSideNondef}, we see the proof is complete when $S$ is one of the groups $\PSL_n^\epsilon(q)$.

Now let $S=\PSp_4(q)$  and $G=\Sp_4(q)$.  In this case there is again a unique unipotent block of maximal defect, and $\irra{3'}{B_0(S)}=\irra{3'}{B_0(G)}$ contains 5 unipotent characters, which lie in distinct $\Aut(S)$-orbits unless $q=2^a$, in which case they lie in 4 distinct orbits. Since a non-unipotent character cannot be $\Aut(S)$-conjugate to a unipotent character, we will have at least 5 $A$-orbits once we know that $k_0(B_0(S))>4$. Here arguing like above, but now using \cite[Thm.~5.17]{malle19} to obtain $k_0(B_0(G))$, yields $k_0(B_0(A))\leq 9$ if $[P:P]'=9$ and $k_0(B_0(A))> 18$  if $[P:P']>9$, yielding the statement using Proposition \ref{prop:Noelialemma}.

Next, consider the case $S=\PSp_6(q)$. Again we have $\irra{3'
}{B_0(S)}=\irra{3'
}{B_0(G)}$ where $G=\Sp_6(q)$. This case is very similar to that of $\PSL_6^\epsilon(q)$ with $3\mid (q+\epsilon)$ above, again using \cite[Thm.~5.17]{malle19} to obtain $k_0(B_0(S))$. We again obtain $k_0(B_0(A))\leq 9$ if $(q^2-1)_3=3$, $k_0(B_0(S))=18$ if $(q^2-1)_3=9$, and $k_0(B_0(S))>18$ if $(q^2-1)_3>9$. 
If $(q^2-1)_3= 9$, we may argue similar to the case $\PSL_6^\epsilon(q)$ with $3\mid (q+\epsilon)$, now using semisimple elements in $G^\ast=\SO_7(q)$ with eigenvalues $\{\omega, \omega, \omega, \omega^{-1}, \omega^{-1}, \omega^{-1},1\}$ and $\{\zeta, \zeta, \zeta, \zeta^{-1}, \zeta^{-1}, \zeta^{-1},1\}$, whose series yield three characters each in $\irra{3'}{B_0(S)}$, now yielding six $\Aut(S)$-orbits in addition to the six from unipotent characters. Hence again $k_0(B_0(A))>9$.

Now let $S=\POmega_7(q)$. Here note that an order-2 diagonal automorphism is induced by the action of $H:=\SO_7(q)$, if $q$ is odd.  Arguing exactly as in the case of $\Sp_6(q)$ above but with corresponding semisimple elements of $H^\ast=\Sp_6(q)$, we obtain that there are more than 9 $A$-orbits in $\irra{3'}{B_0(H)}$ when $(q^2-1)_3\geq 9$, and we further can argue similarly to above to see $k_0(B_0(A))<10$ when $3\mid\mid(q^2-1)$. Their construction further shows that these 10 characters restrict to distinct $A$-orbits as well in $\irra{3'}{B_0(S)}$, so that $k_0(B_0(A))>9$ when $(q^2-1)_3\geq 9$.

Finally, consider the cases $S=\POmega_8^+(q)$ or $S=\POmega_8^-(q)$ and let $H$  be the corresponding $\SO_8^\pm(q)$ and $\Omega=[H,H]$ so that $S=\Omega/\zent{\Omega}$. In these cases, there are six unipotent characters in $\irra{3'}{B_0(S)}$, and again note that $\irra{3'}{B_0(\Omega)}=\irra{3'}{B_0(S)}$ since $3\nmid |\zent{\Omega}|$. The unipotent characters moved by the graph automorphisms (see \cite[Thm.~2.5]{malle08}) have degree divisible by $3$, so the six unipotent characters in $\irra{3'}{B_0(S)}$ are $A$-invariant. 

If $H=\SO_8^+(q)$, we may consider the  classes in $H^\ast$ of the semisimple elements $s_1$ with eigenvalues $\{\omega, \omega^{-1}, I_6\}$ and $s_2$ with eigenvalues $\{\omega\cdot I_3, \omega^{-1}\cdot I_3, 1,1\}$. These each give series of at least three characters with $3'$-degree. (Note that $\cent{H^\ast}{s_i}$ is of the form $H^\ast\cap(\GL_{n}^\epsilon(q)\times \operatorname{O}_{m}^\epsilon(q))$ where $m$ is the multiplicity of the eigenvalue $1$ and $n$ is the multiplicity of $\omega$, since $n$ is odd.) These two classes cannot be $\Aut(S)$-conjugate and the characters restrict to not-$\Aut(S)$-conjugate characters in $\irra{3'}{B_0(\Omega)}=\irra{3'}{B_0(S)}$ as well since $s_1$ cannot be $\Aut(S)$-conjugate to $s_2z$ for any nontrivial element $z$ in the $2$-group $\zent{H^\ast}$. This gives at least 12 $\Aut(S)$-orbits in $\irra{3'}{B_0(S)}$, so $k_0(B_0(A))>9$.

Finally, let $H=\SO_8^-(q)$, and note that $\cent{H^\ast}{s_i}$, with $s_i$ as above, is of the form $H^\ast\cap(\GL_{n}^\epsilon(q)\times \operatorname{O}_{m}^{-\epsilon}(q))$, where $m$ is the multiplicity of the eigenvalue $1$ and $n$ is the multiplicity of $\omega$, since $n$ is odd. In this case, we see $s_1$ above yields characters of degree divisible by $3$, but $s_2$ still yields three not-$\Aut(S)$-conjugate characters in $\irra{3'}{B_0(H)}$, and hence their restrictions yield three not-$\Aut(S)$-conjugate characters in $\irra{3'}{B_0(S)}$ as before. If $(q^2-1)_3\geq 9$, we may in addition consider the class corresponding to $\{\zeta\cdot I_3, \zeta^{-1}\cdot I_3, 1, 1\}$ with $|\zeta|=(q^2-1)_3$ to obtain three additional $\Aut(S)$-orbits of characters in $\irra{3'}{B_0(S)}$. Then it suffices to show that $k_0(B_0(A))\leq 9$ if $(q^2-1)_3=3$.
Recall that the Sylow $3$-subgroup of $S$ or of $H\cong H^\ast$ can be identified with one of $\SO_7(q)$, and hence of $\GL_3(q)$ or $\GL_6(q)$. From here, the proof is almost exactly as in the case of $\PSL_6^\epsilon(q)$ with $(q+\epsilon)_3=3$ above.
\end{proof}

\section{Proof of Theorem A}

We divide the proof in three parts. We first analyse the character theory of groups whose Sylow 3-subgroups have abelianization of size 9, relying mostly on \cite{NS23}.

\begin{thm}\label{thmA:P/P'=9}
  Let $G$ be a finite group and $P\in \syl 3 G$. Write $B_0=B_0(G)$ to denote the principal $3$-block of $G$. If $[P:P']=9$, then $k_0(B_0)\in \{ 6, 9\}$.
\end{thm}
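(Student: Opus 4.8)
The plan is to prove the statement by induction on $|G|$, reducing to the almost simple situation treated in Theorem \ref{thm:theoremAforalmostsimple} via the structural analysis of groups with $[P:P']=9$ from \cite{NS23} and the principal-block machinery of Section \ref{Sec:principalblocks}. Note first that $3\mid k_0(B_0)$ by Theorem \ref{Landrock} (as $P\ne 1$), so it suffices to exclude the value $3$ and the values $\ge 12$; both will drop out of the case analysis. Two immediate reductions present themselves: if $G$ is a $3$-group then $G=P$ and $k_0(B_0(G))$ is the number of linear characters of $P$, which is $[P:P']=9$; and in general $\Irr(B_0(G))=\Irr(B_0(G/\oh{3'}G))$ by \cite[Thm.~9.9(c)]{Navarro98} with $P\oh{3'}G/\oh{3'}G\cong P$, so by induction we may assume $\oh{3'}G=1$, whence $\cent G{\oh 3G}\le\oh 3G$. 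Finally, if $P$ is cyclic then $P\cong C_9$ and $B_0(G)$ has cyclic defect group of order $9$, so Dade's theory gives $k_0(B_0(G))=k(B_0(G))=e+\tfrac{8}{e}\in\{6,9\}$ with inertial index $e\in\{1,2\}$; hence we may assume $P$ is non-cyclic, so $P/\Phi(P)\cong C_3\times C_3$.

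The heart of the proof is the dichotomy coming from the reduction theorems of \cite{NS23} for the condition $[P:P']=9$: either (I) $G$ is $3$-solvable, or (II) $G$ has a single component $S$ (so $\mathbf F^*(G)=\oh 3G\cdot S$ with $S$ quasisimple and $\bar S:=S/\zent S$ one of the simple groups of Theorem \ref{thm:conditionsonsimples}), and then $A:=G/\cent GS$ is almost simple with socle $\bar S$ and $[A:\bar S]_3\le 3$. In regime (I) I would invoke that Alperin--McKay holds for $p$-solvable groups, giving $k_0(B_0(G))=k_0(B_0(\norm GP))$, and then, peeling off the normal $3'$-part via Schur--Zassenhaus, reduce to evaluating $k_0(B_0(P\rtimes H))$ for a faithful action of a $3'$-group $H$ on $P$ — which, as coprime actions are faithful on $P/\Phi(P)$, makes $H$ a $3'$-subgroup of $\mathrm{GL}_2(3)$; this is the finite verification handled by \cite{NS23}, always producing a value in $\{6,9\}$. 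The various descents through normal subgroups in this regime are justified by Theorems \ref{thm:NRSV}, \ref{thm:thetatheorem} and \ref{alperin-dade} and Lemmas \ref{lem:Murai}, \ref{lem:extensionprincipalblock} and \ref{lem:extension}.

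In regime (II), Theorem \ref{thm:theoremAforalmostsimple} yields $k_0(B_0(A))\in\{6,9\}$ once one checks that $[P:P']=9$ forces $[P_A:P_A']=9$ for $P_A\in\syl 3A$; it then remains to transfer this count back to $G$. Setting $C:=\cent GS$ and $M:=SC$, inflation gives $\Irr(B_0(A))\subseteq\Irr(B_0(G))$, so the task is to bound the height-zero characters of $B_0(G)$ that are non-trivial on $C$, equivalently the $\theta\in\Irr_0(B_0(C))$ they lie over. Here Lemma \ref{lemma:onlyblockabove} makes $B_0(G)$ the unique block covering $B_0(C)$, forcing $k(G/M)\le k_0(B_0(A))$; Theorem \ref{thm:thetatheorem} makes each $k_0(B_0(G)|\theta)$ divisible by $3$; and the extension statements of Theorem \ref{thm:conditionsonsimples}(a), Theorem \ref{tensorinduction} and Proposition \ref{prop:Noelialemma}, applied to $\bar S$ under the (outer) action of $G/M$ on it, control the characters arising from diagonal and field automorphisms and from $\zent S$. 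Combining these pins $k_0(B_0(G))$ to the same value as $k_0(B_0(A))$, closing the induction.

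The step I expect to be the main obstacle is exactly the transfer in regime (II): ruling out ``extra'' height-zero characters of $B_0(G)$ supported on $C=\cent GS$ forces one to control simultaneously the principal block of $C$ (using $\oh{3'}G=1$ and the divisibility-by-$3$ statement of Theorem B), the behaviour of $\Irr_0(B_0(\bar S))$ under the action of $G/M$ inside $\Out(\bar S)$, and the passage through the Schur multiplier $\zent S$; the interplay of these three is the genuinely delicate point, and it is where essentially all the auxiliary results of Sections \ref{Sec:principalblocks} and \ref{Sec:almostsimple} are brought to bear. A secondary subtlety is obtaining the sharp bound (rather than merely $k_0(B_0)\le 9$) in the $C_3\times C_3$ defect-group configurations occurring inside regime (I).
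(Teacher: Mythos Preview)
Your inductive setup (kill $\oh{3'}G$, handle the abelian--$P$ case, aim for Theorem~\ref{thm:theoremAforalmostsimple}) is correct in spirit, but you are working much harder than the paper does, and the hard part you identify---the ``transfer'' in regime (II)---is both genuinely incomplete as written and, more importantly, unnecessary.

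The paper's argument is shorter and runs as follows. First, both abelian shapes $P\cong C_9$ and $P\cong C_3\times C_3$ are dispatched simultaneously by Dade and Sambale \cite[Thm.~8.6, Cor.~8.11]{Sambale14}; you only do $C_9$. With $P$ nonabelian one has $P'=\Phi(P)$. After $\oh{3'}G=1$, the case $\norm GP=G$ is handled by Fong's theorem (pass to $G/P'$, whose Sylow is abelian), not by a general Alperin--McKay argument. Next one reduces to $\oh 3G=1$: for $N\le\oh 3G$ minimal normal, either $N\le P'=\Phi(P)$ and Theorem~\ref{thm:NRSV} gives $k_0(B_0(G))=k_0(B_0(G/N))$, or $|P/N|=3$, whence $N=\oh 3G$ and An--Eaton \cite[Cor.~1.6]{AE05} yields $k_0(B_0(G))=k_0(B_0(\norm GP))$; in either case induction finishes. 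Now every minimal normal subgroup is nonabelian semisimple. If $G$ is almost simple, \cite[Lem.~2.2]{NRSV21} forces $[G:\mathrm{Soc}(G)]_3\le 3$ and Theorem~\ref{thm:theoremAforalmostsimple} applies. If $G$ is \emph{not} almost simple, then \cite[Thms.~3.2 and 3.3]{NS23} already prove $k_0(B_0)\le 9$ outright in this situation; combined with $3\mid k_0(B_0)$ (Theorem~\ref{Landrock}) and $k_0(B_0)>3$ (\cite[Thm.~C]{NST18}), this gives $k_0(B_0)\in\{6,9\}$.

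So the decisive point is that \cite{NS23} is not used to produce a structural dichotomy ``$3$-solvable vs.\ single component'' from which you then have to climb back up; it is used, after the reduction $\oh 3G=1$, as a black box giving the bound $k_0(B_0)\le 9$ directly in the non--almost-simple case. Your regime~(II) programme of comparing $k_0(B_0(G))$ with $k_0(B_0(G/\cent GS))$ runs into real problems you have not resolved: you have not shown $[P_A:P_A']=9$ for $P_A\in\syl 3{G/\cent GS}$ (if $\cent GS$ has nontrivial $3$-part this could a priori drop to $3$), Lemma~\ref{lemma:onlyblockabove} does not apply to $C=\cent GS$ without checking $\cent GQ\le C$ for $Q\in\syl 3C$, and the claimed inequality $k(G/M)\le k_0(B_0(A))$ does not follow from that lemma. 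None of this is needed once you read \cite{NS23} as the paper does.
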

\begin{proof} We will sometimes write $p=3$.
If $P$ is abelian then $P\cong C_9, C_3\times C_3$. In either case $k_0(B_0)=k(B_0)\in \{ 6, 9\}$ by work of Dade and of Sambale (see \cite[Thm.~8.6 and Cor.~8.11]{Sambale14}, for instance). 
We may thus assume that $P$ is not abelian. In particular $P'=\Phi(P)$.

\smallskip

Arguing by induction on $|G|$ we may assume that ${\bf O}_{3'}(G)=1$ by \cite[Thm.~9.9.(c)]{Navarro98}. We can also assume that $\norm G P <G$, as otherwise the group $G$ would be $p$-solvable and by Fong's Theorem (\cite[Thm.~10.20]{Navarro98}) $k_0(B_0)=k_0(B_0(G/P'))=k(G/P')$. Note that the group $G/P'$ has abelian Sylow $3$-subgroup.

\smallskip


We want to show that
also ${\bf O}_3(G)$ may be assumed to be trivial.
Otherwise, let $N$ be a minimal normal subgroup of $G$ contained in ${\bf O}_3(G)$. Then 
$N<P$.
Notice that $(P/N)'=P'N/N=\Phi(P)N/N=\Phi(P/N)$. As $[P/N: P'N/N]$ divides $[P:P']=9$ and $P/N>1$ we conclude that either $P'\leq N$ and $|P/N|=3$ or $N\leq P'= \Phi(P)$.

In the case where $|P/N|=3$ then $N={\bf O}_3(G)$ and $P\cap P^g=N$ whenever $g \in G\setminus \norm G P$. By An--Eaton (see \cite[Cor.~1.6]{AE05} or \cite[Thm.~7.17]{Sambale14}) $k_0(B_0(\norm G P))=k_0(B_0)$ and we are done by induction.
In the case where $N\leq P'$, by induction $k_0(B_0(G/N))\in \{6, 9\}$. We claim that $k_0(B_0(G/N))=k_0(B_0)$. Let $\chi \in{\rm Irr}_0(B_0)$. Then $\chi_P$ contains some linear character $\lambda$ of $P$. Since $\lambda_N=1_N$, we conclude that $\chi$ lies over $1_N$ and $N \subseteq \ker \chi$. By Theorem \ref{thm:NRSV}, $\chi \in {\rm Irr}_0(B_0(G/N))$.

\smallskip

We have shown that if $N$ is a minimal normal subgroup of $G$, then $N$ is semisimple of order divisible by $p$. If $G$ is almost simple, write $S={\rm Soc}(G)$. Recall that $P'=\Phi(P)$. By \cite[Lem.~2.2]{NRSV21} we have that either $[G:S]_3=1$ or $[G:S]_3=3$ and we are done by Theorem \ref{thm:theoremAforalmostsimple}. So we may assume that $G$ is not almost simple. In this case,  \cite[Thm.~3.2 and 3.3]{NS23} yields $k_0(B_0)\leq 9$. As we know that $3$ divides $k_0(B_0)$ (by \cite[Lem.~2.2]{NST18}, for instance) and $k_0(B_0)>3$ by \cite[Thm.~C]{NST18}, we are done in this case.
\end{proof}

We continue by analyzing the structure of  $P/P'$ for groups $G$ satisfying $k_0(B_0(G))=6$ and $P \in \syl 3 G$.

\begin{thm}\label{thmA:case6} Let $G$ be a finite group with $k_0(B_0(G))=6$, where
$B_0$ is the principal $3$-block. Then $[P:P']=9$ where $P\in \syl 3 G$.
\end{thm}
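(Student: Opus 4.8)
The plan is to mirror the structure of the proof of Theorem~\ref{thmA:P/P'=9}, reducing by induction on $|G|$ to the case where $G$ is either $3$-solvable, almost simple, or has a semisimple minimal normal subgroup and is not almost simple, and then to invoke the corresponding known results. First I would dispose of the case where $P$ is abelian: by the results of Dade and Sambale quoted in the previous proof (\cite[Thm.~8.6, Cor.~8.11]{Sambale14}), $k_0(B_0)=k(B_0)\in\{6,9\}$ forces $P\cong C_3\times C_3$ (since $k(B_0)=6$ rules out $C_9$, which has $k(B_0)=9$, and rules out larger abelian $3$-groups), and then $[P:P']=9$ as wanted. So one may assume $P$ is non-abelian, whence $P'=\Phi(P)$.

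Next I would run the same reductions as in Theorem~\ref{thmA:P/P'=9}. By \cite[Thm.~9.9(c)]{Navarro98} we may assume $\mathbf{O}_{3'}(G)=1$. If $\norm GP=G$ then $G$ is $3$-solvable and Fong's theorem gives $k_0(B_0)=k(G/P')$ with $G/P'$ having abelian Sylow $3$-subgroup; since $k(G/P')=6$, this abelian group must be $C_3\times C_3$, so $[P:P']=[G/P' : (G/P')']=9$. Hence we may assume $\norm GP<G$. Then I would try to reduce $\mathbf{O}_3(G)$ to be trivial: take $N$ minimal normal in $\mathbf{O}_3(G)$; as in the earlier proof, either $|P/N|=3$ with $N=\mathbf{O}_3(G)$, or $N\le P'=\Phi(P)$. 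In the first case An--Eaton \cite[Cor.~1.6]{AE05} gives $k_0(B_0(\norm GP))=k_0(B_0)=6$, and by induction $[P/N:(P/N)']=9$; since $|N|=3$ and $N\le\mathbf{Z}(P)$ one checks (using $N=P\cap P^g$ for $g\notin\norm GP$, and that $N$ being the full kernel situation forces $N\le P'$ or $|P:P'|$ unchanged) that $[P:P']=9$ as well — this bookkeeping needs care but is routine. In the second case, $N\le P'$ and by Theorem~\ref{thm:NRSV} every $\chi\in\Irr_0(B_0)$ has $N\le\Ker\chi$ and lies in $B_0(G/N)$, so $k_0(B_0(G/N))=k_0(B_0)=6$; by induction $[P/N:(P/N)']=9$, and since $N\le P'=\Phi(P)$ we get $P'/N=(P/N)'$ and hence $[P:P']=9$ directly.

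Thus we reach the case $\mathbf{O}_3(G)=\mathbf{O}_{3'}(G)=1$, so every minimal normal subgroup of $G$ is semisimple of order divisible by $3$. If $G$ is almost simple with socle $S$, then by \cite[Lem.~2.2]{NRSV21} we have $[G:S]_3\le 3$, and Theorem~\ref{thm:theoremAforalmostsimple} gives the equivalence of $[P:P']=9$ with $k_0(B_0)\in\{6,9\}$, so $[P:P']=9$ here. If $G$ is not almost simple, I would appeal to \cite[Thm.~3.2, 3.3]{NS23}: the classification there of groups with $k_0(B_0)\le 9$ (combined with the almost-simple input Theorem~\ref{thm:conditionsonsimples}) should show that whenever $G$ is not almost simple and $k_0(B_0)\le 9$, one necessarily has $[P:P']=9$ — or, if $[P:P']>9$, that $k_0(B_0)>9$, a contradiction with $k_0(B_0)=6$.

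The main obstacle I anticipate is precisely this last step: showing that $k_0(B_0(G))=6$ is incompatible with $[P:P']>9$ for non-almost-simple $G$ with trivial $\mathbf{O}_3$ and $\mathbf{O}_{3'}$. This is really the converse direction of the reduction machinery in \cite{NS23}, and the delicate point is that the bounds there (e.g.\ the use of Theorem~\ref{thm:thetatheorem} to force $k_0(B_0|\theta)$ to be large over suitable characters $\theta$ of a semisimple minimal normal subgroup, together with Theorem~\ref{tensorinduction} and the counting in Theorem~\ref{thm:conditionsonsimples}(c)) must be shown to already exclude $k_0(B_0)=6$ unless the Sylow structure is exactly $[P:P']=9$. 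I would organize this by letting $N=S_1\times\cdots\times S_t$ be a minimal normal subgroup, using Theorem~\ref{thm:conditionsonsimples}(c) to get three non-$\Aut$-conjugate nontrivial height-zero characters when $9\mid|S_i|$, building an $X$-invariant such character via the $\theta_1\times\cdots\times\theta_t$ construction when the $S_i$ have non-abelian (or large) Sylows, and then applying Theorem~\ref{thm:thetatheorem} to contradict $k_0(B_0)=6$; the case where each $S_i$ has Sylow $3$-subgroup of order exactly $3$ (so $t\ge 2$ is needed for $[P:P']$ to be $\ge 9$) would be handled separately, again via the $X$-invariant characters from Theorem~\ref{thm:conditionsonsimples}(a) and Lemma~\ref{lem:extension}/Theorem~\ref{tensorinduction}. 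Getting all these sub-cases to close cleanly, rather than the reductions themselves, is where the real work lies.
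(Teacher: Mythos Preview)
Your reduction of $\mathbf{O}_3(G)$ contains a circularity. The dichotomy ``either $|P/N|=3$ or $N\le P'=\Phi(P)$'' that you borrow from the proof of Theorem~\ref{thmA:P/P'=9} was deduced there \emph{from} the hypothesis $[P:P']=9$ (via $[P/N:P'N/N]\mid [P:P']=9$). Here $[P:P']=9$ is the desired conclusion, so you cannot invoke it; for a minimal normal $3$-subgroup $N$ there is no a priori reason why $N\le\Phi(P)$ or $|P/N|=3$. The paper sidesteps this by splitting instead on the value of $k_0(B_0(G/N))$: Theorem~\ref{Landrock} forces $k_0(B_0(G/N))\in\{3,6\}$; if it is $3$ then \cite[Thm.~C]{NST18} gives $|P/N|=3$ and one finishes via An--Eaton and induction on $\norm GP$; if it is $6$ then induction on $G/N$ gives $[P/N:(P/N)']=9$, and $N\le P'$ is proved \emph{a posteriori} using Lemma~\ref{lem:Murai} (every $\lambda\in\Lin(P)$ must restrict trivially to $N$, else some height-zero $\chi\in B_0$ lies over a nontrivial character of $N$, contradicting $k_0(B_0)=k_0(B_0(G/N))$). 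So the dichotomy you want is a consequence of the character-theoretic hypothesis, not an input.

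Your treatment of the semisimple case also needs more than an appeal to \cite[Thms.~3.2, 3.3]{NS23}: those results assume $[P:P']\le 9$ and bound $k_0(B_0)$, which is the wrong direction here, as you yourself flag. The paper does not reverse that machinery; it again splits on $k_0(B_0(G/N))\in\{1,3,6\}$. When this is $1$ (so $P\le N$) it uses Theorem~\ref{thm:conditionsonsimples}(b) to produce at least six distinct $3'$-degrees in $B_0(N)$ when $t\ge 2$, and compares with $k_0(B_0)=6$ via Theorem~\ref{alperin-dade} to force $G=N\cent GP$ and then a contradiction $6=k_0(B_0(S))^t$. When it is $3$, one first shows $\cent GN=1$, then that $P$ acts transitively on the simple factors (else two $P$-invariant height-zero characters of $N$ in distinct $G$-orbits yield at least nine characters above them by Theorem~\ref{thm:thetatheorem}), reducing to $t=3$, and finishes using part~(b) of Theorem~\ref{thm:conditionsonsimples} to build two $P$-invariant characters of different degrees. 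The case $k_0(B_0(G/N))=6$ is disposed of immediately by Lemma~\ref{lem:Murai}. Your sketch invoking part~(c) and tensor induction is closer to what is needed later for $k_0(B_0)=9$; for $k_0(B_0)=6$ the numerics are tighter and part~(b) already suffices. (Incidentally, your claim that $C_9$ forces $k(B_0)=9$ is false: with inertial index $2$ one gets $k(B_0)=6$. This is harmless since $[P:P']=9$ either way, but it shows the abelian case is not as immediate as you suggest.)
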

\begin{proof} Write $p=3$ and $B_0=B_0(G)$. We proceed by induction on $|G|$.

\smallskip

  Working by induction, we may assume that ${\rm\textbf{O}}_{p'}(G)=1$ by \cite[Thm.~9.9.(c)]{Navarro98}. We may also assume that $P$ is not normal in $G$. Indeed, if $P\triangleleft G$, then $G$ is $p$-solvable and since ${\rm\textbf{O}}_{p'}(G)=1$ we have that ${\rm Irr}_{p'}(B_0(G))={\rm Irr}_{p'}(G)={\rm Irr}(G/P')$. Analysing the structure of the finite groups with an abelian normal Sylow 3-subgroup and 6 conjugacy classes, we would arrive at $[P:P']=9$.

      \smallskip

 Let $N$ be a minimal normal subgroup of $G$. Then $p$ divides the order of $N$.

    \medskip


    (1) Suppose first that $N$ is an elementary abelian 3-group. Notice that $P/N>1$ as ${\bf N}_G(P)<G$, so $3 \mid k_0(B_0(G/N))\leq 6$ by Theorem \ref{Landrock}. Write $\bar B_0=B_0(G/N)$. The only options are then $k_0(\bar B_0)\in \{ 3, 6\}$.

    \smallskip

   (1.a) Suppose that $k_0(\bar B_0)=3$, then $|P/N|=3$ by \cite[Thm.~C]{NST18}. Notice that $N={\bf O}_p(G)$ and that if $g \in G\setminus {\bf N}_G(P)$ then $P\cap P^g=N$. By \cite[Cor.~1.5]{AE05} (see also \cite[Thm.~7.17]{Sambale14}), the block $B_0$ satisfies the Alperin--McKay conjecture. In particular, $k_0(B_0({\bf N}_G(P)))=6$ and we are done by induction (as $P$ is not normal in $G$).

 \smallskip

 (1.b) Suppose that $k_0(\bar B_0)=6$. Then $\irra{p'}{B_0(G)}=\irra{p'}{B_0(G/N)}$, so every height zero character in $B_0(G)$ lies over $1_N$. By induction $[P/N: (P/N)']=9$. Notice that $(P/N)'=P'N/N$. We want to show that actually $N\subseteq P'$ and hence $[P:P']=9$. 
 Given any linear character $\lambda \in {\rm Irr}(P)$, we have that $\tau=\lambda_N\in {\rm Irr }(N)$. By Lemma \ref{lem:Murai} there is some $\chi\in {\rm Irr}(B_0)$ of height zero lying over $\tau$. Then $\tau=1_N$. Therefore $N$ is contained in the kernel of every linear character of $P$, giving us 
 $$N\subseteq\bigcap_{\lambda\in{\rm Lin}(P)}{\rm ker}(\lambda)=P',$$ as wanted.

 \bigskip
 

   (2) Suppose now that $N$ is semisimple, and write $N=S_1\times S_2\times\cdots\times S_t$, where $S_i\cong S$ are nonabelian simple of order divisible by 3. Since $p=3$, we have that $k_0(B_0(G/N))\in\{1,3,6\}$ again by Theorem \ref{Landrock}. 

    \medskip

    (2.a) Suppose first that $k_0(B_0(G/N))=1$. Then 3 does not divide $[ G:N]$ and $P\leq N$. 
    Also, since ${\rm\textbf{O}}_{p'}(G)=1$, we have that $\cent G N=1$.
     By the Frattini argument, we have that $G=N\norm G P$, so $M=N\cent G P$ is normal in $G$. By Lemma \ref{lemma:onlyblockabove} we have that $B_0(G)$ is the only block covering $B_0(M)$. In particular, since $3$ does not divide $[G:M]$, we have that ${\rm Irr}(G/M)\subseteq{\rm Irr}_0(B_0(G))$. More generally ${\rm Irr}(G|\theta)\subseteq {\rm Irr}_0(B_0)$ for every $\theta$ of height zero in the principal block of $M$.  
     If $t=1$ then $G$ is almost simple and we are done by Theorem \ref{thm:theoremAforalmostsimple}. Hence we may assume that $t\geq 2$.
     By Theorem \ref{thm:conditionsonsimples}(b), the set of degrees of ${\rm Irr}_{p'}(B_0(N))$, has size at least 6.
Notice that for every $\psi\in{\rm Irr}_{p'}(B_0(M))$, there is $\chi\in{\rm Irr}_{p'}(B_0(G))$ lying over it and hence the set of character degrees of characters in ${\rm Irr}_{p'}(B_0(M))\setminus\{1_M\}$ has size at most $6-k(G/M)$. By Theorem \ref{alperin-dade} so has the set of character degrees of characters in ${\rm Irr}_{p'}(B_0(N))\setminus\{1_N\}$. These facts force $M=G$. Then again by Theorem \ref{alperin-dade} we have that $6=k_0(B_0(N))=k_0(B_0(S))^t$, absurd as $t\geq 2$.


\bigskip

(2.b) Suppose now that $k_0(B_0(G/N))=3$. Then $|PN/N|=3$. We first claim that $\cent G N=1$. Indeed, consider $k_0(B_0(G/\cent G N))$. As $[G:\cent G N]$ is divisible by 3, we have that $k_0(B_0(G/\cent G N))\in \{ 3,6\}$ by Theorem \ref{Landrock}. If $k_0(B_0(G/\cent G N))=3$  then $[G:\cent G N]_3=3$ by \cite[Thm.~C]{NST18} and hence $|N|_3=3$, which implies $|G|_3=9$, and the result follows. Finally, suppose that $k_0(B_0(G/\cent G N))=6$, so all the characters in $\chi\in{\rm Irr}_0(B_0(G))$ lie over $1_{\cent G N}$. Since $[G:N]_3=3$ and 3 divides $|\cent G N|$, we necessarily have that 3 does not divide $[G:N\cent G N]$. Now take $\varphi\in{\rm Irr}_0(B_0(\cent G N))$ nontrivial and consider $1_N\times \varphi\in{\rm Irr}_0(B_0(N\times \cent G N))$, then there is $\chi\in{\rm Irr}_0(B_0(G))$ lying over $\varphi$, a contradiction.

\medskip

 We now claim that $P$ (and hence  $PN/N$) acts transitively on $\{S_1,\ldots, S_t\}$. In particular, $t\in\{1,3\}$ and by Theorem \ref{thm:theoremAforalmostsimple}, that implies $t=3$. 
 Suppose that the action of $P$ on $\{S_1,\ldots, S_t\}$ is not transitive and write $N=R\times T$, where $R$ is the product of one $P$-orbit. 
 Since $p$ divides $k_0(B_0(R))$, then necessarily $p$ divides the size of the set ${\rm Irr}_{p',P}(B_0(R))$ of $P$-invariant characters of height zero in ${\rm Irr}(B_0(R))$.
 As $1_R\in {\rm Irr}_{p',P}(B_0(R))$, there exists $1_R\neq \theta\in{\rm Irr}_{p'}(B_0(R))$ that is $P$-invariant. Analogously, let $1_T\neq \xi\in{\rm Irr}_{p'}(B_0(T))$ be $P$-invariant. We have that $1_R\times\xi$ and $\theta\times \xi$ are  $P$-invariant, so they extend to $PN$ using \cite[Cor.~8.16]{Isaacs76}. By Theorem \ref{thm:thetatheorem}, there are at least 3 characters in ${\rm Irr}(B_0(G))$ of height zero above them. As $1_R\times\xi$ and $\theta\times \xi$ are not $G$-conjugate, this implies that $k_0(B_0(G))\geq 9$, a contradiction. 
  
 \medskip
 
 Therefore $N=S_1\times S_2\times S_3$, $|PN/N|=3$ and $P$ acts transitively on $\{ S_1, S_2, S_3 \}$. 
  We claim that from every character $\varphi$ in $\irra{p'}{B_0(S)}$ we can construct a $P$-invariant character $\theta$ in $\irra{p'}{B_0(N)}$ with $\theta(1)=\varphi(1)^3$. Indeed, if $xN$ generates $PN/N$, then $xN$ seen as an element of ${\rm Out}(N)$ acts on characters of $N$ like $(\alpha_1, \alpha_2, \alpha_3)\sigma$ with $\alpha_i \in {\rm Out}(S)$ and $\sigma \in { S}_3$ of order $3$, satisfying $\alpha_1\alpha_{\sigma(1)}\alpha_{\sigma(2)}=1$. The character $\theta=\varphi\times \varphi^{\alpha_1}\times \varphi^{\alpha_1\alpha_{\sigma(1)}}$ is $P$-invariant whenever $\varphi$ is a character of $S$, and hence it does extend to $PN$ using \cite[Cor.~8.16]{Isaacs76}. 
  Now, by Theorem \ref{thm:thetatheorem}, there are at least $3$ characters in $\irra{p'}{B_0(G)|\theta}$.  
  With this method, and using Theorem \ref{thm:conditionsonsimples}(b), we can construct at least $2$ non-trivial $P$-invariant characters $\theta\in \irra{p'}{B_0(N)}$ with different degrees, but again that is not possible.

\bigskip


(2.c) Finally suppose that $k_0(B_0(G/N))=6$, thus every irreducible character in ${\rm Irr}_0(B_0(G))$ lies over $1_N$. Since 3 divides $k_0(B_0(N))$, we have that there are at least 3 irreducible characters in ${\rm Irr}_0(B_0(N))$ fixed by $P$. Let $\varphi$ one of them, so $\varphi$ extends to $PN$, using \cite[Cor.~8.16]{Isaacs76}. Then by Lemma \ref{lem:Murai} there exists $\chi\in{\rm Irr}_0(B_0(G))$ lying over $\varphi$, a contradiction.   
\end{proof}

We now finish the proof of Theorem A.

\begin{thm}\label{thmA:case9}
    Let $G$ be a finite group, $p=3$ and $P\in \syl p G$. Suppose that $k_0(B_0(G))=9$, then $[P:P']=9$.
\end{thm}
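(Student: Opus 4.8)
The plan is to induct on $|G|$, following the pattern of Theorems \ref{thmA:P/P'=9} and \ref{thmA:case6}. Using \cite[Thm.~9.9(c)]{Navarro98} we reduce to ${\mathbf O}_{3'}(G)=1$. If $P\triangleleft G$, then $G$ is $3$-solvable (a Schur--Zassenhaus complement to $P$ is a $3'$-group, hence solvable, since every non-abelian simple group has order divisible by $3$), so ${\rm Irr}_{3'}(B_0(G))={\rm Irr}_{3'}(G)={\rm Irr}(G/P')$ and $k(G/P')=9$; analysing the finite groups with an abelian normal Sylow $3$-subgroup and $9$ conjugacy classes gives $[P:P']=9$. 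Hence we may assume $P$ is not normal in $G$. Finally, if $P$ is abelian then $k_0(B_0(G))=k(B_0(G))$, and the value $9$ together with the known possibilities for $k(B)$ for abelian defect groups of order $3$ or $\geq 27$ (by work of Dade and Sambale, cf.\ \cite{Sambale14}) forces $|P|=9$, i.e.\ $[P:P']=9$. So from now on $P$ is non-abelian.

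Let $N$ be a minimal normal subgroup of $G$; then $3\mid|N|$. Suppose first that ${\mathbf O}_3(G)\neq 1$, and take $N\leq{\mathbf O}_3(G)$ elementary abelian. Then $N<P$, so $B_0(G/N)$ has positive defect and $k_0(B_0(G/N))\in\{3,6,9\}$ by Theorem \ref{Landrock}. If $k_0(B_0(G/N))=3$, then $|P/N|=3$ by \cite[Thm.~C]{NST18}, so $N={\mathbf O}_3(G)$, $P\cap P^g=N$ for all $g\notin{\mathbf N}_G(P)$, and An--Eaton \cite[Cor.~1.6]{AE05} gives $k_0(B_0({\mathbf N}_G(P)))=k_0(B_0(G))=9$; since ${\mathbf N}_G(P)<G$, induction applies. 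If $k_0(B_0(G/N))\in\{6,9\}$, then $[P/N:(P/N)']=9$ — by Theorem \ref{thmA:case6} if the value is $6$, and by the inductive hypothesis if it is $9$ — and we claim $N\subseteq P'$, which yields $[P:P']=[P/N:(P/N)']=9$. Indeed, if not, there is $\lambda\in{\rm Lin}(P)$ with $\mu:=\lambda_N\neq 1_N$, and $\mu$ extends to $P$. Setting $I=I_G(\mu)\supseteq P$, so that $3\nmid[G:I]$, Lemma \ref{lem:extensionprincipalblock} applied inside $I$ produces an extension of $\mu$ in $B_0(I)$; then Lemma \ref{lem:extension} (in $I$) together with Theorem \ref{thmA:P/P'=9} applied to $I/N$, whose Sylow $3$-subgroup $P/N$ satisfies $[P/N:(P/N)']=9$, gives $k_0(B_0(I)\mid\mu)\geq k_0(B_0(I/N))\geq 6$. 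The Clifford correspondence attached to $\mu$ preserves principal blocks — immediate from $\sum_{x\in G^0}\psi^G(x)=[G:I]\sum_{x\in I^0}\psi(x)$ for $\psi\in{\rm Irr}(I\mid\mu)$ — and $3'$-degrees, so $k_0(B_0(G)\mid\mu)=k_0(B_0(I)\mid\mu)\geq 6$; as also $k_0(B_0(G)\mid 1_N)\geq k_0(B_0(G/N))\geq 6$ and these two sets of characters are disjoint, $k_0(B_0(G))\geq 12$, a contradiction.

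It remains to treat ${\mathbf O}_3(G)=1$, so every minimal normal subgroup $N=S_1\times\cdots\times S_t$ is a direct product of isomorphic non-abelian simple groups of order divisible by $3$, transitively permuted by $G$; here $k_0(B_0(G/N))\in\{1,3,6,9\}$, and one runs the analysis of Theorem \ref{thmA:case6}(2) with target $9$. The value $9$ is impossible, since then every height-zero character of $B_0(G)$ would lie over $1_N$, contradicting Lemma \ref{lem:Murai} applied to a non-trivial $P$-invariant character in ${\rm Irr}_0(B_0(N))$ (one exists because $3\mid k_0(B_0(N))$, and it extends to $PN$). If the value is $1$, then $P\leq N$; for $t=1$, $G$ is almost simple with $[G:{\rm Soc}(G)]_3=1$ and Theorem \ref{thm:theoremAforalmostsimple} applies, while for $t\geq 2$ one combines Theorem \ref{thm:conditionsonsimples}(b),(c), the fact that $P$ non-abelian forces some $S_i$ to have non-abelian Sylow $3$-subgroups (so $k_0(B_0(S_i))\geq 6$), and the Alperin--Dade bijection \ref{alperin-dade} to reach a contradiction. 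For the values $3$ and $6$ one first shows ${\mathbf C}_G(N)=1$ exactly as in \ref{thmA:case6}(2.b) — using Lemma \ref{lem:Murai} and the normal subgroup $N\times{\mathbf C}_G(N)$ — and then exploits the transitivity of $G$ on $\{S_1,\dots,S_t\}$: in the case $k_0(B_0(G/N))=3$ one has $|PN/N|=3$, so $P$ is transitive on the components and $t\in\{1,3\}$; for $t=3$ one builds from a non-trivial $\varphi\in{\rm Irr}_0(B_0(S))$ the $P$-invariant character $\varphi\times\varphi^{\alpha}\times\varphi^{\alpha\beta}\in{\rm Irr}_0(B_0(N))$, uses Theorem \ref{thm:conditionsonsimples}(c) to find three such in distinct $G$-orbits whenever $9\mid|S|$, and applies Theorem \ref{thm:thetatheorem} to force $k_0(B_0(G))\geq 3+3\cdot 3>9$; hence $|S|_3=3$, so $P\cong C_3\wr C_3$ and $[P:P']=9$. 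The remaining subcase $k_0(B_0(G/N))=6$ with $t\geq 2$ is dealt with similarly, now using Theorem \ref{tensorinduction} to produce extensions into $B_0$ and Proposition \ref{prop:Noelialemma} to bound $k_0(B_0(G))$.

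The main obstacle is precisely this last layer of the semisimple analysis, and in particular eliminating $t=2$: unlike the situation of Theorem \ref{thmA:case6}, the target value $9=3^2$ is a perfect power, so it no longer rules out $k_0(B_0(S))=3$ (equivalently, cyclic Sylow $3$-subgroups of the components) on purely numerical grounds. One must therefore show that $P$ really acts transitively on the components — and, when it does not, exhibit sufficiently many pairwise non-$G$-conjugate $P$-invariant characters in ${\rm Irr}_0(B_0(N))$ that extend to $PN$, so that Theorem \ref{thm:thetatheorem} forces $k_0(B_0(G))>9$. This is where Theorems \ref{thm:conditionsonsimples}, \ref{thm:thetatheorem} and \ref{tensorinduction}, together with Proposition \ref{prop:Noelialemma}, do the real work.
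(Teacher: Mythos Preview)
Your inductive skeleton matches the paper's, and the handling of the $p$-local case ($N\leq\mathbf{O}_3(G)$) is fine---in fact your uniform treatment of $k_0(B_0(G/N))\in\{6,9\}$ via Lemma~\ref{lem:extensionprincipalblock} and Lemma~\ref{lem:extension} is a clean variant of the paper's separate arguments (1.a)/(1.b). But the semisimple part has genuine gaps.

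First, Proposition~\ref{prop:Noelialemma} is stated only for almost simple groups $S\leq A\leq\Aut(S)$; you cannot invoke it in case $k_0(B_0(G/N))=6$ with $t\geq 2$. The paper instead uses the dichotomy (a1)/(a2) of Theorem~\ref{thm:conditionsonsimples}: under (a1) two non-$G$-conjugate $P$-invariant characters of $N$ give $6+3+3>9$ via Theorem~\ref{thm:thetatheorem}; under (a2) one uses Theorem~\ref{tensorinduction} and Lemma~\ref{lem:extension} at the stabilizer $G_{\tilde\theta}$ to get $k_0(B_0(G_{\tilde\theta})\mid\tilde\theta)\geq k_0(B_0(G_{\tilde\theta}/N))>3$ by \cite[Thm.~C]{NST18}, contradicting that at most $9-6=3$ height-zero characters lie over a nontrivial $N$-constituent. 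Second, your treatment of $k_0(B_0(G/N))=1$ with $t\geq 2$ is a hand-wave: the paper needs the normal subgroup $M=N\cent{G}{P}$, Lemma~\ref{lemma:onlyblockabove}, and a case analysis on $k(G/M)\in\{1,2,4\}$ to force $t=2$ and $|S|_3=3$; your appeal to ``$P$ non-abelian'' both begs the question (you never justified the reduction to non-abelian $P$---it is not obvious that $k(B_0)=9$ excludes abelian defect of order $\geq 27$) and does not by itself bound the number of $A$-orbits. Third, in case $k_0(B_0(G/N))=3$ you assert transitivity of $P$ on the components and $\cent{G}{N}=1$ ``exactly as in \ref{thmA:case6}(2.b)'', but the paper's argument here is different and more delicate: it first shows $N$ is the \emph{unique} minimal normal subgroup (via a nine-element grid $\{\theta_i\times\varphi_j\}$ in $B_0(N\times M)$), and then rules out intransitive $P$-action in two subcases (2.c.i)/(2.c.ii), the second of which requires the structural fact from Remark~\ref{remark:conditions} that in the (a2) situations $\Out(S)$ has a normal cyclic Sylow $3$-subgroup. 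Your final paragraph correctly identifies exactly these places as ``the main obstacle'' but does not resolve them.
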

\begin{proof} We proceed by induction on $|G|$.
Working as in the proof of Theorem \ref{thmA:case6}, we may assume that ${\rm \textbf{O}}_{p'}(G)=1$ and $\norm G P < G$. Write $B_0=B_0(G)$.
    
    \smallskip
    
  (1)  We next want to prove that we may also assume that ${\rm \textbf{O}}_{p}(G)=1$. Otherwise, let 
    $N$ be a minimal normal subgroup of $G$ contained in ${\rm \textbf{O}}_{p}(G)$.
  Notice that $p$ divides $|G/N|$ as otherwise $N=P\triangleleft G$. By Theorem \ref{Landrock},  $3$ divides $k_0(\bar{B_0})=k_0(B_0(G/N))\leq k_0(B_0)=9$. The only possibilities are then $k_0(\bar{B_0})\in \{ 3, 6, 9\}$.

\medskip

 (1.a) Suppose that $k_0(\bar{B_0})=9$. In this case, ${\rm Irr}_0(B_0(G))={\rm Irr}_0(B_0(G/N))$, so every character of height zero in $B_0(G)$ lies over $1_N$.
Moreover, by induction, $[P:P'N]=9$.  We claim that $N\subseteq P'$. Let $\lambda\in{\rm Lin}(P)$. Then $\theta=\lambda_N\in{\rm Irr}(N)$ extends to $\lambda$. 
By Lemma \ref{lem:Murai} we have that there exists $\chi\in{\rm Irr}_0(B_0(G))$ over $\theta$. Hence $\theta=1_N$. Then $$N\subseteq\bigcap_{\lambda\in{\rm Lin}(P)}{\rm ker}(\lambda)=P',$$ as wanted. Therefore $[P:P']=[P:P'N]=9$ and we are done.

\bigskip

 (1.b) Suppose that $k_0(\bar{B_0})=6$. In this case, by Theorem \ref{thmA:case6}, $[P:P'N]=9$. We work to prove that $N\subseteq P'$. 
 Let $\lambda \in {\rm Lin}(P)$. If $\theta=\lambda_N \neq 1_N$, then by Lemma \ref{lem:Murai} some $\chi \in {\rm Irr}_{p'}(B_0)$ lies over $\theta$. 
By Theorem \ref{thm:thetatheorem}, we have that $3$ divides $k_0(B_0|\theta)$. In fact, $k_0(B_0|\theta)=3$ as $k_0(B_0)=9$ and $k_0(\bar B_0)=6$. Moreover $k_0(B_0|\theta)=k_0(B_0(G_\theta)| \theta)$, by the Clifford correspondence and the Third Main Theorem (\cite[Thm.~6.7]{Navarro98}).
By Lemma \ref{lem:extensionprincipalblock}, we have that $\theta$ extends to some $\psi \in {\rm Irr}_0(B_0(G_\theta))$. Then, the equality $k_0(B_0(G_\theta)|\theta)=k_0( B_0(G_\theta/N))$ follows from Gallagher's Theorem and \cite[Lem.~5.1]{NV17}. But
 $k_0(B_0(G_\theta/N))>3$ by \cite[Thm.~C]{NST18} because $[P/N:(P/N)']=9$, a contradiction. 
Hence $\theta=1_N$, and we conclude $N\subseteq P'$, as in (1.a). Then $[P:P']=[P:P'N]=9$ and we are done.

\bigskip

 (1.c) Suppose that $k_0(\bar{B_0})=3$. Then $|P/N|=3$. Recall that $P$ is not normal in $G$, so $N={\rm\textbf{O}}_p(G)$. Let $g\in G\setminus \norm G P$. Then $P\cap P^g=N$. Again, by Corollary 1.5 of \cite{AE05}, the Alperin--McKay conjecture holds for $B_0(G)$. Hence $k_0(B_0({\bf N}_G(P)))=9$. Since $\norm G P <G$, we are done by induction.

\bigskip

(2) Now let $N$ be a minimal normal subgroup of $G$. By (1),  $N$ is a semisimple group of order divisible by $p$. Then $N= S_1\times \cdots \times S_t$, where $S_i\cong S$ are transitively permuted by the action of $G$, and $S$ is a simple non-abelian group of order divisible by $p$. We now analyse the different possibilities for $k_0(B_0(G/N))\in \{ 1, 3, 6, 9\}$.

\smallskip

(2.a) $k_0(B_0(G/N))=1$. In this case $(|G/N|, p)=1$ and $P\subseteq N$.  Since ${\rm\textbf{O}}_{p'}(G)=1$, we have that $\cent G N =1$, so we may assume that $t>1$ by Theorem  \ref{thm:theoremAforalmostsimple}. Consider $M=N{\bf C}_G(P)$. Then $M$ is normal in $G$, by the Frattini argument, and $B_0$ is the only block covering $B_0(M)$ by Lemma \ref{lemma:onlyblockabove}. In particular $k(G/M)\leq 8$. Notice that by Theorem \ref{thm:conditionsonsimples} (b) and using that $t\geq 2$ we obtain that there are at least 6 different character degrees of irreducible characters in ${\rm Irr}_{p'}(B_0(N))$. 
Using Theorem \ref{alperin-dade}, there are at least 6 different character degrees of irreducible characters in ${\rm Irr}_{p'}(B_0(M))$ too. Since every irreducible character in ${\rm Irr}_{p'}(B_0(M))$ lies under an irreducible character in ${\rm Irr}_{p'}(B_0(G))$ we have that the set of character degrees of irreducible characters in ${\rm Irr}_{p'}(B_0(M))\setminus \{ 1_M\}$ is at most $9-k(G/M)$. This, together with the fact that $3$ does not divide $[G:M]$, forces $k(G/M)\in\{1,2,4\}$.

(2.a.i) If $k(G/M)=1$ then $G=M$, then by Theorem \ref{alperin-dade}, $9=k_0(B_0(N))=k_0(B_0(S))^t$. Hence either $t=1$ and we are done by Theorem \ref{thm:theoremAforalmostsimple} or $t=2$ and $k_0(B_0(S))=3$. By \cite[Thm.~C]{NST18}, in the latter case, $|S|_3=3$ and then $|P|=9$ as wanted.

(2.a.ii) If $k(G/M)=2$ then $[G:M]=2$, and then $k_0(B_0(S))^t=k_0(B_0(N))=k_0(B_0(M))\leq 1+2\cdot 7=15$. The above inequalities force $t=2$ and $k_0(B_0(S))=3$. Then $|P|=9$ by \cite[Thm.~C]{NST18}.

(2.a.iii) Finally suppose that $k(G/M)=4$. In this case, since there are at least 5 $G$-orbits in ${\rm Irr}(B_0(M))\setminus\{1_M\}$, we have that every element of ${\rm Irr}_{p'}(B_0(M))\setminus\{1_M\}$ has exactly one element of ${\rm Irr}_{p'}(B_0(G))$ above it (and there are exactly 5 $G$-orbits in ${\rm Irr}(B_0(M))\setminus\{1_M\}$). Since ${\rm Irr}(B_0(G))$ is the unique block covering ${\rm Irr}(B_0(M))$ this means that all the nontrivial elements in ${\rm Irr}_{p'}(B_0(M))$ are fully ramified in their stabilizers. This means that, if $\varphi\in{\rm Irr}_{p'}(B_0(M))$ then $[G_\varphi:M]$ is a square. By analyzing the structure of groups with 4 conjugacy classes we conclude that either $[G:M]=4$ or $[G:M]=10$ and $G_\varphi=M$ for all $\varphi\in{\rm Irr}_{p'}(B_0(M))$. If $[G:M]=4$, then $k_0(B_0(S))^t=k_0(B_0(N))=k_0(B_0(M))\leq 1+4\cdot 5=21$ and then $t=2$ and $k_0(B_0(S))=3$, as wanted. If $[G:M]=10$, we have that $k_0(B_0(S))^t=k_0(B_0(N))=k_0(B_0(M))= 1+10\cdot 5=51$ which forces $t=1$, contradiction.

\smallskip

(2.b) $k_0(B_0(G/N))=9$. In this case every height zero character in the principal block of $G$ contains $N$ in its kernel. But that is absurd. Indeed, the set ${\rm Irr}_0(B_0(N))$ has size divisible by $3$. In particular, also the set of $P$-invariant characters in ${\rm Irr}_0(B_0(N))$ has size divisible by 3. Thus, we can choose a $P$-invariant $1_N\neq \theta \in {\rm Irr}_0(B_0(N))$. By \cite[Cor.~8.16]{Isaacs76}, $\theta$ extends to $PN$.  By  Lemma \ref{lem:Murai}, there would be some $\chi \in {\rm Irr}_0(B_0)$ lying over $\theta$, so not containing $N$ in its kernel. 

\smallskip

We have shown that if $N$ is a minimal normal subgroup of $G$ then, $k_0(B_0(G/N))\in \{ 3, 6\}$.

\smallskip

(2.c) If $k_0(B_0(G/N))=3$, then $[PN:N]=3$ by \cite[Thm.~C]{NST18}. 
We claim that $N$ is the only minimal normal subgroup of $G$. Once this statement is proven, we can assume that ${\bf C}_G(N)=1$. Suppose that $M$ is another minimal normal subgroup of $G$. Since $3$ divides $|M|$ we have that $3$ divides $k_0(B_0(M))$. Write $K=NM\cong N\times M$. Since $[G:N]_3=3$ and $3$ divides $|M|$ we have that $[G:K]$ is not divisible by 3 and $|M|_3=3$. Hence $P\leq K$. 
Let $L=K{\bf C}_G(P)$ which is normal in $G$ by the Frattini argument. By the Alperin--Dade correspondence (Theorem \ref{alperin-dade}), there are the same number of $G$-orbits in ${\rm Irr}_{p'}(B_0(L))$
as in ${\rm Irr}_{p'}(B_0(K))={\rm Irr}_{p'}(B_0(N))\times {\rm Irr}_{p'}(B_0(M))$. If $L=G$, then by Theorem \ref{alperin-dade}, we have that $9=k_0(B_0(G))=k_0(B_0(K))=k_0(B_0(N))k_0(B_0(M))$, forcing $k_0(B_0(N))=k_0(B_0(M))=3$. Then $|N|_3=|M|_3=3$ by \cite[Thm.~C]{NST18}, so $|G|_3=9$ as desired. Hence, we may assume that $L<G$.

\smallskip

By Lemma \ref{lemma:onlyblockabove} we have that $B_0(G)$ is the only block covering $B_0(L)$. Notice that by Theorem \ref{thm:conditionsonsimples}, there are at least three characters in both ${\rm Irr}_{p'}(B_0(N))$ and ${\rm Irr}_{p'}(B_0(M))$ with distinct degrees, namely $\theta_0=1_N$, $\theta_1$ and $\theta_2$ for $N$ and $\varphi_0=1_M$, $\varphi_1$ and $\varphi_2$ for $M$. Notice then that the set
 $$\Omega=\{ \theta_i\times \varphi_j \ | \ i, j \in \{ 0, 1 , 2\} \}\subseteq {\rm Irr}_{p'}(B_0(K))$$
 consists of non-$G$-conjugate characters. In particular, the action of $G$ on ${\rm Irr}_{p'}(B_0(L))$ defines at least 9 orbits, forcing each $\chi \in {\rm Irr}_{p'}(B_0)$ to lie above a different $G$-orbit on $L$.
 This is absurd as $L<G$ and every character in  ${\rm Irr}(G/L)\subseteq {\rm Irr}_{p'}(B_0)$ lies over $1_L$.
 
 \smallskip
 
 Hence the claim is proven and $\cent G N=1$. By Theorem \ref{thm:theoremAforalmostsimple}, we may assume that $t>1$. We claim now that $P$ acts transitively on $\{ S_1, \ldots, S_t\}$. Note that $|PN/N|=3$ so each of these $P$-orbits has length $1$ or $3$.

(2.c.i) Suppose that the action of $P$ on $\{S_1,\ldots, S_t\}$ is not transitive and defines at least one orbit of length $3$. Write $N=R\times T$, where $R$ is the product of the elements in a $P$-orbit of length 3. 
 Since $P$ acts transitively on the simple factors of $R$, we can argue as in the last paragraph of the case (2.b) of  the proof of Theorem \ref{thmA:case6} to obtain 2 nontrivial $P$-invariant characters $\theta,\xi\in{\rm Irr}_{p'}(B_0(R))$ of different degrees. Now since $|{\rm Irr}_{p'}(B_0(T))|$ is divisible by $p$ and $P$ acts on ${\rm Irr}_{p'}(B_0(T))$, there exists a nontrivial $P$-invariant $\psi\in{\rm Irr}_{p'}(B_0(T))$. Notice that $1_N$, $\theta\times 1_T$, $\xi\times 1_T$, $\theta\times\psi\in{\rm Irr}_{p'}(B_0(N))$ are all $P$-invariant  and not $G$-conjugate. By \cite[Cor.~8.16]{Isaacs76}, they extend to $PN$. 
By Theorem \ref{thm:thetatheorem}, there are at least $3$ irreducible characters in ${\rm Irr}_{p'}(B_0(G))$ above each of them, and no character of $G$ lies above $2$ of them. This yields $k_0(B_0(G))>9$,  a contradiction.

\smallskip

(2.c.ii) Suppose now that the action of $P$ on $\{S_1,\ldots, S_t\}$ is trivial. In this case $P$ acts on ${\rm Irr}_{p'}(B_0(S))$. Since $p$ divides $|{\rm Irr}_{p'}(B_0(S))|$ by Theorem \ref{Landrock}, $p$ also divides the number of $P$-invariant irreducible characters in ${\rm Irr}_{p'}(B_0(S))$ and then every $S_i$ contains a nontrivial $P$-invariant irreducible character of $p'$-degree in the principal block. If $t>2$ we can easily construct 3 nontrivial $P$-invariant character in ${\rm Irr}_{p'}(B_0(N))$ that are not $G$-conjugate. As in the (2.c.i) case, we get a contradiction using \cite[Cor.~8.16]{Isaacs76} and Theorem \ref{thm:thetatheorem}.
We are left with the situation in which $t=2$ and $P$ normalizes $S_i$ for $i \in \{ 1, 2\}$. As before, we can derive a contradiction if we are able to construct $3$ nontrivial $P$-invariant characters  ${\rm Irr}_{p'}(B_0(N))$ that are not $G$-conjugate. That is certainly the case if $S$ satisfies the (a1) statement of Theorem \ref{thm:conditionsonsimples}. By Remark \ref{remark:conditions}, we can assume that $S$ satisfies (a2) and that the Sylow $3$-subgroup of ${\rm Out}(S)$ is cyclic and normal.  Notice that ${\rm Out}(S)$ has only $2$ elements of order $3$. Since $|PN/N|=3$, then it is easy to show that $G/N$ (that can be identified with a subgroup of ${\rm Out}(S)\wr C_2$ because $\cent G N=1$)  has only $2$ elements of order $3$. Henceforth $PN/N \triangleleft G/N$ and $PN \triangleleft G$. At this point, we can consider $M=PN \cent G P$. We have that $M$ is normal in $G$ by the Frattini argument and $B_0$ is the only block  covering $B_0(M)$ by Lemma \ref{lemma:onlyblockabove}. Also, by Theorem \ref{alperin-dade} we know that $k_0(B_0(M))=k_0(B_0(PN))$. Notice that the set of $P$-invariant characters in ${\rm Irr}_{p'}(B_0(S_i))$ has size at least $3$ (in our situation it will be exactly $3$ but it is not really important). Hence the set of $P$-invariant characters in ${\rm Irr}_{p'}(B_0(N))$ has size at least $9$, which translates in $k_0(B_0(M))=k_0(B_0(PN))\geq 27$ by \cite[Cor.~9.6]{Navarro98}. We then see that $M<G$. As $G/M$ is a group of order coprime to $3$ and all its characters are contained in $B_0(G/N)$ (if they would lie in a different block of maximal defect of the quotient contained in $B_0(G)$ we would be done just by constructing $2$ nontrivial $P$-invariant characters in ${\rm Irr}_{p'}(B_0(N))$ lying in different $G$-orbits, that we can do), we get that $G/M\cong C_2$. Then $k_0(B_0)\geq 2+26/2\geq 15$, a contradiction.

 \smallskip

Therefore we can assume that $P$ acts transitively on the set of simple direct factors of $N$ and therefore $t=3$. If $|S|_3=3$, then notice that $P$ has a normal subgroup $Q\cong { C}_3\times { C}_3\times { C}_3$, $|P/Q|=3$ and $P$ acts permuting transitively the ${ C}_3$-factors of $Q$.  In particular, $[P:P']=9$, as wanted. So we may assume that $9$ divides $|S|$. By Theorem \ref{thm:conditionsonsimples}(c), notice that $N$ has at least 3 $G$-orbits of $P$-invariant nontrivial characters (we are reasoning as in the third paragraph of (2.b) in the proof of Theorem \ref{thmA:case6}). Since each character in one these $G$-orbits has at least 3 characters in the principal block above it by Theorem \ref{thm:thetatheorem}, we obtain a contradiction. 

\smallskip

(2.d)  Finally we consider the case where $k_0(B_0(G/N))=6$. Reasoning as in case (2.c) we can assume that $P$ acts transitively on the set of simple direct factors of $N$.
 In this case $[PN:P'N]=9$ by Theorem \ref{thmA:case6}. Suppose that Theorem \ref{thm:conditionsonsimples}(a1) holds for $S$ and let $\theta_1,\theta_2$ as in Theorem \ref{thm:conditionsonsimples}(a1). Let $\tilde{\theta_i}\in{\rm Irr}_{p'}(B_0(N))$ be the product of the copies of $\theta_i$, so $\tilde{\theta}_1$ and $\tilde{\theta}_2$ are $P$-invariant (so they extend to $PN$ by \cite[Cor.~8.16]{Isaacs76}) and not $G$-conjugate. By Lemma \ref{lem:Murai} and Theorem \ref{thm:thetatheorem} each of these characters has at least 3 irreducible characters in ${\rm Irr}_{p'}(B_0(G))$ lying above them, a contradiction.
Finally suppose that Theorem \ref{thm:conditionsonsimples}(a2) holds for $S$ and let $\theta\in{\rm Irr}_{p'}(B_0(S))$ be the character given by the statement of Theorem \ref{thm:conditionsonsimples}(a2).  
Let $\tilde{\theta}\in{\rm Irr}_{p'}(B_0(N))$ be the product of copies of $\theta$, so that $\tilde{\theta}$ is $P$-invariant. Notice then that $G_{\tilde \theta}$ acts transitively on the simple direct factors of $N$. By Theorem \ref{tensorinduction} we also know that $\tilde \theta$ extends to some $\phi \in {\rm Irr}(B_0(G_{\tilde \theta}))$. By Lemma \ref{lem:extension} we have that $k_0(B_0(G_{\tilde \theta})|\tilde{\theta})\geq k_0(B_0(G_{\tilde \theta}/N))$. Since $|G_{\tilde{\theta}}:N|_3>3$, then  $k_0(B_0(G/N))>3$ by \cite[Thm.~C]{NST18}, and this gives a contradiction as $B_0$ contains at most 3 characters of degree coprime to $p$ not lying over $1_N$. 
   \end{proof}
   
\subsection*{Acknowledgments} The authors are indebted to Miquel Mart\'inez for a careful reading of a previous version of this preprint and for detecting a missing case in the proof of Theorem \ref{thmA:case9} and to Gunter Malle for providing many useful comments that have greatly improved the readability of this article. 
We would also like to thank Gustavo Fern\'andez-Alcober, Alex Moret\'o and Benjamin Sambale for useful discussions on $p$-groups.
 In particular, we are grateful to Benjamin Sambale (and incidentally to Heiko Dietrich and Bettina Eick) for providing a reference for the asymptotic behavior of the number of isomorphism classes of (metabelian) $3$-groups with abelianization of size $9$.


\begin{thebibliography}{ABCDEF}


\bibitem[Alp76]{Alperin76}
{\sc J.\,L. Alperin,}
\newblock Isomorphic blocks,
\newblock {\em J. Algebra} \textbf{43} (1976), 694--698.

\bibitem[AE05]{AE05}
{\sc J. An and C. Eaton,} Modular representation theory of blocks with trivial intersection, {\em Algebr. Represent. Theory} {\bf 8} (2005), 427--448.


%
%



\bibitem[BMM93]{BMM}
{\sc M. Brou\'{e}, G. Malle, and J. Michel}, Generic blocks of
finite reductive groups, \emph{Ast\'{e}risque} \textbf{212} (1993),
7--92.


%


\bibitem[CE04]{CE04}
{\sc M. Cabanes and M. Enguehard}, \emph{Representation theory of
finite reductive groups}, New Mathematical Monographs \textbf{1},
Cambridge University Press, Cambridge, 2004.

\bibitem[Car85]{carter}
{\sc R.\,W. Carter}, \emph{Finite groups of Lie type. Conjugacy
classes and complex characters}, John Wiley and Sons, Inc., New
York, 1985, 544 pp.


\bibitem[CPW87]{CPW87}
{\sc G.H. Cliff, W. Plesken, and A. Weiss.}
Order-theoretic properties of the center of a block; in \textit{The Arcata
Conference on Representations of Finite Groups}, Arcata, Calif., 1986, Proc. Sympos. Pure Math. 47 (1987),
413--420.
%
%

%

\bibitem[Dad77]{Dad77}
{\sc E.\,C. Dade},
\newblock Remarks on isomorphic blocks,
\newblock {\em J. Algebra} \textbf{45} (1977), 254--258.

\bibitem[DZ87]{deriziotismichler}
{\sc D. I. Deriziotis and G. O. Michler}, Character table and blocks of finite simple triality groups
$\tw{3}\type{D}_4(q)$, \emph{ Trans. Amer. Math. Soc. {\bf 303}} (1987), 39--70.

\bibitem[DLM97]{dignelehrermichel}
{\sc F. Digne, G. I. Lehrer, and J. Michel}, On Gelfand-Graev characters of reductive groups with
disconnected centre, \emph{J. Reine Angew. Math. {\bf 491}} (1997), 131--147.


\bibitem[FS82]{FS82}
{\sc P.~Fong and B.~Srinivasan},
\newblock The blocks of finite general linear and unitary groups,
\newblock {\em Invent. Math.} \textbf{69} (1982), 109--153.

\bibitem[FS89]{FS89}
{\sc P.~Fong and B.~Srinivasan},
\newblock The blocks of finite classical groups, 
\newblock {\em J. reine angew. Math. {\bf 396}} (1989), 122--191.

\bibitem[GAP]{GAP} The GAP~Group, \textit{GAP -- Groups, Algorithms, and Programming,
Version 4.11.0}, 2020. {http://www.gap-system.org}

\bibitem[GM20]{GM20}
{\sc M. Geck, and G. Malle}, \emph{The character theory of finite groups of Lie
  type: A guided tour}. Cambridge University Press, Cambridge, 2020.

\bibitem[GMS22]{GMS}
{\sc E. Giannelli, J.\,M. Mart{\'i}nez, and A.\,A. Schaeffer Fry}, Character degrees in blocks and defect groups, {\em J. Algebra {\bf 594}}   (2022), 170--193. 

\bibitem[GRSS20]{GRSS20}
{\sc E. Giannelli, N. Rizo, B. Sambale, and A.\,A. Schaeffer Fry},
Groups with few $p'$-character degrees in the principal block,
\emph{Proc. Amer. Math. Soc.} \textbf{148} (2020), 4597--4614.

\bibitem[GSV19]{GSV}
{\sc E. Giannelli,  A.\,A. Schaeffer Fry, and C. Vallejo},
Characters of $\pi'$-degree, \emph{ Proc. Amer. Math. Soc. {\bf 147}}, no. 11 (2019), 4697--4712. 

\bibitem[GLS98]{GLS}
{\sc D. Gorenstein, R. Lyons, and R. Solomon}, \emph{The
classification of the finite simple groups}, Number 3, Part I,
Chapter A, Almost simple K-groups, Mathematical Surveys and Monographs, 40.3, American Mathematical Society, Providence, RI, 1998.

%


\bibitem[Hi90]{hiss}
{\sc G. Hiss}, Regular and semisimple blocks of finite reductive groups,
\emph{J. London Math. Soc. (2) {\bf 41}} (1990), 63--68.

\bibitem[HiSh90]{hissshamash}
{\sc G. Hiss and J. Shamash}, $3$-blocks and $3$-modular characters of $\type{G}_2(q)$, \emph{J. Algebra} {\bf 131} (1990), 371--387.
%

\bibitem[HS23]{HSF23}
{\sc N.\,N. Hung and A.\,A. Schaeffer Fry}, On
H\'{e}thelyi-K\"{u}lshammer's conjecture for principal blocks,
\emph{Algebra Number Theory {\bf 17}}:6 (2023), 1127--1151.



\bibitem[HSV23]{HSV23}
{\sc N.\,N. Hung, A.\,A. Schaeffer Fry, and C. Vallejo}, Height zero
characters in principal blocks, \emph{J. Algebra} \textbf{622} (2023), 197--219.

\bibitem[HSV24]{HSV24}
{\sc N.\,N. Hung, A.\,A. Schaeffer Fry, and C. Vallejo}, The principal blocks with 6 irreducible characters. To appear in {\em J. Algebra, special issue dedicated to Georgia Benkart}.


\bibitem[Isa76]{Isaacs76}
{\sc I.\,M. Isaacs}, {\em Character theory of finite groups}, AMS
Chelsea Publishing, Providence, RI, 2006.

\bibitem[Isa08]{Isa08}
{\sc I.~M.~Isaacs,}
\newblock {\em Finite group theory}, volume~92 of {\em Graduate Studies in
 Mathematics}.
\newblock American Mathematical Society, Providence, RI, 2008.

	\bibitem[J78]{James}
	{\sc  G.~James,} 
	\newblock \emph{The representation theory of the symmetric groups}, 
	\newblock Lecture Notes in Mathematics, vol. 682, Springer, Berlin, 1978.
	
		\bibitem[JK81]{JK}
	{\sc  G.~James and A.~Kerber,}
	\newblock \emph{The representation theory of the symmetric group}, 
	\newblock Encyclopedia of Mathematics and its Applications, vol.~16, Addison-Wesley Publishing Co., Reading, Mass., 1981.

%
%

%
%
%
%



\bibitem[Lan81]{Landrock81}
{\sc P. Landrock}, On the number of irreducible characters in a
$2$-block, \emph{J. Algebra} \textbf{68} (1981), 426--442.

\bibitem[Lar23]{Lar23}
{\sc A. Laradji}, On height zero characters of $p$-solvable groups,
{\em Osaka J. Math.}
{\bf 60} (2023), 753--759.


\bibitem[Lub]{lubeckwebsite}
{\sc F.~L{\"u}beck}, Character Degrees and their Multiplicities for some Groups of Lie Type of Rank $< 9$, \url{https://www.math.rwth-aachen.de/~Frank.Luebeck/chev/DegMult/index.html}

\bibitem[Mal90]{malle91}
{\sc G. Malle}, Die unipotenten Charaktere von $\tw{2}\type{F}_4(q^2)$,
\emph{Comm. Algebra {\bf 18}} (1990), 2361--2381.

\bibitem[Mal91]{malle91max}
{\sc G. Malle}, The maximal subgroups of $\tw{2}\type{F}_4(q^2)$,
\emph{J. Algebra {\bf 139}} (1991), 52--69.

\bibitem[Mal07]{malle07}
{\sc G. Malle}, Height $0$ characters of finite groups of Lie type,
\emph{Represent. Theory} \textbf{11} (2007), 192--220.

\bibitem[Mal08]{malle08}
{\sc G. Malle}, Extensions of unipotent characters and the inductive
McKay condition, \emph{J. Algebra} \textbf{320} (2008), 2963--2980.

\bibitem[Mal20]{malle19}
{\sc G. Malle}, On the number of characters in blocks of quasi-simple groups, \emph{Algebr. Represent. Theory {\bf 23}} (2020), 513--539.
%
%

%


\bibitem[MT11]{MT11}
{\sc G. Malle and D. Testerman}, \emph{Linear algebraic groups and finite groups of Lie type}. Cambridge
Studies in Advanced Mathematics, 133, Cambridge University Press, Cambridge, 2011.


%
%


\bibitem[MO83]{MO83}
{\sc G.\,O. Michler and J.\,B. Olsson}, Character correspondences in
finite general linear, unitary and symmetric groups, \emph{Math. Z.}
\textbf{184}, 203--233.

\bibitem[Mur94]{Murai94}
{\sc M. Murai},
Block induction, normal subgroups and characters of height zero,
{\em Osaka J. Math.} {\bf 31} (1994), 9--25.

\bibitem[Nav98]{Navarro98}
{\sc G.~Navarro},
\newblock {\em Characters and blocks of finite groups}, {London Mathematical Society Lecture Note Series} \textbf{50},
\newblock Cambridge University Press, Cambridge, 1998.

\bibitem[Nav18]{Navarro18}
{\sc G. Navarro}, \emph{Character theory and the McKay conjecture},
Cambridge Studies
  in Advanced Mathematics \textbf{175}, Cambridge University Press, Cambridge,
  2018.

  
\bibitem[NRSV21]{NRSV21}
{\sc G.~Navarro, N. Rizo, A. a. Schaeffer Fry, and C. Vallejo}, 
Characters and generation of Sylow 2-subgroups,  {\em Represent. Theory} {\bf 25} (2021), 142--165.
  
\bibitem[NS23]{NS23}
{\sc G.~Navarro, B. Sambale}, 
Characters, commutators and centers of Sylow subgroups, {\em Represent. Theory} {\bf 27} (2023), 717--733.


\bibitem[NST18]{NST18}
{\sc G.~Navarro, B. Sambale, and P.\,H.~Tiep}, Characters and Sylow 2-subgroups of maximal class revisited, \emph{J. Pure Appl. Algebra}
\textbf{222} (2018), 3721--3732.

\bibitem[NT13]{NT13}
{\sc G. Navarro and P.\,H. Tiep},
Characters of relative $p'$-degree over normal subgroups, \emph{Ann. Math.} (2)
 {\bf 178:3} (2013) 1135--1171.


\bibitem[NV17]{NV17}
{\sc G.~Navarro, and C. Vallejo}, 2-Brauer correspondent blocks with one simple module, \emph{J. Algebra}
\textbf{488} (2017), 230--243.

\bibitem[Neb89]{Neb89}
{\sc B. Nebelung}, 
{\em Klassifikation metabelscher 3-Gruppen mit Faktorkommutatorgruppe vom Typ (3,3) und Anwendung auf das Kapitulationsproblem},
Inauguraldissertation, Universit\"at zu K\"oln, (1989).



\bibitem[Ols76]{olsson76}
{\sc J.\,B. Olsson}, McKay numbers and heights of characters, \emph{Math. Scand. {\bf 38}} (1976), 25--42.
%

\bibitem[Ols93]{Olsson93}
{\sc J.\,B. Olsson},
\emph{Combinatorics and representations of
finite groups}, Universit{\"a}t Essen, Fachbereich Mathematik,
Essen, 1993.

\bibitem[Ols90]{OlssonZZZ}
{\sc J.\,B. Olsson}, On the $p$-blocks of symmetric and alternating groups and their covering groups, \emph{J. Algebra}
\textbf{128} (1990), 188--213.



\bibitem[Riz19]{Riz19}
{\sc N. Rizo}, p-Blocks relative to a character of a normal subgroup, \emph{J. Algebra} \textbf{514} (2018), 254--272.

\bibitem[RSV20]{RSV20}
{\sc N. Rizo, A.\,A. Schaeffer Fry, and C. Vallejo}, Galois action on the principal block and cyclic Sylow subgroups, \emph{ Algebra  Number Theory {\bf{14}}}:7 (2020), 1953--1979.

\bibitem[RSV21]{RSV21}
{\sc N. Rizo, A.\,A. Schaeffer Fry, and C. Vallejo}, Principal
blocks with 5 irreducible characters, \emph{J. Algebra} {\bf 585} (2021), 316--337.

\bibitem[Ro96]{RobinsonGroupTheory}
{\sc D.J.S. Robinson}, \emph{A course in the theory of groups}, Graduate Texts in Mathematics Vol. 80, Springer, 1996.

\bibitem[Sam14]{Sambale14}
{\sc B. Sambale}, \emph{Blocks of finite groups and their
invariants}, Lecture Notes in Mathematics \textbf{2127}, Springer,
Cham, 2014.
%
%

%

\bibitem[Spa12]{Spaeth12}
{\sc B. Spaeth}, Inductive McKay condition in defining characteristic, \emph{Bull. Lond. Math. Soc. \bf{44}} (3) (2012), 426--438.


\bibitem[Sri68]{srinivasan}
{\sc B. Srinivasan}, The characters of the finite symplectic group $\Sp(4,q)$, \emph{Trans. Amer. Math. Soc. {\bf 131}} no. 2 (1968), 488--525.

\bibitem[Tau37]{Tau37}
{\sc O. Taussky}, A remark on the class field towers, {\em J. London Math. Soc.} {\bf 1} (1937), 82--85.

\bibitem[Tay18]{taylor}
{\sc J. Taylor},  Action of automorphisms on irreducible characters of symplectic
groups, \emph{J. Algebra {\bf 505}} (2018), 211--246.

%

\bibitem[Wa66]{ward}
{\sc H. N. Ward},   On Ree’s series of simple groups, \emph{Trans. Amer. Math. Soc. {\bf 121}(1)} (1966), 62--89.

\bibitem[We55]{weir}
{\sc A. J. Weir}, Sylow $p$-subgroups of the classical groups over finite fields with characteristic prime to $p$, \emph{Proc. Amer. Math. Soc. {\bf 6}}, No. 4 (1955), 529--533.

\end{thebibliography}
\end{document}